\newcommand{\aug}{\mathbf{a}}
\newcommand\red[1]{\textcolor{black}{#1}}
\newcommand\green[1]{\textcolor{black}{#1}}
\theoremstyle{plain}
\newtheorem{theorem}{Theorem}[section]
\newtheorem*{intro-theorem}{Theorem}
\newtheorem{corollary}[theorem]{Corollary}
\newtheorem*{intro-corollary}{Corollary}
\newtheorem{lemma}[theorem]{Lemma}
\newtheorem{proposition}[theorem]{Proposition}
\theoremstyle{definition}
\newtheorem{definition}[theorem]{Definition}
\theoremstyle{remark}
\newtheorem{remark}[theorem]{Remark}
\newcommand{\xr}{\xrightarrow}
\newcommand{\onto}{\twoheadrightarrow}
\newcommand{\td}[1]{\tilde{#1}}
\newcommand{\into}{\hookrightarrow}
\newcommand{\Z}{\mathbb{Z}}
\newcommand{\Q}{\mathbb{Q}}
\newcommand{\R}{\mathbb{R}}
\newcommand{\D}{\mathbb{D}}
\renewcommand{\O}{\mathbb{O}}
\newcommand{\bd}{\partial}
\newcommand{\pf}{\pitchfork}
\renewcommand{\H}{\mathbb H}
\newcommand{\mc}[1]{\mathcal{#1}}
\newcommand{\ms}[1]{\mathscr{#1}}
\newcommand{\dlim}{\varinjlim}
\newcommand{\Hom}{\operatorname{Hom}}
\newcommand{\Mor}{\operatorname{Mor}}
\newcommand{\SHom}{\text{\emph{Hom}}}
\newcommand{\codim}{\text{codim}}
\newcommand{\Ext}{\text{Ext}}
\newcommand{\mf}{\mathfrak}
\newcommand{\im}{\text{im}}
\begin{document}

\title{Intersection homology duality and pairings: singular, PL, and sheaf-theoretic}
\author{Greg Friedman and James E. McClure}
\date{\red{July 22, 2020}}

\maketitle
\tableofcontents

\textbf{2000 Mathematics Subject Classification:} Primary: 55N33, 55N45
Secondary: 55N30, 57Q99

\textbf{Keywords: intersection homology, intersection cohomology, pseudomanifold, cup product, cap product, intersection product, Poincar\'e duality, Verdier duality, sheaf theory}

\begin{abstract}
We compare the sheaf-theoretic and singular chain versions of Poincar\'e duality for intersection homology, showing that they are isomorphic via naturally defined maps. Similarly, we demonstrate the existence of canonical isomorphisms between the singular intersection cohomology cup product, the hypercohomology product induced by the Goresky-MacPherson sheaf pairing, and, for PL pseudomanifolds, the Goresky-MacPherson PL intersection product. We also show that the de Rham isomorphism of Brasselet, Hector, and Saralegi preserves product structures. 
\end{abstract}

\section{Introduction}
\red{In its modern incarnation, Poincar\'e duality for a compact oriented $n$-manifold $M$ most often takes the form of an isomorphism $$H^{i}(M;R)\xr{\cap \Gamma}H_{n-i}(M;R),$$
where $R$ is a ring of coefficients and $\cap \Gamma$ is the cap product with a fundamental class $\Gamma\in H_n(M;R)$; see for example \cite[Theorem 3.30]{Ha}. If we use coefficients in a field $F$ for simplicity, then Poincar\'e duality together with the Universal Coefficient Theorem $H_{n-i}(M;F)\cong \Hom(H^{n-i}(M;F),F)$ provides an isomorphism $H^i(M;F)\cong \Hom(H^{n-i}(M;F),F)$, and it is well known that the resulting nonsingular pairing $$H^i(M;F)\otimes H^{n-i}(M;F)\to F$$ corresponds to evaluating the output of the cup product  $H^i(M;F)\otimes H^{n-i}(M;F)\xr{\cup}H^n(M;F)$ on the fundamental class $\Gamma$; again see \cite[Chapter 3]{Ha}. Dually one obtains a nonsingular pairing $H_{n-i}(M;F)\otimes H_i(M;F)\to F$ referred to as the \emph{intersection pairing}. Dold essentially defines the intersection pairing to be dual to the cup product pairing \cite[Formula VIII.13.5]{Dold}, though, at least in the piecewise linear (PL) setting, this pairing is induced at the chain level by a partially-defined operation of geometric intersection \cite{McC} (in fact, this geometric intuition came historically first \cite{Dieudonne}).}  

\red{These pairings and isomorphisms can also be approached via sheaf theory. If $\mc F$ is the constant sheaf on $M$ with stalk $F$, then the sheaf cohomology $H^i(M;\mc F)$ is isomorphic to the singular cohomology $H^i(M;F)$ \cite[Theorem III.1.1]{BR}, and there is a cohomology pairing induced by the sheaf product $\mc F\otimes \mc F\to \mc F$ that is given by stalkwise multiplication. Sheaf-theoretic Poincar\'e duality most often arises in modern textbook sources as a corollary of Verdier duality, which in this context says that the sheaf $\mc F$, treated as a sheaf complex concentrated in degree $0$, is quasi-isomorphic to the shifted Verdier dual $\mc D\mc F[-n]$. As the Verdier dualizing functor $\mc D$ admits a form of the Universal Coefficient Theorem---namely for any complex $\mc S^*$ we have that the hypercohomology $\H^i(M;\mc D \mc S^*)$ is isomorphic to $\Hom(\H^{-i}(M;\mc S^*),F)$ for $M$ compact---we have together that 
$$\H^i(M;\mc F)\cong \H^i(M;\mc D\mc F[-n]) \cong \H^{i-n}(M;\mc D\mc F)\cong \Hom(\H^{n-i}(M;\mc F),F).$$
In sources such as \cite[Theorem 3.3.1]{DI04} this is declared to be Poincar\'e duality. While it seems to be folklore that these sheaf-theoretic tools are equivalent to those from singular homology and cohomology, this is in fact a delicate issue, and one of our aims in this paper is to verify this expectation; we will see that it takes some work. Furthermore, we will show that deriving Poincar\'e duality sheaf-theoretically does not require Verdier duality; in fact Poincar\'e duality turns out to be due to the surprisingly simple observation that both the singular chain and cochain complexes sheafify in an appropriate manner to resolutions of the constant sheaf. Relating this observation to the singular cap product is a more complex matter.}

\red{These folklore results about manifolds will in fact be outcomes of our more general study of products and duality for intersection homology and intersection cohomology on pseudomanifolds. To explain, let us review some of that context.}

\red{While singular homology cup and cap products can be defined on any space, Poincar\'e duality will not hold in general for spaces that have singularities, i.e.\ points with non-Euclidean neighborhoods. However,
in \cite{GM1}, Goresky and MacPherson introduced intersection homology for compact oriented piecewise linear (PL) stratified pseudomanifolds, extending Poincar\'e duality to such spaces, which we recall in detail in Section \ref{S: ihreview}.} Their duality result involved constructing a geometric PL intersection product for PL chains in a suitable version of general position, which induces a pairing on intersection homology\footnote{The symbol $\pf$ is used in \cite{GM1} for a different purpose, but we will use it to denote the intersection pairing.}
$$I^{\bar p}H_i(X)\otimes I^{\bar q}H_j(X)\xr{\pf} I^{\bar r}H_{i+j-n}(X).$$
Here $X$ is an $n$-dimensional compact oriented PL stratified pseudomanifold and $\bar p,\bar q,\bar r$ are perversity parameters such that $\bar p+\bar q\leq \bar r$. Furthermore, they showed that if $i+j=n$ and $\bar q=D\bar p$, the complementary perversity to $\bar p$, then after tensoring with $\Q$ we obtain  a \emph{nonsingular} pairing
$$I^{\bar p}H_i(X;\Q)\otimes I^{D\bar p}H_{n-i}(X;\Q)\to I^{\bar t}H_{0}(X;\Q)\to\Q,$$
with $\bar t$ being the distinguished \emph{top perversity.}
\red{If $X$ is in fact a PL manifold, this is the intersection product mentioned above. Beyond the enormous importance of this version of duality, Goresky and MacPherson's manifestation of the intersection product was itself an innovation, utilizing the PL structure rather than fixed triangulations and employing the cup product to define intersections. This was the approach to intersection later used by the second-named author in \cite{McC} to define intersection products on PL manifolds in terms of chain maps (as opposed to only for chains in general position), which was then extended to intersections chains on PL pseudomanifolds by the first-named author in \cite{GBF18, GBF39}. }

In their follow-up \cite{GM2}, Goresky and MacPherson recast intersection homology in the derived category of sheaf complexes. This allowed an extension to \emph{topological} stratified pseudomanifolds, eliminating the requirement of piecewise linear structures, as well as carrying other advantages due to the powerful abstract machinery. For example, the sheaf complex whose hypercohomology gives intersection homology is determined up to isomorphism in the derived category by a simple set of axioms. Letting $\mc P_{\bar p}^*$ denote\footnote{Note the shift to cohomological indexing. There are a variety of further indexing conventions; we choose the one from \cite{Bo} such that $\H^i(X;\mc P_{\bar p}^*)\cong I^{\bar p}H_{n-i}(X;F)$.} such a sheaf complex for the perversity $\bar p$ and assuming coefficients in a field $F$, the duality itself takes the form $ \mc P_{\bar p}^*\sim (\mc D\mc P_{D\bar p}^*)[-n]$, where $\mc D$ is the Verdier dualizing functor,  $[-n]$ denotes a degree shift, and $\sim$ denotes quasi-isomorphism of sheaf complexes or, equivalently, isomorphism in the derived category. Applying hypercohomology and properties of $\mc D$, this quasi-isomorphism implies that $\H^i(X;\mc P_{\bar p}^*)\cong \Hom(\H^{n-i}(X;\mc P_{D\bar p}^*),F)$ on a compact pseudomanifold, which translates to $I^{\bar p}H_{n-i}(X;F)\cong \Hom(I^{D\bar p}H_{i}(X;F),F)$.
 We also have sheaf complex  pairings,  induced now  by morphisms of the form $\mc P_{\bar p}^*\otimes \mc P_{\bar q}^*\to \mc P_{\bar r}^*$. 
By \cite[Section 5.2]{GM2} (see also \cite[Section 9.C]{Bo}), such pairings turn out to be completely characterized by how they behave as maps of the cohomology stalks $\mc H^0(\mc P_{\bar p}^*)_x\otimes \mc H^0(\mc P_{\bar q}^*)_x\to \mc H^0(\mc P_{\bar r}^*)_x$ at the nonsingular points $x$, i.e.\ those with Euclidean neighborhoods. Up to isomorphism, each such pointwise pairing reduces to a map $F\otimes F\to F$, and the pairing corresponding to simple multiplication at each  $x$ we refer to as the \emph{Goresky-MacPherson sheaf pairing}\footnote{This pairing may differ from the original Goresky-MacPherson sheaf-theoretic pairing of \cite[Section 5.2]{GM2} by some signs, as we use different indexing conventions.}.

\red{Following the work of Goresky-MacPherson, a singular chain approach to intersection homology was developed by King \cite{Ki}, and even more recently intersection (co)homology versions of the cup and cap products have been developed by the authors using a variant of King's singular chains \cite{GBF25, GBF35}. 
In particular, there is a Poincar\'e duality isomorphism of the form}\footnote{We include a sign so that this isomorphism will be induced by a degree $-n$ chain map; see \cite[Remark 8.2.2]{GBF35}.}:\red{
$$I_{\bar p}H^i(X;F)\xr{(-1)^{in}\cdot \cap \Gamma} I^{D\bar p}H_{n-i}(X;F)$$
and a cup product pairing  $$I_{\bar p}H^*(X;F)\otimes I_{\bar q}H^*(X;F)\to I_{\bar r}H^*(X;F)$$ for appropriate $\bar p,\bar q,\bar r$. Note that here $I_{\bar p}H^*(X;F)$ is the cohomology of the dual complex of a complex of  singular intersection chains: $I_{\bar p}H^*(X;F)=H^*(\Hom(I^{\bar p}S_*(X;F),F))$.}

\red{As in the manifold case, the question naturally arises of how all of these pairings and duality isomorphisms are related to each other. Again they are generally assumed to be equivalent, though this is far from obvious. To emphasize again the subtleties even for manifolds, we note again that many expository resources for sheaf theory,} such as \cite{KS, IV, DI04},
first prove various forms of Verdier duality and then derive a version of Poincar\'e duality as a corollary; see e.g.\ \cite[Theorem 3.3.1]{DI04}. 
But  the relationship between these isomorphisms arising from Verdier duality and the duality isomorphism given by the cap product with the fundamental class is not so evident. On the other hand, there are some discussions of sheaf-theoretic cap products in the literature, e.g.\ in \cite{BR} and \cite[Section 10]{Sk94}, but it is not  clear how they relate to Verdier duality or that they extend to the intersection homology setting\footnote{See Remark \ref{R: intro cap} below for more about these.}.

Our goal then is to prove that the various pairings and duality maps \emph{are} isomorphic, at least in the case of intersection (co)homology of compact oriented stratified pseudomanifolds and with field coefficients. This includes ordinary (co)homology on compact oriented manifolds as a special case.  Furthermore, we show that the isomorphisms are all induced by appropriately canonical maps. 

We next outline the paper and our main results:

\paragraph{Preliminary material.} Section \ref{S: background} contains some conventions and a very brief review of  background material with references to more detailed sources.  Section \ref{S: hyper} contains some preliminary material about hypercohomology when working with sheaf complexes in the derived category $D(X)$. In particular, we consider conditions under which hypercohomology groups and maps between hypercohomology groups can be realized concretely by the cohomology of global sections of specific sheaf complexes and the maps between them. 

\paragraph{Sheaf complexes.} Section \ref{S: particular} introduces our main sheaf complexes of interest. We begin with brief reviews of the Verdier dualizing complex $\mathbb D^*$ and the Deligne sheaf complexes  $\mc P^*_{\bar p}$, the latter of which are determined up to quasi-isomorphism by the aforementioned Goresky-MacPherson axioms from \cite{GM2}. In Section \ref{S: SIC}, we recall the singular intersection chain sheaf complexes $\mc I^{\bar p}\mc S^*$ of \cite{GBF10, GBF23}, which are quasi-isomorphic to the Deligne sheaf complexes $\mc P_{\bar p}^*$ but are defined concretely by sheafifying the presheaf complexes of singular intersection chains $U\to I^{\bar p}S_{n-*}(X,X-\bar U;F)$. The quasi-isomorphism tells us that 
$\mc I^{\bar p}\mc S^*$ can be thought of as a specific object representing the abstract isomorphism class of $\mc P_{\bar p}^*$ in the derived category. In particular, its hypercohomology is isomorphic to the intersection homology $\H^i(X;\mc I^{\bar p}\mc S^*)\cong I^{\bar p}H_{n-i}(X;F)$. But since $\mc I^{\bar p}\mc S^*$ is homotopically fine, we can realize its hypercohomology in terms of the cohomology of the complex of global sections; this provides a way to make the isomorphism  $\H^i(X;\mc I^{\bar p}\mc S^*)\cong I^{\bar p}H_{n-i}(X;F)$ canonical, which will be important in what follows.

 In Section \ref{S: sheaf}, we introduce the sheaf complexes of intersection cochains $\mc I_{\bar p}\mc C^*$ as the sheafifications of the presheaf complexes of singular intersection cochains $U\to I_{\bar p}S^*(U;F)$, and we study its properties. Its hypercohomology is isomorphic to the intersection cohomology of \cite{GBF25}, i.e.\ $\H^i(X;\mc I_{\bar p}\mc C^*)\cong I_{\bar p}H^i(X;F)$.
Furthermore, the complex $\mc I_{\bar p}\mc C^*$ is flabby, and so it too canonically represents its hypercohomology by global sections.  We also show that $\mc I_{\bar p}\mc C^*$ is quasi-isomorphic to $\mc P_{D\bar p}^*$ and so to $\mc I^{D\bar p}\mc S^*$, providing another representative of the same isomorphism class in the derived category. This turns out to be the heart of intersection homology Poincar\'e duality, and we see that it does not explicitly require Verdier duality.

\paragraph{The cup product.} Before getting to duality results in detail, we first sheafify the cup product in Section \ref{S: sheaf cup}; this provides our first compatibility result, between the cup product and the Goresky-MacPherson sheaf product.
We state this theorem here with field coefficients for simplicity though a more general case is considered as Theorem \ref{S: sheaf cup}:

\begin{theorem}
Suppose $D\bar r\geq D\bar p+D\bar q$. Then there is a commutative diagram 
\begin{diagram}[LaTeXeqno]
I_{\bar p}H^*(X;F)\otimes I_{\bar q}H^*(X;F)&&\rTo^\cup && I_{\bar r}H^*(X;F)\\
\dTo^\cong&&&&\dTo_\cong\\
\H^*(X;\mc I_{\bar p}\mc C^*)\otimes \H^*(X;\mc I_{\bar q}\mc C^*)&\rTo  &
\H^*(X;\mc I_{\bar p}\mc C^*\otimes \mc I_{\bar q}\mc C^*)&\rTo^{\td \cup}&   \H^*(X;\mc  I_{\bar r}\mc C^*)
\end{diagram}
in which the vertical maps are isomorphisms induced by sheafification and $\td \cup$ is the Goresky-MacPherson sheaf product. 
\end{theorem}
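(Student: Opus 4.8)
The plan is to obtain the sheaf morphism $\td\cup$ by sheafifying the chain-level intersection cup product, and then to identify it with the Goresky--MacPherson sheaf pairing by means of the axiomatic characterization of such pairings. Recall from \cite{GBF25} that the intersection cup product $I_{\bar p}H^*(X;F)\otimes I_{\bar q}H^*(X;F)\to I_{\bar r}H^*(X;F)$ is induced by a cochain-level pairing $I_{\bar p}S^*(U;F)\otimes I_{\bar q}S^*(U;F)\to I_{\bar r}S^*(U;F)$ that is defined for every open $U\subseteq X$ (the hypothesis $D\bar r\geq D\bar p+D\bar q$ being exactly what makes it defined for the triple $\bar p,\bar q,\bar r$) and is compatible with restriction to smaller open sets. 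It is therefore a morphism of presheaf complexes from the presheaf tensor product of $U\mapsto I_{\bar p}S^*(U;F)$ and $U\mapsto I_{\bar q}S^*(U;F)$ to $U\mapsto I_{\bar r}S^*(U;F)$. Since sheafification is an isomorphism on stalks and filtered colimits commute with $\otimes$, the sheaf tensor product $\mc I_{\bar p}\mc C^*\otimes\mc I_{\bar q}\mc C^*$ is canonically the sheafification of that presheaf tensor product, so applying sheafification to the presheaf cup product yields a morphism of sheaf complexes $\td\cup\colon\mc I_{\bar p}\mc C^*\otimes\mc I_{\bar q}\mc C^*\to\mc I_{\bar r}\mc C^*$.

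Next I would identify $\td\cup$ with the Goresky--MacPherson sheaf product. By Section \ref{S: sheaf} there are quasi-isomorphisms $\mc I_{\bar a}\mc C^*\sim\mc P_{D\bar a}^*$ for $\bar a=\bar p,\bar q,\bar r$; and since $F$ is a field, $-\otimes_F-$ is exact on complexes of sheaves of $F$-vector spaces, hence preserves quasi-isomorphisms, so $\td\cup$ represents a morphism $\mc P_{D\bar p}^*\otimes\mc P_{D\bar q}^*\to\mc P_{D\bar r}^*$ in the derived category, and $D\bar r\geq D\bar p+D\bar q$ is precisely the range in which the characterization of \cite[Section 5.2]{GM2} and \cite[Section 9.C]{Bo} applies: such a morphism is determined by the map it induces on degree-$0$ cohomology stalks at a nonsingular point. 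I would compute this map directly. For $x$ a nonsingular point, $\mc H^0(\mc I_{\bar p}\mc C^*)_x=\varinjlim_{U\ni x}I_{\bar p}H^0(U;F)$; letting $U$ run over the distinguished Euclidean neighborhoods of $x$, each of which lies entirely in the top stratum, we have $I_{\bar p}S^*(U;F)=S^*(U;F)$, the intersection cup product on $U$ is the ordinary singular cup product, and $I_{\bar p}H^0(U;F)=H^0(U;F)=F$ is generated by the class of the constant cochain. Hence the induced stalk map is ordinary multiplication $F\otimes F\to F$, which by definition is the Goresky--MacPherson sheaf pairing, so $\td\cup$ is that pairing.

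To show the diagram commutes, note that the vertical maps are the cohomology isomorphisms induced by the sheafification units $I_{\bar a}S^*(X;F)\to\Gamma(X;\mc I_{\bar a}\mc C^*)$, which are quasi-isomorphisms by Section \ref{S: sheaf}, together with the identification $\H^*(X;\mc I_{\bar a}\mc C^*)=H^*(\Gamma(X;\mc I_{\bar a}\mc C^*))$ coming from flabbiness. By naturality of the sheafification unit applied to the presheaf cup product, the cochain cup product on $X$ followed by the unit into $\Gamma(X;\mc I_{\bar r}\mc C^*)$ equals the tensor product of the units on $X$, followed by the canonical map $\Gamma(X;\mc I_{\bar p}\mc C^*)\otimes\Gamma(X;\mc I_{\bar q}\mc C^*)\to\Gamma(X;\mc I_{\bar p}\mc C^*\otimes\mc I_{\bar q}\mc C^*)$, followed by $\Gamma(X;\td\cup)$. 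Passing to cohomology, one route realizes $\cup$ followed by the right-hand vertical isomorphism; the other realizes the left-hand vertical isomorphisms followed by the product induced on global sections and then by $\td\cup_*$, which, by the results of Section \ref{S: hyper} identifying the hypercohomology product $\H^*(X;-)\otimes\H^*(X;-)\to\H^*(X;-\otimes-)$ and the induced maps out of it with the corresponding operations computed from global sections of $\mc I_{\bar p}\mc C^*$ and $\mc I_{\bar q}\mc C^*$, is exactly the bottom row of the diagram. Hence the two routes agree and the diagram commutes.

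The step I expect to be the main obstacle is the last one: matching the intrinsic hypercohomology product with the one manufactured from the global sections of the particular complexes $\mc I_{\bar p}\mc C^*$ and $\mc I_{\bar q}\mc C^*$. These complexes are flabby, but their tensor product need not be, so computing $\H^*(X;\mc I_{\bar p}\mc C^*\otimes\mc I_{\bar q}\mc C^*)$ and the maps into and out of it requires care about which resolutions are used and about the naturality of the product with respect to them; this is exactly the situation the preliminary material of Section \ref{S: hyper} is designed to control, and invoking it correctly, rather than any manipulation of the cup product itself, is the crux of the argument. By comparison, checking that the cochain-level intersection cup product of \cite{GBF25} is natural under open restrictions and reduces to the ordinary cup product in the absence of singularities, and that the perversity conditions line up on the two sides, is routine.
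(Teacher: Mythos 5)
Your overall strategy—sheafify the (pre)cochain cup product, identify the resulting sheaf morphism with the Goresky--MacPherson pairing by computing the induced map on $H^0$ of stalks over the regular part, and then derive commutativity of the diagram from naturality of sheafification—is the strategy the paper follows. The stalk identification in your second paragraph and the worry you flag in your last paragraph (handling $\H^*(X;\mc I_{\bar p}\mc C^*\otimes\mc I_{\bar q}\mc C^*)$ via the machinery of Section \ref{S: hyper}, since the tensor product of flabby sheaves need not be flabby) are both exactly on the mark.

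There is, however, a genuine gap in the very first step. You assert that the intersection cup product ``is induced by a cochain-level pairing $I_{\bar p}S^*(U;F)\otimes I_{\bar q}S^*(U;F)\to I_{\bar r}S^*(U;F)$ that is defined for every open $U$'' and is hence a morphism of presheaf complexes. That is not how the cup product of \cite{GBF25,GBF35} is defined, and no such natural cochain-level chain map is available in the intersection setting. The cup product is built from the zig-zag $I_{\bar p}S^*(U)\otimes I_{\bar q}S^*(U)\xrightarrow{\Theta}\Hom(I^{\bar p}S_*(U)\otimes I^{\bar q}S_*(U),R)\xleftarrow{\times^*}I_{Q}S^*(U\times U)\xrightarrow{d_2^*}I_{\bar r}S^*(U)$, in which $\times^*$ (the dual of the intersection chain cross product) is a quasi-isomorphism but is inverted only on cohomology; one cannot simply compose with an Alexander--Whitney inverse, because that map need not respect allowability of intersection chains. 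Consequently $\td\cup$ is not a morphism of sheaf complexes but only a morphism in the derived category $D(X)$, obtained by sheafifying the entire roof rather than a single chain map. Your subsequent argument leans on having an actual presheaf map to apply the sheafification unit to; once the morphism is a roof, one must instead check (as the paper does) that the two wrong-way quasi-isomorphisms in the roof sheafify to quasi-isomorphisms, that the sheafified tensor term represents the derived tensor product, and that the resulting derived-category morphism induces a well-defined map on hypercohomology that is compatible with the presheaf-level cohomology of global sections. These are precisely the points your proposal currently glosses over, and they are where the substance of the proof lies.
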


As an application, we show that the intersection de Rham theorems of 
 Brasselet-Hector-Saralegi \cite{BHS} and Saralegi \cite{Sa05} are multiplicative (see Section \ref{S: mult} for further details):
 
\begin{intro-theorem}[Theorem \ref{T: de Rham}]
Let $X$ be an $\R$-oriented unfoldable stratified pseudomanifold, and suppose $D\bar r\geq D\bar p+D\bar q$.
 Let $\Omega_{\bar s}^*(X)$ be the complex of $\bar s$-perverse  differential forms on $X$. Then the following diagram commutes
\begin{diagram}[LaTeXeqno]
H^*(\Omega_{D\bar p}^*(X))\otimes H^*(\Omega_{D\bar q}^*(X))&\rTo^\wedge&H^*(\Omega_{D\bar r}^*(X))\\
\dTo^{\int\otimes \int}_\cong&&\dTo^\cong_\int \\
I_{\bar p}H^*(X;\R)\otimes I_{\bar q}H^*(X;\R)&\rTo^\cup&I_{\bar r}H^*(X;\R) .
\end{diagram}
\end{intro-theorem}

\green{Similarly, we will also show in  Section \ref{S: mult} that the singular cochain intersection cohomology cup product is isomorphic to the blown-up intersection cohomology cup product of Chataur, Saralegi-Aranguren, and Tanr\'e \cite{CST7} when they are both defined.}

\paragraph{Poincar\'e duality.}
We next turn to Poincar\'e duality. In Section \ref{S: compatibility} we prove the following:

\begin{intro-theorem}[Theorem \ref{T: MAIN}]
Let $X$ be a compact $F$-oriented $n$-dimensional stratified pseudomanifold, and let $\bar p,D\bar p$ be complementary perversities. The following diagram of isomorphisms commutes, where the vertical maps are induced by the sheafification of presheaf sections into sheaf sections,  the bottom map is induced by the quasi-isomorphism consistent with the orientation, and  the top map is the Poincar\'e duality map given by the signed cap product with the fundamental class $\Gamma$ determined by the orientation:

\begin{diagram}[LaTeXeqno]
I_{\bar p}H^i(X; F)&\rTo^{(-1)^{in}\cdot\cap \Gamma}& I^{D\bar p}H_{n-i}(X; F)\\
\dTo^\sigma&&\dTo_{\sigma'}\\
\H^i(X;\mc I_{\bar p}\mc C^*) &\rTo^{\mathbb{O}} &  \H^i(X;\mc I^{D\bar p}\mc S^*).
\end{diagram}
\end{intro-theorem}

Here the quasi-isomorphism $\mathbb O$ ``consistent with the orientation'' is the unique morphism $\mathbb{O}\in \Mor_{D(x)}(\mc I_{\bar p}\mc C^*,\mc I^{D\bar p}\mc S^*)$ that on cohomology stalks at nonsingular points $x$ takes the generator of $H^0(\mc I_{\bar p}\mc C^*_x)$ represented locally by the cocycle $1$ to the generator of $H^0(\mc I^{D\bar p}\mc S^*_x)$ represented locally by the local orientation class.  See Section \ref{S: O} for more details.

When $X$ is a manifold, $\mc I_{\bar p}\mc C^*$ and $\mc I^{D\bar p}\mc S^*$ reduce to the sheaf complexes $\mc C^*$ and $\mc S^*$ of ordinary singular cochains and chains, and we obtain the following corollary:

\begin{intro-corollary}[Corollary \ref{C: man duality}]
Let $M$ be a compact $F$-oriented $n$-dimensional manifold. The following diagram of isomorphisms commutes:

\begin{diagram}
H^i(M; F)&\rTo^{(-1)^{in}\cdot\cap \Gamma}& H_{n-i}(M; F)\\
\dTo^\sigma&&\dTo_{\sigma'}\\
\H^i(M;\mc C^*) &\rTo^{\mathbb{O}} &  \H^i(M;\mc S^*).
\end{diagram}
\end{intro-corollary}

\begin{remark}\label{R: intro cap}
Once again we emphasize that Poincar\'e duality from the sheaf-theoretic point of view does not require Verdier duality; in fact it is a manifestation of the existence on oriented manifolds of a canonical quasi-isomorphism between sheaves of cochains and chains, each of which is a resolution of the constant sheaf with stalk $F$. 
For pseudomanifolds, Poincar\'e duality results from the existence of such a quasi-isomorphism over the regular strata (the complement of the singular set). 

The part that requires effort is proving that the hypercohomology maps induced by these quasi-isomorphisms are compatible with a map that comes from an essentially global chain level construction---the cap product with the fundamental class. In fact, it is far from clear that the chain theoretic cap product can be realized as a map of global sections induced by a sheaf map. The authors have thus far failed in their attempts to construct it in this fashion.

There is a thorough treatment of a sheaf-theoretic cap product for locally compact spaces in Bredon \cite{BR}, though as observed by Sklyarenko \cite[Section 10]{Sk94}, ``we have not succeeded in establishing some bridge between the $\cap$-products corresponding to these  theories [Bredon's and the classical singular theory] using even one of the constructions available [in \cite{BR}].'' Sklyarenko then goes on to show that another quite general sheaf cap product of his construction on locally compact spaces agrees with the singular chain cap product in a number of important cases. The proof is very nontrivial. 

Unfortunately, Sklyarenko's cap product does not quite yield the intersection homology results we are after since although his cap product takes quite general hyperhomology theories as one of its inputs, the other input is always cohomology with coefficients in a single sheaf. This is not quite the right set-up for studying intersection homology and cohomology, which sheaf theoretically are both the (co)homology of sheaf complexes that are not resolutions of a single sheaf. Nonetheless, our goals are also much more modest than those of Sklyarenko.  We will limit ourselves to compact topological pseudomanifolds (which includes manifolds), and we will only look at intersection homology theories (which includes singular homology on manifolds). We will  not construct a sheaf cap product, per se, but rather we will look at the  canonical quasi-isomorphism $\mc I_{\bar p}\mc C^*\to \mc I^{D\bar p}\mc S^*$ determined by the orientation of $X$, and we will deduce, more or less by hand, that the hypercohomology isomorphisms induced by these quasi-isomorphisms are compatible with the singular intersection chain cap product with the fundamental class. When the underlying space is a compact manifold oriented over a field, this gives a possibly new proof that the canonical map in the derived category from the sheaf complex of singular cochains to the sheaf complex of singular chains  induces a map in hypercohomology consistent with the singular chain cap product with the fundamental class.

\end{remark}

\paragraph{Comparing products: cup, intersection, and sheaf-theoretic.}
With Theorem \ref{T: MAIN} in hand, we can turn to the compatibility of various pairings in Section \ref{S: cap/cup}. These results can best be summarized in terms of the commutativity of the double cube diagram below. We explain all of the groups and maps in the diagram following the statement of the theorem.

\begin{theorem}\label{T: cubes}
Let $F$ be a field, $X$ an $n$-dimensional compact $F$-oriented PL stratified pseudomanifold, and $\bar p,\bar q,\bar r$  perversities such that $D\bar r\geq D\bar p+D\bar q$. Then all of the squares in the following diagram commute except for the middle and bottom horizontal squares and the top left square, all three of which commute up to $(-1)^n$. Furthermore, all vertical and back-to-front maps are isomorphisms. If $X$ is only a topological stratified pseudomanifold, then the bottom square is not defined, but the signed commutativity of the top cube continues to hold. 
\begin{equation}
\resizebox{.95\hsize}{!}{
\begin{diagram}
  I_{\bar p}H^i(X)\otimes I_{\bar q}H^j(X)  &    &\rTo^{\cup} &      &    I_{\bar r}H^{i+j}(X)  \\
      & \rdTo_{\sigma\otimes \sigma} &      &      & \vLine& \rdTo^{\sigma}  \\
\dTo^{(-1)^{in}\cdot \cap \Gamma\otimes (-1)^{jn}\cdot \cap \Gamma} &    &   \H^{i} (X;\mc I_{\bar p}\mc C^*)\otimes\H^j(X; \mc I_{\bar q}\mc C^*)   & \rTo^{\td \cup}  & \HonV   &    &  \H^{i+j}( X; \mc I_{\bar r}\mc C^*) \\
      &    & \dTo^{\mathbb{O}\otimes \mathbb{O}}  &      & \dTo^{(-1)^{(i+j)n}\cdot \cap \Gamma }  \\
I^{D\bar p}H_{n-i}(X)\otimes I^{D\bar q}H_{n-j}(X) &\hLine & \VonH   & \rTo^{\psi} &   I^{D\bar r}H_{n-i-j}(X)&    & \dTo_{\mathbb{O}} \\
      & \rdTo_{\sigma'\otimes \sigma'} &      &      & \uTo& \rdTo^{\sigma'}  \\  
\dCorresponds &    &   H^{i} (X;\mc I^{D\bar p}\mc S^*)\otimes \H^j(X;\mc I^{D\bar q}\mc S^*)   & \rTo^{\td \psi}  & \HonV   &    &  \H^{i+j}( X; \mc I^{D\bar r}\mc S^*) \\
      &    & \dCorresponds &      & \dTo  \\
   I^{D\bar p}\mf H_{n-i}(X)\otimes I^{D\bar q}\mf H_{n-j}(X) & \hLine & \VonH   & \rTo^{\pf} &  I^{D\bar r}\mf H_{n-i-j}(X)  &    & \dCorresponds \\
      & \rdTo_{\sigma''\otimes\sigma''} &      &      &      & \rdTo^{\sigma''}  \\
      &    &   \H^{i} (X;\mc I^{D\bar p}\mc S_{PL}^*)\otimes \H^j(X;\mc I^{D\bar q}\mc S_{PL}^*)   &      & \rTo^{\td\pf} &    &   \H^{i+j}( X; \mc I^{D\bar r}\mc S_{PL}^*) .      
\end{diagram}}
\end{equation}
\end{theorem}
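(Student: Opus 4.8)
The plan is to check the commutativity of the two cubes one face at a time; once every constituent square --- each of which is a face of one of the two cubes --- is known, on the nose or up to the global factor $(-1)^n$ for the three faces named in the statement, the assertion for the whole figure follows by pasting, using that the vertical maps (the cap products with $\Gamma$, the orientation morphism $\mathbb{O}$, and the identification of PL with singular intersection homology) and the back-to-front maps ($\sigma$, $\sigma'$, $\sigma''$ and the canonical identifications in the derived category of $\mc I^{D\bar p}\mc S^*$ with $\mc I^{D\bar p}\mc S_{PL}^*$) are all isomorphisms, which has been established in the preceding sections. Four of the faces require no new argument. The top horizontal face of the upper cube, comparing the singular cup product $\cup$ with the Goresky--MacPherson sheaf product $\td\cup$ through the sheafifications $\sigma$, is exactly the cup-product comparison theorem proved in Section~\ref{S: sheaf cup}. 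The left and right vertical faces of the upper cube, comparing the signed cap products with $\Gamma$ on intersection (co)homology with the orientation morphism $\mathbb{O}$ on hypercohomology, are instances of Theorem~\ref{T: MAIN} --- tensored, for the factors of perversities $\bar p$ and $\bar q$, on the left, and applied directly, for perversity $\bar r$, on the right. The back vertical face of the upper cube, the ``top left square,'' relates $\cup$ to the intersection-homology product $\psi$; whichever of the two is taken as primitive, their agreement up to $(-1)^n$ is a direct computation with the sign rules --- essentially the classical identity $(\alpha\cup\beta)\cap\Gamma=\pm\,\alpha\cap(\beta\cap\Gamma)$ together with the signs built into the cap products used as the diagram's vertical maps (and, if $\psi$ is instead defined via the sheaf or PL side, this face is obtained by combining that definition with the three faces just listed).

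The genuinely new sheaf-theoretic input is the commutativity of the two ``front'' faces, asserting that $\mathbb{O}$ carries $\td\cup$ to $\td\psi$ and that the canonical identification of $\mc I^{D\bar p}\mc S^*$ with $\mc I^{D\bar p}\mc S_{PL}^*$ carries $\td\psi$ to $\td\pf$. For both I would invoke the Goresky--MacPherson classification of sheaf pairings recalled in the introduction: a morphism in the derived category out of the tensor product of two of our intersection-(co)chain sheaf complexes into a complex quasi-isomorphic to a Deligne complex is determined by the induced map on degree-zero cohomology stalks over the regular stratum, and these are canonically copies of $F$; see \cite[Section~5.2]{GM2} and \cite[Section~9.C]{Bo}. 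By definition $\td\cup$, $\td\psi$, and $\td\pf$ are all ``the'' Goresky--MacPherson sheaf product, namely the one inducing multiplication $F\otimes F\to F$ at every regular point, and $\mathbb{O}$ is by construction the quasi-isomorphism inducing the identity on those stalks (it carries the class of the cocycle $1$ to the local orientation class); hence $\mathbb{O}\circ\td\cup$ and $\td\psi\circ(\mathbb{O}\otimes\mathbb{O})$ induce the same stalk map and so agree in the derived category, and similarly for the lower cube. The back vertical face of the lower cube --- the statement that $\psi$ agrees with the Goresky--MacPherson PL intersection product $\pf$ under the PL-versus-singular isomorphism --- is where I would appeal to the chain-level comparison of the PL intersection product with cup products in \cite{McC, GBF18, GBF39}, together with the identification of the singular and PL cup products afforded by the formalism of \cite{GBF25, GBF35}; combined, these say precisely that $\pf$ realizes Poincar\'e duality dually to $\cup$, which is the content of this face. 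The remaining two vertical faces of the lower cube, relating the PL-versus-singular isomorphism on (co)homology to the corresponding identification on sheaf complexes through $\sigma'$ and $\sigma''$, commute by naturality of sheafification, since both identifications are induced at the level of presheaves by the inclusion of PL chains into singular chains.

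With these seven faces in hand, the remaining two --- the ``middle horizontal square'' shared by the two cubes and the ``bottom horizontal square'' --- are forced, and I would deduce them by the usual cube-chase rather than argue them separately. Five faces of the upper cube (top, back, front, left, right) are now known, and since the relevant maps are isomorphisms this pins down the sixth, the shared face; chasing the diagram and collecting signs shows that the $(-1)^n$ carried by the back face propagates to the shared face alone, so the latter commutes up to $(-1)^n$. The same argument applied to the lower cube --- now using the shared face (carrying $(-1)^n$), the back face, the front face, and the two vertical faces (all on the nose) --- produces the bottom face, again up to $(-1)^n$. For the last clause of the theorem: if $X$ is only a topological stratified pseudomanifold the lower cube and the edges identifying PL with singular intersection homology are not defined, but none of the ingredients used for the upper cube (the cup-product comparison theorem, Theorem~\ref{T: MAIN}, the classification of sheaf pairings, and the sign computation for $\psi$) uses the PL structure, so the signed commutativity of the upper cube persists verbatim.

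I expect the principal obstacle to be making the ``front face'' argument fully rigorous. Two points need care. First, the Goresky--MacPherson classification of pairings is phrased for Deligne-type complexes, whereas it must be applied here to the concrete representatives $\mc I_{\bar p}\mc C^*$, $\mc I^{\bar p}\mc S^*$, $\mc I^{\bar p}\mc S_{PL}^*$ and, on the source side, to a \emph{tensor product} of two such complexes, which is not itself of Deligne type; one must check that the support (and, for the target, cosupport) hypotheses under which the classification holds are satisfied by these objects, using the cohomological constructibility and attaching properties established for our sheaf complexes in the earlier sections. Second, although the global signs by which our $\td\cup$, $\td\psi$, $\td\pf$ may differ from the classical Goresky--MacPherson pairing are harmless --- being the same for all three, they cancel in the comparisons --- the $(-1)^n$ factors that genuinely appear, and the claim that nothing worse appears, have to be tracked carefully through the sheafification isomorphisms, the cross products underlying the products, and the degree shift implicit in $\mathbb{O}$. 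A secondary difficulty is extracting the exact equality $\psi=\pf$ from \cite{McC, GBF18, GBF39}, since those references present the PL intersection product via partially defined geometric intersections rather than as a transported cup product, so bridging the two genuinely requires the full cup- and cap-product machinery of \cite{GBF25, GBF35}.
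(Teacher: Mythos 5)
Your decomposition into faces and the pasting logic are the same as the paper's, and you correctly handle several ingredients: the top horizontal face is Theorem~\ref{T: cup product}; the right vertical face of the upper cube is Theorem~\ref{T: MAIN}; the two front faces are handled exactly as in the paper, by the classification of derived-category morphisms via $\mathcal H^0$ stalks over the regular stratum. But you have the sign $(-1)^n$ in the wrong face of the upper cube, and this is a genuine error, not a labelling quirk.

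You identify the ``top left square'' with the back vertical face of the upper cube --- the square relating $\cup$ to $\psi$ with cap products on either side --- and assert it commutes up to $(-1)^n$. The theorem actually asserts that this back face commutes \emph{exactly}, and that the sign $(-1)^n$ lives in the left vertical face, namely the tensor-product square with $\sigma\otimes\sigma$ and $\mathbb{O}\otimes\mathbb{O}$ back-to-front and the tensored cap products and $\sigma'\otimes\sigma'$ down. You claim the left face commutes on the nose as ``Theorem~\ref{T: MAIN}, tensored,'' but tensoring two commutative squares of chain maps of nonzero degree does not preserve commutativity: the Koszul convention $(f\otimes g)(a\otimes b)=(-1)^{|g||a|}f(a)\otimes g(b)$ introduces extra signs. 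Here $\sigma'$ has cohomological degree $n$ and each signed cap product has degree $-n$, and the resulting defect is $(-1)^{jn+(n-j)n}=(-1)^n$. If you carry your distribution of signs through a cube chase you do reproduce the $(-1)^n$ in the middle horizontal face, but only because you have simultaneously moved the sign out of the left face and into the back face --- so your argument, if made rigorous, would prove a different statement from the theorem, one in which the back face carries $(-1)^n$ and the left face is exact; in the paper it is the other way around. Relatedly, there is no ``direct computation'' to perform for the back face: in the paper $\psi$ has no independent chain-level definition, being \emph{defined} by the requirement that the middle horizontal square commute up to $(-1)^n$, so the back face is the one that must be deduced by the cube chase (and likewise $\td\pf$ carries an explicit extra $(-1)^n$ in its definition, making the bottom horizontal face definitional and the back face of the lower cube the deduced one). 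You acknowledge the possibility that $\psi$ is defined via the sheaf side, but your main line of argument takes the wrong face as primitive. Once the Koszul sign in the left face is put in the right place and the definitional faces are identified correctly, the rest of your pasting argument goes through.
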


Here, using $\bar s$ as a placeholder perversity,   
$I_{\bar s}H^i(X)$, $I^{\bar s}H_{n-i}(X)$, and $I^{\bar s}\mf H_{n-i}(X)$ are respectively singular intersection cohomology, singular intersection homology, and PL intersection homology (with coefficients in $F$), and  $\mc I_{\bar s}\mc C^*$,  $\mc I^{\bar s}\mc S^*$, and $\mc I^{\bar s}\mc S^*_{PL}$ are respectively the sheaf complexes of singular intersection cochains, singular intersection chains, and PL intersection chains. Furthermore, the maps $\sigma$, $\sigma'$, and $\sigma''$ are all isomorphisms induced by sheafification. The maps $\td \cup$, $\td \psi$, and $\td \pf$ are all representatives of the Goresky-MacPherson sheaf product (with respect to different complexes isomorphic to the Deligne sheaf complexes), while 
 $\cup$, $\pf$, and $\psi$  are the cup product, the Goresky-MacPherson PL intersection product (up to sign conventions), and the ``singular chain intersection product,'' which we take to be defined by the sheaf product $\td \psi$ and the commutativity of the middle horizontal square up to $(-1)^n$. 
For the vertical maps, $\cap \Gamma$ is the  cap product with the fundamental class and $\mathbb{O}$ is the aforementioned quasi-isomorphism consistent with the local orientation. The vertical double-headed arrows in the back of the bottom cube represent chains of isomorphisms between the singular and PL intersection homology groups (see \cite[Section 5.4]{GBF35}), and those in the front come from their sheafifications. These last sheafifications are again compatible with the orientations in the sense that over each $x\in X-X^{n-1}$ they map the local singular homology orientation class to the local PL homology orientation class. 

\paragraph{Verdier duality.} Finally, in Section \ref{S: Verdier} we demonstrate that there is indeed a relationship between the Poincar\'e duality of singular intersection (co)homology and the Goresky-MacPherson sheaf-theoretic duality in terms of Verdier duals. In fact we provide two statements, one using the sheaves of intersection chains and one using the sheaves of intersection cochains. The maps involved are defined in detail in Section \ref{S: Verdier}. 

\begin{theorem}\label{T: Verdier}
Let $X$ be a compact $F$-oriented $n$-dimensional stratified pseudomanifold and $\bar p$ a perversity.
Then there are  commutative diagrams of isomorphisms
\begin{diagram}
I_{\bar p}H^i(X; F)&&\rTo^{(-1)^{in}\cdot\cap \Gamma} &&I^{D\bar p}H_{n-i}(X; F)\\
\dTo^{\sigma}&&&&\dTo_{\kappa}\\
\H^i(X;\mc I_{\bar p}\mc C^*)&\rTo^{\hat \Phi} &\H^{i}(X;\mc D\mc I_{D\bar p}\mc C^*[-n])&\rTo^{\sigma^*e}&\Hom(I_{D\bar p}H^{n-i}(X;F),F)\\
 I^{D\bar p}H_{n-i}(X) & &\lTo^{(-1)^{in}\cdot\cap\Gamma} & &I_{\bar p}H^i(X)    \\
 \dTo^{\sigma'}  & &&  &  \dTo_{\kappa'}  \\
\H^{i} (X;\mc I^{D\bar p}\mc S^*)  &  \rTo^{\hat{\Psi}}  &       \H^{i} (X;\mc D\mc I^{\bar p}\mc S^*[-n])    &      \rTo^{{\sigma'}^*e} &     \Hom(I^{\bar p}H_{i}(X),F) ,\\
\end{diagram}
in which the top maps are signed cap products with the fundamental class,  $\sigma$ and $\sigma'$ are sheafifications, $\sigma^*$ and  
${\sigma'}^*$ are the $\Hom$ duals of the sheafifications, $e$ is the  universal coefficient isomorphism for Verdier duals,
$\kappa$ and $\kappa'$ are Kronecker evaluation maps\footnotemark, and $\hat \Phi$ and $\hat \Psi$ are induced by  quasi-isomorphisms of sheaf complexes (in particular they are respective adjoints of the sheaf theoretic cup and intersection products composed with canonical maps to the Verdier dualizing complex).  
\end{theorem}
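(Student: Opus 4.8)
The plan is to deduce both squares from three ingredients that are already in hand: (i) the identifications of the sheaf-theoretic cup and intersection products with their singular-chain counterparts --- the cup-product theorem of Section \ref{S: sheaf cup} and the middle horizontal square of Theorem \ref{T: cubes}; (ii) the universal-coefficient formalism for Verdier duals, by which, for a flabby or homotopically fine complex $\mc A^*$ on compact $X$, the map $e$ is literally the adjunction isomorphism $\H^i(X;\mc D\mc A^*[-n])\xrightarrow{\cong}\Hom(\H^{n-i}(X;\mc A^*),F)$ and is compatible with pairings into $\mathbb{D}^*[-n]$; and (iii) the chain-level adjunction between the cap product with $\Gamma$ and the cup (resp.\ intersection) pairings, from \cite{GBF35}.

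First I would unwind the bottom row of the first diagram. By construction $\hat\Phi$ is the morphism in $\Mor_{D(X)}(\mc I_{\bar p}\mc C^*,\mc D\mc I_{D\bar p}\mc C^*[-n])$ adjoint to the composite
\[
\mc I_{\bar p}\mc C^*\otimes\mc I_{D\bar p}\mc C^*\xrightarrow{\td{\cup}}\mc I_{\bar 0}\mc C^*\longrightarrow\mathbb{D}^*[-n],
\]
where $\mc I_{\bar 0}\mc C^*\sim\mc P_{\bar t}^*$ and the second arrow is the canonical map to the dualizing complex that restricts over the regular part $X-X^{n-1}$ to the identity of the constant sheaf. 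Feeding this through $e$ and the $\Hom$-dual $\sigma^*$ of sheafification, and invoking (ii) together with $\H^{n-i}(X;\mc I_{D\bar p}\mc C^*)\cong I_{D\bar p}H^{n-i}(X;F)$, the composite $(\sigma^*e)\circ\hat\Phi\circ\sigma$ becomes the adjoint of the pairing
\[
I_{\bar p}H^i(X;F)\otimes I_{D\bar p}H^{n-i}(X;F)\xrightarrow{\cup}I_{\bar 0}H^n(X;F)\xrightarrow{\epsilon}F,
\]
where I have replaced $\td{\cup}$ by the singular cup product using (i), and $\epsilon$ is the hypercohomology image of $\mc I_{\bar 0}\mc C^*\to\mathbb{D}^*[-n]$ followed by the trace $\H^0(X;\mathbb{D}^*)\xrightarrow{\cong}F$. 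At this point I must identify $\epsilon$ with evaluation on the fundamental class, $\epsilon(\gamma)=\langle\gamma,\Gamma\rangle$; since the canonical map restricts over $X-X^{n-1}$ to the identity of the constant sheaf, this is a local-to-global computation controlled by the compactness and $F$-orientation of $X$ --- the same orientation data that pins down $\Gamma$ and the maps $\mathbb{O}$ of Theorem \ref{T: MAIN} --- and is carried out in Section \ref{S: Verdier}.

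Granting that identification, commutativity of the first square is exactly the assertion that, for $\alpha\in I_{\bar p}H^i(X;F)$ and $\beta\in I_{D\bar p}H^{n-i}(X;F)$,
\[
\langle\alpha\cup\beta,\Gamma\rangle=(-1)^{in}\langle\beta,\alpha\cap\Gamma\rangle,
\]
which is the chain-level cap/cup adjunction of \cite{GBF35} with the sign $(-1)^{in}$ accounting for the normalization of the duality map. The right-hand side is $\bigl(\kappa\bigl((-1)^{in}(\alpha\cap\Gamma)\bigr)\bigr)(\beta)$, so both composites around the square carry $\alpha$ to the same functional. The second square is handled in the same way: $\hat\Psi$ is the adjoint of $\mc I^{D\bar p}\mc S^*\otimes\mc I^{\bar p}\mc S^*\xrightarrow{\td{\psi}}\mc I^{\bar t}\mc S^*\longrightarrow\mathbb{D}^*[-n]$, input (i) via the middle square of Theorem \ref{T: cubes} replaces $\td{\psi}$ by the singular intersection product $\psi$, the universal-coefficient machinery again exhibits $({\sigma'}^*e)\circ\hat\Psi\circ\sigma'$ as an adjoint, and the needed chain-level identity becomes $\bigl((-1)^{in}(\alpha\cap\Gamma)\bigr)\pitchfork w=\langle\alpha,w\rangle$ for $w\in I^{\bar p}H_i(X;F)$ --- that the intersection pairing of $w$ against the Poincar\'e dual of $\alpha$ recovers the Kronecker pairing --- which one takes from \cite{GBF35} or deduces from the first square together with Theorem \ref{T: MAIN} and Theorem \ref{T: cubes}.

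I expect the main obstacle to be the identification isolated in the second paragraph: pinning down the canonical maps $\mc I_{\bar 0}\mc C^*\to\mathbb{D}^*[-n]$ and $\mc I^{\bar t}\mc S^*\to\mathbb{D}^*[-n]$ precisely enough --- signs included --- to show that composing with the trace on $\mathbb{D}^*$ yields evaluation on $\Gamma$ exactly, not merely up to a scalar, and that this is consistent with the normalizations used to define $\hat\Phi$, $\hat\Psi$, the fundamental class $\Gamma$, and the morphisms $\mathbb{O}$ of Theorem \ref{T: MAIN}. Everything else reduces to citations --- input (i), Theorem \ref{T: cubes}, and the chain-level adjunctions of \cite{GBF35} --- or to formal manipulation of adjunctions and the universal-coefficient isomorphism for Verdier duals.
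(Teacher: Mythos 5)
Your proposal follows essentially the same route as the paper. The paper obtains the first square by literally assembling boxes I through III of Diagram (\ref{D: MAIN}) from the proof of Theorem \ref{T: MAIN} — and those boxes are exactly the adjunction / universal-coefficient / cup-cap-adjunction chain you describe, with the normalization question you flag being resolved by the choice of the isomorphism $\ell$ in the ``Constants'' paragraph of that proof (the composite (\ref{E: II}) is forced to be the identity, which pins down $\epsilon$ as $(-1)^n$ times evaluation on $\Gamma$). For the second square, the paper does the second of your two options: it builds a prism whose top face is the already-established first square, whose bottom face is the target, and whose side faces commute by Theorem \ref{T: MAIN}, the $\Hom$-dual of Theorem \ref{T: MAIN}, the computation of $\kappa'$ against the cap/cup identities from \cite{GBF35}, and a stalk check over $X-X^{n-1}$; so ``deduce from the first square together with Theorem \ref{T: MAIN}'' is precisely what happens. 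Your direct-adjunction alternative for the second square would also work but duplicates effort.

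One point worth tightening: the identity you display, $\langle\alpha\cup\beta,\Gamma\rangle=(-1)^{in}\langle\beta,\alpha\cap\Gamma\rangle$, only holds with a consistent sign convention on both Kronecker pairings. If $\langle\cdot,\cdot\rangle$ is the unsigned evaluation of a cochain on a chain, the two sides disagree by $(-1)^i$; if you use the paper's signed pairing $\kappa(x)(\beta)=(-1)^{|x|}\beta(x)$ on the right but unsigned evaluation on $\Gamma$ on the left, they disagree by $(-1)^n$. The identity is correct if both pairings are the signed $\kappa$-type, or equivalently if the ``trace'' $\epsilon$ is taken to be $(-1)^n\cdot\aug(\cdot\cap\Gamma)$ rather than $\aug(\cdot\cap\Gamma)$, which is exactly what the paper's normalization of $\ell$ produces. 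This is within the scope of the obstacle you yourself isolated, so it is not a gap, but you should state the sign conventions explicitly before invoking the adjunction identity.
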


\footnotetext{\label{pageref kappa}If  $x\in I^{\bar p}H_{i}(X; F)$ and $\alpha\in I_{\bar p}H^{i}(X;F)$, then $\kappa(x)(\alpha)=(-1)^{i}\alpha(x)$ and $\kappa'(\alpha)(x)=\alpha(x)$. See Appendix \ref{Appendix}.}

As a corollary, we have the following statement in the manifold case:

\begin{corollary}\label{C: Verdier}
Let $M$ be a compact $F$-oriented $n$-manifold, let $\mc F$ be the constant sheaf with stalks $F$, and let $\D^*$ be the Verdier dualizing complex. Then there is a commutative diagram of isomorphisms
\begin{diagram}
H^i(M; F)&&\rTo^{(-1)^{in}\cdot\cap \Gamma} &&H_{n-i}(M; F)\\
\dTo^{\sigma}&&&&\dTo_{\kappa}\\
\H^i(M;\mc F)&\rTo^{\hat \Phi} &\H^{i}(M;\D^*[-n])&\rTo^e&\Hom(H^{n-i}(M;F),F).
\end{diagram}
\end{corollary}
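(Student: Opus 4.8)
The plan is to obtain Corollary \ref{C: Verdier} as the special case of the first diagram of Theorem \ref{T: Verdier} in which $X = M$ is a manifold and $\bar p = \bar{0}$ (any perversity will do, since all perversities agree on a manifold). First I would invoke the identifications, already used for Corollary \ref{C: man duality} and established in Sections \ref{S: SIC} and \ref{S: sheaf} and in \cite{GBF35}, that on a manifold $\mc I_{\bar p}\mc C^*$ and $\mc I^{D\bar p}\mc S^*$ reduce to the sheaves $\mc C^*$ and $\mc S^*$ of singular cochains and chains, that $I_{\bar p}H^i(M;F) = H^i(M;F)$ and $I^{D\bar p}H_{n-i}(M;F) = H_{n-i}(M;F)$, and that the intersection-homology cap product with the fundamental class and the intersection-cohomology Kronecker map coincide with their classical counterparts. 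Under these identifications the top row and the two vertical maps $\sigma$ and $\kappa$ of the first diagram of Theorem \ref{T: Verdier} become exactly the corresponding data of the corollary.

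It remains to identify the bottom row. Since $\mc C^*$ is a resolution of the constant sheaf $\mc F$, we have $\mc C^* \simeq \mc F$ in $D(M)$, hence $\mc D\mc I_{D\bar p}\mc C^*[-n] = \mc D\mc C^*[-n] \simeq \mc D\mc F[-n] = \D^*[-n]$, giving a canonical isomorphism $\H^i(M;\mc D\mc I_{D\bar p}\mc C^*[-n]) \cong \H^i(M;\D^*[-n])$. I would then record two compatibility facts. First, the sheafification $\sigma$ of the theorem fits into a commuting triangle whose composite $H^i(M;F) \xrightarrow{\sigma} \H^i(M;\mc C^*) \xrightarrow{\cong} \H^i(M;\mc F)$ is the standard comparison isomorphism between singular and sheaf cohomology --- this is essentially how $\mc I_{\bar p}\mc C^*$ and $\sigma$ are set up in Section \ref{S: sheaf}. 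Second, the universal-coefficient isomorphism $e$ for Verdier duals is natural with respect to the quasi-isomorphism $\mc F \to \mc C^*$, so that the two-step map $\sigma^* e$ of Theorem \ref{T: Verdier} corresponds, under the above identifications, to the single map $e$ of the corollary. Together these show that the lower two vertices $\H^i(X;\mc I_{\bar p}\mc C^*)$ and $\H^i(X;\mc D\mc I_{D\bar p}\mc C^*[-n])$ of the theorem's diagram become $\H^i(M;\mc F)$ and $\H^i(M;\D^*[-n])$, and that $\sigma^* e$ becomes $e$.

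The one step demanding genuine care --- and the main obstacle --- is to check that $\hat\Phi$ of Theorem \ref{T: Verdier} becomes $\hat\Phi$ of the corollary under these identifications. By construction $\hat\Phi$ is the tensor--Hom adjoint of (a representative of) the Goresky-MacPherson sheaf cup product composed with the canonical map to the Verdier dualizing complex; on a manifold this pairing is the ordinary cup pairing $\mc C^* \otimes \mc C^* \to \mc C^*$ followed by the canonical orientation-compatible quasi-isomorphism $\mc C^* \xrightarrow{\sim} \D^*[-n]$. By the Goresky-MacPherson principle quoted in the introduction, a sheaf pairing into a complex isomorphic to a Deligne complex is determined by its effect on degree-$0$ stalk cohomology at regular points; combined with the definition of the orientation morphism $\mathbb{O}$ in Section \ref{S: O}, this forces, after the identifications $\mc C^* \simeq \mc F$ and $\mc D\mc C^*[-n] \simeq \D^*[-n]$, the quasi-isomorphism inducing $\hat\Phi$ to be the canonical orientation quasi-isomorphism $\mc F \xrightarrow{\sim} \D^*[-n]$ carrying the section $1$ of $\mc F$ to the local orientation class in $\mc H^{-n}(\D^*)$. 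Once $\hat\Phi$ is so identified, every vertex and arrow of the corollary's square matches that of the specialized Theorem \ref{T: Verdier} diagram, so its commutativity and its isomorphism assertions follow immediately, with the signs --- the $(-1)^{in}$ in the top map and the sign in $\kappa$ --- inherited verbatim.
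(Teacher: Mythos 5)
Your proposal is correct and follows the same route as the paper: the paper's own proof is essentially the single observation that when $M$ is a manifold, the sheaf of cochains is a resolution of the constant sheaf $\mc F$, so the first diagram of Theorem \ref{T: Verdier} specializes to the corollary's diagram. You have simply spelled out the identifications (that $\mc I_{\bar p}\mc C^*$ becomes $\mc C^*\simeq \mc F$, that $\mc D\mc C^*[-n]\simeq\D^*[-n]$, that $\sigma^*e$ collapses to $e$ by naturality of the universal coefficient map, and that $\hat\Phi$ is identified via the Goresky-MacPherson uniqueness principle) which the paper leaves implicit.
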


\paragraph{Appendices.} We conclude with two appendices. The first contains some basic material about ``double duals.'' The second demonstrates that certain sign difficulties are unavoidable when working with pairings that are isomorphic via chain maps of non-zero degree.

\red{
\paragraph{A note about coefficients.} Through Section \ref{S: sheaf cup}, which includes the material on cup products, we allow our ground ring $R$ to be an arbitrary Dedekind domain. To define the singular intersection cohomology cup product, this requires the additional condition that the space $X$ be locally torsion free in an appropriate sense (meaning that the torsion of certain local intersection homology groups must vanish---see Section \ref{S: ihreview} below and the definition of the intersection cup product in \cite{GBF35}). Such conditions are automatic when working with field coefficients. }

\red{From Section \ref{S: compatibility} on we work with coefficients in a field. As the reader will see, this is required of our proofs, especially when where we apply Lemma \ref{L: coco UCT}, proven in Appendix
\ref{Appendix}. We expect the results of these sections to remain true for coefficients in a Dedekind domain and locally torsion free pseudomanifolds, but a proof will have to await further technology and perhaps a true sheafification of the intersection cap product. The authors plan to pursue this in future work.}

\red{Unfortunately, it does not make sense to ask these questions for spaces that are not locally torsion free, for in this case the authors' cup and cap products are not defined. }

\red{We mention, however, that Chataur, Saralegi-Aranguren, and Tanr\'e have devised an alternative definition of intersection cohomology, \emph{blown-up intersection cohomology} \cite{CST}, that permits cup and cap products \cite{CST7} without restrictions, though in general this intersection cohomology does not agree with that defined here  if the space is not locally torsion free.}

\red{\paragraph{Other recent work.}
Another approach to the relationship between cap products and Verdier duality has been given recently by Chataur, Saralegi-Aranguren, and Tanr\'e in \cite{CST20}. This cap product is defined in the context of the \emph{blown-up intersection cohomology} defined by the authors in \cite{CST}. This model for intersection cohomology is somewhat different from that considered here  as it is not defined directly in terms of duals of intersection chains. Nonetheless, \cite[Section 4]{CST20} provides a commutative diagram relating intersection chains, the Verdier dual of intersection cochains, and the blown-up intersection cochains, with the map from blown-up intersection cochains to intersection chains being a cap product defined in \cite{CST7}. Notably, the diagram in \cite{CST20} allows for noncompact spaces and arbitrary coefficients in a PID. The results are proven for PL pseudomanifolds but can likely be extended to the topological setting using the results of \cite{ST20}.
 }

\paragraph{Acknowledgment.} We thank the anonymous referee for several helpful suggestions.

\section{Conventions and some background}\label{S: background}

We assume the reader to be generally conversant with pseudomanifolds, intersection homology theory, sheaf theory, and derived categories.  We recommend \cite{GBF26} for an expository introduction to the version of sheaf-theoretic intersection homology with general perversities considered here and \cite{GBF23} for a more technical account\footnote{In \cite{GBF23} we utilized what we called ``stratified coefficients'' when working with singular intersection homology with arbitrary perversities. In \cite{GBF25} and \cite{GBF35} this notation was abandoned, and ``intersection homology with stratified coefficients'' is now simply called ``(non-GM) intersection homology.''}. 
 We also direct the reader to \cite{GBF25, GBF35} for intersection cochains and for the chain theoretic versions of intersection (co)homology cup and cap products. 

Other basic textbook introductions to intersection homology (with the original Goresky-MacPherson perversities)  include  \cite{Bo, KirWoo, BaIH}, and the original papers \cite{GM1, GM2, Ki} are well worth reading. The first three references also provide some background on sheaf theory in the derived category; other references  include \cite{SW, BR} for elementary sheaf theory, \cite{GelMan2} for derived categories, and \cite{DI04} for specifics in the derived category of sheaves.

\paragraph{Signs.}

We principally follow the signs in Dold \cite{Dold}, which agree with the Koszul
convention everywhere except in the definition of the coboundary on cochains. In particular, if $f\in \Hom^i(A^*,B^*)$, then $df=d_{B^*}\circ f-(-1)^if\circ d_{A^*}$ \cite[Definition VI.10.1]{Dold}. This also means that the coboundary acts on cochains by $(d\alpha)(x)=(-1)^{|\alpha|+1}(\bd x)$. The exception to following Dold is that we include a sign in our Poincar\'e duality isomorphisms so that they will also obey the Koszul sign conventions for degree $\dim(X)$ chain maps  (see \cite[Section 8.2.1]{GBF35}, \cite[Section 4.1]{GBF18}, or \cite{GBF25}  for further discussion). 

\subsection{Pseudomanifolds and intersection homology}\label{S: ihreview}

 We note here some of our conventions, which sometimes differ from those of other authors. 

Throughout the paper, $X$ will be a paracompact \emph{$n$-dimensional stratified topological pseudomanifold} \cite[Section 2.4]{GBF35}, \cite{GM2} with filtration $$X=X^n\supset X^{n-1}\supset X^{n-2}\supset\cdots\supset X^0.$$ Note that $X$ is allowed to have strata of codimension one. \emph{Skeleta} of $X$ will be denoted $X^i$. By a \emph{stratum} we will mean a connected component of one of the spaces $X^i-X^{i-1}$; each such stratum is an $i$-manifold or empty. A stratum $Z$ is a \emph{singular stratum} if $\dim(Z)<n$. Unless stated otherwise, we will assume $X$ is $F$-oriented with respect to some field $F$. By definition, this means $X-X^{n-1}$ is $F$-oriented.

We let $I^{\bar p}S_*(X;G)$ denote the complex of perversity $\bar p$ singular intersection chains with coefficients in the abelian group $G$.  
Here $\bar p$ is a general perversity $\bar p:\{\text{singular strata}\}\to \Z$, and $I^{\bar p}S_*(X;G)$ is the ``non-GM'' intersection chain complex (see \cite[Chapter 6]{GBF35}). \red{Recall that the Goresky-MacPherson-King singular intersection chain complex, which we denote $I^{\bar p}S^{GM}_*(X;G)$, is the subcomplex of the standard singular chain complex $S_*(X)$ consisting of chains $\xi$ such that} 
\begin{enumerate}
\item \red{if $\sigma:\Delta^i\to X$ is a singular $i$-simplex with non-zero coefficient in $\xi$ then for all singular strata $S$ the inverse image $\sigma^{-1}(S)$ is contained in the $i-\codim(S)+\bar p(S)$ skeleton of $\Delta^i$, and}

\item \red{ if $\tau:\Delta^{i-1}\to X$ is a singular $i-1$ simplex with non-zero coefficient in $\bd \xi$ then for all singular strata $S$ the inverse image $\tau^{-1}(S)$ is contained in the $i-1-\codim(S)+\bar p(S)$ skeleton of $\Delta^{i-1}$.}
\end{enumerate}
\red{The complex $I^{\bar p}S_*(X;G)$ is defined similarly except that no simplex of $\xi$ is allowed to have image contain in $X^{n-1}$ and the definition of the boundary map is modified so that any simplex of $\bd \xi$ contained in $X^{n-1}$ has its coefficient set to $0$ (and hence the second condition does not need to be checked for such simplices). More details can be found in \cite[Section 6.2]{GBF35}, where it is explained that this version of intersection homology theory possesses properties more closely aligned with the sheaf-theoretic intersection cohomology; cf. also \cite{Sa05}.}
If $X$ has no codimension one strata and $\bar p$ is a Goresky-MacPherson perversity as defined in \cite{GM1}, then $I^{\bar p}S_*(X;G)$ agrees with the singular intersection chain complex of King \cite{Ki} and the corresponding $I^{\bar p}H_*(X;G)$ is precisely the Goresky-MacPherson intersection homology \cite[Proposition 6.2.9]{GBF35}. We also consider the complex $I^{\bar p}S^\infty_*(X;G)$ of locally-finite intersection chains, i.e.\ intersection chains with possibly an infinite number of singular simplices with nonzero coefficient but such that every point has a neighborhood intersecting only finite many such simplices (see \cite[Section 2.2]{GBF10}). 
We denote by  $D\bar p$ the complementary perversity to $\bar p$, i.e. $D\bar p(Z)=\codim(Z)-2-\bar p(Z)$.

The intersection homology K\"unneth Theorem of \cite{GBF20} says that for any perversities $\bar p,\bar q$, there is a perversity $Q_{\{\bar p,\bar q\}}$ on $X\times X$ such that the exterior chain product induces a quasi-isomorphism $\times: I^{\bar p}S_*(X; F)\otimes I^{\bar q}S_*(X; F)\to I^{Q_{\{\bar p,\bar q\}}}S_*(X\times X; F)$. A more general K\"unneth Theorem allowing coefficients in a Dedekind domain is provided in \cite[Section 6.4]{GBF35}. See \cite{GBF35} for more details about these K\"unneth Theorems in general and \cite{GBF25, GBF35} for their use in defining cup and cap products. 

For a Dedekind domain $R$, a stratified pseudomanifold is 
\emph{locally $(\bar p,R)$-torsion free} if for all singular strata $Z$ and each $x\in Z$,  the module $I^{\bar p}H_{\codim(Z)-2-\bar p(Z)}(L_x;R)$ is $R$-torsion free, where $L_x$ is the  link of $x$ in $X$; see \cite[Definition 6.3.21]{GBF35}. This definition is originally due to Goresky-Siegel \cite{GS83}. This condition is automatic if $R$ is a field. To define the singular intersection cup and cap products it is required that $X$ satisfy such a condition. More details are provided below as needed.

\section{Hypercohomology in the derived category}\label{S: hyper}

We will work in both the category $C(X)$ of \red{unbounded complexes of sheaves} of $F$-vector spaces over a topological pseudomanifold $X$ and in its derived category $D(X)$. The sheaf theoretic approach to intersection (co)homology, and sheaf cohomology in general, is most often considered from the point of view of the derived category. Of course the objects of $C(X)$ and $D(X)$ are the same, but  $D(X)$ is the localization of $C(X)$ with respect to quasi-isomorphisms, so it is usual in the derived category language to identify quasi-isomorphic complexes. When doing this, we must exercise care with the identifying quasi-isomorphisms, especially if we are concerned, as we shall be, with keeping precise track of induced morphisms of hypercohomology groups.

One place  where it is often necessary to replace a sheaf complex $A^*$ with a quasi-isomorphic one is for the computation  of hypercohomology  $\H^*(X;A^*)$, which is typically computed by using any of a number of possible acceptable (e.g. flabby, injective, soft, fine) resolutions $A^*\to I^*$ and then taking $H^*(\Gamma(X;I^*))$. Some sources use the canonical Godement injective resolution $A^*\to I^*$ to define hypercohomology, though just as many seem content to leave the resolution unspecified. Morphisms in $D(X)$ induce maps of resolutions and hence maps of hypercohomology groups.

To illustrate the potential danger of imprecision in specifying resolutions, 
consider the following example in the derived category of abelian groups $D(Ab)$. Let $G$ be an injective group (as a $\Z$-module and thought of as a complex concentrated in degree $0$); then any automorphism of $G$  provides a quasi-isomorphism $G\to G$ and hence is an injective resolution. In particular, $\H^0(G)\cong G$ and $\H^i(G)=0$, $i\neq 0$. Given two such groups $G, H$, a morphism $f:G\to H$, and any automorphisms $\phi:G\to G$ and $\vartheta:H\to H$, we obtain a commutative diagram 
\begin{diagram}
G &\rTo^{g=\vartheta^{-1} f\phi}& H\\
\dTo^\phi&&\dTo^\vartheta\\
G&\rTo^f &H.
\end{diagram}
Since the identity maps, $\phi$, and $\vartheta$ are all injective resolutions, either
$f$ or $g$ could represent the hypercohomology morphism $\H^*(G)\to \H^*(H)$ induced by $f$ (up to specified isomorphisms, of course). 
 But clearly $g$ depends on our choices of $\phi$ and $\vartheta$.

 This  argument demonstrates the importance of paying careful attention to the resolutions if we hope to be precise about induced hypercohomology maps. 
To avoid this sort of  ambiguity as much as possible, we will mostly endeavor to work with particular natural choices of objects and to be as precise with maps between these objects as possible. In those cases where it is not possible to prescribe a specific map in the sheaf category, we will see that fixing sufficiently nice resolutions and giving a morphism between them in the derived category is at least sufficient to determine a unique map of hypercohomology groups. 

Toward these ends, we make a definition:

\begin{definition}
Let $\phi$ be a paracompactifying family of supports on $X$ \cite[Definition I.6.1]{BR}. We will call an object $A^*\in D(X)$ \emph{$\phi$-cohomology ready} (or \emph{$C_\phi$-ready}) if  some  (and hence any) injective resolution  $A^*\to I^*$ induces an isomorphism from
$H^*(\Gamma_\phi(X;A^*))$ to $H^*(\Gamma_\phi(X;I^*))$. 

If $B^*$ is any object of $D(X)$ and $A^*$ is a $C_\phi$-ready object that is quasi-isomorphic to $B^*$, we may call $A^*$ a \emph{$C_\phi$-ready representative of $B^*$}.

If $\phi$ is the family of closed sets, we write simply \emph{$C$-ready}. 
\end{definition}   

The point of the definition is that if $A^*$ is a $C_\phi$-ready representative of $B^*$, then the cohomology groups $H^*_{\phi}(\Gamma(X;A^*))$ are isomorphic to  $\H^*_{\phi}(X;B^*)$ (and also $\H^*_{\phi}(X;A^*)$). In particular, if $A^*$ is $C_\phi$-ready, we will take $\H^*_{\phi}(X;A^*)$ to mean precisely $H^*_{\phi}(\Gamma(X;A^*))$, eliminating ambiguity in the definition of hypercohomology for such sheaves.

\red{Before stating the next lemma, we recall the definition of a homotopically fine complex of sheaves, as this concept is less well known than some of the other more common flavors of sheaves, such as injective, fine, flabby, soft, etc. This definition can be found in Exercise 32 of \cite[Section II]{BR}. For a paracompactifying family of supports $\phi$, the sheaf complex $A^*$ is called \emph{homotopically $\phi$-fine} if for every locally finite covering $\{U_\alpha\}$ with one member of the form $X-K$ for some $K\in \phi$ there are degree zero maps $h_\alpha\in \Hom(A^*,A^*)$ such that $|h_\alpha|\subset U_\alpha$ and such that $\sum h_\alpha$ is chain homotopic to the identity. By comparison, fine sheaves can be defined in terms of such a partition of unity condition but satisfied exactly, not just up to homotopy; cf. \cite[Exercise II.13]{BR} or \cite[Definition II.3.3]{WELLS}. The utility of homotopically $\phi$-fine complexes is that they satisfy $H^*(H^p_\phi(X;A^*))=0$ for all $p>0$, even if the $A^i$ themselves are not $\phi$-acyclic; consequently their hypercohomology spectral sequences degenerate so that $\H_\phi^*(X;A^*)\cong H^*(\Gamma_\phi(X;A^*))$. Hence such an $A^*$ is $C_\phi$-ready, as we now argue in detail.}

\begin{lemma} \label{L: C-ready}
Let $\phi$ be a paracompactifying family of supports. Any complex $A^*$ such that the derived cohomology sheaves $\mc H^i(A^*)$ vanish for sufficiently small $i$ and such that each $A^j$ is  $\phi$-acyclic (e.g.\ flabby, injective, $\phi$-soft, or $\phi$-fine) is $C_\phi$-ready. This also holds if $\mc H^i(A^*)$ vanishes for sufficiently small $i$ and the complex $A^*$ is 
$\phi$-homotopically fine.
\end{lemma}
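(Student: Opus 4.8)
The plan is to reduce both assertions to the standard fact that a bounded-below complex of $\phi$-acyclic sheaves computes hypercohomology via its complex of global $\Gamma_\phi$-sections. First I would handle the first sentence. Let $A^*$ be a complex with $\mc H^i(A^*)=0$ for $i<N$ (some $N\in\Z$) and with each $A^j$ being $\phi$-acyclic. The issue is that $A^*$ need not be bounded below as a complex of sheaves even though its derived cohomology sheaves are; however, the hypercohomology spectral sequence
$$E_2^{p,q}=H^p_\phi\bigl(X;\mc H^q(A^*)\bigr)\Longrightarrow \H^{p+q}_\phi(X;A^*)$$
(the second hypercohomology spectral sequence, using the derived cohomology sheaves) has $E_2^{p,q}=0$ for $q<N$, so it is concentrated in a half-plane and converges. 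On the other hand, because each $A^j$ is $\phi$-acyclic, the first hypercohomology spectral sequence $E_1^{p,q}=H^q_\phi(X;A^p)$ has $E_1^{p,q}=0$ for $q>0$, and hence collapses to give $\H^{p}_\phi(X;A^*)\cong H^p(\Gamma_\phi(X;A^*))$. (One must check convergence of this spectral sequence for an unbounded-below complex; this is where the vanishing of $\mc H^i(A^*)$ for small $i$ is used, via the comparison with the convergent second spectral sequence — alternatively one truncates, see below.) Comparing with the spectral sequence of any injective resolution $A^*\to I^*$, which likewise computes $\H^*_\phi(X;A^*)$ by the same collapsing argument, one gets that the natural map $H^*(\Gamma_\phi(X;A^*))\to H^*(\Gamma_\phi(X;I^*))$ is an isomorphism, i.e.\ $A^*$ is $C_\phi$-ready.

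A cleaner route, which I would probably adopt to avoid delicate convergence issues with unbounded complexes, is to truncate. Since $\mc H^i(A^*)=0$ for $i<N$, the canonical truncation $\tau_{\geq N}A^*$ is quasi-isomorphic to $A^*$ and is bounded below. Its terms are $A^j$ for $j>N$, $\ker(A^N\to A^{N+1})$ in degree $N$, and $0$ below; the degree-$N$ term is a subsheaf of the $\phi$-acyclic sheaf $A^N$ but need not itself be $\phi$-acyclic, so this still requires an argument. An honest fix is: it suffices to show that for a bounded-below complex of $\phi$-acyclic sheaves the natural map to an injective resolution induces an isomorphism on $H^*\Gamma_\phi$; for such complexes the first hypercohomology spectral sequence is a genuine first-quadrant (or bounded-below half-plane) spectral sequence, converges unconditionally, and collapses because $H^q_\phi(X;A^p)=0$ for $q>0$. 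Then the only remaining point is that $\tau_{\geq N}A^*$ itself is a bounded-below complex of $\phi$-acyclic sheaves; since $\tau_{\geq N}A^*$ and $A^*$ are quasi-isomorphic and both bounded below, and the terms of $A^*$ in degrees $\geq N+1$ agree, one checks $\Gamma_\phi$ cohomology agrees in the relevant range — or, more simply, one observes that it is enough to have a complex of $\phi$-acyclic sheaves that is quasi-isomorphic to $A^*$, and $A^*$ itself is such a complex, so one applies the bounded-below collapsing statement directly to $A^*$ after noting that convergence of its spectral sequences follows from the boundedness-below of $\mc H^*(A^*)$.

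For the second assertion, replace "$\phi$-acyclic terms" by "$\phi$-homotopically fine". Here I would cite the property recalled just before the lemma: if $A^*$ is $\phi$-homotopically fine then $H^p_\phi(X;A^j)$ need not vanish, but the cohomology presheaf $U\mapsto H^p_\phi(U;A^*)$ (equivalently, the relevant $E_1$ or $E_2$ term of the hypercohomology spectral sequence built from the $\phi$-fine partition-of-unity homotopies) vanishes for $p>0$. Concretely: the homotopies $h_\alpha$ subordinate to a locally finite cover show that the Čech-to-derived-functor spectral sequence, or the spectral sequence $H^p_\phi(X;\mc H^q(A^*)) $-- wait, the correct statement is that $\phi$-homotopically fine complexes are $\Gamma_\phi$-acyclic in the hypercohomology sense, i.e.\ the map $\Gamma_\phi(X;A^*)\to \Gamma_\phi(X;I^*)$ to an injective resolution is a quasi-isomorphism; this is exactly Exercise II.32 of \cite{BR} together with the boundedness-below of $\mc H^*(A^*)$ needed for convergence. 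So the proof is: invoke that exercise (which gives degeneration of the hypercohomology spectral sequence for homotopically fine complexes, under the boundedness hypothesis on $\mc H^*$), conclude $\H^*_\phi(X;A^*)\cong H^*(\Gamma_\phi(X;A^*))$, and note this is precisely the definition of $C_\phi$-ready.

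The main obstacle, and the only genuinely nonroutine point, is handling convergence of the hypercohomology spectral sequences for complexes that are \emph{unbounded below as complexes of sheaves} while having \emph{bounded-below derived cohomology} — this is exactly why the hypothesis "$\mc H^i(A^*)$ vanish for sufficiently small $i$" appears rather than "$A^*$ bounded below". The clean way around it is to compare the two hypercohomology spectral sequences of $A^*$ (one filtered by the $A^j$, one by $\tau_{\geq q}$), both of which abut to the same $\H^*_\phi(X;A^*)$; the second converges because $\mc H^q(A^*)=0$ for $q\ll 0$, the first degenerates because the $A^j$ are $\phi$-acyclic (or: because $A^*$ is $\phi$-homotopically fine), and together they force $H^*(\Gamma_\phi(X;A^*))\xrightarrow{\ \cong\ }\H^*_\phi(X;A^*)$. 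I expect this comparison argument, carried out with care about the direction of convergence and the degeneration, to be the heart of the proof; everything else is bookkeeping with standard sheaf-cohomology facts from \cite{BR}.
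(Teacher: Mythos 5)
Your proposal is in spirit the same as the paper's argument: both rest on the spectral sequence $E_2^{p,q}=H^p_\phi(X;\mc H^q(A^*))$ together with the degeneration forced by $\phi$-acyclicity or homotopical fineness, and both use truncation to finesse the lack of boundedness. The paper's version is tighter because it invokes Bredon's Theorems IV.2.1 and IV.2.2 directly. Theorem IV.2.1 gives, under precisely the hypothesis $H^*(H^p_\phi(X;A^*))=0$ for $p>0$ (which Bredon verifies on pp.~172 and 202 for $\phi$-acyclic terms and for $\phi$-homotopically fine complexes, respectively), a spectral sequence $E_2^{p,q}=H^p_\phi(X;\mc H^q(A^*))\Rightarrow H^{p+q}(\Gamma_\phi(X;A^*))$; this is applied both to $A^*$ and to an injective resolution $I^*$, and Theorem IV.2.2 is then the comparison theorem. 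The truncation $A^*\to\tau^{\geq N}A^*$ is used \emph{only} to manufacture a bounded-below complex admitting an injective resolution $I^*$, so that the composite quasi-isomorphism $A^*\to I^*$ exists; the comparison then runs from $A^*$ itself to $I^*$. Consequently your concern about whether the degree-$N$ term of the truncation remains $\phi$-acyclic is a red herring --- that term never enters the spectral-sequence argument. Your first paragraph and your closing ``comparison'' paragraph, with Bredon IV.2.1/IV.2.2 substituted for the hand-rolled two-spectral-sequence discussion, become exactly the proof in the paper; the intermediate back-and-forth about truncations can be deleted.
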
 
\begin{proof}
The argument is essentially that given in the discussions following Proposition 3.5 and Lemma 3.10 in  \cite{GBF10}. 

Recall that we assume $X$ to be a paracompact and finite dimensional stratified pseudomanifold. 
If $A^*$ is $\phi$-homotopically fine or each $A^*$ is $\phi$-acyclic then   $H^*(H^p_\phi(X; A^*))=0$ for all $p>0$ by \cite[pages 172 and 202]{BR}.  So there exists a spectral sequence with $E_2^{p,q}=H^p_\phi(X;\mc{H}^q(A^*))$ abutting to $H^{p+q}(\Gamma_\phi(X;A^*))$ by \cite[Theorem IV.2.1]{BR}.

On the other hand, since $\mc H^i(A^*)$ vanishes for sufficiently small $i$, there is an $N$ such that 
the truncation map $A^*\to \tau^{\geq N} A^*$ is a quasi-isomorphism (see \cite[Section V.1.10]{Bo}). As a bounded from below complex, $\tau^{\geq N} A^*$ has an injective resolution \cite[Corollary V.1.18]{Bo}, say $\tau^{\geq N}A^*\to I^*$. The composition  $A^*\to \tau^{\geq N}A^*\to I^*$ of quasi-isomorphisms is a quasi-isomorphism, and so $H^*(\Gamma_\phi(X;I^*))$ is the hypercohomology $\H_\phi^*(X;A^*)$ by definition \cite[Section V.1.4]{Bo}. 

Finally, since injective sheaves are also $\phi$-acyclic (for any $\phi$), the map of spectral sequences induced by our quasi-isomorphism $A^*\to I^*$ gives an isomorphism  $H^i(\Gamma_\phi(X;A^*))\to H^i (\Gamma_\phi(X;I^*))$ for all $i$ by \cite[Theorem IV.2.2]{BR}. 
\end{proof}

The next lemma shows that once we have fixed specific $C_\phi$-ready objects,  any  morphism in the derived category between these objects induces a uniquely determined map on hypercohomology. 
This is not completely obvious because of the identifications that are made in the derived category - in particular there may be many possible ways to represent a morphism in $D(X)$ in terms of morphisms in $C(X)$.

\begin{lemma}\label{L: rep}
Suppose $A^*,B^*\in D(X)$ are $C_\phi$-ready with $\mc H^i(A^*)=\mc H^i(B^*)=0$ for sufficiently small $i$ and that $f\in \Mor_{D(X)}(A^*,B^*)$. Then $f$ determines a unique morphism $\H_\phi^*(X;A^*)=H^*(\Gamma_\phi(X;A^*)) \to H^*(\Gamma_\phi(X;B^*))=\H_\phi^*(X;B^*)$.
\end{lemma}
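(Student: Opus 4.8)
The plan is to show that the morphism on hypercohomology is well-defined by reducing everything to injective resolutions, where the localization at quasi-isomorphisms is well-understood. First I would recall from the proof of Lemma \ref{L: C-ready} that, since $\mc H^i(A^*)=\mc H^i(B^*)=0$ for sufficiently small $i$, we may choose an integer $N$ and truncations $A^*\to \tau^{\geq N}A^*$ and $B^*\to \tau^{\geq N}B^*$, and then bounded-below injective resolutions $\tau^{\geq N}A^*\to I_A^*$ and $\tau^{\geq N}B^*\to I_B^*$, giving quasi-isomorphisms $\alpha\colon A^*\to I_A^*$ and $\beta\colon B^*\to I_B^*$ in $C(X)$. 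Because $A^*$ and $B^*$ are $C_\phi$-ready, the induced maps $H^*(\Gamma_\phi(X;A^*))\to H^*(\Gamma_\phi(X;I_A^*))$ and $H^*(\Gamma_\phi(X;B^*))\to H^*(\Gamma_\phi(X;I_B^*))$ are isomorphisms; this is exactly the content of $C_\phi$-readiness applied to $\alpha$ and $\beta$ (which are injective resolutions, or can be extended to such). So it suffices to produce a uniquely determined morphism $H^*(\Gamma_\phi(X;I_A^*))\to H^*(\Gamma_\phi(X;I_B^*))$ from $f$, and then transport it back along these fixed isomorphisms.

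Next I would use the standard fact that for bounded-below complexes of injectives, the natural map $\Hom_{K(X)}(I_A^*,I_B^*)\to \Mor_{D(X)}(I_A^*,I_B^*)$ from the homotopy category to the derived category is a bijection (see e.g.\ the references on derived categories in Section \ref{S: background}, or \cite{Bo, GelMan2}). Our $f\in\Mor_{D(X)}(A^*,B^*)$ transports, via the isomorphisms $\alpha$ and $\beta$ in $D(X)$, to a morphism $\beta f \alpha^{-1}\in\Mor_{D(X)}(I_A^*,I_B^*)$, hence to a chain homotopy class of genuine chain maps $\tilde f\colon I_A^*\to I_B^*$, well-defined up to homotopy. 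Applying the functor $\Gamma_\phi(X;-)$ yields a chain map $\Gamma_\phi(X;\tilde f)\colon \Gamma_\phi(X;I_A^*)\to\Gamma_\phi(X;I_B^*)$ whose homotopy class, and therefore the induced map on cohomology $H^*(\Gamma_\phi(X;I_A^*))\to H^*(\Gamma_\phi(X;I_B^*))$, depends only on the homotopy class of $\tilde f$, hence only on $f$. Composing with the $C_\phi$-readiness isomorphisms gives the desired morphism $\H_\phi^*(X;A^*)\to\H_\phi^*(X;B^*)$, and its independence of all choices follows from the fact that any two bounded-below injective resolutions of the same object are canonically homotopy equivalent, so the passage through $I_A^*$ and $I_B^*$ is canonical up to canonical homotopy.

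The main obstacle, and the point requiring the most care, is verifying that the map we obtain really is independent of the auxiliary choices --- the truncation level $N$, the particular injective resolutions $I_A^*$, $I_B^*$, and the chain-level representative $\tilde f$ of the homotopy class --- and that it is compatible with the identification $\H_\phi^*(X;A^*)=H^*(\Gamma_\phi(X;A^*))$ built into the notion of $C_\phi$-readiness. I would handle this by noting that all the comparison maps between different choices of injective resolutions are themselves quasi-isomorphisms between injective complexes, hence homotopy equivalences, and that these homotopy equivalences are unique up to homotopy; naturality of $\Gamma_\phi(X;-)$ on the homotopy category then propagates this uniqueness down to cohomology. One should also observe that functoriality is automatic from this construction (the composite of two morphisms in $D(X)$ induces the composite on hypercohomology), though that is not strictly needed for the statement. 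No genuinely new ideas beyond those already deployed in Lemma \ref{L: C-ready} are required; the lemma is essentially a bookkeeping statement pinning down the identification so that later diagrams can be checked unambiguously.
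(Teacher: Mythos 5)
Your proof is correct and takes essentially the same approach as the paper: both pass to bounded-below injective resolutions (via truncation, using the vanishing hypothesis on $\mc H^i$), exploit $C_\phi$-readiness to identify $\H_\phi^*$ with $H^*(\Gamma_\phi)$, and use that chain maps between injective complexes are determined up to homotopy. The only cosmetic difference is that the paper unpacks $f$ as an explicit roof diagram $A^*\leftarrow C^*\rightarrow B^*$ and resolves all three vertices, whereas you invoke directly the bijection $\Hom_{K(X)}(I_A^*,I_B^*)\cong\Mor_{D(X)}(I_A^*,I_B^*)$, which slightly streamlines the bookkeeping but rests on the same underlying facts.
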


Before indicating the proof of the lemma, we make one more definition that follows naturally from it. 
Suppose $A^*$ and $B^*$ are $C_\phi$-ready representatives of $S^*$ and $T^*$. Then for any $f\in \Mor_{D(X)}(S^*,T^*)$, 
choices of fixed quasi-isomorphisms  $s: A^*\to S^*$ and $t:B^*\to T^*$
 determine a morphism $g\in \Mor_{D(X)}(A^*,B^*)$, and, by Lemma \ref{L: rep}, this determines a unique map $g: H^*(\Gamma_\phi(X;A^*))\to H^*(\Gamma_\phi(Y;B^*))$. We will refer to $g$ as a \emph{cohomological representative of $f$}. 

\begin{proof}[Proof of Lemma \ref{L: rep}]
An element  $f\in \Mor_{D(X)}(A^*,B^*)$ is represented by a ``roof'' diagram $A^*\xleftarrow{s} C^*\xrightarrow{\hat f} B^*$, where $s,\hat f$ are chain maps of sheaf complexes and $s$ is a quasi-isomorphism. Recall that the induced maps of hypercohomology groups can be defined by means of injective resolutions. These exist here due to the assumptions on the boundedness of $\mc H^i(A^*)$ and $\mc H^i(B^*)$ as in the proof of Lemma \ref{L: C-ready}.
 Taking such resolutions yields a diagram

\begin{diagram}
A^*&\lTo^s& C^*&\rTo^{\hat f}&B^*\\
\dTo^{i_1}&&\dTo&&\dTo^{i_3}\\
\mc I_1^*&\lTo^{\td s}& \mc I_2^*&\rTo^{\td f}&\mc I_3^*.
\end{diagram}
If we assume the vertical maps are fixed injective resolutions, then $\td s$ and $\td f$ are determined up to chain homotopy \cite[Section V.5.16]{Bo}. Now taking the cohomology of global sections provides a map $\H^*(X;A^*)\to \H^*(X;B^*)$, namely  $(i_3)^{-1}\td f\td s^{-1}i_1$. Note that $i_3$ is invertible as a map on cohomology of global sections by our assumption that $B^*$ is $C_\phi$-ready, while $\td s$ is invertible on cohomology since it is a quasi-isomorphism of injective complexes.

This map is independent of the choices of resolutions, since if $j:A^*\to \mc J^*$ is another resolution of $A^*$, then the properties of injective resolutions again give a chain homotopy class of chain homotopy equivalences $e:\mc I_1^*\to \mc J^*$ such that $ei_1=j$ up to chain homotopy, and similar for the other terms. Further application of the properties of injective objects allows one to obtain  a triangular prism diagram demonstrating that the map $\H^*(X;A^*)\to \H^*(X;B^*)$ does not depend on the choices of resolutions.

Now, suppose $A^*\xleftarrow{t} D^*\xrightarrow{\hat g} B^*$ is another roof diagram representing $f$, then there exists a chain homotopy commutative diagram (see \cite[Lemma III.2.8]{GelMan2})
\begin{diagram}
&&C^*\\
&\ldTo^s&\uTo&\rdTo^{\hat f}\\
A^*&\lTo^{r}&E^*&\rTo&B^*\\
&\luTo^t&\dTo&\ruTo^{\hat g}\\
&&D^*&&&
\end{diagram}
with $r,s,t$ quasi-isomorphisms (and thus so are the vertical maps, as well). 

Again by the properties of injective resolutions, we can form injective resolutions for each object in the diagram and a map from this diagram to the injective version that commutes up to chain homotopy. Thus, taking into account the independence of choice of injective resolutions, the homotopy commutativity of the injective version of the diagram shows that any representative of the morphism $f$ in $D(X)$ yields the same map $\H^*(X;A^*)\to \H^*(X;B^*)$.
\end{proof}

\section{Some particular sheaf complexes}\label{S: particular}

\subsection{The Verdier dualizing complex and Verdier duality}\label{S: Verdier complex}

We will let $\D^*_X$ (or simply $\D^*$ when the space is unambiguous) denote the Verdier dualizing sheaf complex on $X$. Specifically, we will assume $\D^*$ as constructed in \cite[Section V.7.A]{Bo}. This construction requires a fixed injective resolution $I^*$ of the ground ring and a $c$-soft flat resolution $\mc K^*$ of the constant sheaf with stalks isomorphic to the ground ring.  \red{When we consider Verdier duality in the context of coefficients in a field $F$, we can assume $F$ is its own injective resolution as a complex, i.e\ that the only nontrivial term in $I^*$ is $I^0=F$.} We also assume $\mc K^*$ chosen and fixed for the given space $X$. Then $\D^*$ is the sheaf $U\to \Hom^*(\Gamma_c(\mc K^*_U),I^*)$, and $\D^*$ is injective by \cite[Corollary V.7.6]{Bo}.

If $\ms S^*$ is a complex of sheaves, we let $\mc D \ms S^*$ stand for the Verdier dual of $\ms S^*$. Explicitly, we set $\mc D\ms S^*=\SHom^*(\ms S^*,\D^*)$, where $\SHom$ denote the $\Hom$ sheaf.  It is also common to suppress the grading decoration for the Verdier dual, but when necessary we will use the notation $\mc D^i\ms S^*\cong \SHom^i(\ms S^*,\D^*)$. This is not the  definition of $\mc D\ms S^*$ given by Borel in \cite[Section V.7]{Bo}, however it is equivalent up to isomorphism by \cite[Theorem V.7.8.ii]{Bo} (and the definition we use is quite prominent in the literature). Borel defines the Verdier dual of $\ms S^*$ as the sheaf $U\to \Hom^*(\Gamma_c( \mc J^*_U),I^*)$, where $\mc J^*$ is a $c$-soft resolution of $\ms S^*$ (or $\ms S^*$ itself if it were $c$-soft) and $I^*$ is an injective resolution of the ground ring. In what follows, we shall notate the sheaf complex $U\to \Hom^*(\Gamma_c( \mc J^*_U),I^*)$ as $\ms L\ms S^*$; in the particular examples we consider below, the resolution of $\ms S^*$  will be specified precisely (and in fact it will be $\ms S^*$ itself). 
Then \cite[Theorem V.7.8.ii]{Bo} says $\mc D\ms S^*\cong \ms L\ms S^*$ in the category of sheaf complexes.

\subsection{The Deligne sheaf}\label{R: uniqueness}

In the sheaf-theoretic formulation of intersection homology, $I^{\bar p}H_*(X;R)$ corresponds to the hypercohomology of the Deligne sheaf complex, denoted $\mc P_{\bar p}^*$. We sometimes write $\mc P^*_{\bar p}(\mc E)$ if we wish to specify a coefficient system $\mc E$ on $X-X^{n-1}$, \red{i.e.\ a locally constant sheaf of finitely-generated $R$ modules. If omitted from the notation, we typically assume the constant coefficient sheaf $\mc R$ whose stalks are the ground ring $R$, which we assume to be a Dedekind domain.} These sheaf complexes were first defined in \cite{GM2} for Goresky-MacPherson perversities. A version for more general perversities was constructed in \cite{GBF23} (see \cite{GBF26} for an exposition). \red{We briefly recall the construction and the important axiomatic characterization.}

\red{Given our n-dimensional stratified pseudomanifold $X$, let $U_k=X-X^{n-k}$ and write $i_k:U_k\into U_{k+1}$ for the inclusion. For simplicity, suppose that $\bar p$ is a Goresky-MacPherson perversity so that it depends only on the codimension of the strata and hence its value on any codimension $k$ stratum can be written $\bar p(k)$. Such perversities also satisfy $\bar p(k+1)\geq\bar p(k)$ (as well as additional restrictions). Let $\tau_{\leq j}$ denote the standard sheaf complex truncation functor and $Ri_{k*}$ the right derived functor of the sheaf pushforward induced by $i_k$. Finally, let $\mc E$ be a system of of coefficients over $U_1=X-X^{n-1}$. Then the Deligne sheaf is defined by an iterative pushforward-and-truncate procedure as:
$$\mc P^*_{\bar p}(\mc F)= \tau_{\leq \bar p(n)}Ri_{n*}    \cdots\tau_{\leq \bar p(1)}Ri_{1*}\mc E.$$
For arbitrary perversities, the truncation functor needs to be modified to a truncation that is localized at the strata; details can be found in \cite{GBF23}.}

\red{We also recall that there are several axiomatic characterizations of $\mc P^*_{\bar p}$. We review the simplest, typically referenced as the axioms Ax1. A complex $\ms S^*$ satisfies these axioms for perversity $\bar p$ and coefficient system $\mc E$ if 
\begin{enumerate}
\item Up to quasi-isomorphism, $\ms S^*$ is bounded, $\ms S^i=0$ for $i<0$, and $\ms S^*|_{U_1}=\mc E$, 
\item if $Z$ is a singular stratum and $x\in Z$ then $H^i(\ms S^*_x)=0$ for $i>\bar p(Z)$, and
\item if $Z$ is a singular stratum of codimension $k$ and $x\in Z$ then the attachment map $\alpha_k:\ms S^*|_{U_{k+1}}\to Ri_{k*}(\ms S^*|_{U_k})$ is a quasi-isomorphism at $x$ in degrees $\leq \bar p(Z)$. 
\end{enumerate}
A sheaf complex is quasi-isomorphic to $\mc P_{\bar p}^*(\mc E)$ if and only if it satisfies these axioms. See \cite[Section V.2]{Bo} for an exposition for Goresky-MacPherson perversities and \cite[Section 3]{GBF23} for general perversities.  }

Our most important examples of $C$-ready sheaves will be the sheaves of intersection chains $\mc I^{\bar p}\mc S^*$  and intersection cochains $\mc I_{D\bar p}\mc C^*$ with coefficients in \red{$\mc R$, the constant system with stalk $R$}, both of which are $C$-ready and $C_c$-ready representatives of the perversity $\bar p$ Deligne sheaf $\mc P^*_{\bar p}$ with the same coefficients (see Proposition \ref{P: IC is IS} below). In fact, the Deligne sheaf itself is generally only defined up to quasi-isomorphism \cite{GM2, Bo}, relying as it does on resolutions, which are rarely specified, at various stages of its construction. For our purposes, we assume that a specific sheaf representative of $\mc P^*_{\bar p}$ has been fixed, as well as a specific quasi-isomorphism from the constant sheaf $\mc R$ on $U_1=X-X^{n-1}$ with stalk $R$  to $\mc P^*_{\bar p}|_{U_1}$. Later on, we will be working with specific $C$-ready representatives of $\mc P^*_{\bar p}$, so we will not need these assumptions.  

\begin{remark}
Deligne sheaves, and hence intersection (co)homology, can be defined with coefficients in any  local system (i.e. locally constant sheaf) on $U_1$ of finitely generated modules over a  commutative noetherian ring of finite cohomological dimension \cite[Section V.2]{Bo}. However, aside from Section \ref{S: sheaf}, we will not pursue this level of generality here. We invite the reader to consider the natural generalizations in what follows.
\end{remark}

\smallskip

Now, suppose $\ms S^*$ and $\ms T^*$ are any sheaf complexes quasi-isomorphic respectively to $\mc P^*_{\bar p}$ and $\mc P^*_{\bar q}$ with $\bar p\leq \bar q$. Then a consequence of \cite[Theorem 3.5]{GM2} (see also \cite[Lemma V.9.1]{Bo}, \cite[Lemma 4.9]{GBF23}) is that $\Mor_{D(X)}(\ms S^*,\ms T^*)\cong \Mor_{D(U_1)}(\ms S^*|_{U_1},\ms T^*|_{U_1})$, where the isomorphism is induced by restriction. Furthermore, since the Deligne sheaves restrict over $U_1$ to locally constant systems (up to quasi-isomorphism), by \cite[Lemma V.9.13]{Bo}, $\Mor_{D(U_1)}(\ms S^*|_{U_1},\ms T^*|_{U_1})\cong \Mor_{C(U_1)}(\mc H^0(\ms S^*)|_{U_1},\mc H^0(\ms T^*)|_{U_1})$, where $\mc H^*$ is the derived cohomology sheaf complex. In particular, since we assume our Deligne sheaves to have constant coefficients, $\mc H^0(\ms S^*)|_{U_1}$ and $\mc H^0(\ms T^*)|_{U_1}$ are each isomorphic to the constant sheaf $\mc R$ with stalk $R$ on $U_1$, and maps $\mc R\to \mc R$ are determined by their restrictions to a set of points, one in each connected component of $U_1$. Thus if $U_1$ has $m$ connected components, $\Mor_{D(X)}(\ms S^*,\ms T^*)\cong R^m$. It also follows that the set of automorphisms in $\Mor_{D(X)}(\ms S^*,\ms S^*)$ is isomorphic to $(R^*)^m$, where $R^*$ is the set of units of $R$.

By \cite[Proposition V.9.4]{Bo} and \cite[Lemma 4.9]{GBF23}, the same conclusions hold if $\bar p\leq \bar t$ and $\ms T^*$ is quasi-isomorphic to $\D^*[-n]$. In this case $\mc H^0(\D^*[-n]|_{U_1})$ is isomorphic to the $R$-orientation sheaf $\mc O$ on $U_1$. However, if $X$ is $R$-oriented, we again have $\mc H^0(\D^*[-n]|_{U_1})\cong \mc O\cong \mc R$ and again $\Mor_{D(X)}(\ms S^*,\ms T^*)\cong R^m$ with the morphisms determined by restricting  $\mc H^0$ to an appropriate set of points.

Finally, suppose $\ms S^*, \ms T^*, \ms U^*$ are quasi-isomorphic to $\mc P^*_{\bar p}, \mc P^*_{\bar q}, \mc P^*_{\bar r}$ with $\bar p+\bar q\leq \bar r$. Then by \cite[Proposition V.9.14]{Bo} and \cite[Theorem 4.6]{GBF23}, $\Mor_{D(X)}(\ms S^*\otimes\ms T^*, \ms U^*)$ is bijective with the set of induced maps $\mc H^0(\ms S^*)|_{U_1}\otimes \mc H^0(\ms T^*)|_{U_1}\to \mc H^0(\ms U^*)|_{U_1}$ and hence with the collection of pairings $H^0(\ms S^*_x)\otimes H^0(\ms T^*_x)\to H^0(\ms U^*_x)$ as $x$ runs over a set of representative points, one in each connected component of $U_1$.

\subsection{The sheaf complex of intersection chains}\label{S: SIC}

Let $X$ be an $n$-dimensional stratified topological pseudomanifold, possibly with codimension one strata, and let $\bar p$ be a general perversity. \red{Recall the intersection chain complexes $I^{\bar p}S_*(X;G)$ and $I^{\bar p}S_*^\infty(X;G)$ as reviewed above in Section \ref{S: ihreview}.}
Further, recall from\footnote{In \cite{GBF10} the perversities were required to satisfy certain restrictive conditions, but the results there hold for completely general perversities, as noted in \cite[Section 2.2]{GBF23}, using what's now called ``non-GM'' singular intersection homology in \cite{GBF35}.} \cite[Section 3.1]{GBF10} that for any coefficient system of abelian groups $\mc G$ defined on $X-X^{n-1}$ (or, more generally, $R$-modules, where $R$ is a noetherian commutative ring of finite cohomological dimension), one can define a sheaf complex\footnote{Again, we tend to leave the coefficients out of the sheaf notation for simplicity; we use the notation $\mc I^{\bar p}\mc S^*(\mc G)$ when the coefficients $\mc G$ need to be emphasized. } $\mc I^{\bar p}\mc S^*$ on $X$ as the sheafification of the presheaf $U\to I^{\bar p}S^{\infty}_{n-*}(X,X-\bar U;\mc G)$ or, equivalently, of the presheaf $U\to I^{\bar p}S_{n-*}(X,X-\bar U;\mc G)$. To account for the grading shift, the boundary $d$ in the presheaf, and hence the sheaf, corresponds to\footnote{This sign to account for the grading shift was inadvertently neglected in prior work of the first-named author. Of course the sign does not affect (co)homology computations, but it will be important here as we pay careful attentions to signs and gradings.} $(-1)^n\bd$. This is consistent with shifting by $-n$ the presheaf complex $U\to I^{\bar p}S_{-*}(X,X-\bar U;\mc G)$ that is also commonly used in sheaf theoretic treatments of intersection homology. \red{To see that $\mc I^{\bar p}\mc S^*$ can be used to compute intersection homology, we review some further background.}

\red{ Recall that a presheaf $A$ is called a \emph{monopresheaf} if for any $U\subset X$ with $U=\cup_{\alpha}U_\alpha$, the $U_{\alpha}$ each open, and if $s,t\in A(U)$ satisfy $s|_{U_\alpha}=t|_{U_\alpha}$ for all $\alpha$ then $s=t$. A presheaf $A$ is called \emph{conjunctive} if for any collection of open subsets $U_\alpha\subset X$ and any $s_\alpha\in A(U_\alpha)$ such that $s_\alpha|_{U_\alpha\cap U_\beta}=s_\beta|_{U_\alpha\cap U_\beta}$ for all $\alpha,\beta$ there exists an $s\in A(U)$ with $s|_{U_\alpha}=s_\alpha$ for all $\alpha$. A presheaf is a sheaf precisely when it is a conjunctive monopresheaf  \cite[page 6]{BR}. A slightly weaker set of conditions is that a presheaf satisfy the conjunctivity condition for all coverings $\{U_{\alpha}\}$ and that the only element of $A(X)$ with empty support (i.e. that goes to $0$ under sheafification) is the zero element. If we let $\mc A$ denote the sheafification of $A$, then for such a presheaf it remains the case that $A_\phi(X)\cong \Gamma_\phi(X;\mc A)$ by \cite[Theorem I.6.2]{BR}.}

\red{It is shown in \cite[Lemmas 3.2, 3.3]{GBF10} that the presheaf $U\to I^{\bar p}S^{\infty}_{n-*}(X,X-\bar U;\mc G)$ is conjunctive for coverings and has no non-trivial global sections with empty support. It follows that $H^*(\Gamma(X;\mc I^{\bar p}\mc S^*))\cong I^{\bar p}H_{n-*}^\infty(X;\mc G)$; see  \cite[Corollary 3.4]{GBF10} for further details. }

Furthermore, the sheaf complex $\mc I^{\bar p}\mc S^*$  is homotopically fine (and so $c$-homotopically fine) by \cite[Proposition 3.5]{GBF10}, and in \cite{GBF23} it is shown\footnote{The proof is axiomatic and does not hinge on any choice of sign conventions for boundary maps. In fact, it is true in general that the complexes $(C^*,d)$ and $(C^*,-d)$ are quasi-isomorphic via the chain map $(-1)^{|\cdot|}\text{id}$.}  that $\mc I^{\bar p}\mc S^*$ is quasi-isomorphic to the Deligne sheaf complex $\mc P^*_{\bar p}$. It follows that the local intersection homology groups $H^i(\mc I^{\bar p}\mc S^*_x)$ vanish for $|i|$ sufficiently large, as this is true for $\mc P^*_{\bar p}$ by the Goresky-MacPherson axioms. Hence $\mc I^{\bar p}\mc S^*$ is  $C$-ready and $C_c$-ready by Lemma \ref{L: C-ready}, and in particular it is a  $C$-ready and $C_c$-ready representative of $\mc P^*_{\bar p}$. 
 Thus $\H^*(X; \mc P^*_{\bar p})\cong\H^*(X;\mc I^{\bar p}\mc S^*)\cong I^{\bar p}H_{n-*}^\infty(X; \mc G)$ and similarly  $\H_c^*(X; \mc P^*_{\bar p})\cong\H_c^*(X;\mc I^{\bar p}\mc S^*)\cong I^{\bar p}H_{n-*}(X; \mc G)$.

\subsection{The sheaf of intersection cochains}\label{S: sheaf}

We now introduce the sheaf of intersection cochains. \red{In this section, we assume coefficients in a Dedekind domain $R$. Recall that the Dedekind domains include the Principal Ideal Domains.}

Intersection cochains $I_{\bar p}S^*(X;R)\cong \Hom_R(I^{\bar p}S_*(X;R);R)$ 
appear sporadically in the literature with a thorough study appearing in \cite{GBF25} with field coefficients and then in \cite{GBF35} with $R$ a Dedekind domain. 

Using the natural restriction of cochains, we can define a sheaf\footnote{We hope that using a $\mc C$  for the intersection cochain sheaf $\mc I_{\bar p} C^*$  and an $\mc S$ for the 
intersection chain sheaf $\mc I^{\bar p}\mc S^*$  will help to eliminate any confusion that might be caused by leaving the difference in notation only up to the placement of the perversity decoration $\bar p$. As is usual for sheaf theory, we use cohomological indexing for both sheaf complexes.}
$\mc I_{\bar p}\mc C^*$ as the sheafification of  the presheaf $U\to I_{\bar p}S^*(U; R)$. We will see below in Proposition \ref{P: conj} that the presheaf is conjunctive. Also as for ordinary singular cochains, $H^*(I_{\bar p}S_0^*(X; R))=0$, where $I_{\bar p}S_0^*(X; R)$ is the group of presheaf sections with zero support; this follows from the fact that subdivision induces isomorphisms on singular intersection homology (by \cite[Corollary 6.3.10]{GBF35}). Thus  $I_{\bar p}H^*(X; R)\cong H^*(\Gamma(X;\mc I_{\bar p}\mc C^*))$, and similarly for compact supports. See the discussion of singular cohomology in \cite[Section I.7]{BR} for more details. 

The sheaf   $\mc I_{\bar p}\mc C^*$ (using traditional perversities and real coefficients) is considered also in \cite{BHS}, where 
it is claimed that the presheaf $U\to I_{\bar p}S^*(U;\R)$ is in fact a flabby sheaf. This does not seem to be correct, as the presheaf of cochains is not  a sheaf  even for ordinary cochains (see \cite[page 26]{BR}). Nonetheless,  the sheaf $\mc I_{\bar p}\mc C^*$ is indeed flabby, as we will show in Proposition \ref{P: flabby}.  

Before proving Propositions \ref{P: conj} and \ref{P: flabby}, we have the following important corollary:

\begin{corollary}\label{C: cohomology}
$\H^*(X;\mc I_{\bar p}\mc C^*)\cong I_{\bar p}H^*(X; R)$ and $\H_c^*(X;\mc I_{\bar p}\mc C^*)\cong I_{\bar p}H_c^*(X; R)$.
\end{corollary}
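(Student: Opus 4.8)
The plan is to deduce Corollary~\ref{C: cohomology} as a formal consequence of two facts that are stated (and will be proved shortly afterward) in the text: the presheaf $U\mapsto I_{\bar p}S^*(U;R)$ is conjunctive (Proposition~\ref{P: conj}), and its group of global sections with empty support is acyclic, since $H^*(I_{\bar p}S_0^*(X;R))=0$ because subdivision induces isomorphisms on singular intersection homology (\cite[Corollary 6.3.10]{GBF35}). The key point is that these are exactly the hypotheses of \cite[Theorem I.7.2]{BR} (the singular-cohomology comparison theorem in Bredon), applied here to the variant singular complex defining intersection cohomology rather than the ordinary one.

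First I would recall the general mechanism from \cite[Section I.7]{BR}: given a presheaf $A$ that is conjunctive for coverings and whose sections of empty support form an acyclic complex, Bredon shows that the sheafification $\mc A$ of $A$ satisfies $H^*(\Gamma_\phi(X;A))\cong H^*(\Gamma_\phi(X;\mc A))$ for any paracompactifying family of supports $\phi$, and moreover that $H^*(\Gamma_\phi(X;\mc A))\cong \H^*_\phi(X;\mc A)$, i.e.\ the cohomology of global sections of the sheafified complex computes hypercohomology. (This last step uses that the relevant $A^j$ sheafify to $\phi$-soft, or at least $\phi$-acyclic, sheaves; alternatively one can invoke Lemma~\ref{L: C-ready} once flabbiness is known from Proposition~\ref{P: flabby}, together with the boundedness of the stalk cohomology that follows from the quasi-isomorphism $\mc I_{\bar p}\mc C^*\sim \mc P^*_{D\bar p}$ established in Section~\ref{S: sheaf}.) Then I would simply note that $\Gamma(X;I_{\bar p}S^*)=I_{\bar p}S^*(X;R)$ by definition, so $H^*(\Gamma(X;\mc I_{\bar p}\mc C^*))\cong H^*(I_{\bar p}S^*(X;R))=I_{\bar p}H^*(X;R)$; the compact-supports case is identical, taking $\phi$ to be the family of compact subsets (paracompactifying because $X$ is locally compact), which on the presheaf side picks out cochains with compact support, computing $I_{\bar p}H^*_c(X;R)$.

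The one genuine subtlety — and the step I expect to be the main obstacle, though it is really bookkeeping rather than depth — is checking that the intersection-cohomology analogue of \cite[Theorem I.7.2]{BR} actually goes through. Bredon's argument for ordinary singular cohomology uses (i) conjunctivity, (ii) acyclicity of empty-support cochains (which for ordinary cochains follows from barycentric subdivision, and here follows from \cite[Corollary 6.3.10]{GBF35} since subdivision respects the perversity conditions defining $I^{\bar p}S_*$), and (iii) that the sheafified cochain complex is a resolution of (a sheaf concentrated in degree $0$); in the intersection setting the last fact fails — $\mc I_{\bar p}\mc C^*$ is quasi-isomorphic to the Deligne sheaf $\mc P^*_{D\bar p}$, not to a single sheaf — but that is harmless: one only needs $H^*(\Gamma(X;\mc I_{\bar p}\mc C^*))\cong \H^*(X;\mc I_{\bar p}\mc C^*)$, which holds once $\mc I_{\bar p}\mc C^*$ is known to be flabby (Proposition~\ref{P: flabby}) with stalk cohomology bounded below, via Lemma~\ref{L: C-ready}. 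So the corollary reduces to: (a) $H^*(\Gamma_\phi(X;I_{\bar p}S^*))\cong H^*(\Gamma_\phi(X;\mc I_{\bar p}\mc C^*))$ from conjunctivity and empty-support acyclicity via \cite[Theorem I.6.2]{BR}, and (b) $H^*(\Gamma_\phi(X;\mc I_{\bar p}\mc C^*))\cong \H^*_\phi(X;\mc I_{\bar p}\mc C^*)=\H^*(X;\mc I_{\bar p}\mc C^*)$ (resp.\ with $c$) from Lemma~\ref{L: C-ready} and Proposition~\ref{P: flabby}. I would present the proof in exactly that order, citing Propositions~\ref{P: conj} and \ref{P: flabby} forward, and remark that this is precisely parallel to the treatment of $\mc I^{\bar p}\mc S^*$ in Section~\ref{S: SIC}.
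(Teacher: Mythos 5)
Your argument follows exactly the route the paper takes: conjunctivity (Proposition~\ref{P: conj}) and acyclicity of empty-support cochains give $H^*(\Gamma_\phi(X;I_{\bar p}S^*))\cong H^*(\Gamma_\phi(X;\mc I_{\bar p}\mc C^*))$ via \cite[Theorem I.6.2]{BR}, and flabbiness (Proposition~\ref{P: flabby}) together with Lemma~\ref{L: C-ready} identifies the latter with hypercohomology. That is precisely the paper's proof.

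One small repair is needed, though. In the parenthetical you suggest getting the boundedness of the stalk cohomology (needed for Lemma~\ref{L: C-ready}) from the quasi-isomorphism $\mc I_{\bar p}\mc C^*\sim \mc P^*_{D\bar p}$. That quasi-isomorphism is Proposition~\ref{P: IC is IS}, and its proof begins by computing $H^i(\mc I_{\bar p}\mc C^*_x)=\dlim_{x\in U}\H^i(U;\mc I_{\bar p}\mc C^*)\cong \dlim_{x\in U}I_{\bar p}H^i(U;R)$, which is exactly Corollary~\ref{C: cohomology} applied to the open sets $U$. So this route is circular. Fortunately you do not need it: $\mc I_{\bar p}\mc C^i=0$ for $i<0$ simply because $I_{\bar p}S^i(U;R)=0$ for $i<0$, so $\mc H^i(\mc I_{\bar p}\mc C^*)$ vanishes trivially for $i<0$, which is all that Lemma~\ref{L: C-ready} requires. (This is why the paper's proof just says ``flabby and bounded below.'') You already state the correct version in your final paragraph --- ``with stalk cohomology bounded below'' --- so the fix is just to delete the appeal to $\mc P^*_{D\bar p}$.
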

\begin{proof}
Since $\mc I_{\bar p}\mc C^*$ is flabby and bounded below, it is $C$-ready and $C_c$-ready by Lemma \ref{L: C-ready}, thus  $\H^*(X;\mc I_{\bar p}\mc C^*)\cong H^*(\Gamma(X;\mc I_{\bar p}\mc C^*))$. Furthermore, the presheaf of cochains is conjunctive and the cohomology of cochains with zero support is trivial, so $H^*(\Gamma(X;\mc I_{\bar p}\mc C^*)) \cong I_{\bar p}H^*(X;R)$ \cite[Theorem I.6.2]{BR}, cf.\ the proof of \cite[Equation I.7.(16)]{BR}. The argument with compact supports is analogous.
\end{proof}

We now turn to proving Proposition \ref{P: conj} and \ref{P: flabby}.

\begin{proposition}\label{P: conj}
For each $i$, the presheaf $U\to I_{\bar p}S^i(U;\R)$ is conjunctive.
\end{proposition}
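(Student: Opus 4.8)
The plan is to work with the presheaf $\mc A^i\colon V\mapsto I_{\bar p}S^i(V;\R)=\Hom_\R\bigl(I^{\bar p}S_i(V;\R),\R\bigr)$, whose restriction $\mc A^i(U)\to\mc A^i(V)$ for $V\subseteq U$ is precomposition with the pushforward inclusion $\iota\colon I^{\bar p}S_i(V;\R)\hookrightarrow I^{\bar p}S_i(U;\R)$. Given a family $\{U_\alpha\}$ with union $U$ and a compatible family $s_\alpha\in\mc A^i(U_\alpha)$, we must produce $s\in\mc A^i(U)$ with $s\circ\iota_\alpha=s_\alpha$. Since $\R$ is a field it is enough to define $s$ consistently on the subspace $\Sigma:=\sum_\alpha\iota_\alpha\bigl(I^{\bar p}S_i(U_\alpha)\bigr)$ of $\{U_\alpha\}$-small chains and then extend $s$ arbitrarily to $I^{\bar p}S_i(U)$. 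Such a consistent $s$ on $\Sigma$ exists precisely when the functional $\bigoplus_\alpha s_\alpha$ on $\bigoplus_\alpha I^{\bar p}S_i(U_\alpha)$ annihilates $\ker\Phi$, where $\Phi\colon\bigoplus_\alpha I^{\bar p}S_i(U_\alpha)\to I^{\bar p}S_i(U)$ is the summation map; and since compatibility of the $s_\alpha$ already makes $\bigoplus_\alpha s_\alpha$ vanish on every elementary relation $\eta\cdot\mathbf{1}_\alpha-\eta\cdot\mathbf{1}_\beta$ with $\eta\in I^{\bar p}S_i(U_\alpha\cap U_\beta)=I^{\bar p}S_i(U_\alpha)\cap I^{\bar p}S_i(U_\beta)$, the problem reduces to showing that $\ker\Phi$ is spanned by these elementary relations. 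Equivalently, the nerve (\v{C}ech) complex of the presheaf $V\mapsto I^{\bar p}S_i(V;\R)$ for the cover $\{U_\alpha\}$ must be exact at the term $\bigoplus_\alpha I^{\bar p}S_i(U_\alpha)$.

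To prove that exactness I would compare with the free-on-simplices presheaves. Let $\hat{\mc S}_i\colon V\mapsto S_i(V;\R)/S_i(V\cap X^{n-1};\R)$, which is free on the singular $i$-simplices of $V$ with image outside $X^{n-1}$; let $\mc A_i\subseteq\hat{\mc S}_i$ be spanned by the $\bar p$-allowable such simplices; and let $\mc{NA}_{i-1}:=\hat{\mc S}_{i-1}/\mc A_{i-1}$, free on the non-$\bar p$-allowable $(i-1)$-simplices. For each such free-on-simplices presheaf the nerve complex splits as a direct sum over simplices $\sigma$ of the augmented simplicial chain complexes of the full simplex on $\{\alpha:\sigma\subseteq U_\alpha\}$, hence is acyclic in positive degrees with $H_0$ equal to the ``small'' subspace injecting into the value over $U$; this is exactly the mechanism behind Bredon's one-line proof of conjunctivity for ordinary singular cochains, carried out one simplex at a time. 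Now $I^{\bar p}S_i=\ker\bigl(\delta'\colon\mc A_i\to\mc{NA}_{i-1}\bigr)$ as presheaves, where $\delta'_V(\xi)$ is the non-allowable part of $\bd\xi$; feeding the presheaf exact sequence $0\to I^{\bar p}S_i\to\mc A_i\xrightarrow{\delta'}\mc{NA}_{i-1}$ into the long exact sequence of nerve homology, and using the positive-degree acyclicity of the nerve complexes of $\mc A_i$ and $\mc{NA}_{i-1}$ together with a comparison of the augmentation maps, will force the required exactness for $I^{\bar p}S_i$ — provided $\delta'$ is onto. (If $\delta'$ fails to be onto in full generality, one replaces $\mc{NA}_{i-1}$ by $\im\delta'$ and continues the bookkeeping, but the same geometric input is what is needed.)

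The hard part, then, is the lemma that $\delta'_V\colon A_i(V;\R)\to\mc{NA}_{i-1}(V;\R)$ is surjective for every open $V\subseteq X$: every non-$\bar p$-allowable $(i-1)$-simplex of $V$ with image outside $X^{n-1}$ should arise as the non-allowable part of $\bd\xi$ for some $\bar p$-allowable $i$-chain $\xi$ in $V$. This is a genuinely geometric statement about pseudomanifolds, and I would prove it by constructing, in an arbitrarily small neighborhood of a given non-allowable simplex $\tau$, an allowable $i$-chain whose boundary is $\tau$ together with allowable simplices, exploiting the local conical structure of the normal data along the strata (a collar/prism construction). The reason all of this is necessary — in contrast to ordinary singular cochains, where conjunctivity is the trivial remark that a cochain may be prescribed simplex-by-simplex and set to $0$ on ``large'' simplices — is that $I^{\bar p}S_i(V)$ is not free on a set of simplices: an individual simplex of an intersection chain need not itself be an intersection chain, and an intersection chain supported in $\bigcup_\alpha U_\alpha$ need not decompose as a sum of intersection chains supported in the individual $U_\alpha$. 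Note also that subdivision does not rescue the naive argument here, since the $s_\alpha$ are arbitrary cochains rather than cocycles, so one is not permitted to alter chains by boundaries.
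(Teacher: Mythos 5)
Your reduction is sound: over a field, conjunctivity of $U\mapsto I_{\bar p}S^i(U;\R)$ is exactly the statement that $\ker\Phi$ is spanned by the elementary relations coming from the double intersections, and the Hahn--Banach-type extension to all of $I^{\bar p}S_i(U)$ is free. This matches the paper's reduction, which phrases it as $H^1(\Hom(A_*,R))=0$ for the three-term complex $A_2\to A_1\to A_0$ and applies a Universal Coefficient argument (the paper works over a Dedekind domain $R$, not just $\R$ — the $\R$ in the statement appears to be a typo, since Corollary \ref{C: cohomology} and the rest of Section \ref{S: sheaf} use $R$ — which is why the paper also needs the separate lemma showing $I^{\bar p}S_i(X;R)/I^{\bar p}S_i^{\mc U}(X;R)$ is projective, a step you correctly observe is vacuous over a field).

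The genuine gap is in how you try to establish the exactness itself. You propose deducing $\tilde H_0$ of the augmented nerve complex of $I^{\bar p}S_i$ is zero by fitting $I^{\bar p}S_i=\ker(\delta'\colon\mc A_i\to\mc{NA}_{i-1})$ into a short exact sequence of precosheaves and citing the acyclicity of nerve complexes of free-on-simplices precosheaves. That acyclicity is correct, and the long exact sequence argument would close if $\delta'$ were onto. But $\delta'$ is \emph{not} surjective in general, and your escape hatch — replacing $\mc{NA}_{i-1}$ by $\im\,\delta'$ — destroys exactly the structure that made the argument work: $\im\,\delta'$ is a sub-precosheaf of $\mc{NA}_{i-1}$ that is not spanned by individual simplices, so its nerve complex no longer decomposes as a direct sum of simplicial chain complexes of full simplices, and there is no reason for $H_1$ (or $H_2$) of its nerve complex to vanish. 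Concretely, take $X=cS^1$ with the vertex $v$ as the singular set and $\bar p(v)=0$: a $1$-simplex $\tau$ with one endpoint at $v$ is non-allowable (so lies in $\mc{NA}_1$), but for any \emph{allowable} $2$-simplex $\sigma$ the condition $\sigma^{-1}(v)\subset\Delta^2_{(0)}$ forces the non-allowable faces of $\bd\sigma$ to come in pairs or triples, so $\delta'(\sigma)$ is never a single radial edge, and it is far from clear that $\tau$ lies in the span of these images. So the ``collar/prism lemma'' you defer to is not merely unproven — it is likely false as stated, and even if one salvages a weaker version landing in $\im\,\delta'$, you are still left needing $H_1$ of the nerve complex of $\im\,\delta'$ to vanish, for which you have supplied no mechanism.

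The paper closes precisely this gap by a different route: rather than trying to resolve $I^{\bar p}S_i$ by free-on-simplices precosheaves, it invokes a previously established decomposition result (\cite[Proposition 6.5.2]{GBF35}): if an intersection chain $\xi_1\in I^{\bar p}S_i(U_1)$ has every simplex supported in some $U_1\cap U_j$, then $\xi_1$ decomposes as a sum $\sum_j\xi_{1j}$ of \emph{intersection} chains $\xi_{1j}\in I^{\bar p}S_i(U_1\cap U_j)$. This is exactly the statement you identify as the crux (``an intersection chain supported in $\bigcup_\alpha U_\alpha$ need not decompose as a sum of intersection chains supported in the individual $U_\alpha$''), and the paper's proof of $H_1(A_*)=0$ is an induction on the number of nonzero summands of an element of $\ker\Phi$, using this decomposition to peel off the first summand. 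Your approach tries to sidestep this decomposition lemma via homological algebra with free precosheaves, but that sidestep fails; some geometric input of this kind is unavoidable, and you would do better to invoke it directly rather than to try to manufacture it from the allowability filtration of the singular chain complex.
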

\begin{proof}
Let $U_j$ be a set of open subsets of $X$ and let $U=\cup_j U_j$. We may assume that the indexing set $j$ is well ordered. Consider the sequence of maps 
\begin{equation*}
\oplus_{j<k}I^{\bar p}S_i(U_j\cap U_k;R)\xr{f} \oplus_j I^{\bar p}S_i(U_j;R)\xr{g} I^{\bar p}S_i(U;R),
\end{equation*}
where $g$ acts as the inclusion on each summand and $f$ takes  $\xi\in I^{\bar p}S_i(U_j\cap U_k;R)$ to $(\xi, -\xi)\in I^{\bar p}S_i(U_j;R)\oplus I^{\bar p}S_i(U_k;R)$ when $j<k$. Then the composition $gf$ is trivial, and we can think of this sequence as part of a chain complex $A_*$ with the shown modules being $A_2,A_1,A_0$ and all other $A_*$ trivial. 

The dual chain complex restricts on these three terms to 
\begin{equation*}
\prod_{j<k}I_{\bar p}S^i(U_j\cap U_k;R)\xleftarrow{f^*} \prod_j I_{\bar p}S^i(U_j;R)\xleftarrow{g^*} I_{\bar p}S^i(U;R).
\end{equation*}
Now if $\alpha\in I_{\bar p}S^i(U;R)$ then the image of $g^*(\alpha)$ in the factor $I_{\bar p}S^i(U_j;R)$ is just the restriction of $\alpha$. Similarly, if $\prod \alpha_j\in\prod_j I_{\bar p}S^i(U_j;R)$ and $\xi\in I_{\bar p}S^i(U_a\cap U_b;R)$, then $f^*(\prod \alpha_j)(\xi)$ is obtained by applying $\prod \alpha_j$ to $f(\xi)=(\xi,-\xi)\in I^{\bar p}S_i(U_a;R)\oplus I^{\bar p}S_i(U_b;R)$, which is $\alpha_a(\xi)-\alpha_b(\xi)$. Thus $f^*$ is a difference of restrictions on each $U_a\cap U_b$.  So the condition of conjunctivity of the presheaf is that $\ker f^*\subset \im g^*$, which in this case is equivalent to $H^1(\Hom(A_*,R))=0$. The modules $I^{\bar p}S_i(\cdot ;R)$ are projective by \cite[Lemma 6.3.1]{GBF35}, and hence so are the direct sums of such modules  and their submodules \cite[Proposition I.4.5 and Corollary I.5.3]{HS}. Therefore, we can employ the Universal Coefficient Theorem, and it suffices to show that $H_1(A_*)=0$ and that $\Ext(H_0(A_*),R)=0$.

Let $\xi=\oplus \xi_j\in \oplus_j I^{\bar p}S_i(U_j;R)$ with $g(\oplus \xi_j)=0$. As an element of a direct sum, all but a finite number of $\xi_j$ must be trivial. We will perform an induction over the number of indices $j$ for which $\xi_j$ is nontrivial. 

First suppose there is just one index, say $k$, for which $\xi_k$ is nontrivial. As the restriction of $g$ to each summand $g: I^{\bar p}S_i(U_j;R)\to I^{\bar p}S_i(U;R)$ is injective, $g(\xi)=g(\xi_k)=0$ implies that $\xi_k=\xi=0$, in which case $\xi$ is certainly in the image of $f$. 

Next suppose we have shown that if $g(\xi)=0$ and $\xi=\oplus \xi_j$ with fewer than $m$ of the $\xi_j$ nontrivial then $\xi\in \im f$. Now let $\xi=\oplus_{j=1}^m\xi_j$ for some indices that we relabel as $1,\ldots, m$. The hypothesis is that $\sum_{j=1}^m \xi_j=0\in I^{\bar p}S_i(U;R)$. Now each chain $\xi_j$ has the form $\xi_j=\sum a_k\sigma_k$, where $a_k\in R$ and the $\sigma_k$ are singular simplices in $U$ (see \cite[Definition 6.2.1]{GBF35}). In particular, each simplex of $\xi_1$ with a non-zero coefficient must have image in $U_1$ as well as some other $U_j$, $2\leq j\leq m$, as otherwise this simplex could not end up with a zero coefficient in $\sum_{j=1}^m \xi_j$. It therefore follows from \cite[Proposition 6.5.2]{GBF35} that $\xi_1=\sum_{2\leq j\leq m}\xi_{1j}$ where $\xi_{1j}\in I^{\bar p}S_i(U_1\cap U_j;R)$. Now, consider $\eta=\oplus_{j=2}^m (-\xi_{1j})\in \oplus I^{\bar p}S_i(U_1\cap U_j;R)$. We have $g(\xi+f(\eta))=g(\xi)+gf(\eta)=0$, but since the image under $f$ of $\eta$ is $\left(-\sum_{2\leq j\leq m}\xi_{1j}\right)\oplus \xi_{12}\oplus\cdots \oplus \xi_{1m}$, we have  $$\xi+f(\eta)=\left(\xi_1- \sum_{2\leq j\leq m}\xi_{1j}\right)\oplus (\xi_2+\xi_{12})\oplus \cdots\oplus(\xi_m+\xi_{1m})=0\oplus (\xi_2+\xi_{12})\oplus \cdots\oplus(\xi_m+\xi_{1m}).$$ So $\xi+f(\eta)$ is nontrivial only for the indices $2\leq j\leq m$. By induction, $\xi+f(\eta)\in \im f$. Thus $\xi\in \im(f)$. Therefore we have $H_1(A_*)=0$. 

Turning to $H_0(A_*)=I^{\bar p}S_i(U;R)/ \im(g)$, we will show in the following lemma that this module is projective. Therefore, $\Ext(H_0(A_*),R)=0$, and the proposition now follows.
\end{proof}

\begin{lemma}
Let $\mc U$ be a covering of $X$, and let $I^{\bar p}S^{\mc U}_i(X;R)=\im\left(\oplus_{U\in\mc U} I^{\bar p}S_i(U;R)\xr{g} I^{\bar p}S_i(X;R)\right)$, where $g$ restricts to the inclusion on each summand. Then for each degree $i$ the module $I^{\bar p}S^{\mc U}_i(X;R)$ is a direct summand of $I^{\bar p}S_i(X;R)$ and so $I^{\bar p}S_i(X;R)/I^{\bar p}S^{\mc U}_i(X;R)$ is projective.
\end{lemma}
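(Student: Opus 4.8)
The plan is to realize $I^{\bar p}S^{\mc U}_i(X;R)$ as the intersection of $I^{\bar p}S_i(X;R)$ with a free direct summand of the ambient module of singular chains, so that the quotient $I^{\bar p}S_i(X;R)/I^{\bar p}S^{\mc U}_i(X;R)$ becomes a submodule of a free $R$-module and hence projective because $R$ is a Dedekind (so hereditary) domain. Projectivity of the quotient then splits the defining short exact sequence, which is exactly the direct-summand statement; the assertion that the quotient is projective is built into the argument.

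First I would recall the structure of the chain module. An element of $I^{\bar p}S_i(X;R)$ is a finite $R$-linear combination of $\bar p$-allowable singular $i$-simplices none of whose images lies in $X^{n-1}$, and the $R$-module $C_i$ of \emph{all} such combinations (dropping the allowability requirement) is free on that set of simplices; thus $I^{\bar p}S_i(X;R)\subseteq C_i$, cf.\ \cite[Definition 6.2.1 and Lemma 6.3.1]{GBF35}. Inside $C_i$ let $A^{\mc U}_i$ be the submodule spanned by those $\bar p$-allowable $i$-simplices, not contained in $X^{n-1}$, whose image lies in some member of $\mc U$. Being spanned by a subset of a basis of $C_i$, the submodule $A^{\mc U}_i$ is a direct summand of $C_i$, and the quotient $C_i/A^{\mc U}_i$ is again free on the complementary set of basis elements.

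The crux is the identification
\[
I^{\bar p}S^{\mc U}_i(X;R)=I^{\bar p}S_i(X;R)\cap A^{\mc U}_i .
\]
The inclusion $I^{\bar p}S^{\mc U}_i(X;R)\subseteq I^{\bar p}S_i(X;R)\cap A^{\mc U}_i$ is routine: for $\xi_U\in I^{\bar p}S_i(U;R)$ the chain $g(\xi_U)$ is $\bar p$-allowable, all of its simplices lie in $U$ and none in $X^{n-1}$, and it lies in $I^{\bar p}S_i(X;R)$ because $g$ is a chain map; the same then holds for finite sums. For the reverse inclusion, an $\eta$ in the larger module is represented, as an element of $C_i$, by an intersection chain each of whose simplices has image in some member of $\mc U$, so the covering/decomposition result \cite[Proposition 6.5.2]{GBF35}---the same tool used in the proof of Proposition \ref{P: conj}---writes $\eta=\sum_U\eta_U$ with $\eta_U\in I^{\bar p}S_i(U;R)$, exhibiting $\eta\in\im g$. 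Granting this, we obtain
\[
I^{\bar p}S_i(X;R)/I^{\bar p}S^{\mc U}_i(X;R)\;\cong\;\bigl(I^{\bar p}S_i(X;R)+A^{\mc U}_i\bigr)/A^{\mc U}_i\;\subseteq\;C_i/A^{\mc U}_i ,
\]
a submodule of a free $R$-module, hence projective (cf.\ the use of \cite[Corollary I.5.3]{HS} in the proof of Proposition \ref{P: conj}). The short exact sequence $0\to I^{\bar p}S^{\mc U}_i(X;R)\to I^{\bar p}S_i(X;R)\to I^{\bar p}S_i(X;R)/I^{\bar p}S^{\mc U}_i(X;R)\to 0$ therefore splits, so $I^{\bar p}S^{\mc U}_i(X;R)$ is a direct summand of $I^{\bar p}S_i(X;R)$ with complementary summand isomorphic to the projective quotient, as claimed.

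The only step that is not formal bookkeeping is the reverse inclusion $I^{\bar p}S_i(X;R)\cap A^{\mc U}_i\subseteq I^{\bar p}S^{\mc U}_i(X;R)$, and this is where I expect the actual content to lie: one must know that an intersection chain on $X$ all of whose individual simplices lie in members of a cover genuinely decomposes as a sum of intersection chains on those members (not merely that its \emph{support} is covered, which is automatic). This is exactly \cite[Proposition 6.5.2]{GBF35}, so the point to check is only that its hypotheses are met here, with ambient space $X$, open cover $\mc U$, and the standing hypothesis that each simplex of $\eta$ already has image in some cover element; everything else rests on the elementary facts that a submodule spanned by part of a basis is a direct summand and that submodules of free modules over a Dedekind domain are projective.
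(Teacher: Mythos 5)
Your argument is correct, but it takes a genuinely different route from the paper's, essentially reversing the logical flow. The paper first constructs an explicit splitting: by induction over degree via iterated generalized barycentric subdivision relative to the boundary, it builds a chain map $h: S_*(X;R)\to S^{\mc U}_*(X;R)$ splitting the inclusion of $\mc U$-small singular chains, then checks (using \cite[Corollary 6.3.10]{GBF35} that subdivision preserves intersection chains, together with \cite[Proposition 6.5.2]{GBF35}) that $h$ restricts to a splitting of $I^{\bar p}S^{\mc U}_i(X;R)\into I^{\bar p}S_i(X;R)$; projectivity of the quotient is then read off as the complementary summand inside the projective module $I^{\bar p}S_i(X;R)$. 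You instead prove projectivity of the quotient first, embedding it via the second isomorphism theorem into the free module $C_i/A^{\mc U}_i$, and deduce the splitting formally from that. The decisive nontrivial input is the same in both proofs, namely \cite[Proposition 6.5.2]{GBF35}, which gives $I^{\bar p}S^{\mc U}_i(X;R)=I^{\bar p}S_i(X;R)\cap S^{\mc U}_i(X;R)$ (your intersection with $A^{\mc U}_i$ reduces to this since $I^{\bar p}S_i(X;R)\subseteq C_i$), and you have correctly isolated it as the only substantive step. What your route buys is elimination of the subdivision machinery, including the delicate footnote about Munkres's Theorem 16.4 needed to make it rigorous; what it gives up is that your splitting is produced only degree by degree rather than as a chain map, but since the lemma is a statement about each fixed $i$ this loses nothing. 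You could also simplify slightly by taking $C_i=S_i(X;R)$ and $A^{\mc U}_i=S^{\mc U}_i(X;R)$ outright, since $I^{\bar p}S_i(X;R)\subseteq S_i(X;R)$ already; the intermediate free module on allowable simplices is a valid choice but not necessary.
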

\begin{proof}
The last claim follows from the first by identifying the quotient with the complementary summand to $I^{\bar p}S^{\mc U}_*(X;R)$. As $I^{\bar p}S_i(X;R)$ is projective \cite[Lemma 6.3.1]{GBF35} 
and $R$ is a Dedekind domain, every submodule is projective \cite[Section VII.5 and Theorem I.5.4]{CE}. 

To prove the first claim we consider the full singular chain complex $S_*(X;R)$ and let $S^{\mc U}_*(X;R)$ denote the subcomplex generated by the singular simplices whose supports are contained in some $U$ (i.e.\ $S^{\mc U}_*(X;R)=\im(\oplus S_*(U;R)\to S_*(X;R)$)). We will construct a chain map $h: S_*(X;R)\to S^{\mc U}_*(X;R)$ that splits the inclusion $S^{\mc U}_*(X;R)\to S_*(X;R)$. The construction is by induction over degree. We let $h$ be the identity on $S_0(X;R)=S^{\mc U}_0(X;R)$. Now suppose we have defined $h$ up through degree $i-1$ as a subdivision map (see \cite[Section 4.4.2]{GBF35}). For each $i$ simplex $\sigma$, the map $h$ has already been defined on $\bd \sigma$. If $\sigma$ is supported in some $U\in \mc U$, we let $h(\sigma)=\sigma$. Otherwise we can perform iterated generalized barycentric subdivision\footnote{See \cite[Section 16]{MK} for the definition of generalized barycentric subdivisions. Theorem 16.4 and its preceding lemmas of \cite{MK} have stronger hypotheses than the situation here, but also stronger conclusions. Nonetheless, the computations of \cite{MK} can be applied here. Here is a rough sketch: Pulling the open cover $\mc U$ back to a cover $\sigma^{-1}(\mc U)$ of the model simplex $\Delta^i$,  Step 1 of the proof of Lemma 16.3 shows that under a sufficiently iterated relative subdivision of $\Delta^i$ the simplices that intersect  $\bd \Delta^i$ can be made arbitrarily close to the simplices in the subdivision of $\bd \Delta^i$ coming from the induction; consequently each can be made to lie in some $\sigma^{-1}(U)$. Then further subdivision ensures that the ``interior'' simplices can be made $\sigma^{-1}(\mc U)$-small as in Step 4 of that proof.}
 of $\sigma$ relative to $h(\bd \sigma)$ to obtain a chain $\sigma'$ with $\bd \sigma'=h(\bd \sigma)$ and with each simplex of $\sigma'$ supported in some element of $\mc U$. Let $h(\sigma)=\sigma'$. Inductively, this gives a subdivision chain map $h: S_*(X;R)\to S^{\mc U}_*(X;R)$ that splits the inclusion. The restriction of $h$ to $I^{\bar p}S_i(X;R)$ has image in $I^{\bar p}S_i(X;R)$ by \cite[Corollary 6.3.10]{GBF35}, and since $I^{\bar p}S_i(X;R)\cap S^{\mc U}_i(X;R)=I^{\bar p}S^{\mc U}_i(X;R)$ by \cite[Proposition 6.5.2]{GBF35} we see that $h$ induces a map $I^{\bar p}S_i(X;R)\to I^{\bar p}S^{\mc U}_i(X;R)$. But $h$ remains the identity on $I^{\bar p}S^{\mc U}_i(X;R)\subset S^{\mc U}_i(X;R)$, so $h$ splits the inclusion 
$I^{\bar p}S^{\mc U}_i(X;R)\into I^{\bar p}S_i(X;R)$, as desired. 
\end{proof}

\begin{proposition}\label{P: flabby}
The sheaf $\mc I_{\bar p}\mc C^*$ of intersection cochains with coefficients in the \red{Dedekind domain} $R$ is flabby and hence soft and c-soft.
\end{proposition}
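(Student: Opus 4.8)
The plan is to prove that $\mc I_{\bar p}\mc C^*$ is flabby directly from the definition: for each degree $i$ and each open $V\subseteq X$, I would show that the restriction $\Gamma(X;\mc I_{\bar p}\mc C^i)\to\Gamma(V;\mc I_{\bar p}\mc C^i)$ is surjective. Softness and $c$-softness then come for free, since flabbiness implies $c$-softness on any space and softness on the paracompact space $X$ (see \cite[Chapter II]{BR}). The surjectivity will be assembled from two ingredients: (i) every section of $\mc I_{\bar p}\mc C^i$ over an open set is represented by an honest intersection cochain; and (ii) an intersection cochain on an open subset of $X$ extends to an intersection cochain on all of $X$. Granting these, a section $s\in\Gamma(V;\mc I_{\bar p}\mc C^i)$ lifts by (i) to some $\alpha\in I_{\bar p}S^i(V;R)$, then (ii) produces $\beta\in I_{\bar p}S^i(X;R)$ with $\beta|_V=\alpha$, and the image of $\beta$ in $\Gamma(X;\mc I_{\bar p}\mc C^i)$ restricts over $V$ to $s$; the coboundary plays no role, so it suffices to argue one degree at a time.

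Ingredient (i) is a direct consequence of Proposition \ref{P: conj}. Because the presheaf $W\mapsto I_{\bar p}S^*(W;R)$ is conjunctive, \cite[Theorem I.6.2]{BR} --- applied exactly as in the proof of Corollary \ref{C: cohomology}, now to an arbitrary open $W\subseteq X$ in place of $X$ --- gives that the canonical map $I_{\bar p}S^i(W;R)\to\Gamma(W;\mc I_{\bar p}\mc C^i)$ is surjective, with kernel the cochains of empty support. (This uses, as elsewhere in the paper, that open subsets of $X$ are paracompact.)

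Ingredient (ii) is the substantive step, and the one I expect to be the main obstacle, precisely because the non-GM complex $I^{\bar p}S_i(X;R)$ is a submodule of the free $R$-module $F$ on the singular $i$-simplices of $X$ not contained in $X^{n-1}$ and is \emph{not} itself free on a set of simplices, so one cannot simply ``extend a cochain by zero off $V$.'' Instead I would show that the inclusion $I^{\bar p}S_i(V;R)\hookrightarrow I^{\bar p}S_i(X;R)$ is a split monomorphism of $R$-modules; dualizing by $\Hom_R(-,R)$ then yields the required surjection $I_{\bar p}S^i(X;R)\to I_{\bar p}S^i(V;R)$. For the splitting: by \cite[Proposition 6.5.2]{GBF35}, $I^{\bar p}S_i(V;R)$ is precisely the set of chains in $I^{\bar p}S_i(X;R)$ all of whose simplices have image in $V$, so if $F_V\subseteq F$ denotes the free direct summand spanned by the simplices with image in $V$, then $I^{\bar p}S_i(V;R)=I^{\bar p}S_i(X;R)\cap F_V$ inside $F$. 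Hence the cokernel $I^{\bar p}S_i(X;R)/I^{\bar p}S_i(V;R)$ embeds in $F/F_V$, which is free; since $R$ is a Dedekind domain, every submodule of a free module is projective \cite[Section VII.5 and Theorem I.5.4]{CE}, so the cokernel is projective and the short exact sequence $0\to I^{\bar p}S_i(V;R)\to I^{\bar p}S_i(X;R)\to I^{\bar p}S_i(X;R)/I^{\bar p}S_i(V;R)\to 0$ splits. This is the same mechanism already used in the lemma supporting Proposition \ref{P: conj}.

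Assembling (i) and (ii) as described yields flabbiness of each $\mc I_{\bar p}\mc C^i$; softness and $c$-softness then follow as noted. The only place genuine care is required is ingredient (ii); everything else is formal sheaf theory together with the structural facts about non-GM intersection chains recalled above.
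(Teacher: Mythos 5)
Your argument is correct and follows the same two-step strategy as the paper: conjunctivity (Proposition \ref{P: conj}) lifts a sheaf section over $W$ to an honest cochain in $I_{\bar p}S^*(W;R)$, and dualizing a splitting of the chain-level inclusion $I^{\bar p}S_*(W;R)\hookrightarrow I^{\bar p}S_*(X;R)$ extends that cochain to all of $X$; its sheafification then restricts back to the original section. The only point of divergence is how ingredient (ii) is obtained: the paper cites \cite[Corollary 6.5.3]{GBF35} for the split injection, whereas you reprove it by observing that $I^{\bar p}S_i(V;R)=I^{\bar p}S_i(X;R)\cap F_V$ (via \cite[Proposition 6.5.2]{GBF35}), so the cokernel embeds in the free module $F/F_V$ and is therefore projective over the Dedekind domain $R$; this is a valid and self-contained substitute for the citation, and it correctly isolates the real subtlety (that one cannot extend a cochain by zero, since $I^{\bar p}S_i(X;R)$ is not free on a set of simplices). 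One small quibble: your remark that this is ``the same mechanism already used in the lemma supporting Proposition \ref{P: conj}'' overstates the resemblance --- that lemma does share the use of the hereditary property of Dedekind domains, but it produces its splitting by an explicit subdivision retraction rather than by a projective-cokernel argument.
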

\begin{proof}
Let $W$ be an open subspace of $X$ and let $s\in \Gamma(W;\mc I_{\bar p}\mc C^*)$. It is easy to see from the definition that the sheaf of intersection cochains on $W$, which we shall denote $\mc I_{\bar p}\mc C^*_W$, is isomorphic to the restriction $\mc I_{\bar p}\mc C^*|_W$, and so
$\Gamma(W;\mc I_{\bar p}\mc C^*) =\Gamma(W;\mc I_{\bar p}\mc C^*_W)$. Since the presheaf of intersection cochains is conjunctive, there is a surjection  $\phi: I_{\bar p}S^*(W; R)\onto \Gamma(W;\mc I_{\bar p}\mc C^*_W)$. Choose  $\bar s\in I_{\bar p}C^*(W; R)$ such that $\phi(\bar s)=s$. Now consider the restriction $r:I_{\bar p}S^*(X; R)\to I_{\bar p}C^*(W; R)$, which is the dual of the inclusion $I^{\bar p}S_*(W; R)\into I^{\bar p}S_*(X; R)$. This inclusion splits \cite[Corollary 6.5.3]{GBF35}, and so $r$ is surjective. Let $\td s$ be such that $r(\td s)=\bar s$. Then $\td s$ induces an element of $\Gamma(X;\mc I_{\bar p}\mc C^*)$ that agrees with $s$ on $W$. 
\end{proof}

\subsubsection{\texorpdfstring{$\mc I_{\bar p}\mc C^*$}{IC} is quasi-isomorphic to \texorpdfstring{$\mc P_{D\bar p}$}{P}}

We next relate $\mc I_{\bar p}\mc C^*$ to $\mc I^{D\bar p}\mc S^*$ and $\mc P_{D\bar p}^*$ when
 $X$ is \emph{locally $(\bar p,R)$-torsion free}.

\begin{proposition}\label{P: IC is IS}
Let $\mf R$ be the constant coefficient system on $X-X^{n-1}$ with stalks in the \red{Dedekind domain $R$},  and let $\mc O$ be the $\Z$-orientation sheaf on $X-X^{n-1}$. Suppose $X$ is locally $(\bar p,R)$-torsion free.
Then 
the sheaf $\mc I_{\bar p}\mc C^*$  of $\bar p$-perversity intersection cochains with $R$ coefficients is quasi-isomorphic to the Deligne sheaf $\mc P^*_{D\bar p}(\mf R)$. Consequently, $\mc I_{\bar p}\mc C^*$  is also quasi-isomorphic to 
$\mc I^{D\bar p}\mc S^*(\mf R\otimes_{\Z} \mc O)$, the sheaf of $D\bar p$-perversity singular intersection chains with coefficients $\mf R\otimes \mc O$.
\end{proposition}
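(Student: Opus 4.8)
The plan is to verify that $\mc I_{\bar p}\mc C^*$ satisfies the Goresky-MacPherson axioms Ax1 for the perversity $D\bar p$ and the constant coefficient system $\mf R$ on $U_1=X-X^{n-1}$; by the axiomatic characterization recalled in Section \ref{R: uniqueness}, this identifies $\mc I_{\bar p}\mc C^*$ with $\mc P^*_{D\bar p}(\mf R)$ in $D(X)$, and the last assertion will then follow by transitivity of quasi-isomorphism together with some bookkeeping with the orientation sheaf. The normalization condition (1) is nearly immediate: $\mc I_{\bar p}\mc C^i=0$ for $i<0$ by construction, and over the manifold $U_1$ there are no singular strata, so $I_{\bar p}S^*(U;R)$ coincides with the ordinary singular cochain complex $S^*(U;R)$ for $U\subseteq U_1$; hence $\mc I_{\bar p}\mc C^*|_{U_1}$ is the sheaf of singular cochains of $U_1$, which is quasi-isomorphic to the constant sheaf $\mf R$ (cf.\ \cite[Section I.7]{BR}). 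Boundedness above will fall out of condition (2).

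The core of the proof is a local computation at a point $x$ of a singular stratum $Z$ of codimension $k$. Since $\mc I_{\bar p}\mc C^*$ is the sheafification of $U\mapsto I_{\bar p}S^*(U;R)$, its stalk cohomology at $x$ is $H^i(\mc I_{\bar p}\mc C^*_x)=\varinjlim_{U\ni x}I_{\bar p}H^i(U;R)$; choosing distinguished neighborhoods $U\cong\R^{n-k}\times\mathring c L_x$, with $L_x$ the compact $(k-1)$-dimensional link, and using invariance of intersection cohomology under products with Euclidean factors, this colimit is $I_{\bar p}H^i(\mathring c L_x;R)$. Likewise, since $\mc I_{\bar p}\mc C^*$ is flabby with conjunctive defining presheaf (Propositions \ref{P: conj} and \ref{P: flabby}), the stalk cohomology of $Ri_{k*}(\mc I_{\bar p}\mc C^*|_{U_k})$ at $x$ is $\varinjlim_U I_{\bar p}H^i(U\cap U_k;R)\cong I_{\bar p}H^i(\mathring c L_x-\{v\};R)\cong I_{\bar p}H^i(L_x;R)$, using $U\cap U_k\cong\R^{n-k}\times(\mathring c L_x-\{v\})$ and $\mathring c L_x-\{v\}\cong L_x\times\R$, and under these identifications the attachment map $\alpha_k$ induces on stalk cohomology exactly the restriction $I_{\bar p}H^i(\mathring c L_x;R)\to I_{\bar p}H^i(L_x;R)$. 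Now $I_{\bar p}S^*(\mathring c L_x;R)=\Hom_R(I^{\bar p}S_*(\mathring c L_x;R),R)$ with the chain modules $R$-projective ($R$ being a Dedekind domain), so the universal coefficient theorem together with the (non-GM) cone formula for intersection homology \cite[Section 6.2]{GBF35}---which gives $I^{\bar p}H_i(\mathring c L_x;R)\cong I^{\bar p}H_i(L_x;R)$ for $i\le D\bar p(Z)$ and $0$ for $i>D\bar p(Z)$---yields a cone formula for $I_{\bar p}H^*(\mathring c L_x;R)$: the restriction to $I_{\bar p}H^i(L_x;R)$ is an isomorphism for $i\le D\bar p(Z)$, which is condition (3), while for $i>D\bar p(Z)$ the $\Hom$ summand of the universal coefficient sequence vanishes and the only possibly nonzero term is $\Ext_R(I^{\bar p}H_{D\bar p(Z)}(L_x;R),R)$, concentrated in degree $D\bar p(Z)+1$.

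This residual $\Ext$ term is exactly what the hypothesis removes: since $X$ is locally $(\bar p,R)$-torsion free, the module $I^{\bar p}H_{\codim(Z)-2-\bar p(Z)}(L_x;R)=I^{\bar p}H_{D\bar p(Z)}(L_x;R)$ is $R$-torsion free, so its $\Ext$ vanishes and $I_{\bar p}H^i(\mathring c L_x;R)=0$ for all $i>D\bar p(Z)$; this is condition (2), and it also supplies the boundedness needed in (1). With conditions (1)--(3) verified, the axiomatic characterization yields $\mc I_{\bar p}\mc C^*\sim\mc P^*_{D\bar p}(\mf R)$.

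For the final assertion, recall (cf.\ Section \ref{S: SIC} and \cite[Section 3]{GBF10}) that for any coefficient system $\mc G$ on $U_1$ the chain sheaf $\mc I^{D\bar p}\mc S^*(\mc G)$ is a $C$-ready representative of $\mc P^*_{D\bar p}(\mc G\otimes_{\Z}\mc O)$; the orientation twist is forced by the fact that the stalk of $\mc H^0(\mc I^{D\bar p}\mc S^*(\mc G))$ at a regular point $x$ is the local homology module $H_n(\R^n,\R^n-x;\mc G)\cong\mc G_x\otimes_{\Z}\mc O_x$. Applying this with $\mc G=\mf R\otimes_{\Z}\mc O$ and using that the orientation sheaf is $2$-torsion, so that $\mc O\otimes_{\Z}\mc O\cong\Z$, gives $\mc I^{D\bar p}\mc S^*(\mf R\otimes_{\Z}\mc O)\sim\mc P^*_{D\bar p}(\mf R)\sim\mc I_{\bar p}\mc C^*$. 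The main obstacle is the middle step: establishing the cone formula for $I_{\bar p}H^*$ over a Dedekind domain and applying the stratified homotopy and product identifications correctly within the non-GM theory (whose cone-point behavior differs subtly from the Goresky-MacPherson one), so as to see that the local torsion-free hypothesis is precisely what makes condition (2) hold without an off-by-one shift in degree.
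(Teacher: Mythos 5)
Your proposal follows essentially the same route as the paper's proof: verify the Goresky--MacPherson axioms for perversity $D\bar p$, observing that $\mc I_{\bar p}\mc C^*|_{U_1}$ is the ordinary singular cochain sheaf for condition (1), computing stalk cohomology via distinguished neighborhoods together with the universal coefficient theorem and the non-GM cone formula for conditions (2) and (3), and using the locally $(\bar p,R)$-torsion free hypothesis to kill the single residual $\Ext$ term in degree $D\bar p(Z)+1$. The extra words you spend unwinding the $Ri_{k*}$ stalk and the orientation twist in the final step (which the paper dispatches by citing \cite[Theorem 3.6]{GBF23}) are correct but do not change the argument.
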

\begin{proof}
The last statement follows from the first by \cite[Theorem 3.6]{GBF23}. To prove the first, we employ \cite[Proposition 3.8]{GBF23}, which is the axiomatic characterization of Deligne sheaves with general perversities. These are analogous to the Goresky-MacPherson axioms \cite{GM2}, though as in \cite[Section V.2]{Bo} it is not necessary to make constructibility assumptions. 

Let $U_1=X-X^{n-1}$. When we restrict $I_{\bar p}S^*(X;R)$ to $I_{\bar p}S^*(U_1;R)$, we get simply $S^*(U_1;R)$, the ordinary singular cochain complex. It follows that the restriction of $\mc I_{\bar p}\mc C^*$ to $U_1$ is  the ordinary sheaf of singular cochains. In particular, it is quasi-isomorphic to the constant sheaf $\mf R_{U_1}$ on $U_1$  with stalks $R$. It is also evident that $\mc I_{\bar p}\mc C^i=0$ for $i<0$. Together these statements give the first axiom of \cite[Proposition 3.8]{GBF23} except for the bounded above condition. But, as in \cite[Remark V.2.7.b]{Bo}, we only need $\mc H^i(\mc I_{\bar p}\mc C^*)$ to be bounded above, and this will follow from the computations below.

Next, let $x\in Z$, where $Z$ is a singular stratum. Then 
\begin{align*}
H^i(\mc I_{\bar p}\mc C^*_x)=\dlim_{x\in U} \H^i(U;\mc I_{\bar p}\mc C^*)\cong \dlim_{x\in U} I_{\bar p}H^i(U;R).
\end{align*}
To compute the limit we are free to restrict $U$ to members of the cofinal family of distinguished neighborhoods of $x$ of the form $\R^{n-k}\times cL^{k-1}$. Using the Universal Coefficient Theorem and stratum preserving homotopy invariance \cite[Theorem 7.1.4 and Corollary 6.3.8]{GBF35},
\begin{align*}
I_{\bar p}H^i(U;R)&\cong  \Hom(I^{\bar p}H_i(U;R),R)\oplus \Ext(I^{\bar p}H_{i-1}(U;R),R)\\
&\cong  \Hom(I^{\bar p}H_i(cL;R),R)\oplus \Ext(I^{\bar p}H_{i-1}(cL;R),R).
\end{align*}
Since $X$ is locally $(\bar p,R)$-torsion free, $\Ext(I^{\bar p}H_{i-1}(cL;R),R)=0$ for $i=\codim(Z)-1-\bar p(Z)$, and we see that $H^i(\mc I_{\bar p}\mc C^*_x)$ vanishes for $i\geq \codim(Z)-1-\bar p(Z)$, i.e. for $i>D\bar p(Z)$, from the cone formula for intersection homology \cite[Theorem 6.2.13]{GBF35}. This is the second axiom.

Lastly, we need to show the attaching axiom holds. By \cite[page 50]{Bo}, this is equivalent to showing that 
$H^i(\mc I_{\bar p}\mc C^*_x)\cong \dlim \H^i(U-U\cap Z; \mc I_{\bar p}\mc C^*)$ when $x\in Z$ with $Z$ a singular stratum and when $i\leq D\bar p(Z)$. But again by stratified homotopy invariance and the intersection cohomology cone formula \cite[Proposition 7.1.5]{GBF35} we have that that $I_{\bar p}H^i(U;R)\cong  I_{\bar p}H^i(L;R)$, for $i\leq D\bar p(Z)$. More specifically, these isomorphisms are induced by the inclusion maps $L\into U-U\cap Z\into U$. Applying the naturality of intersection cohomology with respect to inclusion maps, and that $I_{\bar p}H^*(V;R)\cong \H^*(V;\mc I_{\bar p}\mc C^*)$ on open sets, 
it now follows from an easy argument that, in this degree range, $$H^i(\mc I_{\bar p}\mc C^*_x)\cong \dlim I_{\bar p}H^i(U;R)\cong \dlim I_{\bar p}H^i(U-U\cap Z;R)\cong \dlim \H^i(U-U\cap Z; \mc I_{\bar p}\mc C^*).$$ 
This completes our verification of the needed axioms.
\end{proof}

\begin{corollary}\label{C: IC is IS}
If $F$ is a field, $X$ is $F$-orientable, and we take as coefficients the constant sheaf with stalk $F$, then $\mc I_{\bar p}\mc C^*$ is quasi-isomorphic to $\mc I^{D\bar p}\mc S^*$ and is a $C$-ready and $C_c$-ready representative of $\mc P^*_{D\bar p}$. 
\end{corollary}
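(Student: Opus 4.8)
The plan is to obtain this as an immediate specialization of Proposition \ref{P: IC is IS}. Taking $R=F$, recall from Section \ref{S: ihreview} that over a field every stratified pseudomanifold is automatically locally $(\bar p,F)$-torsion free, so the hypothesis of Proposition \ref{P: IC is IS} holds with no extra condition on $X$ beyond those already standing. That proposition then gives that $\mc I_{\bar p}\mc C^*$ (with coefficients in $F$) is quasi-isomorphic both to the Deligne sheaf $\mc P^*_{D\bar p}(\mf F)$ and to $\mc I^{D\bar p}\mc S^*(\mf F\otimes_{\Z}\mc O)$, where $\mc O$ is the $\Z$-orientation sheaf on $U_1=X-X^{n-1}$.

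Next I would identify $\mc I^{D\bar p}\mc S^*(\mf F\otimes_{\Z}\mc O)$ with $\mc I^{D\bar p}\mc S^*$. Since $F$ is a field, the Universal Coefficient Theorem identifies $\mf F\otimes_{\Z}\mc O$ with the $F$-orientation sheaf on $U_1$ (the stalk at $x$ being $H_n(U_1,U_1-x;F)$, with vanishing $\Ext$ term), and the hypothesis that $X$ is $F$-orientable says precisely that this $F$-orientation sheaf is isomorphic to the constant sheaf $\mf F$. Functoriality of the intersection chain sheaf construction $\mc G\mapsto\mc I^{D\bar p}\mc S^*(\mc G)$ in the coefficient system then yields $\mc I^{D\bar p}\mc S^*(\mf F\otimes_{\Z}\mc O)\cong\mc I^{D\bar p}\mc S^*(\mf F)=\mc I^{D\bar p}\mc S^*$. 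Combining this with the previous paragraph gives the asserted chain of quasi-isomorphisms $\mc I_{\bar p}\mc C^*\sim\mc P^*_{D\bar p}\sim\mc I^{D\bar p}\mc S^*$.

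For the $C$-readiness claims I would simply invoke the relevant instances of Lemma \ref{L: C-ready} already established in the text. The sheaf $\mc I_{\bar p}\mc C^*$ is flabby by Proposition \ref{P: flabby} and is concentrated in nonnegative degrees, so $\mc H^i(\mc I_{\bar p}\mc C^*)=0$ for $i<0$, and Lemma \ref{L: C-ready} shows it is $C$-ready and $C_c$-ready (as recorded in the proof of Corollary \ref{C: cohomology}). Likewise $\mc I^{D\bar p}\mc S^*$ is homotopically fine, hence $c$-homotopically fine, and its stalk cohomology $H^i(\mc I^{D\bar p}\mc S^*_x)$ vanishes for $|i|$ sufficiently large, so it too is $C$-ready and $C_c$-ready by Lemma \ref{L: C-ready}, as noted in Section \ref{S: SIC}. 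Since each of the two sheaves is quasi-isomorphic to $\mc P^*_{D\bar p}$, each is by definition a $C$-ready and $C_c$-ready representative of $\mc P^*_{D\bar p}$.

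I do not expect a genuine obstacle: the corollary is essentially a matter of packaging the preceding results. The only two points worth making explicit are (i) that passing to field coefficients makes the local torsion-freeness hypothesis vacuous, so Proposition \ref{P: IC is IS} applies unconditionally here, and (ii) that $F$-orientability is exactly what is needed to replace the twisted coefficient system $\mf F\otimes_{\Z}\mc O$ by the constant system $\mf F$ — without it one would only get a quasi-isomorphism of $\mc I_{\bar p}\mc C^*$ with the intersection chain sheaf carrying the twisted coefficients.
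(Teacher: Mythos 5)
Your proof is correct and is essentially the argument the paper intends (the corollary appears without a written proof as a direct specialization of Proposition \ref{P: IC is IS} once the field hypothesis makes local torsion-freeness automatic, combined with the $C$-readiness of $\mc I_{\bar p}\mc C^*$ from Proposition \ref{P: flabby} and Lemma \ref{L: C-ready}). One cosmetic slip: in the UCT aside, the correction term for homology is a $\mathrm{Tor}$ term rather than an $\Ext$ term, and it vanishes because $H_{n-1}(U_1,U_1-x;\Z)=0$, not because $F$ is a field; this does not affect the argument.
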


\section{Sheafification of the cup product}\label{S: sheaf cup}

An intersection cohomology cup product 
$$ I_{\bar p}H^i(X;F)\otimes I_{\bar q}H^j(X;F)\to I_{\bar r}H^{i+j}(X;F)$$
was introduced in \cite{GBF25} for coefficients in a field and for $\bar p,\bar q,\bar r$ satisfying $D\bar r\geq D\bar p+D\bar q$. In \cite[Chapter 7]{GBF35}, the cup product was extended to coefficients in a Dedekind domain $R$ assuming a locally torsion free condition (which is automatic for field coefficients). 
We now turn toward a sheaf theoretic description of this cup product.

\red{Let us first briefly review the definition of the cup product as given in  \cite[Section 7.2.2]{GBF35}. With $R$ coefficients mostly tacit, it consists of the composition
\begin{equation*}
\resizebox{.95\hsize}{!}{
\begin{diagram}
I_{\bar p}H^{*}(X)\otimes I_{\bar q}H^*(X)&\rTo&H^*(I_{\bar p}S^*(X)\otimes I_{\bar q}S^*(X))&\rTo^\Theta&H^*(\Hom(I^{\bar p}S_*(X)\otimes I^{\bar q}S_*(X),R))&\lTo^{\times^*}_\cong&I_{Q}H^*(X\times X)&\rTo^{d_2^*}& I_{\bar r}H^*(X).
\end{diagram}}
\end{equation*}
Here the map on the left takes the tensor product of cohomology classes to the product class, i.e.\ $[\alpha]\otimes [\beta]\to [\alpha\otimes \beta]$; the map $\Theta$ is induced at the cochain level so that $\Theta(\alpha\otimes \beta)(x\otimes y)=(-1)^{|x||\beta|}\alpha(x)\beta(y)$; the map $\times^*$ is the $\Hom$ dual of the chain level cross product, and $d_2^*$ is the $\Hom$ dual of the diagonal map. For ordinary singular cochains, this is precisely the classical definition of the cup product that can be found, for example, in Dold \cite[Section VII.8]{Dold}, as $\times$ is an Eilenberg-Zilber map in the sense of \cite[Section VI.12]{Dold}. The subtlety in the present context lies in demonstrating that there is a choice of perversity $Q$ on the product space so that all the maps remain well defined and so that the cross product induces a cohomology isomorphism.}

The following theorem says that the cup product in intersection cohomology can be realized sheaf theoretically by the Goresky-MacPherson sheaf pairing \cite[Section 5.2]{GM2} (see also \cite[Section V.9.C]{Bo}). 
 \green{As applications, we will prove in Section \ref{S: mult} that the intersection (co)homology de Rham theorem of Brasselet, Hector, and Saralegi \cite{BHS} (see also Saralegi \cite{Sa05} for general perversities) preserves multiplicative structures and that the singular cochain intersection cohomology cup product is isomorphic to the blown-up intersection cohomology cup product of Chataur, Saralegi-Aranguren, and Tanr\'e \cite{CST7} when they are both defined.}

\begin{theorem}\label{T: cup product}
Suppose that $R$ is a \red{Dedekind domain}, that $X$ is a locally $(\bar p,R)$-torsion free or locally $(\bar q,R)$-torsion free stratified pseudomanifold, and that $D\bar r\geq D\bar p+D\bar q$. There exists a  morphism $\td \cup: \mc I_{\bar p}\mc C^*\otimes \mc I_{\bar q}\mc C^*\to \mc I_{\bar r}\mc C^*$ in the derived category $D(X)$ such that the induced map on hypercohomology fits into the following commutative diagram, in which the vertical maps are isomorphisms induced by sheafification and the lower left horizontal map is induced by tensor product of sheaf sections:
\begin{diagram}[LaTeXeqno]\label{D: cup product}
I_{\bar p}H^*(X;R)\otimes I_{\bar q}H^*(X;R)&&\rTo^\cup && I_{\bar r}H^*(X;R)\\
\dTo^\cong&&&&\dTo_\cong\\
\H^*(X;\mc I_{\bar p}\mc C^*)\otimes \H^*(X;\mc I_{\bar q}\mc C^*)&\rTo  &
\H^*(X;\mc I_{\bar p}\mc C^*\otimes \mc I_{\bar q}\mc C^*)&\rTo^{\td \cup}&   \H^*(X;\mc  I_{\bar r}\mc C^*).
\end{diagram}
Furthermore, 
the morphism $\td \cup$ is a cohomological representative of the Goresky-MacPherson sheaf-theoretic intersection pairing  $\mc P_{\bar p}^*\otimes\mc P_{\bar q}^*\to \mc P_{\bar r}^*$ (see \cite[Section 5.2]{GM2} or \cite[Section V.9.C]{Bo}). In particular, at each $x\in X-X^{n-1}$, the map $\td \cup$ induces the product map $1\otimes 1\to 1$ of germs of $0$-cocycles. 
\end{theorem}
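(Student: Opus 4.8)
The plan is to construct $\td\cup$ by first building a cochain-level pairing of presheaves, sheafifying it, and then verifying it lies in the correct isomorphism class in $D(X)$ by checking its effect on stalks at regular points. Concretely, the chain-level cup product recalled above is the composite $I_{\bar p}S^*(X)\otimes I_{\bar q}S^*(X)\xrightarrow{\Theta} \Hom(I^{\bar p}S_*(X)\otimes I^{\bar q}S_*(X),R)\xleftarrow[\cong]{\times^*} I_Q S^*(X\times X)\xrightarrow{d_2^*} I_{\bar r}S^*(X)$. The first step is to observe that all of these maps are natural with respect to restriction to open subsets $U\subseteq X$, at least after passing to appropriate subcomplexes generated by small simplices (using Proposition 6.5.2 and the subdivision results of \cite{GBF35} invoked in the proof of Proposition \ref{P: conj}), so that upon sheafifying we obtain a morphism of sheaf complexes $\mc I_{\bar p}\mc C^*\otimes \mc I_{\bar q}\mc C^* \to \mc I_{\bar r}\mc C^*$. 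Here one must be slightly careful: the cross-product quasi-isomorphism $\times$ only becomes an isomorphism on the nose after restricting to small simplices, so in the derived category $\times^*$ must be inverted, producing a morphism in $D(X)$ rather than $C(X)$; this is where the ``roof'' formalism of Section \ref{S: hyper} enters. I would set up the roof $\mc I_{\bar p}\mc C^*\otimes \mc I_{\bar q}\mc C^* \leftarrow (\text{sheafified } I_QS^*\text{-type complex}) \to \mc I_{\bar r}\mc C^*$, with the backward arrow a quasi-isomorphism because the K\"unneth theorem of \cite{GBF20} (and its Dedekind-domain version in \cite[Section 6.4]{GBF35}) makes $\times$ a quasi-isomorphism stalkwise.

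The second step is to identify this morphism, up to the scalar ambiguity discussed after the Deligne sheaf axioms in Section \ref{R: uniqueness}, with the Goresky-MacPherson sheaf pairing. By the discussion there — specifically \cite[Proposition V.9.14]{Bo} together with \cite[Theorem 4.6]{GBF23} — any morphism $\mc P_{\bar p}^*\otimes\mc P_{\bar q}^*\to\mc P_{\bar r}^*$ in $D(X)$ (noting $\mc I_{\bar p}\mc C^*\simeq\mc P^*_{D\bar p}$, etc., by Corollary \ref{C: IC is IS}, and reindexing the perversity hypothesis $D\bar r\geq D\bar p+D\bar q$ accordingly) is completely determined by the family of induced pairings $\mc H^0(\mc I_{\bar p}\mc C^*)_x\otimes\mc H^0(\mc I_{\bar q}\mc C^*)_x\to\mc H^0(\mc I_{\bar r}\mc C^*)_x$ at one point $x$ in each connected component of $U_1=X-X^{n-1}$. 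So it suffices to compute the stalk map at a single regular point and check that it is the multiplication map $1\otimes 1\mapsto 1$. Over $U_1$ all three sheaves restrict to the ordinary sheaf of singular cochains (quasi-isomorphic to the constant sheaf $\mc R$), and the chain-level construction restricts there to the classical singular cup product, as noted in the paper's review of the definition (it is literally Dold's \cite[Section VII.8]{Dold}). Thus the computation reduces to the standard fact that the singular cup product sends $1\smile 1 = 1$ on $H^0$ of a point (or a small Euclidean ball), which gives both that $\td\cup$ is a cohomological representative of the Goresky-MacPherson pairing and the final claimed normalization.

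The third step is to verify commutativity of diagram \eqref{D: cup product}. Here the left vertical maps are the sheafification isomorphisms $\sigma$ of Corollary \ref{C: cohomology}, and the bottom-left horizontal map is the canonical map $\H^*(X;\mc A)\otimes\H^*(X;\mc B)\to\H^*(X;\mc A\otimes\mc B)$. Commutativity amounts to saying that sheafification intertwines the chain-level cup product with its sheafified version — but since $\td\cup$ was \emph{defined} by sheafifying the very same chain-level maps, this is essentially a naturality statement: the global-sections functor applied to the presheaf-level composite, followed by sheafification, agrees with sheafifying first and then taking hypercohomology. One uses that $\mc I_{\bar p}\mc C^*$, $\mc I_{\bar q}\mc C^*$, $\mc I_{\bar r}\mc C^*$ are all flabby (Proposition \ref{P: flabby}), hence $C$-ready (Lemma \ref{L: C-ready}), so that Lemma \ref{L: rep} guarantees the hypercohomology map induced by the roof is well-defined and computable on global sections; tracking the global sections through the roof recovers exactly the chain-level cup product up to the presheaf-to-sheaf surjection, which induces an isomorphism on cohomology because the cochains-with-zero-support complex is acyclic.

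I expect the main obstacle to be the first step — making the cross product and diagonal maps genuinely compatible with sheafification. The diagonal map $d_2:X\to X\times X$ and the pullback $d_2^*$ do not obviously respect the support conditions needed for restriction to open sets, and the Eilenberg--Zilber/cross-product map is only a chain homotopy equivalence, not a strict isomorphism, on intersection chains; so some care with small-simplex subcomplexes, the operadic/naturality properties of the Eilenberg--Zilber maps, and the perversity $Q_{\{\bar p,\bar q\}}$ on $X\times X$ versus its restriction to a neighborhood of the diagonal is required. Once the morphism in $D(X)$ is correctly set up, the rest — the stalk computation and the diagram chase — follows from the already-established uniqueness machinery of Section \ref{R: uniqueness} and the readiness lemmas of Section \ref{S: hyper}.
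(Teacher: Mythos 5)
Your proposal follows essentially the same route as the paper: sheafify the chain-level roof $I_{\bar p}S^*\otimes I_{\bar q}S^*\xrightarrow{\Theta}\Hom(I^{\bar p}S_*\otimes I^{\bar q}S_*,R)\xleftarrow{\times^*}I_QS^*(U\times U)\xrightarrow{d_2^*}I_{\bar r}S^*$, note the backward arrow is a quasi-isomorphism, get a morphism in $D(X)$, and pin it down as the Goresky-MacPherson pairing by the uniqueness-on-$U_1$-stalks argument from Section \ref{R: uniqueness}. The commutativity of diagram \eqref{D: cup product} then falls out of functoriality of sheafification plus the $C$-readiness of the sheaf complexes involved, exactly as you outline.

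One remark on your anticipated obstacle: the worry that one must pass to small-simplex subcomplexes to make $d_2^*$ and $\times^*$ compatible with restriction is unfounded. The presheaf assigns to $U$ the group $I_QS^*(U\times U)$, the cross product and diagonal are both natural with respect to inclusions of open sets $V\hookrightarrow U$ (and hence $V\times V\hookrightarrow U\times U$), and so the whole roof diagram restricts as it stands; no subdivision or smallness condition is needed. The subdivision machinery from the proof of Proposition \ref{P: conj} serves only to establish conjunctivity of the cochain presheaf, not to build the roof.
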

\begin{proof}
 
Suppose $D\bar r\geq D\bar p+D\bar q$ and that $Q=Q_{\{\bar p,\bar q\}}$ is the maximal $(\bar p,\bar q)$-compatible perversity as defined in \cite[7.2.4]{GBF35}. For any open $U\subset X$, we have the following diagram of maps; we leave the coefficients tacit:

\begin{diagram}[LaTeXeqno]\label{E: cup roof}
&&I_{\bar p}S^*(U)\otimes I_{\bar q}S^*(U)&&&&I_{Q}S^*(U\times U)\\
&\ldTo^=&&\rdTo^\Theta&&\ldTo^{\times^*}&&\rdTo^{d_2^*}\\
I_{\bar p}S^*(U)\otimes I_{\bar q}S^*(U)&& &
 &\Hom(I^{\bar p}S_*(U)\otimes I^{\bar q}S_*(U),R)&& &&      I_{\bar r}S^*(U).
\end{diagram}
The leftmost arrow is the identity, but we use it to get into the form of a roof diagram. The morphism $\Theta$ is defined by $\Theta(\alpha\otimes \beta)(x\otimes y)=(-1)^{|x||\beta|}\alpha(x)\beta(y)$ (see \cite[Section 7.2.2]{GBF35}), $\times^*$ is the $\Hom$ dual of the chain cross product (called $IAW$ in \cite{GBF35}), and $d_2^*$ is the $\Hom$ dual of the map induced by the diagonal $X\to X\times X$; this last map is allowable by \cite[Lemma 7.2.7]{GBF35} and the cross product is allowable by \cite[Theorem 6.3.19]{GBF35}. If $V\subset U\subset X$, then the entire diagram restricts by naturality, and we obtain a diagram of presheaves over $X$. Sheafifying yields a diagram of sheaves over $X$.
After sheafifying, the rightmost term is $\mc I_{\bar r}\mc C^*$, and the leftmost term is $\mc I_{\bar p}\mc C^*\otimes \mc I_{\bar q}\mc C^*$, which also represents $\mc I_{\bar p}\mc C^*\overset{L}{\otimes} \mc I_{\bar q}\mc C^*$ as each $\mc I_{\bar p}\mc C^*_x$ is torsion free and hence flat \cite[Proposition 4.20]{LAMMOD}. Furthermore, since $\mc I_{\bar p}\mc C^*$ and $\mc I_{\bar q}\mc C^*$ are each flabby, and hence soft and $c$-soft, $\mc I_{\bar p}\mc C^*\overset{L}{\otimes} \mc I_{\bar q}\mc C^*$ is soft and $c$-soft, and hence $C$-ready and $C_c$-ready, by \cite[Corollary II.16.31]{BR}.
 From the intersection homology K\"unneth theorem \cite[Theorem 6.4.14]{GBF35},
the cross product here is a chain homotopy equivalences, and so
 both arrows pointing down and to the left are quasi-isomorphisms. Thus the sheaf version of this diagram represents the concatenation of two morphisms in the derived category $D(X)$ and can be extended to a composition morphism $\td \cup\in \Mor_{D(X)}(\mc I_{\bar p}\mc C^*\otimes \mc I_{\bar q}\mc C^*,  \mc I_{\bar r}\mc C^*)$. 

We claim this morphism induces a  sheaf map compatible with the cup product. In particular, recall that for  a presheaf $P^*$, both the sheafification from $P^*$ to  $\ms S^*$ and the forming of injective resolutions  $\ms S^*\into \mc I^*$ are functorial between the appropriate chain homotopy categories. Applying $H^*$ to obtain a map $H^*(P^*(X))\to H^*(X;\mc I^*)=\H^*(X;\ms S^*)$ is also functorial. 
Thus, the above diagram yields a diagram of the following form, in which $\mc{\text{Sh}}$ is the sheafification functor over $X$.

\begin{equation*}
\resizebox{.95\hsize}{!}{
\begin{diagram}
I_{\bar p}H^{*}(X)\otimes I_{\bar q}H^*(X)&&&&H^*(I_{\bar p}S^{*}(X)\otimes I_{\bar q}S^*(X))&&&&I_{Q}H^*(X\times X)\\
\dTo&\rdTo&&\ldTo^=&&\rdTo^\Theta&&\ldTo^{\times^*}&&\rdTo^{d_2^*}\\
&&H^*(I_{\bar p}S^*(X)\otimes I_{\bar q}S^*(X))&&\dTo &
 &H^*(\Hom(I^{\bar p}S_*(X)\otimes I^{\bar q}S_*(X),R))&&\dTo && I_{\bar r}H^*(X)\\
\H^*(X;I_{\bar p}\mc C^*)\otimes \H^*(X;I_{\bar q}\mc C^*)&&&&\H^*(X;\mc I_{\bar p}\mc C^*\otimes \mc I_{\bar q}\mc C^*)&&&&\H^*(X; \mc{\text{Sh}}(I_{Q}S^*))\\
&\rdTo&\dTo&\ldTo^=&&\rdTo^\Theta&\dTo&\ldTo^{\times^*}&&\rdTo^{d_2^*}&\dTo\\
&&\H^*(X;\mc I_{\bar p}\mc C^*\otimes \mc I_{\bar q}\mc C^*)&& &
 &\H^*(X;\mc{\text{Sh}}(\Hom(I^{\bar p}C_*\otimes I^{\bar q}C_*,R)))&& &&  \H^*(X;\mc  I_{\bar r}\mc C^*).
\end{diagram}}
\end{equation*}

The maps on the left are simply the maps that take products of cohomology classes to the product cohomology class: $[\alpha]\otimes [\beta]\to[\alpha\otimes \beta]$. 
The composition across the top of the diagram is precisely the cup product as defined in \cite[Section 7.2.2]{GBF35}. The composition across the bottom represents the map on hypercohomology obtained from our morphism $\td \cup$. These bottom maps and cohomology groups technically depend on choices of injective resolutions, but since $\mc I_{\bar p}\mc C^*\otimes \mc I_{\bar q}\mc C^*$ and $\mc  I_{\bar s}\mc C^*$ are $C$-ready for any $\bar s$, we can assume $\H^*(X;\mc I_{\bar p}\mc C^*\otimes \mc I_{\bar q}\mc C^*)=H^*(\Gamma(X;\mc I_{\bar p}\mc C^*\otimes \mc I_{\bar q}\mc C^*))$ and $\H^*(X;\mc  I_{\bar s}\mc C^*)=H^*(\Gamma(X;\mc  I_{\bar s}\mc C^*))$, and then, as in Lemma \ref{L: rep}, the resulting cohomology map 
$$H^*(\Gamma(X;\mc I_{\bar p}\mc C^*))\otimes H^*(\Gamma(X;\mc I_{\bar q}\mc C^*))\to H^*(\Gamma(X;\mc I_{\bar p}\mc C^*\otimes \mc I_{\bar q}\mc C^*))\to H^*(\Gamma(X;\mc  I_{\bar r}\mc C^*))$$ along the bottom is independent of choices of resolutions of the other terms.
 The lefthand and righthand vertical morphisms are isomorphisms by Corollary \ref{C: cohomology}. 
  Thus this diagram implies the commutativity of \eqref{D: cup product}.

In addition, we observe that if $x\in U_1$, then restricting each $IS^*$ and $\mc I\mc C^*$ to a neighborhood $U$ of $x\in U_1$ yields complexes isomorphic to the usual singular cochain and singular cochain sheaf complexes  and the cup product becomes the standard cup product. In particular, the local cup product acts on locally constant $0$-cochains by multiplication, and it follows from Section \ref{R: uniqueness} and \cite[Section V.9.c]{Bo} that $\td \cup$ must represent the Goresky-MacPherson sheaf-theoretic intersection pairing  $\mc P_{\bar p}^*\otimes\mc P_{\bar q}^*\to \mc P_{\bar p}^*$, corresponding to some choices of quasi-isomorphisms $\mc P_{\bar s}^*\sim_{q.i.} \mc  I_{\bar s}\mc C^*$, for $\bar s=\bar p,\bar q,\bar r$. 
\end{proof}

\subsection{Applications: A multiplicative de Rham theorem for perverse differential forms and compatibility with the blown-up cup product}\label{S: mult}

\green{In this subsection, we apply Theorem \ref{T: cup product} to two applications. We first show that Saralegi's de Rham theorem for perverse differential forms \cite{BHS,Sa05} is compatible with the exterior product and cup product structures. Then we show compatibility between the singular intersection cohomology cup product and the blown-up intersection cohomology cup product of \cite{CST7}.}

\subsubsection{Perverse differential forms}

An approach to intersection (co)homology via perverse differential forms 
 first appeared in a paper by Brylinski \cite{Bry}, though he credits Goresky and MacPherson with the idea. Brylinski showed that for  perversities satisfying the Goresky-MacPherson conditions and on a Thom-Mather stratified space, 
a suitably defined de Rham intersection cohomology is $\Hom$ dual to intersection homology with real coefficients. Working on more general ``unfoldable spaces,'' Brasselet, Hector, and Saralegi later proved an analogous de Rham theorem in \cite{BHS}, showing that this result can be obtained by integration of forms on intersection chains, and this was extended to more general perversities by Saralegi in \cite{S1}. However, \cite{S1} contains an error in the case of perversities $\bar p$ satisfying $\bar p(Z)>\codim(Z)-2$ or $\bar p(Z)<0$ for some singular stratum $Z$. This error was corrected by Saralegi in \cite{Sa05}. \red{We refer the reader to these sources for definitions and background and prove here the following multiplicative de Rham theorem:}

\begin{theorem}\label{T: de Rham}
Let $X$ be an $\R$-oriented unfoldable stratified pseudomanifold, and suppose $D\bar r\geq D\bar p+D\bar q$.
 Let $\Omega_{\bar s}^*(X)$ be the complex of $\bar s$-perverse  differential forms on $X$. Then the following diagram commutes
\begin{diagram}[LaTeXeqno]\label{E: mult. de rham}
H^*(\Omega_{D\bar p}^*(X))\otimes H^*(\Omega_{D\bar q}^*(X))&\rTo^\wedge&H^*(\Omega_{D\bar r}^*(X))\\
\dTo^{\int\otimes \int}&&\dTo_\int \\
I_{\bar p}H^*(X;\R)\otimes I_{\bar q}H^*(X;\R)&\rTo^\cup&I_{\bar r}H^*(X;\R) .
\end{diagram}
\end{theorem}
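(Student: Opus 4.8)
The strategy is to reduce the multiplicative de Rham theorem to Theorem~\ref{T: cup product} by sheafifying the integration map and showing that both the wedge product of forms and the singular cup product are realized, at the level of sheaf complexes in $D(X)$, by the Goresky--MacPherson sheaf pairing. The plan is first to recall from \cite{BHS, Sa05} that the presheaf $U\mapsto \Omega^*_{\bar s}(U)$ of $\bar s$-perverse forms sheafifies to a complex $\mc I_{\bar s}\Omega^*$ on $X$, that this complex is fine (a partition of unity argument, as in the classical de Rham case, works on unfoldable spaces away from and across the singular strata) and hence $C$-ready, and that its hypercohomology is $H^*(\Omega^*_{\bar s}(X))$ — this uses that $\Omega^*_{\bar s}(\cdot)$ is conjunctive and has acyclic zero-support sections, exactly as for the intersection cochain presheaf in Section~\ref{S: sheaf}. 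Moreover $\mc I_{\bar s}\Omega^*$ satisfies the Deligne axioms for perversity $D\bar s$: on $U_1=X-X^{n-1}$ it restricts to the ordinary de Rham complex, hence is quasi-isomorphic to the constant sheaf $\mc R$ with stalk $\R$, and the stalk vanishing and attaching conditions follow from Saralegi's local calculations. Thus $\mc I_{\bar s}\Omega^*$ is another $C$-ready representative of $\mc P^*_{D\bar s}$.

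Next I would observe that the integration pairing $\int:\Omega^*_{\bar s}(U)\to I_{\bar s'}S^*(U;\R)$ (with $\bar s'$ the appropriate complementary/dual perversity to match the indexing of \cite{Sa05}; in the notation of the statement $\bar s = D\bar p$ maps to $I_{\bar p}S^*$) is a natural transformation of presheaves, hence sheafifies to a morphism $\mc I_{D\bar p}\Omega^*\to \mc I_{\bar p}\mc C^*$ in $C(X)$, and that this morphism is a quasi-isomorphism by the de Rham theorem of \cite{BHS, Sa05} applied locally on a cofinal system of distinguished neighborhoods. By Lemma~\ref{L: rep} the induced map on hypercohomology is a cohomological representative of this quasi-isomorphism, and under the identifications $\H^*(X;\mc I_{D\bar p}\Omega^*)\cong H^*(\Omega^*_{D\bar p}(X))$ and $\H^*(X;\mc I_{\bar p}\mc C^*)\cong I_{\bar p}H^*(X;\R)$ it is precisely the map $\int$ of the theorem. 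On $U_1$ this sheaf map is the classical de Rham quasi-isomorphism, which sends the constant $0$-form $1$ to the cocycle $1$; hence, by the characterization in Section~\ref{R: uniqueness}, it is (up to the chosen quasi-isomorphisms with the Deligne sheaf) the canonical morphism determined by the identity $\mc R\to\mc R$ on $\mc H^0|_{U_1}$.

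The key step is then to package these into a single commuting diagram of morphisms in $D(X)$. The wedge product of perverse forms, restricted to each $U$, gives a presheaf pairing $\Omega^*_{D\bar p}(U)\otimes\Omega^*_{D\bar q}(U)\to\Omega^*_{D\bar r}(U)$ which sheafifies to a morphism $\td\wedge:\mc I_{D\bar p}\Omega^*\otimes\mc I_{D\bar q}\Omega^*\to\mc I_{D\bar r}\Omega^*$ in $D(X)$; on $U_1$ it is the ordinary wedge product, which on locally constant $0$-forms is multiplication $\R\otimes\R\to\R$. By Section~\ref{R: uniqueness} and \cite[Section V.9.C]{Bo}, $\td\wedge$ therefore represents the Goresky--MacPherson sheaf pairing $\mc P^*_{D\bar p}\otimes\mc P^*_{D\bar q}\to\mc P^*_{D\bar r}$ — and so, via Theorem~\ref{T: cup product}, it is compatible with $\td\cup$ through the sheafified integration maps, since $\td\cup$ represents the same sheaf pairing and two morphisms in $D(X)$ inducing the same map $\mc H^0\otimes\mc H^0\to\mc H^0$ over $U_1$ agree. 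Concretely: the square
\begin{diagram}
\mc I_{D\bar p}\Omega^*\otimes\mc I_{D\bar q}\Omega^* &\rTo^{\td\wedge}& \mc I_{D\bar r}\Omega^*\\
\dTo^{\int\otimes\int} & & \dTo_{\int}\\
\mc I_{\bar p}\mc C^*\otimes\mc I_{\bar q}\mc C^* &\rTo^{\td\cup}& \mc I_{\bar r}\mc C^*
\end{diagram}
commutes in $D(X)$ because both composites restrict over $U_1$ to the same morphism of constant sheaves (multiplication, precomposed with the de Rham quasi-isomorphisms) and morphisms out of $\mc P^*_{D\bar p}\otimes\mc P^*_{D\bar q}$ into a Deligne sheaf are determined by their restriction to $U_1$; then apply Lemma~\ref{L: rep} to pass to hypercohomology, and paste onto the bottom square of Theorem~\ref{T: cup product}'s diagram \eqref{D: cup product} to obtain \eqref{E: mult. de rham}.

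The main obstacle I anticipate is the foundational bookkeeping around the perverse-form sheaf complex $\mc I_{D\bar s}\Omega^*$: verifying carefully that it is fine (or at least $C$-ready) on an unfoldable space — the partition-of-unity argument near singular strata is where the unfoldability hypothesis is genuinely used — and that the sheafified $\int$ is a genuine quasi-isomorphism of sheaf complexes rather than merely a global cohomology isomorphism. This requires checking the local de Rham statement of \cite{Sa05} on distinguished neighborhoods $\R^{n-k}\times cL$, including the delicate cone-formula cases for perversities with $\bar p(Z)>\codim(Z)-2$ or $\bar p(Z)<0$ that motivated Saralegi's correction in \cite{Sa05}. Once that is in place, the compatibility of the products is, as above, essentially forced by the uniqueness of morphisms into Deligne sheaves together with Theorem~\ref{T: cup product}, and requires no further computation with forms or chains.
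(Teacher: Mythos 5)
Your high-level strategy — sheafify perverse forms to a $C$-ready Deligne-sheaf representative, sheafify the wedge product, and then compare it to $\td\cup$ by checking both represent the Goresky--MacPherson pairing on $\mc H^0|_{U_1}$ — is sound and is essentially the skeleton of the paper's argument. However, there is a genuine gap at the point where you assert that integration is a natural transformation of presheaves $\Omega^*_{D\bar p}(U)\to I_{\bar p}S^*(U;\R)$ and therefore sheafifies to a morphism $\mc I_{D\bar p}\Omega^*\to\mc I_{\bar p}\mc C^*$ in $C(X)$. No such chain-level map exists: the integration of a perverse form is only defined against \emph{liftable} intersection chains (Saralegi's $R^{\bar p}C_*$), not against arbitrary singular intersection chains, since a perverse form may blow up near the singular set and the integral over a non-liftable simplex need not converge. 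As the paper notes immediately after the statement of the theorem, the vertical map $\int$ in the diagram is, by definition, the composition of the genuine integration map $H^*(\Omega^*_{D\bar p}(X))\to H^*(R_{\bar p}C^*(X;\R))$ with the \emph{inverse} of the restriction isomorphism $H^*(I_{\bar p}S^*(X;\R))\to H^*(R_{\bar p}C^*(X;\R))$ — a roof, not a chain map.

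Consequently you cannot directly produce a morphism $\mc I_{D\bar p}\Omega^*\to\mc I_{\bar p}\mc C^*$ in $C(X)$ by sheafification, and the commuting square you draw is not literally a diagram of sheaf morphisms realizable at the chain level. The paper's proof handles this by introducing the intermediate flabby sheaf $\mc R_{\bar p}\mc C^*$ of liftable intersection cochains, showing the restriction $\mc I_{\bar p}\mc C^*\to\mc R_{\bar p}\mc C^*$ and the sheafified integration $\td\Omega^*_{D\bar p}\to\mc R_{\bar p}\mc C^*$ are both quasi-isomorphisms, and then building two cubes whose back-to-front edges are these maps; the key lemma (Lemma~\ref{L: wedge is cupR}) is exactly your $\mc H^0|_{U_1}$ comparison, but applied at the level of $\mc R_{\bar p}\mc C^*$ where the morphisms actually exist. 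Your argument can be repaired by replacing your single square with this roof through the liftable cochain sheaf, at which point the remaining steps (fineness of $\td\Omega^*$ by \cite[Proposition 3.1]{BHS}, local de Rham theorem of \cite{Sa05}, uniqueness of morphisms into Deligne sheaves via Section~\ref{R: uniqueness}) go through as you outline. The obstacle you flag — carefully checking the local de Rham statement — is real but is not the main missing ingredient; the missing ingredient is the liftability issue itself.
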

Each vertical integration map in the statement of the theorem is technically a composition of the integration map $\int: H^*(\Omega_{D\bar p}^*(X))\to H^*(R_{\bar p}C^*(X;\R))$, where $R_{\bar p}C^*(X;\R)$ is the dual of the complex of ``liftable'' intersection chains \cite{Sa05}, with the inverse of the  isomorphism $H^*(I_{\bar p}S^*(X;\R))\to H^*(R_{\bar p}C^*(X;\R))$ induced by restriction.

The proof, which will occupy the remainder of this subsection, proceeds by first relating 
singular intersection (co)homology  to the (co)homology of liftable intersection chains and then relating the liftable chains to the perverse differential forms $\Omega^*$. 

We first establish a sheaf-theoretic version of liftable intersection cohomology and define chain and sheaf-theoretic versions of the cup product for liftable (co)chains. 

Let $X$ be an $\R$-oriented unfoldable stratified pseudomanifold. Let $\bar p$ be a general perversity. Let $\Omega_{\bar p}^*(X)$ be the complex of $\bar p$-intersection differential forms on $X$ (see \cite[Section 3]{Sa05}), and let $R^{\bar p}C_*(X,X_{D\bar p};\R)$ be the complex of liftable intersection chains as defined\footnote{Note: Saralegi uses the notation $\bar t-\bar p$ rather than $D\bar p$.} in Saralegi \cite[Section 2.3]{Sa05}. By \cite[Section 2.4]{Sa05}, the inclusion of $R^{\bar p}C_*(X,X_{D\bar p};\R)$ into Saralegi's relative singular intersection chain complex $S^{\bar p}C_*(X,X_{D\bar p};\R)$ (also defined in \cite{Sa05}) is a quasi-isomorphism, and $S^{\bar p}C_*(X,X_{D\bar p};\R)$ is isomorphic to our singular intersection chain complex  $I^{\bar p}S_*(X;\R)$ by \cite[Appendix A]{GBF20}. 
As for singular intersection chains, we can form the sheaf $\mc R_{\bar p}\mc C^*$ of liftable intersection cochains as the sheafification of $U\to R_{\bar p}C^*(U;\R):=\Hom(R^{\bar p}C_*(U, U_{D\bar p};\R),\R)$. This is well-defined assuming that we choose as the unfolding of each $U$ the restriction of a fixed unfolding of $X$. Since $R^{\bar p}C_*(U, U_{D\bar p};\R)$ is quasi-isomorphic to $S^{\bar p}C_*(U;\R)$, the restriction morphism $\mc I_{\bar p}\mc C^*\to \mc R_{\bar p}\mc C^*$ is a quasi-isomorphism.

\begin{lemma}
$\mc R_{\bar p}\mc C^*$ is flabby, $C$-ready, and $C_c$-ready.
\end{lemma}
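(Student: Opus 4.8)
The plan is to follow the templates of Propositions \ref{P: conj} and \ref{P: flabby} essentially line by line, with Saralegi's liftable chain complexes $R^{\bar p}C_*(\,\cdot\,,\,\cdot\,_{D\bar p};\R)$ playing the role of the singular intersection chain complexes $I^{\bar p}S_*(\,\cdot\,;\R)$, and then to read off the $C$-ready and $C_c$-ready statements from Lemma \ref{L: C-ready}. Throughout, the argument is streamlined by the fact that the coefficient ring is the field $\R$.

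First I would show that the presheaf $U \mapsto R_{\bar p}C^*(U;\R) = \Hom(R^{\bar p}C_*(U,U_{D\bar p};\R),\R)$ is conjunctive. Mimicking Proposition \ref{P: conj}, for a cover $\{U_j\}$ of an open set $U$ one forms the complex $A_*$ with $A_2 = \bigoplus_{j<k} R^{\bar p}C_i(U_j\cap U_k,(U_j\cap U_k)_{D\bar p};\R)$, $A_1 = \bigoplus_j R^{\bar p}C_i(U_j,(U_j)_{D\bar p};\R)$, $A_0 = R^{\bar p}C_i(U,U_{D\bar p};\R)$, so that conjunctivity amounts to $H^1(\Hom(A_*,\R)) = 0$. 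Over the field $\R$ every module in sight is a vector space, so the projectivity bookkeeping, the invocation of the Universal Coefficient Theorem in Proposition \ref{P: conj}, and the separate lemma used there to show $H_0(A_*)$ is projective are all unnecessary: since $\Ext_\R(-,\R) = 0$, conjunctivity follows as soon as $H_1(A_*) = 0$. Running the same induction on the number of nonzero summands as in Proposition \ref{P: conj}, this reduces to the analogue of \cite[Proposition 6.5.2]{GBF35}: a liftable intersection chain in $U = \bigcup_j U_j$ all of whose simplices have image in some member of the cover can be written as a sum of liftable intersection chains in the $U_j$; together with the (also needed) injectivity of $R^{\bar p}C_i(U_j,(U_j)_{D\bar p};\R) \to R^{\bar p}C_i(U,U_{D\bar p};\R)$, this holds because liftability is a simplexwise condition tested through the chosen unfolding, and by our standing convention the unfolding of every open subset is the restriction of a fixed unfolding of $X$; consequently the $D\bar p$-bad set of an open $V$ is the intersection with $V$ of the $D\bar p$-bad set of $X$, and the dimension and allowability constraints defining $R^{\bar p}C_*$ restrict and decompose just as for singular intersection chains.

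With conjunctivity in hand, flabbiness is obtained exactly as in Proposition \ref{P: flabby}. Given an open $W \subseteq X$ and $s \in \Gamma(W;\mc R_{\bar p}\mc C^*)$, conjunctivity yields a surjection $R_{\bar p}C^*(W;\R) \onto \Gamma(W;\mc R_{\bar p}\mc C^*_W) = \Gamma(W;\mc R_{\bar p}\mc C^*|_W)$, so choose $\bar s \in R_{\bar p}C^*(W;\R)$ mapping to $s$. The restriction $R_{\bar p}C^*(X;\R) \to R_{\bar p}C^*(W;\R)$ is the $\R$-linear dual of the inclusion $R^{\bar p}C_*(W,W_{D\bar p};\R) \hookrightarrow R^{\bar p}C_*(X,X_{D\bar p};\R)$; this inclusion is injective (by the locality of the bad set noted above) and hence split, being a monomorphism of $\R$-vector spaces, so the dual restriction map is surjective. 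Lifting $\bar s$ to $\td s \in R_{\bar p}C^*(X;\R)$ produces a global section of $\mc R_{\bar p}\mc C^*$ restricting to $s$, so $\mc R_{\bar p}\mc C^*$ is flabby, and hence soft and $c$-soft on the paracompact space $X$.

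Finally, $\mc R_{\bar p}\mc C^j = 0$ for $j < 0$ since $R^{\bar p}C_j(U,U_{D\bar p};\R) = 0$ in negative degrees — equivalently $\mc H^i(\mc R_{\bar p}\mc C^*) = 0$ for $i < 0$ because the restriction morphism exhibits $\mc R_{\bar p}\mc C^*$ as quasi-isomorphic to $\mc I_{\bar p}\mc C^*$, hence (via Proposition \ref{P: IC is IS}, with $\R$ coefficients) to $\mc P^*_{D\bar p}$. Being flabby and bounded below, $\mc R_{\bar p}\mc C^*$ is then $C$-ready and $C_c$-ready by Lemma \ref{L: C-ready}. I expect the only genuine work to lie in the locality property of liftable chains feeding conjunctivity — verifying that Saralegi's liftability and allowability conditions behave well under passing to open subsets and under decomposing a chain over a cover — since the flabbiness and readiness conclusions are then purely formal, and all questions of projectivity and splitting evaporate over the field $\R$.
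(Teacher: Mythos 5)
Your overall architecture matches the paper's — mimic Propositions \ref{P: conj} and \ref{P: flabby}, note that projectivity and splitting are free over a field, and read off $C$-ready and $C_c$-ready from flabbiness plus Lemma \ref{L: C-ready}. The flabbiness half and the readiness conclusions are fine: the inclusion $R^{\bar p}C_*(W,W_{D\bar p};\R)\hookrightarrow R^{\bar p}C_*(X,X_{D\bar p};\R)$ splits because everything is an $\R$-vector space, and boundedness below plus flabbiness gives the $C$- and $C_c$-ready conclusions.

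The gap is precisely at the point you flag as ``the only genuine work.'' To run the induction in Proposition \ref{P: conj} you need the analogue of \cite[Proposition 6.5.2]{GBF35} for liftable chains: a liftable chain whose simplices are each supported in some member of the cover decomposes as a sum of liftable chains, one supported in each member. You assert this ``holds because liftability is a simplexwise condition'' and because the allowability constraints ``restrict and decompose just as for singular intersection chains.'' That last clause is not a consequence of the preceding observations about bad sets — it \emph{is} the statement to be proved. The allowability condition for intersection chains is not purely simplexwise (it constrains the boundary of the whole chain), which is exactly why \cite[Proposition 6.5.2]{GBF35} is nontrivial; its proof proceeds by rewriting the chain using linear subdivisions of the original simplices. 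So what actually has to be checked is that this subdivision-based argument survives the additional liftability constraint, and the substantive input there is that a simplex obtained by linear subdivision of a liftable simplex is again liftable. That fact (which the paper takes from \cite{BHS}) is what makes the decomposition land inside $R^{\bar p}C_*$ rather than merely inside $I^{\bar p}S_*$; without it, your ``decompose just as for singular intersection chains'' has no support. The rest of your argument goes through once this is supplied.
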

\begin{proof}
We first observe that the presheaf $U\to R_{\bar p}C^*(U;\R)$ is conjunctive. The proof is the same as for $I_{\bar p}C^*(U;R)$ in Proposition \ref{P: conj}. To invoke  \cite[Proposition 6.5.2]{GBF35}, we use that the argument in \cite{GBF35} involves rewriting chains in terms of other chains whose simplices all come from linear subdivisions of the simplices in the original chains. By \cite{BHS} a simplex from a linear subdivision of a liftable simplex is liftable, and so \cite[Proposition 6.5.2]{GBF35} continues to hold replacing intersection chains with liftable intersection chains. The remainder of the proof of Proposition \ref{P: conj} is even simpler since we now employ field coefficients. 

The rest of the proof is now identical to that of Proposition \ref{P: flabby}, using the field coefficients to provide the needed splitting.
\end{proof}

It follows immediately that we have a 
 commutative diagram of isomorphisms

\begin{diagram}
I_{\bar p}H^*(X;\R)=H^*(I_{\bar p}S^*(X;\R))&\rTo&\H^*(X;\mc I_{\bar p}\mc C^*)=H^*(\Gamma(X;\mc I_{\bar p}\mc C^*))\\
\dTo&&\dTo\\
H^*(R_{\bar p}C^*(X;\R))&\rTo&\H^*(X;\mc R_{\bar p}\mc C^*)=H^*(\Gamma(X;\mc R_{\bar p}\mc C^*)).
\end{diagram}
That the bottom is an isomorphism follows from our existing knowledge that the other three sides are.

We now utilize these isomorphisms to define cup products 

$$\cup_R: H^*(R_{\bar p}C^*(X;\R))\otimes H^*(R_{\bar q}C^*(X;\R))\to H^*(R_{\bar r}C^*(X;\R))$$
and
$$\td \cup_R: H^*(\Gamma(X;\mc R_{\bar p}\mc C^*\otimes \mc R_{\bar q}\mc C^*))\to H^*(\Gamma(X;\mc R_{\bar r}\mc C^*))$$
for $D\bar r\geq D\bar p+D\bar q$ in terms of our existing cup products. 

Consider the cube\footnote{We leave the $\R$ coefficients tacit here and for the remainder of this subsection.}

{\footnotesize
\begin{diagram}[LaTeXeqno]\label{E: liftable cube}
H^*(I_{\bar p}C^*(X))\otimes H^*(I_{\bar q}C^*(X))&&\relax\rTo^{\cup}&&H^*(I_{\bar r}C^*(X))&&&\\
&\relax\rdTo&&&\relax\vLine&\relax\rdTo&&\\
\relax\dTo&&H^*(R_{\bar p}C^*(X))\otimes H^*(R_{\bar q}C^*(X))&\relax\rDashto^{\cup_R}&\relax\HonV&&H^*(R_{\bar r}C^*(X))&\\
&&\relax\dTo&&\relax\dTo&&&\\
H^*(\Gamma(X;\mc I_{\bar p}\mc C^*\otimes \mc I_{\bar q}\mc C^*))&\relax\hLine&\relax\VonH&\relax\rTo^{\td \cup}&H^*(\Gamma(X;\mc I_{\bar r}\mc C^*))&&\relax\dTo&\\
&\relax\rdTo&&&&\relax\rdTo&&\\
&&H^*(\Gamma(X;\mc R_{\bar p}\mc C^*\otimes \mc R_{\bar q}\mc C^*))&&\relax\rDashto^{\td \cup_R}&&H^*(\Gamma(X;\mc R_{\bar r}\mc C^*)).&
\end{diagram}}

\noindent We have just seen in Theorem \ref{T: cup product} that the back commutes, and the sides commute by the naturality of sheafification. \red{Furthermore, the horizontal maps on the sides are isomorphisms; note that tensor product $\mc R_{\bar p}\mc C^*\otimes \mc R_{\bar q}\mc C^*$ is soft by \cite[Corollary II.16.31]{BR} since $\mc R_{\bar p}\mc C^*$ is soft (since flabby implies soft) and torsion-free and so the bottom left horizontal map is a $C$-ready representation}. Thus we can complete the cube with the dashed arrows to obtain a commutative diagram, and we thus \emph{define} the cup products $\cup_R$ and $\td \cup_R$ on liftable intersection cohomology and its sheafification. This approach allows us to avoid consideration of whether all the maps in Diagram \eqref{E: cup roof} are consistent with liftability. We furthermore observe that since the bottom back map is induced by a morphism in the derived category, the quasi-isomorphisms between the $\mc I\mc C^*$ and $\mc R\mc C^*$ complexes imply that the hypercohomology map on the bottom front edge of the cube is also induced by a morphism in the derived category.

Next we connect the cup product for liftable intersection cochains with the exterior product of perverse differential forms using the integration map $\int: \Omega_{\bar p}^*(X)\to R_{D\bar p}C^*(X)$. See \cite{Sa05} for the precise definition, but the idea is basically the same as for manifolds: if $\alpha$ is an $i$-form, then $\int \alpha$ acts on a simplex by integrating $\alpha$ over it (or its interior) if the dimension of the simplex is $i$ and by $0$ otherwise. Taking (co)homology yields an isomorphism $\int: H^*(\Omega_{\bar p}^*(X))\to H^*(R_{D\bar p}C^*(X))$ by \cite[Theorem 3.2.2]{Sa05}. 

Restriction of liftable $\bar p$-forms provides a sheaf $U\to \Omega_{\bar p}^*(U)$. We shall denote this sheaf by $\td \Omega^*_{\bar p}$. By \cite[Proposition 3.1]{BHS}, this sheaf is fine (\cite{BHS} deals only with Goresky-MacPherson perversities, but the argument works for any perversity). Furthermore, integration of liftable intersection chains induces a sheaf map $\td\int: \td \Omega^*_{\bar p}\to \mc R_{D \bar p}\mc C^*$. It follows from Saralegi's isomorphisms \cite{Sa05} that this must be a quasi-isomorphism. Furthermore, by \cite[Section 3.1.2]{S1} the wedge product of forms induces a map $\td\wedge: \td \Omega_{\bar p}^*\otimes \td \Omega_{\bar q}^*\to\td \Omega_{\bar r}^*$ for $\bar p+\bar q\leq \bar r$.

Now, consider the diagram 
\begin{diagram}[LaTeXeqno]\label{E: mult derham}
\mc R_{D\bar p}\mc C^*\otimes \mc R_{D\bar q}\mc C^*&\lTo^{\td\int\otimes \td\int}&
\td \Omega_{\bar p}^*\otimes \td \Omega_{\bar q}^*&\rTo^{\td\wedge} &\td \Omega_{\bar p+\bar q}^*&\rTo^{\td\int} \mc R_{\bar D(\bar p+\bar q)}\mc C^*.
\end{diagram}
Since $\td\int\otimes \td\int$ is a quasi-isomorphism, this diagram represents an element of $\Mor_{D(X)}(\mc R_{D\bar p}\mc C^*\otimes \mc R_{D\bar q}\mc C^*, \mc R_{D(\bar p+\bar q)}\mc C^*)$. We claim  this map is $\td \cup_R$ on hypercohomology.

\begin{lemma}\label{L: wedge is cupR}
The composition $\td\int\circ \td\wedge\circ (\td\int\otimes \td\int)^{-1}$ equals
$$\td \cup_R: H^*(\Gamma(X;\mc R_{\bar p}\mc C^*\otimes \mc R_{\bar q}\mc C^*))\to H^*(\Gamma(X;\mc R_{\bar r}\mc C^*)).$$
\end{lemma}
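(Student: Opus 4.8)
The plan is to recognize that both $\td\cup_R$ and the composite $\td\int\circ\td\wedge\circ(\td\int\otimes\td\int)^{-1}$ are hypercohomology maps induced by morphisms in $D(X)$ between the \emph{same} pair of $C$-ready sheaf complexes, and then to pin that morphism down by its behaviour over the regular part $U_1=X-X^{n-1}$, where everything is classical.

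First I would check that \eqref{E: mult derham} really does define a morphism in $D(X)$. The sheaf $\td\Omega^*_{\bar p}$ is fine, so $\td\Omega^*_{\bar p}\otimes\td\Omega^*_{\bar q}$ is fine, hence soft, $C$-ready, and $C_c$-ready by \cite[Corollary II.16.31]{BR}, and it computes the derived tensor product since $\td\Omega^*_{\bar p}$ is a complex of $\R$-vector spaces and so flat; moreover $\td\int\otimes\td\int$ is a quasi-isomorphism because each $\td\int$ is and each factor is flat and bounded below. Thus \eqref{E: mult derham} is a concatenation of one quasi-isomorphism with two honest sheaf maps, and so represents an element $\Phi\in\Mor_{D(X)}(\mc R_{D\bar p}\mc C^*\otimes\mc R_{D\bar q}\mc C^*,\mc R_{D(\bar p+\bar q)}\mc C^*)$. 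Since $\td\int\colon\td\Omega^*_{\bar s}\to\mc R_{D\bar s}\mc C^*$ is a quasi-isomorphism and $\mc R_{\bar s}\mc C^*$ is quasi-isomorphic to $\mc I_{\bar s}\mc C^*$, which is quasi-isomorphic to $\mc P^*_{D\bar s}$ (Corollary \ref{C: IC is IS}), the source and target of $\Phi$ are $C$-ready representatives of $\mc P^*_{\bar p}\otimes^L\mc P^*_{\bar q}$ and $\mc P^*_{\bar p+\bar q}$ respectively. By construction of the cube \eqref{E: liftable cube}, $\td\cup_R$ is also induced by a morphism in this same group $\Mor_{D(X)}(\mc R_{D\bar p}\mc C^*\otimes\mc R_{D\bar q}\mc C^*,\mc R_{D(\bar p+\bar q)}\mc C^*)$. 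Hence, by Lemma \ref{L: rep}, it suffices to show that $\Phi$ and the morphism inducing $\td\cup_R$ coincide in $D(X)$.

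For that I would invoke the rigidity recorded in Section \ref{R: uniqueness}: since $\bar p+\bar q\leq\bar p+\bar q$, by \cite[Proposition V.9.14]{Bo} and \cite[Theorem 4.6]{GBF23} a morphism in $\Mor_{D(X)}(\mc R_{D\bar p}\mc C^*\otimes\mc R_{D\bar q}\mc C^*,\mc R_{D(\bar p+\bar q)}\mc C^*)$ is determined by the stalk pairings $H^0((\mc R_{D\bar p}\mc C^*)_x)\otimes H^0((\mc R_{D\bar q}\mc C^*)_x)\to H^0((\mc R_{D(\bar p+\bar q)}\mc C^*)_x)$ as $x$ ranges over one point in each connected component of $U_1$. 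Over such a point every one of the complexes $\td\Omega^*_{\bar s}$, $\mc R_{\bar s}\mc C^*$, $\mc I_{\bar s}\mc C^*$ restricts to the ordinary de Rham, respectively singular cochain, complex of a small Euclidean neighbourhood; $\td\wedge$ becomes the exterior product, $\td\int$ the classical integration map, and $\td\cup_R$ the ordinary cup product. This last identification uses that the side faces of \eqref{E: liftable cube} are the restriction quasi-isomorphisms $\mc I_{\bar s}\mc C^*\to\mc R_{\bar s}\mc C^*$, which over $U_1$ are $\Hom$-duals of inclusions of chain complexes that are the identity in degree $0$ (points are always liftable), hence induce the identity on $\mc H^0|_{U_1}\cong\mc R$, together with the fact from Theorem \ref{T: cup product} that $\td\cup$ restricts over $U_1$ to the ordinary cup product, which acts on constant $0$-cocycles by multiplication. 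Finally, on $H^0$ the maps $\td\int$ and these restriction quasi-isomorphisms all carry the germ of the constant cocycle/form $1$ to the germ of $1$, and $1\wedge1=1=1\cup1$; hence both $\Phi$ and $\td\cup_R$ induce the multiplication pairing $1\otimes1\mapsto1$ at each representative point of $U_1$. Therefore $\Phi=\td\cup_R$ in $D(X)$, and so they induce the same map on hypercohomology, as claimed.

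I expect the only real friction to be bookkeeping: confirming that $\td\Omega^*_{\bar p}\otimes\td\Omega^*_{\bar q}$ is $C$-ready and computes the derived tensor product, keeping the shifted perversities straight between \eqref{E: mult derham} and \eqref{E: liftable cube}, and verifying that $\td\int$, the wedge product, and the restriction quasi-isomorphisms each respect the distinguished degree-zero generator $1$, so that the stalk pairings really are multiplication with no stray sign (which is automatic in degree $0$).
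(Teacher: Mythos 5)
Your proof is correct and takes essentially the same route as the paper's: both reduce to comparing the induced pairings on $H^0$ of stalks at points of $U_1$ via the rigidity of Section \ref{R: uniqueness}, and then observe that over $U_1$ everything restricts to ordinary cochains/forms where both products are multiplication on constant $0$-cocycles. The paper leaves the $C$-readiness and well-definedness bookkeeping to the surrounding discussion, which you spell out, but the core argument is the same.
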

\begin{proof}
As noted in Section \ref{R: uniqueness}, it is sufficient to show that these maps induce the same map 
$H^0(\mc R_{\bar p}\mc C^*_x)\otimes H^0(\mc R_{\bar q}\mc C^*_x)\to H^0(\mc R_{\bar r}\mc C^*_x)$
at each $x\in U_1=X-X^{n-1}$.  

Over $U_1$, the sheaves $\mc R_{\bar p}\mc C^*$ restrict to the sheaves $\mc C^*$ of ordinary singular cochains. Thus, as already noted for $\mc I_{\bar p}\mc C^*$ at the end of the proof of Theorem \ref{T: cup product}, the local cup product on cohomology $\cup=\cup_R$ at $x\in U_1$ restricts to multiplication of constant $0$-cocycles. 

Similarly,  over $U_1$, the complex  $\td \Omega_{\bar p}^*$ restricts to the standard sheaf of differential forms. In particular, at each $x\in U_1$, an element of $H^0((\td \Omega_{\bar p}^*)_x)$ is represented by a constant function in a neighborhood of $x$ and the exterior product corresponds to multiplication of these constant functions. If $f$ is such a $0$-form, $\int f$ acts on the standard positively-oriented generating singular $0$-simplex at $x$ by evaluation $f(x)$.   

Putting these facts together, the lemma follows. 
\end{proof}

Now we connect the cup product of liftable cochains to the wedge product of perverse forms.

\begin{lemma}
The following cube commutes (with $\R$ coefficients tacit):

{\footnotesize
\begin{diagram}[LaTeXeqno]\label{E: liftable cube2}
H^*(R_{\bar p}C^*(X))\otimes H^*(R_{\bar q}C^*(X))&&\relax\rTo^{\cup_R}&& H^*(R_{\bar r}C^*(X)) &&&\\
&\relax\luTo^{\int\otimes \int}   &&&\relax\vLine&\relax\luTo^\int&&\\
\relax\dTo^h&&H^*(\Omega_{D\bar p}^*(X))\otimes H^*(\Omega_{D\bar q}^*(X))&\relax\rTo^\wedge&\relax\HonV&&H^*(\Omega_{D\bar r}^*(X))&\\
&&\relax\dTo_f&&\relax\dTo_k&&&\\
\H^*(X;\mc R_{\bar p}\mc C^*\otimes \mc R_{\bar q}\mc C^*)&\relax\hLine&\relax\VonH&\relax\rTo^{\td\cup_R}&\H^*(X;\mc R_{\bar r}C^*)&&\relax\dTo_g&\\
&\relax\luTo^{\td\int\otimes \td\int}&&&&\relax\luTo^{\td\int}&&\\
&&\H^*(X;\td \Omega_{D\bar p}^*\otimes \td \Omega_{D\bar q}^*)&&\relax\rTo^{\td\wedge}&&\H^*(X;\td \Omega_{D\bar r}^*)&
\end{diagram}}

\end{lemma}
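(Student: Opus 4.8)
The plan is to verify the cube \eqref{E: liftable cube2} one face at a time, leaving the top face --- which is the substance of Theorem \ref{T: de Rham} --- for last and deducing it from the other five.

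Two of the faces require nothing new. The \emph{bottom} face asserts precisely that $\td\cup_R\circ(\td\int\otimes\td\int)=\td\int\circ\td\wedge$, which is Lemma \ref{L: wedge is cupR}. The \emph{back} face, with top edge $\cup_R$, bottom edge $\td\cup_R$, and vertical sheafification maps $h$ and $g$, is literally the front face of the cube \eqref{E: liftable cube}, so it commutes because $\cup_R$ and $\td\cup_R$ were \emph{defined} to be the dashed arrows making \eqref{E: liftable cube} commute (here one uses $\H^*(X;\mc R_{\bar p}\mc C^*\otimes\mc R_{\bar q}\mc C^*)=H^*(\Gamma(X;\mc R_{\bar p}\mc C^*\otimes\mc R_{\bar q}\mc C^*))$, which holds by $C$-readiness).

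The remaining three faces --- front, left, right --- are all instances of the naturality of sheafification, handled exactly as in the proof of Theorem \ref{T: cup product}. The integration map is defined on presheaf sections, $U\mapsto(\Omega^*_{D\bar s}(U)\to R_{\bar s}C^*(U;\R))$, and commutes with restriction to smaller open sets; hence it sheafifies to $\td\int$, and, applying $H^*$ along the functorial chain ``presheaf $\to$ sheafification $\to$ injective resolution $\to$ global sections'', the left and right faces commute. The same applies to the presheaf wedge product $\Omega^*_{D\bar p}(U)\otimes\Omega^*_{D\bar q}(U)\to\Omega^*_{D\bar r}(U)$, whose sheafification is $\td\wedge$, giving the front face; here one uses that a tensor product of presheaves sheafifies to the tensor product of the associated sheaves (since direct limits commute with $\otimes$) and that $\td\Omega^*_{D\bar p}\otimes\td\Omega^*_{D\bar q}$ is $C$-ready --- it is soft by \cite[Corollary II.16.31]{BR}, since $\td\Omega^*_{D\bar p}$ is fine (hence soft, cf.\ \cite[Proposition 3.1]{BHS}) and torsion free over the field $\R$ --- just as was checked for $\mc R_{D\bar p}\mc C^*\otimes\mc R_{D\bar q}\mc C^*$.

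Finally, to obtain the top face, fix a class $\omega$ in $H^*(\Omega^*_{D\bar p}(X))\otimes H^*(\Omega^*_{D\bar q}(X))$ and apply the isomorphism $g\colon H^*(R_{\bar r}C^*(X))\to\H^*(X;\mc R_{\bar r}\mc C^*)$ to the two composites around the top square. By the back and left faces, $g(\cup_R((\int\otimes\int)(\omega)))=\td\cup_R((\td\int\otimes\td\int)(f(\omega)))$; by the right and front faces, $g(\int(\wedge(\omega)))=\td\int(\td\wedge(f(\omega)))$; and these two expressions agree by the bottom face. Since $g$ is an isomorphism, the top face commutes, i.e.\ $\cup_R\circ(\int\otimes\int)=\int\circ\wedge$. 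Transporting this along the isomorphisms $I_{\bar s}H^*(X;\R)\cong H^*(R_{\bar s}C^*(X))$ (and the identification of $\cup$ with $\cup_R$) yields Theorem \ref{T: de Rham}. I expect the only real subtlety to be the bookkeeping that makes every sheafification square legitimate --- in particular confirming the $C$-readiness of the tensor-product sheaves, so that $\td\cup_R$ and $\td\wedge$ are honest maps of global-section cohomology, and confirming that $f$ genuinely identifies $H^*(\Omega^*_{D\bar p}(X))\otimes H^*(\Omega^*_{D\bar q}(X))$ with $\H^*(X;\td\Omega^*_{D\bar p}\otimes\td\Omega^*_{D\bar q})$, which relies on the field coefficients via the Künneth theorem --- but these are the same checks already made around Theorem \ref{T: cup product} and Lemma \ref{L: wedge is cupR}, so the commutativity of the cube is otherwise formal.
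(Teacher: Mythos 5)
Your proof is correct and matches the paper's argument essentially line for line: verify that the bottom face is Lemma~\ref{L: wedge is cupR}, that the back face is the defining front face of cube~\eqref{E: liftable cube}, and that the front, left, and right faces commute by functoriality of sheafification (with the $C$-readiness of $\td\Omega^*_{D\bar p}\otimes\td\Omega^*_{D\bar q}$ justified as you do), then deduce the top face. One cosmetic slip: in the cube's labeling the right vertical of the \emph{back} face is $k$ and the right vertical of the \emph{front} face is $g$, so where you write ``vertical sheafification maps $h$ and $g$'' for the back face and later ``$g\colon H^*(R_{\bar r}C^*(X))\to\H^*(X;\mc R_{\bar r}\mc C^*)$'' you should have $k$; this does not affect the substance of the argument.
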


\begin{proof}
We have just seen in Lemma \ref{L: wedge is cupR} that the bottom square commutes.
The front and the left and right sides commute  due to the functoriality of sheafification and cohomology. The back square commutes by definition in consideration of our discussion surrounding the cube \eqref{E: liftable cube}.
The vertical morphisms on the right are isomorphisms by the fineness/flabbiness of the sheaves.  So, to get commutativity of the top square, we have
\begin{align*}
\int\circ\wedge&= \int\circ g^{-1}\circ\td \wedge\circ f\\
&=k^{-1}\circ\td\int\circ\td\wedge\circ f\\
&=k^{-1}\circ\td \cup_R\circ \td\int\otimes \td\int\circ f\\
&=k^{-1}\circ \td \cup_R\circ h\circ\int\otimes\int\\
&=\cup_R \circ \int\otimes \int.
\end{align*}
\end{proof}

\begin{proof}[Proof of Theorem \ref{T: de Rham}.] 
The theorem now follows from gluing together the top faces of the two cubes \eqref{E: liftable cube} and \eqref{E: liftable cube2}.
\end{proof}

\subsubsection{Blown-up intersection cohomology}

\green{Essentially the same proof as for Theorem \ref{T: de Rham} shows that the singular intersection cohomology cup product is isomorphic to the cup product of the \emph{blown-up intersection cohomology} of Chataur-Saralegi-Tanr\'e \cite{CST7}. We outline the modifications to the argument, but as this setting requires a certain amount of technology that is beside our main goal, we refer the interested reader to \cite{CST,CST7,CST20} for the necessary background and definitions. }

\green{
 For notation, some of which will be different from the references to avoid conflicts with our other notations, let $\td N^*_{\bar p}(X;R)$ denote the blown-up complex of $X$ \cite[Definition 3.2]{CST7}, let $\ms H_{\bar p}^*(X;R)$ be the corresponding blown-up intersection cohomology, and let $I^{\bar p}s_*(X;R)$ denote the subcomplex of $I^{\bar p}S_*(X;R)$ obtained by considering only chains made of filtered simplices, i.e. singular simplices $\sigma:\Delta^i\to X$ such that $\sigma^{-1}(X^j)$ is a face of $\Delta^i$ for each $j$. There are maps $\td N^*_{D\bar p}(X;R)\xr{\chi}\Hom(I^{\bar p}s_*(X;R),R)\leftarrow \Hom(I^{\bar p}S_*(X;R),R)=I_{\bar p}S^*(X;R)$. The map $\chi$ is constructed in \cite[Section 13]{CST7}. The other map is restriction.  The map $\chi$ is a quasi-isomorphism on locally $(\bar p,R)$-torsion free stratified pseudomanifolds by \cite[Theorem F]{CST7}. The restriction is a quasi-isomorphism as the inclusion $I^{\bar p}s_*(X;R)\into I^{\bar p}S_*(X;R)$ is a quasi-isomorphism by \cite[Theorem B]{CST-inv} and so the dual is also a quasi-isomorphism by the naturality of the Universal Coefficient Theorem and the Five Lemma. We let $\td \chi$ be the composite isomorphism $\ms H_{D\bar p}^*(X;R)\xr{\chi} H^*(\Hom(I^{\bar p}s_*(X;R),R))\xleftarrow{\cong} I_{\bar p}H^*(X;R)$. 
}

\begin{theorem}\label{T: CST cup}
\green{Let $R$ be a Dedekind domain and $X$ a locally $(\bar p,R)$-torsion free stratified pseudomanifold. Suppose $D\bar r\geq D\bar p+D\bar q$.  Then the following diagram commutes and the vertical maps are isomorphisms:}
\begin{diagram}
\ms H_{D\bar p}^*(X;R)\otimes \ms H^*_{D\bar q}(X;R)&\rTo^\cup&\ms H^*_{D\bar r}(X;R)\\
\dTo^{\td\chi\otimes \td\chi}&&\dTo_{\td \chi} \\
I_{\bar p}H^*(X;R)\otimes I_{\bar q}H^*(X;R)&\rTo^\cup&I_{\bar r}H^*(X;R) .
\end{diagram}
\end{theorem}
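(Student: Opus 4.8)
The strategy mirrors that of Theorem~\ref{T: de Rham}: build a sheaf-theoretic bridge between the blown-up complex and the singular intersection cochain complex, use that $\td\cup$ from Theorem~\ref{T: cup product} is pinned down by its effect on $0$-cocycle germs over $U_1=X-X^{n-1}$, and then glue two commuting cubes together. First I would sheafify the relevant presheaves: let $\mc N_{\bar p}^*$ denote the sheafification of $U\mapsto \td N^*_{\bar p}(U;R)$ and $\mc I^{\bar p}\mf s^*$ the sheafification of $U\mapsto \Hom(I^{\bar p}s_*(U;R),R)$ (using the restriction of a fixed system of filtered structures). The maps $\chi$ and the restriction $I_{\bar p}S^*(U;R)\to \Hom(I^{\bar p}s_*(U;R),R)$ are natural in $U$, so they sheafify to give sheaf maps $\mc N_{D\bar p}^*\xr{\chi}\mc I^{\bar p}\mf s^*\leftarrow\mc I_{\bar p}\mc C^*$. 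By \cite[Theorem F]{CST7} and \cite[Theorem B]{CST-inv} together with the Universal Coefficient Theorem and the Five Lemma (applied stalkwise, since these quasi-isomorphism statements hold on all open subsets of a locally $(\bar p,R)$-torsion free pseudomanifold), both sheaf maps are quasi-isomorphisms. I would then check that $\mc N_{D\bar p}^*$ and $\mc I^{\bar p}\mf s^*$ are $C$-ready and $C_c$-ready: for $\mc I^{\bar p}\mf s^*$ one repeats the conjunctivity and flabbiness arguments of Propositions~\ref{P: conj} and \ref{P: flabby} using that filtered simplices are preserved under the relevant subdivisions and splittings (the analogue of \cite[Proposition 6.5.2]{GBF35}); for $\mc N_{D\bar p}^*$ one can either cite that the blown-up complex presheaf is conjunctive and has an analogous flabbiness property, or simply transport $C$-readiness across the quasi-isomorphisms once it is known for $\mc I^{\bar p}\mf s^*$, using boundedness of the cohomology sheaves (which follows since everything is quasi-isomorphic to $\mc P^*_{D\bar p}$). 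This gives $\ms H^*_{D\bar p}(X;R)\cong H^*(\Gamma(X;\mc N^*_{D\bar p}))$ and $I_{\bar p}H^*(X;R)\cong H^*(\Gamma(X;\mc I^{\bar p}\mf s^*))$ compatibly with $\td\chi$.

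Next I would produce the sheaf-level cup products. On $\mc N^*_{D\bar p}$ the blown-up cup product of \cite{CST7} is already defined at the (pre)sheaf level by an explicit formula natural in $U$, so it sheafifies to a map $\td\cup_N:\mc N^*_{D\bar p}\otimes\mc N^*_{D\bar q}\to\mc N^*_{D\bar r}$; similarly the cup product on $\Hom(I^{\bar p}s_*,R)$ (defined via the same diagram as in \cite[Section 7.2.2]{GBF35} restricted to filtered simplices, which is allowable because filtered simplices behave well under the Alexander--Whitney-type maps) sheafifies to $\td\cup_{\mf s}$. I would assemble the cube analogous to \eqref{E: liftable cube}: the four corners on the ``blown-up/filtered'' side carry $\td\chi$ and its sheafification, the front face is the sheaf cup product $\td\cup_{\mf s}$, the back face is the blown-up cup product $\cup$, and the top/bottom faces connect presheaf cohomology to hypercohomology. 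The side faces commute by naturality of sheafification; the back face commutes because $\chi$ is a chain map compatible with the two cup products up to the homotopy established in \cite[Theorem G]{CST7} (the statement that $\chi$ is a quasi-isomorphism of differential graded algebras, or at least multiplicative on cohomology); one then \emph{defines} $\td\cup_{\mf s}$ by completing the cube, exactly as $\cup_R$ was defined. Then I would build the second cube relating $\mc I^{\bar p}\mf s^*$ to $\mc I_{\bar p}\mc C^*$: the restriction map $\mc I_{\bar p}\mc C^*\to\mc I^{\bar p}\mf s^*$ is a quasi-isomorphism and intertwines the two cup products on cohomology (both are computed by the same roof diagram, one being the restriction of the other), so the back face of this cube commutes; crucially, over $U_1$ both $\mc I_{\bar p}\mc C^*$ and $\mc I^{\bar p}\mf s^*$ restrict to the ordinary singular cochain sheaf and the cup products restrict to multiplication of locally constant $0$-cocycles, hence by the uniqueness principle of Section~\ref{R: uniqueness} the induced morphism in $D(X)$ is the Goresky--MacPherson sheaf product. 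Thus this bottom cube's front face is (a cohomological representative of) $\td\cup$ from Theorem~\ref{T: cup product}, and its commutativity identifies the restriction-induced map on $I_{\bar p}H^*$ with $\cup$ via Theorem~\ref{T: cup product}.

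Finally, gluing the top faces of the two cubes---the first relating $\ms H^*_{D\bar p}$ to $H^*(\Gamma(X;\mc I^{\bar p}\mf s^*))$ with the blown-up cup on top and $\td\cup_{\mf s}$ below, the second relating $H^*(\Gamma(X;\mc I^{\bar p}\mf s^*))$ to $I_{\bar p}H^*$ with $\td\cup_{\mf s}$ on top and the singular cup product below---yields the desired commuting square, with the vertical composite being exactly $\td\chi$. The vertical maps are isomorphisms because each constituent (the sheafification isomorphisms and the hypercohomology maps induced by the quasi-isomorphisms $\chi$ and restriction) is an isomorphism. I expect the main obstacle to be verifying that $\chi$ is multiplicative \emph{at the chain/sheaf level up to coherent homotopy}, rather than merely on cohomology: to run the cube argument cleanly one wants either (i) a presheaf-level chain homotopy commutative square witnessing multiplicativity of $\chi$, or (ii) to invoke the uniqueness-of-morphisms result of Section~\ref{R: uniqueness} to bypass this, which requires first checking that all the sheaf complexes in sight are $C$-ready representatives of the Deligne sheaf $\mc P^*_{D\bar p}$ (so that morphisms in $D(X)$ are detected on $U_1$) and that the blown-up cup product, restricted to $U_1$, is ordinary multiplication on $0$-cocycles. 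Option (ii) is the more robust route and parallels the treatment of $\cup_R$ and Lemma~\ref{L: wedge is cupR}; the one genuinely new ingredient is confirming that over a Euclidean chart the blown-up cochain model and its cup product reduce, up to quasi-isomorphism, to the ordinary singular cochains and cup product---which is standard in the Chataur--Saralegi--Tanr\'e framework since on a manifold the blown-up complex is quasi-isomorphic as a DGA to ordinary cochains.
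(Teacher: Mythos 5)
Your proposal follows essentially the same route as the paper's proof: sheafify the blown-up and filtered-singular presheaves, use $\chi$ and the restriction map as quasi-isomorphisms, verify $C$-readiness, reduce over $U_1$ to ordinary cochains and the standard cup product, and invoke the uniqueness-of-morphisms principle (your ``option~(ii)'') to glue two commuting cubes exactly as in Theorem~\ref{T: de Rham}. The paper settles the $C$-readiness questions more economically by citing softness and flatness results from \cite{CST20} (Propositions 2.6 and 3.9) rather than re-running the conjunctivity/flabbiness arguments, but your first-principles approach would work too.

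One caveat: your secondary suggestion to ``simply transport $C$-readiness across the quasi-isomorphisms'' does not work, because $C$-readiness is not a derived-category invariant --- it asserts that $H^*(\Gamma_\phi(X;A^*))$ computes hypercohomology, and $H^*(\Gamma_\phi(X;A^*))$ itself is not preserved under quasi-isomorphism. You must establish softness (or flabbiness, or homotopical fineness, etc.) directly for $\mc N^*_{D\bar p}$, which is what your primary suggestion does and what the paper does via the CST20 citation. Also, the digression into ``option~(i)'' (needing $\chi$ to be multiplicative up to coherent homotopy) is unnecessary and introduces a dependence on a multiplicativity statement for $\chi$ that the argument never uses; the cube structure, once all the sheaf complexes are pinned down as $C$-ready Deligne-sheaf representatives, lets the local computation over $U_1$ do all the work, exactly as you note in recommending option~(ii).
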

\begin{proof}
\green{The proof follows the same steps as that of Theorem \ref{T: de Rham}, replacing 
$\Omega_{\bar p}^*(X)$ and $R^{\bar p}C_*(X,X_{D\bar p};\R)$ respectively with 
$\td N^*_{\bar p}(X;R)$ and $I^{\bar p}s_*(X;R)$; we note only the necessary modifications. }

\green{
By \cite[Proposition 3.9]{CST20}, the sheaf $\mc I_{\bar p}c^*$ induced by the presheaf $U\to I_{\bar p}s^*(U;R):=\Hom(I^{\bar p}s_*(U;R);R)$ is soft and flat. So by Lemma \ref{L: C-ready}, it is $C$-ready. For the bottom left horizontal map in the analogue of the cube \eqref{E: liftable cube} we have that $\mc I_{\bar p}c^*\otimes \mc I_{\bar q}c^*$ is soft and hence $C$-ready again by \cite[Corollary II.16.31]{BR}, as $\mc I_{\bar p}c^*$ is torsion free; the referenced corollary is stated for $R$ a PID but its proof applies for $R$ a Dedekind domain as torsion-free modules over Dedekind domains are flat. }

\green{The sheafification $\mc N_{\bar p}^*$ of $\td N^*_{\bar p}(X;R)$ is flat and soft by \cite[Proposition 2.6]{CST20}, and $\chi$ induces a quasi-isomorphism $\mc N_{\bar p}^*\to \mc I_{D\bar p}c^*$ by \cite[Theorem F]{CST7}. Furthermore, the tensor product of two such quasi-isomorphisms is a quasi-isomorphism by the K\"unneth Theorem, which applies as all of our presheaves are torsion free and so flat. }

\green{
A version of Lemma \ref{L: wedge is cupR} follows from the observation that on the manifold $U_1$ we have identically $I_{\bar p}S^*(U_1;R)= I_{\bar p}s^*(U;R)=\td N_{D\bar p}^*(U_1;R)=S^*(U;R)$, the ordinary complex of singular cochains. Furthermore, the cup products all reduce to the standard cup product. This is obvious for $I_{\bar p}s^*(U;R)$; for $\td N^*(U_1;R)$ it follows by unwinding the definitions --- see the remarks at the end of \cite[Section 2]{CST20}. }

\green{Given these remarks, we conclude as in the proof of Theorem \ref{T: de Rham}.}
\end{proof}

\section{Classical duality via sheaf maps}\label{S: compatibility}

Throughout this section we choose our ground ring to be a field $F$, and we suppose  $X$ to be a compact $F$-oriented $n$-dimensional stratified pseudomanifold. We also let $\bar p$ and $\bar q=D\bar p$ be two complementary perversities, i.e.\ $\bar p+\bar q=\bar t$. 

Let  $$\alpha\to (-1)^{|\alpha|n}\alpha\cap \Gamma$$ be 
the Poincar\'e duality isomorphism from $I_{\bar p}H^i(X;F)$ to $I^{\bar q}H_{n-i}(X;F)$ given by the signed cap product with the fundamental class $\Gamma$; see \cite[Remark 8.2.2]{GBF35} for an explanation of the sign.  We will show that this Poincar\'e duality map is compatible with a hypercohomology map induced by a morphism  $\mathbb{O}\in \Mor_{D(X)}(I_{\bar p}\mc C^*, I^{\bar q}\mc S^*)$. 

As noted in Remark \ref{R: intro cap} of the Introduction, it is not clear that this can be achieved via a sheaf theoretic cap product induced by the cap product at the level of chains. However, since $I_{\bar p}\mc C^*$ and $I^{\bar q}\mc S^*$ are each quasi-isomorphic to the perversity $\bar q$ Deligne sheaf, it follows from Section \ref{R: uniqueness} that $\Mor_{D(X)}(I_{\bar p}\mc C^*,I^{\bar q}\mc S^*)\cong  F^m$, where $m$ is the number of connected components of $U_1=X-X^{n-1}$. In fact, up to equivalence in the derived category, each morphism is determined by the maps $F\cong H^0(I_{\bar p}\mc C^*_{x_j})\to H^0(I^{\bar q}\mc S^*_{x_j})\cong F$ as the points $\{x_j\}_{j=1}^m$ run over representative points, one in each  of the connected components of $U_1$. We define  $\mathbb{O}\in \Mor_{D^+}(I_{\bar p}\mc C^*,I^{\bar q}\mc S^*)$ to be the morphism that corresponds to the maps $H^0(I_{\bar p}\mc C^*_{x_j})\to H^0(I^{\bar q}\mc S^*_{x_j})$ that take the standard generator $0$-cocycle $1$ in each  $H^0(I_{\bar p}\mc C^*_{x_j})$ to the element of  $H^0(I^{\bar q}\mc S^*_{x_j})\cong I^{\bar q}H_n(X,X-x; F)$ consistent with the local orientation class determined by the given orientation on $X$. Since the maps over the $x_j$ are non-zero, $\mathbb{O}$ is a quasi-isomorphism. We will refer to $\mathbb{O}$ as the \emph{quasi-isomorphism consistent with the orientation}; the definition of $\mathbb{O}$ is discussed in more detail in Section \ref{S: O}.

The following theorem says that the Poincar\'e duality isomorphism given by the signed cap product with the fundamental class is compatible with the map on sheaf hypercohomology induced by $\mathbb{O}$.

\begin{theorem}\label{T: MAIN}
Let $X$ be a compact $F$-oriented $n$-dimensional stratified pseudomanifold, and let $\bar p,D\bar p$ be complementary perversities. The following diagram of isomorphisms commutes, where the vertical maps are induced by the sheafification of presheaf sections into sheaf sections,  the bottom map is induced by the quasi-isomorphism consistent with the orientation, and  the top map is the Poincar\'e duality map given by the signed cap product with the fundamental class $\Gamma$ determined by the orientation:

\begin{diagram}[LaTeXeqno]\label{D: theorem}
I_{\bar p}H^i(X; F)&\rTo^{(-1)^{in}\cdot\cap \Gamma}& I^{D\bar p}H_{n-i}(X; F)\\
\dTo^\sigma&&\dTo_{\sigma'}\\
\H^i(X;\mc I_{\bar p}\mc C^*) &\rTo^{\mathbb{O}} &  \H^i(X;\mc I^{D\bar p}\mc S^*).
\end{diagram}
\end{theorem}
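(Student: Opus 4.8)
The plan is to reduce the global statement to a purely local check at nonsingular points, exploiting the rigidity of morphisms between sheaf complexes quasi-isomorphic to the Deligne sheaf (Section \ref{R: uniqueness}). The point is that $\mc I_{\bar p}\mc C^*$ and $\mc I^{D\bar p}\mc S^*$ are both $C$-ready representatives of $\mc P^*_{D\bar p}$ (by Corollary \ref{C: IC is IS} and the discussion in Section \ref{S: SIC}), so the space $\Mor_{D(X)}(\mc I_{\bar p}\mc C^*, \mc I^{D\bar p}\mc S^*)$ is isomorphic to $F^m$, with each morphism determined by its effect on the stalk cohomology $H^0$ at one representative point in each connected component of $U_1 = X - X^{n-1}$. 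By Lemma \ref{L: rep}, any such morphism induces a well-defined map on hypercohomology (realized concretely as $H^*(\Gamma(X;-))$ since both complexes are $C$-ready). So the diagram commutes if and only if the composite $\sigma' \circ ((-1)^{in}\cdot \cap \Gamma) \circ \sigma^{-1}$ — which is an isomorphism $\H^i(X;\mc I_{\bar p}\mc C^*) \to \H^i(X;\mc I^{D\bar p}\mc S^*)$ — is the hypercohomology map induced by $\mathbb O$. Since $\mathbb O$ is, by definition, the unique morphism sending the cocycle $1$ to the local orientation class at each $x_j$, it suffices to show that the composite along the left-down-bottom route, when localized at a nonsingular point, carries $1 \in H^0(\mc I_{\bar p}\mc C^*_{x_j})$ to the local orientation class in $H^0(\mc I^{D\bar p}\mc S^*_{x_j}) \cong I^{D\bar p}H_n(X, X-x_j; F)$.

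The key steps, in order, would be: (1) Localize. Restrict all four groups and maps to a small distinguished (Euclidean, since $x_j \in U_1$) neighborhood $U$ of a representative point $x_j$; over $U$ the sheaf $\mc I_{\bar p}\mc C^*$ becomes the ordinary singular cochain sheaf and $\mc I^{D\bar p}\mc S^*$ becomes the ordinary singular chain sheaf, and the maps $\sigma,\sigma'$ become the ordinary sheafification maps. (2) Track the cap product locally. Establish that the restriction of the cap product with $\Gamma$ to $U$ agrees, on cohomology, with the cap product against the local fundamental class $\Gamma_U \in H_n(U, U - x_j;F)$ (equivalently a local orientation class); this uses the compatibility of the fundamental class with restriction to open sets — $\Gamma$ restricts to the local orientation class over each component of $U_1$ — together with the naturality of the cap product under the inclusion-induced restriction maps. (3) Compute the stalk effect. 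On the stalk at $x_j$, $H^0$ of the cochain sheaf is generated by the constant $0$-cocycle $1$, and I must verify that $1 \cap \Gamma_U$ represents the local orientation class, up to the sign $(-1)^{0 \cdot n} = 1$ in degree $0$; this is a direct unwinding of the definition of the singular (intersection) cap product, essentially the classical computation that $1 \cap [\text{fundamental class}] = [\text{fundamental class}]$. (4) Conclude. Since the composite map matches $\mathbb O$ on stalk cohomology $H^0$ at each $x_j$, and since morphisms in $D(X)$ between these complexes are determined by that data, the two hypercohomology maps coincide, giving commutativity.

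The main obstacle I anticipate is Step (2)–(3): making rigorous the passage from the \emph{global} chain-level cap product with $\Gamma$ to a statement about stalk cohomology at $x_j$. The cap product with $\Gamma$ is an inherently global construction on chain complexes — it is not, a priori, induced by a sheaf morphism (this is precisely the difficulty flagged in Remark \ref{R: intro cap}). So one cannot simply "restrict the sheaf map." Instead I would argue via naturality squares: for $U \subset X$ open, the diagram relating $I_{\bar p}H^i(X;F) \to I^{D\bar p}H_{n-i}(X;F)$ and its local analogue $I_{\bar p}H^i(U;F) \to I^{D\bar p}H_{n-i}^{BM}(U;F)$ (Borel–Moore / locally-finite chains on $U$) commutes because $\cap$ is natural with respect to restriction of cochains and pushforward of chains, and because $\Gamma|_U$ is the image of $\Gamma$ under the restriction $I^{D\bar p}H_n(X;F) \to I^{D\bar p}H_n^{BM}(U, U-x_j;F)$. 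Taking the direct limit over shrinking $U$ and comparing with the stalk descriptions $H^i(\mc I_{\bar p}\mc C^*_{x_j}) = \varinjlim I_{\bar p}H^i(U;F)$ and $H^i(\mc I^{D\bar p}\mc S^*_{x_j}) = \varinjlim I^{D\bar p}H^{BM}_{n-i}(U;F)$ (this last identification being exactly how the intersection-chain sheaf was built in Section \ref{S: SIC}) then lets me read off the stalk map and identify it with $\mathbb O$. The bookkeeping of which restriction/corestriction maps are used, and the verification that they are compatible with sheafification ($\sigma$ and $\sigma'$), is where the real care is needed.
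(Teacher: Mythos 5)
Your reduction to a local check has a genuine gap at its central step. The rigidity result from Section~\ref{R: uniqueness} that you invoke says that morphisms \emph{in the derived category} $D(X)$ between complexes quasi-isomorphic to Deligne sheaves are determined by their effect on $\mc H^0$ of stalks over $U_1$. But the composite $\sigma' \circ \left((-1)^{in}\cdot\cap\Gamma\right) \circ \sigma^{-1}$ is, a priori, only an abstract isomorphism of hypercohomology groups; you would first need to know that it is induced by some element of $\Mor_{D(X)}(\mc I_{\bar p}\mc C^*,\mc I^{D\bar p}\mc S^*)$ before the rigidity argument even applies. That, however, is exactly the obstacle named in Remark~\ref{R: intro cap}: the cap product with $\Gamma$ is a global chain-level construction, and the authors explicitly state they have not succeeded in realizing it as a map of global sections of a sheaf morphism. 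Your fallback at the end (naturality squares against relative/Borel--Moore cap products, followed by a direct limit at each stalk) does not repair this: even granting those squares, passing to the limit only shows the two hypercohomology maps have the same \emph{stalkwise} effect, and global cohomology classes are not detected stalkwise, so this does not force the maps on $\H^i(X;\cdot)$ to coincide.

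The paper circumvents this by never localizing the cap product directly. It instead converts duality into a pairing via the identity $\beta(\alpha\cap\Gamma)=\pm\aug\bigl((\beta\cup\alpha)\cap\Gamma\bigr)$, relates that pairing to the sheaf-theoretic cup product $\td\cup$ (Theorem~\ref{T: cup product}), which \emph{is} an honest morphism in $D(X)$, and then travels through the adjoint $\hat\Phi$, the Verdier dual with its universal coefficient map $e$, the concrete $\ms L$-dual, the double-dual sheaf $\ms P\mc I_{D\bar p}\mc C^*$, and the sheafified double-dual map $\rho$ — all assembled in the large diagram~\eqref{D: MAIN}, each piece of which is a genuine sheaf morphism amenable to localization. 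The eventual stalk computation (the ``Constants'' step) does happen at a Euclidean neighborhood of a regular point, exactly as your instinct suggests, but only after this reformulation makes the global-to-local comparison legitimate.
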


When $X$ is a manifold, $\mc I_{\bar p}\mc C^*$ and $\mc I^{D\bar p}\mc S^*$ reduce to the sheaf complexes $\mc C^*$ and $\mc S^*$ of ordinary singular cochains and chains, and we obtain the following corollary:

\begin{corollary}\label{C: man duality}
Let $M$ be a compact $F$-oriented $n$-dimensional manifold. The following diagram of isomorphisms commutes, where the vertical maps are induced by the sheafification of presheaf sections into sheaf sections,  the bottom map is induced by the quasi-isomorphism consistent with the orientation, and  the top map is the Poincar\'e duality map given by the signed cap product with the fundamental class $\Gamma$ determined by the orientation:

\begin{diagram}
H^i(M; F)&\rTo^{(-1)^{in}\cdot\cap \Gamma}& H_{n-i}(M; F)\\
\dTo^\sigma&&\dTo_{\sigma'}\\
\H^i(M;\mc C^*) &\rTo^{\mathbb{O}} &  \H^i(M;\mc S^*).
\end{diagram}
\end{corollary}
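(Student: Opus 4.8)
The plan is to deduce Corollary~\ref{C: man duality} directly from Theorem~\ref{T: MAIN} by viewing $M$ as a stratified pseudomanifold with the trivial stratification $M=M^n\supset M^{n-1}=\emptyset\supset\cdots\supset M^0=\emptyset$. With this filtration $M$ is a compact $F$-oriented $n$-dimensional stratified pseudomanifold in the sense used throughout the paper, and since there are no singular strata there is exactly one perversity---the empty function $\bar p$---which is trivially self-complementary, $\bar p=D\bar p$. The hypotheses of Theorem~\ref{T: MAIN} are therefore met, and the remaining task is to check that each object and arrow in that theorem's diagram specializes to the corresponding one in the corollary.

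First I would verify that the chain-level complexes collapse to the ordinary ones. With $M^{n-1}=\emptyset$, every condition in the definition of $I^{\bar p}S_*(M;F)$ recalled in Section~\ref{S: ihreview}---the prohibition on simplices with image in $X^{n-1}$, the resulting modification of the boundary map, and the skeleton conditions indexed by singular strata---is vacuous, so $I^{\bar p}S_*(M;F)=S_*(M;F)$ and likewise $I^{\bar p}S_*^\infty(M;F)=S_*^\infty(M;F)$. Dualizing gives $I_{\bar p}S^*(M;F)=\Hom(S_*(M;F),F)=S^*(M;F)$, the ordinary singular cochain complex. Passing to sheaves as in Sections~\ref{S: SIC} and \ref{S: sheaf}, the sheafification $\mc I^{\bar p}\mc S^*$ of $U\mapsto S_{n-*}(M,M-\bar U;F)$, with the degree shift and the sign $(-1)^n\partial$, is exactly the sheaf complex $\mc S^*$ of singular chains appearing in the corollary, and the sheafification $\mc I_{\bar p}\mc C^*$ of $U\mapsto S^*(U;F)$ is exactly the sheaf complex $\mc C^*$ of singular cochains. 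Consequently the groups in the top row become $I_{\bar p}H^i(M;F)=H^i(M;F)$ and $I^{D\bar p}H_{n-i}(M;F)=H_{n-i}(M;F)$, and the vertical maps $\sigma,\sigma'$---induced by sheafification of presheaf sections---become the standard identifications $H^i(M;F)\cong\H^i(M;\mc C^*)$ and $H_{n-i}(M;F)\cong\H^i(M;\mc S^*)$.

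Next I would identify the horizontal maps. Once the intersection chains and cochains are replaced by ordinary ones, the signed cap product $(-1)^{in}\cdot\cap\Gamma$ of Theorem~\ref{T: MAIN} is by construction the ordinary signed cap product with the fundamental class, i.e.\ the classical Poincar\'e duality map of $M$. The morphism $\mathbb{O}\in\Mor_{D(X)}(\mc I_{\bar p}\mc C^*,\mc I^{D\bar p}\mc S^*)$ consistent with the orientation is, per its definition in Section~\ref{S: O}, the unique morphism in the derived category carrying the generating $0$-cocycle $1$ in the stalk cohomology $H^0(\mc I_{\bar p}\mc C^*_x)$ to the local orientation class in $H^0(\mc I^{D\bar p}\mc S^*_x)$ over the regular set; here the regular set is all of $M$, so this is precisely the canonical quasi-isomorphism $\mc C^*\to\mc S^*$ determined by the $F$-orientation, and its induced hypercohomology map is the map labeled $\mathbb{O}$ in the corollary. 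Substituting these identifications into the commuting square of Theorem~\ref{T: MAIN} reproduces verbatim the square of Corollary~\ref{C: man duality}.

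The step I expect to require the most care---though it is bookkeeping rather than a genuine obstacle---is confirming that the normalizations built into Sections~\ref{S: SIC}--\ref{S: O} (the grading shift by $-n$, the sign $(-1)^n$ on the boundary of $\mc S^*$, and the precise choice of generator of the local orientation module) are the same ones under which $\mc C^*$, $\mc S^*$, and the classical cap product $\cap\,\Gamma$ are understood in the statement of the corollary. Once these conventions are aligned, nothing further is needed beyond quoting Theorem~\ref{T: MAIN}.
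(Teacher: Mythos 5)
Your proposal is correct and is exactly the paper's route: the paper obtains the corollary by observing that for a manifold (trivially stratified, vacuous perversity) the intersection (co)chain sheaves reduce to $\mc C^*$ and $\mc S^*$ and then specializing Theorem \ref{T: MAIN}. You merely spell out in more detail the identifications that the paper states in one sentence.
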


The proof of Theorem \ref{T: MAIN} is given in Section \ref{S: PROOF} following some further preliminaries in Section \ref{S: O}.

\subsection{Orientations and canonical products}\label{S: O}

We continue to assume that $X$ is a compact $n$-dimensional $F$-oriented stratified  pseudomanifold.

As observed in Section \ref{R: uniqueness}, morphisms amongst Deligne sheaves with appropriate perversities, as well as maps from appropriate Deligne sheaves to the shifted Verdier dualizing sheaf complex $\D^*[-n]$, are determined uniquely (in the derived category) by the maps they induce on $H^0$ of stalks in  $U_1=X-X^{n-1}$. Similarly, various products are determined by their behavior on $H^0$ of these stalks. Thus if one wishes to describe these morphisms precisely, and in particular the maps they induce on hypercohomology, it is necessary to first ``orient'' these sheaves by choosing specific generators of these $H^0$ groups.

For the intersection chain sheaves, $H^0(\mc I^{\bar p}\mc S^*_x)\cong I^{\bar p}H_n(X,X-x;F)$, which for each $x\in U_1$ is canonically isomorphic to $H_n(X,X-x)$ by excision arguments. Thus the sheaf of such germs $\mc H^0(\mc I^{\bar p}\mc S^*)$ over $U_1$ is in fact the $F$-orientation sheaf over $U_1$. By definition of the $F$-orientation of $X$, each such stalk is assigned a preferred generator, which we may identify with $1\in F$  and think of as represented at each point by an $n$-chain generating the local orientation class. When $\bar p\geq \bar 0$, it is shown in \cite[Theorem 8.1.18]{GBF35}  that $X$ possesses 
  a fundamental class $\Gamma\in I^{\bar p}H_n(X;F)$, and the germs at $x\in U_1$ of any representative cycle for $\Gamma$ will also represent the preferred local generators.

For the sheaves of intersection cochains, we have over $U_1$ the cochain $1$ that takes the value $1\in F$ on any singular $0$-simplex of $U_1$. The germ of $1$ at $x\in U_1$, which we will denote $1_x$, restricts to the standard generator of $H^0(\mc I_{\bar q}\mc C^*_x)\cong I_{\bar q}H^0(x;F)\cong H^0(x;F)$.

By Proposition \ref{P: IC is IS}, we know that $\mc I_{\bar p}\mc C^*$ and $\mc I^{D\bar p}\mc S^*$ are quasi-isomorphic, and so by Section \ref{R: uniqueness} each quasi-isomorphism between them is determined by the map it induces $\mc H^0(\mc I_{\bar p}\mc C^*)|_{U_1}\to \mc H^0(\mc I^{D\bar p}\mc S^*)|_{U_1}$. As $X$ is orientable, these are each constant local systems with stalk $F$. For each perversity $\bar p$, we define $\mathbb{O}$ to be the morphism $\mc I_{\bar p}\mc C^*\to \mc I^{D\bar p}\mc S^*$ such that  $\mathbb{O}(1_x)$ is the local orientation class in $H^0(\mc I^{\bar p}\mc S^*_x)$ for each $x\in U_1$.

Similarly, by Section \ref{R: uniqueness}, if $\bar p\leq \bar t$, there are unique morphisms $\mc I^{\bar p}\mc S^*\to \D^*[-n]$ and $\mc I_{D\bar p}\mc C^*\to \D^*[-n]$ for each morphism of local systems on $U_1$. In fact, $\mc H^0(\D[-n])|_{U_1}=\mc H^{-n}(\D^*)|_{U_1}$ is also isomorphic to the $F$-orientation sheaf of $U_1$; see \cite[Section V.7.3]{Bo}. Since we have not chosen a particular geometric description of $\D^*$ (e.g. in terms of chains or cochains) but have rather defined $\D^*$ via a fixed but unidentified chosen resolution of the constant sheaf (see Section \ref{S: Verdier complex}), we are free to ``orient'' $\D^*$ by choosing and fixing an isomorphism $\mc H^0(\mc I^{\bar t}\mc S^*)|_{U_1}\to \mc H^0(\D^*[-n])|_{U_1}$. This determines a local generator for each $H^0(\D^*_x[-n])$ as the image of the local orientation in $H^0(\mc I^{\bar t}\mc S^*_x)$. This isomorphism of local systems over $U_1$ then determines a morphism  $\mathbb{K}:\mc I^{\bar t}\mc S^*\to \D^*[-n]$  and, by composition with $\mathbb{O}$, a unique morphism $\mathbb{L}:\mc I_{\bar 0}\mc C^*\to \D^*[-n]$ such that the following triangle commutes:
\begin{diagram}[LaTeXeqno]\label{D: O triangle}
\mc I_{\bar 0}\mc C^*\\
\dTo^{\mathbb{O}}& \rdTo(2,1)^{\mathbb{L}}&\D^*[-n].\\
\mc I^{\bar t}\mc S^*&\ruTo(2,1)_{\mathbb{K}}&
\end{diagram}

Next we ``orient'' certain pairings. Recall again from  Section \ref{R: uniqueness} that maps $\mc I^{\bar p}\mc S^*\otimes \mc I^{\bar q}\mc S^*\to \mc I^{\bar r}\mc S^*$ and $\mc I_{D\bar p}\mc C^*\otimes \mc I_{D\bar q}\mc C^*\to \mc I_{D\bar r}\mc C^*$, for $\bar p+\bar q\leq \bar r$, are determined uniquely by their induced morphisms 
 $\mc H^0(\mc I^{\bar p}\mc S^*)\otimes \mc H^0(\mc I^{\bar q}\mc S^*) \to \mc H^0(\mc I^{\bar r}\mc S^*)$ and $\mc H^0(\mc I_{D\bar p}\mc C^*)\otimes \mc H^0(\mc I_{D\bar q}\mc C^*)\to \mc H^0(\mc I_{D\bar r}\mc C^*)$ over $U_1$. We have already seen in Section \ref{S: sheaf cup} that the map corresponding to the local product $1_x\otimes 1_x\to 1_x$ is the sheafification $\td \cup$ of the intersection cohomology cup product, which represents the Goresky-MacPherson sheaf product. For the intersection chain sheaves, we define a representative of the  Goresky-MacPherson sheaf product by letting  $\td \psi: \mc I^{\bar p}\mc S^*\otimes \mc I^{\bar q}\mc S^*\to \mc I^{\bar r}\mc S^*$ be the unique morphism in $D(X)$ that takes the tensor product of local orientation classes at points $x\in U_1$ to a local orientation class.

Finally, there is one more ``orientation'' issue that requires an explicit isomorphism. 
It seems to be well-known  that for a locally compact space $\D^*[-n]$ is quasi-isomorphic to the sheaf of singular chains $\mc S^*$ (giving $\mc S^*$ the same indexing convention we use for complexes of sheaves of intersection chains). For a pseudomanifold $X$, we will in fact rederive this result below; see Remark \ref{R: Verdier is singular}. In particular then, for compact connected $X$, $\H^{0}(X;\D^*)\cong \H^n(X;\D^*[-n])\cong \H^n(X;\mc S^*)\cong H_0(X;F)\cong F$. This isomorphism will play a role below, but once again the construction of $\D^*$ we have used does not seem to provide a natural choice for these isomorphisms. It will turn out that a useful choice for this isomorphism will be forced on us within the proof of Theorem \ref{T: MAIN}; see the section labeled ``\textbf{Constants}'' beginning on page \pageref{P: constants}. We label this chosen isomorphism $\ell: \H^{0}(X;\D^*)\xr{\cong} F$.

\subsection{Proof of Theorem \ref{T: MAIN}}\label{S: PROOF}

Throughout the proof, we continue to assume that 
  $X$ is a compact $F$-oriented $n$-dimensional stratified pseudomanifold for a fixed field $F$ and write $\bar q=D\bar p$ so that  $\bar p+\bar q=\bar t$. We will also assume at first that $X$ is normal and connected, which implies $U_1=X-X^{n-1}$ is connected and thus $I^{\bar 0}H_n(X;F)\cong F$; see \cite[Lemma 2.6.3 and Theorem 8.1.18 ]{GBF35}. We complete the proof for general $X$ starting on page \pageref{P: not normal}.

With these assumptions, we consider the following diagram in which $i+j=n$. 

\begin{equation}\label{D: MAIN}
\resizebox{.95\hsize}{!}{
\begin{diagram}
I_{\bar p}H^i(X; F) & && &\rTo^{(-1)^{in}\cdot\cap \Gamma} &&&&I^{\bar q}H_j(X; F)\\
&\rdTo(4,2)^\nu && &I& &&\ldTo(4,2)^\kappa \ldTo(2,4)^r&\\
\dTo^\sigma& II&&&\Hom(I_{\bar q}H^j(X; F),F) && &&\\ 
 &&& \ruTo^{\sigma^*}&&\luTo^{e''}&VI & & \dTo_{\sigma'}\\
H^i(\mc I_{\bar p}\mc C^*(X))&\rTo^\chi&\Hom(\H^j(X;\mc I_{\bar q}\mc C^*),F)&&V&&H^{-j}(\Hom(I_{\bar q}S^*(X; F),F))&& \\
\dTo^=&III&\uTo(1,2)^e \qquad IV &\luTo(2,2)^{e'}&&\ruTo^h &  VII\dTo_{ \tau}&VIII & \\
\H^i(X;\mc I_{\bar p}\mc C^*)&\rTo^{\hat \Phi}&\H^i(X;\mc D\mc I_{\bar q}\mc C^*[-n])&\rTo^{\Lambda[-n]} &\H^i(X;\ms L\mc I_{\bar q}\mc C^*[-n])&\rTo^{ \eta[-n]}&\H^i(X;\ms P\mc I_{\bar q}\mc C^*[-n])&\lTo^{ \rho}&\H^i(X;I^{\bar q}\mc S^*)\\
\end{diagram}}
\end{equation}

The specific labeled maps and groups involved will be described in what follows.
We will consider the polygons in the diagram one at a time, considering the extent to which they commute and showing that each map is an isomorphism. 
In our first pass, we will show that all smaller polygons commute up to a constant. We will then show starting on page \pageref{P: constants} that with the proper choice of the isomorphism $\ell$ (see Section \ref{S: O}),   the signs will ``cancel out'' to provide the exact commutativity around the outside of the diagram and the composition of maps along the bottom will be induced by the sheaf morphism $\mathbb{O}$. 

Since the maps around the outside of the diagram will become the maps in the statement of Theorem \ref{T: MAIN}, we take care to ensure that these are all induced by chain maps. However, as the groups in the interior of the diagram are not necessarily cohomology groups of chain complexes, we will typically treat groups and maps in the interior degree by degree, not necessarily induced by chain maps.

For a sheaf complex $\mc A^*$, we will occasionally use the notation $\mc A^*(U):=\Gamma(U;\mc A^*)$. 

 \paragraph{VI.} 
For triangle VI of diagram \eqref{D: MAIN}, the map  $r$ is induced by the chain map  $I^{\bar q}S_*(X;F)\to \Hom^*(\Hom^*(I^{\bar q}S_*(X;F),F),F)=\Hom(I_{\bar q}S^*(X;F),F)$ defined so that if $x\in I^{\bar q}S_*(X;F)$ and $\alpha\in I_{\bar q}S^*(X;F)$, then  $f(x)(\alpha)=(-1)^{|\alpha|}\alpha(x)$. This map is 
 discussed in more detail in Appendix \ref{Appendix}, where it is shown that it is a degree $0$ chain map and that it induces an isomorphism on homology when $I^{\bar q}H_j(X;F)$ is finitely generated in all degrees  and trivial for sufficiently large $j$; these assumptions hold here by  \cite[Corollary 6.3.40]{GBF35} and \cite[Lemma 8.1.16]{GBF35}.

The map $e''$ is just the universal coefficient isomorphism as applied to $I_{\bar q}S^*(X;F)$, which is free since it is a module over the field $F$.  We define the map $\kappa$ to be the composition of $r$ and $e''$. Explicitly,  for a cycle $x$ representing an element of $I^{\bar q}H_j(X;F)$, the homomorphism $\kappa(x)$ acts on a cocycle $\alpha$ representing an element in $I_{\bar q}H^j(X; F)$ by $\kappa(x)(\alpha)=(-1)^{|\alpha|}\alpha(x)$.  

The triangle commutes by definition, and $\kappa$ is an isomorphism as $r$ and $e''$ are. 

\paragraph{I.}  We let $\nu: I_{\bar p}H^i(X; F) \to \Hom(I_{\bar q}H^j(X; F),F)$ be defined so that if $\alpha\in I_{\bar p}H^i(X; F)$ and $\beta\in I_{\bar q}H^j(X; F)$, then\footnote{As motivation for this sign, we note that it is consistent with interchanging the order of $\alpha$ and $\beta$ in the cup product and then applying $(-1)^n\cdot \cap \Gamma$, as the degree of $\beta\cup \alpha$ is $n$.} $\nu(\alpha)(\beta)=(-1)^{ij+n}\aug((\beta\cup \alpha)\cap \Gamma)$, where $\aug: I^{\bar t}H_0(X; F)\to F$ is the augmentation map that takes any $\bar t$-allowable singular $0$-simplex to $1$. This is well-defined as $\beta\cup\alpha\in I_{\bar 0}H^n(X;F)$. Since we have assumed $X$ connected and normal, the augmentation is an isomorphism (see Corollary 5.1.9 and Proposition 6.2.9 of \cite{GBF35}).

Then by the properties of cup and cap products \cite[Section 7.3.9]{GBF35}, 
\begin{align*}
\nu(\alpha)(\beta)&=
(-1)^{ij+n}\aug((\beta\cup \alpha)\cap \Gamma)\\
&=(-1)^{ij+n}\aug(\beta \cap (\alpha\cap \Gamma))\\
&=(-1)^{ij+n}\beta(\alpha\cap \Gamma)\\
&=(-1)^{ij+n+j}\kappa(\alpha\cap\Gamma)(\beta)\\
&=(-1)^{ij+n+j+in}((\kappa\circ (-1)^{in}\cdot \cap \Gamma)(\alpha))(\beta).
\end{align*}

Thus triangle $I$ commutes up to the sign $(-1)^{ij+n+j+in}=(-1)^{i(j+n)+n+j}=(-1)^{i(i)+n+j}=(-1)^{i+n+j}=(-1)^{2n}=1$, i.e.\ it commutes exactly. 

We note that $\cap \Gamma$ is an isomorphism by Poincar\'e duality \cite[Theorem 8.2.4]{GBF35}, $\kappa$ is an isomorphism as above, and thus $\nu$ is also an isomorphism.

\paragraph{III.}

Recall that  we assume that we have chosen a fixed  arbitrary $c$-soft resolution of the constant sheaf with stalk $F$ in order to define $\D^*$ and that we have fixed in Section \ref{S: O} (though not yet specified) the isomorphism $\ell: \H^{0}(X;\D^*)\cong F$.

We will demonstrate square III commutes up to a constant that depends on the choice of $\ell$. We will return to these specific choices when we consider  the issue of precise commutativity more carefully below, starting on page \pageref{P: constants}.

Let $\Phi:\mc I_{\bar p}\mc C^*\otimes \mc I_{\bar q}\mc C^*\to \D^*[-n]$ be the composition of the sheaf cup product $\td \cup:\mc I_{\bar p}\mc C^*\otimes \mc I_{\bar q}\mc C^*\to \mc I_{\bar 0}\mc C^*$ in the derived category (see Theorem \ref{T: cup product}) with the  morphism $\mathbb{L}: \mc I_{\bar 0}\mc C^*\to \D^*[-n]$ of Section \ref{S: O}.

Since $\D^*$ as defined in \cite{Bo} is injective by \cite[Corollary V.7.6]{Bo}, $\Phi$ can be represented by an actual degree $0$ chain map of sheaf complexes  \cite[Section V.5.17]{Bo} (which we will also call $\Phi$). The map $\Phi$ induces a degree $0$ adjoint
$\hat \Phi:  \mc I_{\bar p}\mc C^*\to \SHom(\mc I_{\bar q}\mc C^*,\D^*[-n])=\mc D\mc I_{\bar q}\mc C^*[-n]$. To define $\hat \Phi$ precisely, given a section $s\in \mc I_{\bar p}\mc C^i(U)$, $\hat \Phi(s)$  must be  an element of $\SHom^i(\mc I_{\bar q}\mc C^*,\D^*[-n])(U)=\Hom^i(\mc I_{\bar q}\mc C^*|_U,\D^*[-n]|_U)$, i.e.\ a degree $i$ homomorphism of sheaf complexes (though it need not be a chain map). If $s\in \mc I_{\bar p}\mc C^i(U)$, then we can define such a homomorphism  $\hat \Phi(s):\mc I_{\bar q}\mc C^*|_U\to \D^*[-n]|_U$ as follows:  if $t\in \mc I_{\bar q}\mc C^*(V)$ for $V\subset U$, let $s|_V\td\otimes t$ be the image of $s|_V\otimes t$ under sheafification in  $(\mc I_{\bar p}\mc C^*\otimes \mc I_{\bar q}\mc C^*)(V)$ (which we recall is not necessarily equal to $\mc I_{\bar p}\mc C^*(V)\otimes \mc I_{\bar q}\mc C^*(V)$), and 
then let $\hat \Phi(s)(t)=\Phi(s|_V\td\otimes t)\in \D^*[-n](V)$.  With this definition, $\hat \Phi(s)$ commutes with restrictions, since $\Phi$ is a sheaf morphism, and so $\hat \Phi(s)$ provides a homomorphism of sheaf complexes over $U$ as desired. Furthermore,  restrictions of $s$ commute with restrictions of $\hat\Phi(s)$, and so $\hat \Phi$ is a homomorphism of sheaves. In fact $\hat \Phi$ is a degree $0$ chain map: 
if $s\in  \mc I_{\bar p}\mc C^i(U)$ and $t\in \mc I_{\bar q}\mc C^*(V)$ for $V\subset U$, then

\begin{align*}
\hat \Phi(ds) (t)&=\Phi(ds|_V\td\otimes t)\\
&=\Phi(d(s|_V\td \otimes t)-(-1)^{i}(s|_V\td \otimes dt))\\
&=d\Phi(s|_V\td\otimes t) -(-1)^{i}\Phi(s|_V\td\otimes dt) \\
&= d(\hat \Phi(s)(t))   -(-1)^{i}\hat\Phi(s)(d(t))\\
&=(d \circ  \hat \Phi(s)-(-1)^{i}\hat\Phi(s)\circ d)(t)\\
&=((d\hat\Phi)(s))(t).\\
\end{align*}

\begin{lemma}
$\hat \Phi$ is a quasi-isomorphism. 
\end{lemma}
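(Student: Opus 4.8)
The plan is to check the quasi-isomorphism stalkwise, using the fact that a quasi-isomorphism of sheaf complexes is precisely a morphism inducing isomorphisms on all derived cohomology stalks. Since $\hat\Phi$ is the adjoint of $\Phi = \mathbb{L}\circ\td\cup$, which represents (in the derived category) the composition of the Goresky-MacPherson sheaf cup product $\mc P_{\bar p}^*\otimes\mc P_{\bar q}^*\to\mc P_{\bar 0}^*$ with the canonical morphism $\mc P_{\bar 0}^*\to\D^*[-n]$, the natural approach is to identify $\hat\Phi$ as a representative of the standard sheaf-theoretic duality morphism $\mc P_{\bar p}^*\to \mc D\mc P_{\bar q}^*[-n]$, which is known to be a quasi-isomorphism by Goresky-MacPherson duality (the statement $\mc P_{\bar p}^*\sim(\mc D\mc P_{D\bar p}^*)[-n]$ for complementary perversities); see \cite[Section 5]{GM2} and \cite[Section V.9]{Bo}. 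Concretely, first I would observe that $\mc D\mc I_{\bar q}\mc C^*[-n] = \SHom^*(\mc I_{\bar q}\mc C^*,\D^*[-n])$ is a $C$-ready (indeed injective-valued, hence flabby) representative of $\mc D\mc P_{\bar q}^*[-n]$, since $\mc I_{\bar q}\mc C^*$ is $C_c$-ready representative of $\mc P_{D\bar q}^* = \mc P_{\bar q}^*$-complement and $\D^*$ is injective, so $\SHom$ into it preserves the relevant properties; this uses Proposition \ref{P: IC is IS} and the discussion of $\D^*$ in Section \ref{S: Verdier complex}.

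The key step is then to verify that the morphism $\hat\Phi$ in $D(X)$ restricts, over $U_1 = X-X^{n-1}$, to the canonical duality isomorphism of the constant sheaf with its shifted Verdier dual. Over $U_1$ all the sheaves in sight become the ordinary sheaves of singular cochains (for $\mc I_{\bar p}\mc C^*$ and $\mc I_{\bar q}\mc C^*$) and $\D^*|_{U_1}[-n]$ is quasi-isomorphic to the constant sheaf $\mc F$ (it represents the orientation sheaf, which is $\mc F$ since $X$ is $F$-oriented). By Theorem \ref{T: cup product} the product $\td\cup$ restricts on $\mc H^0$ of stalks to multiplication $1\otimes 1\to 1$, and $\mathbb{L}$ was defined in Section \ref{S: O} so that $\mathbb{L}$ sends $1_x$ to the chosen local generator of $H^0(\D^*_x[-n])$. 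Hence on $\mc H^0$ of stalks over $U_1$ the adjoint $\hat\Phi$ sends $1_x$ to the generator dual to $1_x$ under the pairing, which is exactly the nonzero (hence isomorphism) map $\mc F\to\mc D\mc F[-n]|_{U_1}$ realizing Poincar\'e duality on the manifold $U_1$; that this is a quasi-isomorphism over $U_1$ is the classical Verdier/Poincar\'e duality for the manifold $U_1$, or can be seen directly since $\SHom(\mc F,\D^*[-n])|_{U_1}$ has $\mc H^0$ the orientation sheaf and vanishing higher $\mc H$'s. Finally, invoke the uniqueness-and-extension machinery recalled in Section \ref{R: uniqueness}: both $\mc I_{\bar p}\mc C^*$ and $\mc D\mc I_{\bar q}\mc C^*[-n]$ are quasi-isomorphic to $\mc P_{\bar p}^*$-type complexes (the latter by Goresky-MacPherson duality applied to $\mc P_{D\bar p}^* = \mc P_{\bar q}^*$), so a morphism between them in $D(X)$ is a quasi-isomorphism if and only if its restriction to $U_1$ is; since $\hat\Phi|_{U_1}$ is an isomorphism on cohomology sheaves, $\hat\Phi$ is a quasi-isomorphism.

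The main obstacle I anticipate is bookkeeping around the fact that $\mc D\mc I_{\bar q}\mc C^*[-n]$ must genuinely be shown to be quasi-isomorphic to $\mc P_{\bar p}^*$ (equivalently, that Verdier duality sends the Deligne sheaf for $D\bar p$ to that for $\bar p$, up to the shift), and that the adjunction used to pass from $\Phi$ to $\hat\Phi$ is compatible with this identification at the level of the derived category rather than just on stalks — one must be slightly careful that $\hat\Phi$ is the "right" morphism and not some other element of the one-dimensional (per component) space $\Mor_{D(X)}(\mc I_{\bar p}\mc C^*,\mc D\mc I_{\bar q}\mc C^*[-n])$. This is handled by the explicit stalk computation over $U_1$ above together with Section \ref{R: uniqueness}, which guarantees that matching behavior on $\mc H^0$ over $U_1$ pins the morphism down uniquely; the remaining details (that $\SHom$ into an injective complex of a $C$-ready representative of a Deligne-type sheaf is again $C$-ready and a representative of the Verdier-dual Deligne sheaf) are routine given \cite[Theorem V.7.8, Corollary V.7.6]{Bo} and Lemma \ref{L: C-ready}.
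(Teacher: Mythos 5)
Your argument is correct and follows the same route as the paper: reduce to checking that $\hat\Phi$ is nontrivial at a stalk over $U_1$, using the fact that both $\mc I_{\bar p}\mc C^*$ and $\mc D\mc I_{\bar q}\mc C^*[-n]$ are quasi-isomorphic to the same Deligne sheaf so that morphisms between them in $D(X)$ are pinned down by $\mc H^0$ over $U_1$, then verify locally that the cup product followed by $\mathbb{L}$ is nonzero in degree $0$. One small slip: by Proposition \ref{P: IC is IS} both complexes are quasi-isomorphic to $\mc P^*_{\bar q}=\mc P^*_{D\bar p}$ (not to ``$\mc P^*_{\bar p}$-type complexes'' as written), but this labeling error does not affect the substance of the argument since all that is used is that they are quasi-isomorphic to the \emph{same} Deligne sheaf.
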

\begin{proof}
By Corollary \ref{C: IC is IS},  $\mc I_{\bar p}\mc C^*$ is quasi-isomorphic to $\mc I^{\bar q}\mc S^*$, and so by \cite[Theorem V.7.8.i]{Bo}, also 
$\mc D\mc I_{\bar q}\mc C^*[-n]$ is quasi-isomorphic to $\mc D\mc I^{\bar p}\mc S^*[-n]$, which is quasi-isomorphic to $\mc I^{\bar q}\mc S^*$ by \cite[Theorem 4.3]{GBF23}. Therefore $\mc I_{\bar p}\mc C^*$ and $\mc D\mc I_{\bar q}\mc C^*[-n]$
 are both  quasi-isomorphic to the perversity $\bar q$ Deligne sheaf. So to show that $\hat \Phi$ is a quasi-isomorphism, it suffices by Section \ref{R: uniqueness} to show that $\hat \Phi$ is a quasi-isomorphism at some $x\in U_1=X-X^{n-1}$, since the only non-quasi-isomorphism between these sheaves induces the zero map on the cohomology of the stalk at such an $x$.

So suppose $x\in U_1$, and let $U$ be a Euclidean neighborhood of $x$.  As observed in the proof of Theorem \ref{T: cup product}, on $U$ the intersection cohomology cup product reduces to the standard cup product, and in particular the stalk map $H^*(I_{\bar p}\mc C^*_x)\otimes H^*(I_{\bar q}\mc C^*_x)\to H^*(I_{\bar 0}\mc C^*_x)$ corresponds to multiplication $F\times F\to F$ in degree $0$, while $H^*(I_{\bar 0}\mc C^*_x)\to H^*(\D[-n]_x)$ corresponds to an automorphism of $F$ in degree $0$ (recall $\D^*[-n]|_{U_1}$ is quasi-isomorphic to the constant sheaf $\mc F_{U_1}$). Since the stalk cohomology for each of these sheaves is isomorphic to the section cohomology on $U$, and in fact there is a cofinal system of such euclidean neighborhoods that is essentially constant on cohomology, corresponding statements hold over $U$. 

Now, over $U$, $\hat \Phi$  takes a cycle representing a non-zero element $\alpha\in H^0(I_{\bar p}S^*(U))\cong F$ to an element of $\Hom^0(\mc I_{\bar q}\mc C^*|_U,\D^*[-n]|_U)$ representing a degree $0$ chain map. In particular, $\hat \Phi(\alpha)$ induces a map $I_{\bar q}S^*(U)\to \D^*[-n](U)$ such that for a cocycle $\beta\in I_{\bar q}S^0(U)$, $\hat \Phi(\alpha)(\beta)=\Phi(\alpha\otimes \beta)$, which if $\beta$ is also cohomologically non-trivial yields a non-zero element of $H^0(U;\D^*[-n])\cong H^0(U;F)\cong F$. Since all maps involved commute with restrictions and, in fact, yield cohomology isomorphism on restriction to smaller euclidean neighborhoods about $x$, it follows that $\hat \Phi(\alpha)$ induces a quasi-isomorphism $\mc I_{\bar q}\mc C^*|_U\to \D[-n]|_U$ and, on restriction, a quasi-isomorphism $\mc I_{\bar q}\mc C^*|_V\to \D[-n]|_V$ for all Euclidean $V$ with $x\in V\subset U$. In particular, $\hat \Phi(\alpha)$ is not chain-homotopically trivial, and thus it is not a boundary in the $\Hom^*$ complex, and so it represents a non-zero element of $\H^0(U;\SHom(\mc I_{\bar q}\mc C^*,\D[-n]))\cong \H^0(U;\mc D\mc I_{\bar q}\mc C^*[-n])\cong \H^0(U;\mc C^*)\cong H^0(U;F)\cong F$. Again because these formulas all commute with restriction and yield isomorphisms on all cohomology groups over Euclidean neighborhoods of $x$, it follows that $\hat \Phi$ yields a cohomology isomorphism at each stalk $x\in U_1$. 
\end{proof}

Now, suppose $i+j=n$. We construct a diagram
\begin{diagram}[LaTeXeqno]\label{D: cup duality}
&&\Hom(\H^j(X;\mc I_{\bar p}\mc C^*),F)\\
\H^i(X;\mc I_{\bar p}\mc C^*)&\ruTo(2,1)^\chi &\uTo^e\\
 &\rdTo(2,1)^{\hat \Phi}&H^i(\Hom(\mc I_{\bar q}\mc C^*,\D^*[-n])). 
\end{diagram}
 We define the vertical map $e$ to take a cocycle $\eta\in \Hom^i(\mc I_{\bar q}\mc C^*,\D^*[-n])$, which corresponds to a degree $i$ chain map  $\mc I_{\bar q}\mc C^*\to \D^*[-n]$,  and output the induced map $\H^j(X;\mc I_{\bar q}\mc C^*)=H^j(\Gamma(X;\mc I_{\bar q}\mc C^*))\to H^n(\Gamma(X;\D^*[-n]))\xrightarrow{\ell} F$. This depends on our previous choice of isomorphism  $\ell: H^n(\Gamma(X;\D^*[-n]))=H^0(\Gamma(X;\D^*))\cong F$. 
We define the top diagonal $\chi$ to  take a cohomology class represented by a cocycle $\alpha\in \mc I_{\bar p}\mc C^*(X)$ to the composition  $\beta\to \Phi(\alpha\td\otimes\beta) \xrightarrow{\ell} F$, where $\alpha\td\otimes\beta$ stands for the image in $(\mc I_{\bar p}\mc C^*\otimes \mc I_{\bar q}\mc C^*)(X)$ of $\alpha\otimes \beta\in \mc I_{\bar p}\mc C^*(X)\otimes \mc I_{\bar q}\mc C^*(X)$ under sheafification. As $\alpha$ and $\beta$ are both assumed to be cocycles (representing cohomology classes) we do have that  $\Phi(\alpha\td\otimes\beta)$ represents an element of $\H^n(X;\D^*[-n])$.

\begin{lemma}
Diagram \eqref{D: cup duality} commutes. 
\end{lemma}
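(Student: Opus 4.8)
The claim is that Diagram~\eqref{D: cup duality} commutes, i.e.\ that $e\circ \hat\Phi = \chi$ as maps $\H^i(X;\mc I_{\bar p}\mc C^*)\to \Hom(\H^j(X;\mc I_{\bar q}\mc C^*),F)$ (with $i+j=n$). The strategy is simply to unwind both composites on a cocycle $\alpha\in \mc I_{\bar p}\mc C^i(X)$ representing a class in $\H^i(X;\mc I_{\bar p}\mc C^*)=H^i(\Gamma(X;\mc I_{\bar p}\mc C^*))$ and a cocycle $\beta\in \mc I_{\bar q}\mc C^j(X)$ representing a class in $\H^j(X;\mc I_{\bar q}\mc C^*)$, and check that both produce the element $\ell\bigl(\Phi(\alpha\td\otimes\beta)\bigr)\in F$. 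Since all the sheaves involved are $C$-ready (flabby, by Proposition~\ref{P: flabby}) and $\D^*$ is injective, every group in sight is the cohomology of global sections, so there are no resolutions to juggle and the verification is at the level of explicit cochains.

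First I would trace $\chi$: by definition, $\chi([\alpha])$ is the functional $[\beta]\mapsto \ell\bigl(\Phi(\alpha\td\otimes\beta)\bigr)$, where $\alpha\td\otimes\beta\in(\mc I_{\bar p}\mc C^*\otimes\mc I_{\bar q}\mc C^*)(X)$ is the sheafification of $\alpha\otimes\beta$, and $\Phi$ is viewed as the chosen chain-level representative $\mc I_{\bar p}\mc C^*\otimes\mc I_{\bar q}\mc C^*\to\D^*[-n]$; that $\Phi(\alpha\td\otimes\beta)$ is a cocycle representing a class in $\H^n(X;\D^*[-n])$ was already observed in the text preceding the lemma. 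Next I would trace $e\circ\hat\Phi$: the class $\hat\Phi([\alpha])\in H^i(\Hom(\mc I_{\bar q}\mc C^*,\D^*[-n]))$ is represented by the degree-$i$ chain map $\hat\Phi(\alpha):\mc I_{\bar q}\mc C^*\to\D^*[-n]$ defined by $\hat\Phi(\alpha)(t)=\Phi(\alpha|_V\td\otimes t)$ for $t$ a local section over $V$. Applying $e$ means: take the induced map on cohomology of global sections, $H^j(\Gamma(X;\mc I_{\bar q}\mc C^*))\xrightarrow{\hat\Phi(\alpha)_*}H^n(\Gamma(X;\D^*[-n]))\xrightarrow{\ell}F$, and this sends $[\beta]\mapsto \ell\bigl(\hat\Phi(\alpha)(\beta)\bigr)=\ell\bigl(\Phi(\alpha\td\otimes\beta)\bigr)$, where in the last equality I use that over $V=X$ the section $\alpha|_X\td\otimes\beta$ is exactly $\alpha\td\otimes\beta$. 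So both composites agree on representatives, hence agree on cohomology.

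The only genuine points needing care are bookkeeping rather than substance: (i) confirming that the grading conventions make $\hat\Phi(\alpha)$ a degree-$i$ chain map whose value on the degree-$j$ cocycle $\beta$ lands in degree $i+j=n$ of $\D^*[-n]$, which is exactly degree $0$ of $\D^*$, so that $\ell$ applies — this was established in the construction of $\hat\Phi$ just above (the displayed computation showing $\hat\Phi(ds)=(d\hat\Phi)(s)$), and one checks no sign is introduced because $\Phi$ itself is a degree-$0$ chain map; (ii) checking that $e$ is well defined on cohomology classes, i.e.\ that a coboundary $\eta=d\zeta$ in $\Hom^i(\mc I_{\bar q}\mc C^*,\D^*[-n])$ induces the zero map $H^j(\Gamma(X;\mc I_{\bar q}\mc C^*))\to H^n(\Gamma(X;\D^*[-n]))$ — this is the standard fact that chain-homotopic chain maps induce the same map on cohomology, applied globally; and (iii) noting that since $\mc I_{\bar q}\mc C^*$ and $\D^*$ are $C$-ready (flabby, resp.\ injective), $\H^*(X;-)$ of each really is $H^*(\Gamma(X;-))$, so the map "induced on hypercohomology" by $\hat\Phi(\alpha)$ is literally the map induced on cohomology of global sections, with no intervening resolution. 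I do not anticipate a main obstacle here — the lemma is essentially a definition-chase, and the content is that the two ways of packaging the same bilinear pairing $(\alpha,\beta)\mapsto\ell(\Phi(\alpha\td\otimes\beta))$ — once as a pairing, once as its adjoint composed with evaluation — coincide, which is a formal adjunction statement. The subtler commutativity questions (involving signs and the choice of $\ell$) are deferred to the later polygons of Diagram~\eqref{D: MAIN}, not to this one.
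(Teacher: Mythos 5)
Your proof is correct and follows essentially the same route as the paper: unwind both composites on representative cocycles $\alpha,\beta$, use the defining identity $\hat\Phi(\alpha)(\beta)=\Phi(\alpha\td\otimes\beta)$, and observe that both sides produce $\ell\bigl(\Phi(\alpha\td\otimes\beta)\bigr)$. The extra bookkeeping you note (degree conventions, $C$-readiness making hypercohomology literal global-section cohomology, well-definedness of $e$ on classes) is exactly the supporting scaffolding the paper relies on implicitly.
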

\begin{proof}
Let $\alpha$ be a cocycle in $\mc I_{\bar p}\mc C^i(X)$. By definition, and the assumption that $\alpha$ is a cocycle,  $\hat \Phi(\alpha)$ is a degree $i$ chain map $\mc I_{\bar q}\mc C^*\to\D^*[-n]$, and so it induces a map on homology $H^*(\mc I_{\bar q}\mc C^*(X))\to H^{*+i}(\D^*[-n](X))$ (in fact, this is a map of hypercohomology, as $\mc I_{\bar q}\mc C^*$ and $\D^*$ are both $C$-ready).
Therefore, if $\beta$ is a cycle in $\mc I_{\bar q}\mc C^j (X)$, then $((e\hat \Phi))(\alpha)(\beta)$ is precisely the image of $\Phi(\alpha\td \otimes \beta)$ under  $\ell: H^n(\Gamma(X;\D^*[-n]))\cong F$. But this is precisely what $\chi(\alpha)$ does to $\beta$ by definition.
\end{proof}

This proposition suffices to demonstrate the commutativity of 
 square III in diagram \eqref{D: MAIN} since the lefthand map is an identity, as $\mc I_{\bar p}\mc C^*$ is flabby, and $\H^i(X;\mc D^*\mc I_{\bar q}\mc C^*[-n])=H^i(\Gamma(X;\SHom^*(\mc I_{\bar q}\mc C^*,\D^*)[-n]))=H^i(\Hom^*(\mc I_{\bar q}\mc C^*,\D^*[-n]))$ by the flabbiness of  $\mc D^*\mc I_{\bar q}\mc C^*=\SHom(\mc I_{\bar q}\mc C^*,\D^*)$. 

$\hat \Phi$ is an isomorphism since it is induced by a quasi-isomorphism. We will see below in proving the commutativity of II that $\chi$ is an isomorphism, hence so is $e$.

\paragraph{II.} We continue to let $\nu,\chi$ be as defined above. We let $\sigma$ stand for the map that takes cochains in $I_{\bar p}C^*(X; F)$  to their sections in $\mc I_{\bar p}\mc C^*(X)$. Similarly, $\sigma^*$ is induced by the Hom-dual of such a map on perversity $\bar q$ intersection cochains.

Now, let $\alpha\in I_{\bar p}S^i(X; F)$ and $\beta\in I_{\bar q}S^j(X; F)$
be  cocycles. Then, by definition, $\sigma^*\chi\sigma(\alpha)$ acts on $\beta$ by taking it to $\Phi(\sigma(\alpha)\td\otimes \sigma(\beta))$, which is  the composition on $\sigma(\alpha)\td\otimes \sigma(\beta)$ of the sheaf cup product $\td \cup$, which has image in  $\H^n(X;\mc I_{\bar 0}\mc C^*)$, with the isomorphisms $\H^n(X;\mc I_{\bar 0}\mc C^*)\xr{\mathbb{L}} \H^n(X;\D^*[-n])\xr{\ell} F$ - see Section \ref{S: O}. But by Theorem \ref{T: cup product}, the sheaf cup product on $\sigma(\alpha)\td\otimes \sigma(\beta)$ is precisely the image in $\H^n(X;\mc I_{\bar 0}\mc C^*)$ under sheafification of the singular cup product $\alpha \cup \beta$. Therefore $\sigma^*\chi\sigma(\alpha)$ is equal to the composite map 
$$ I_{\bar q}H^j(X; F)\xrightarrow{\alpha\cup \cdot} I_{\bar 0}H^n(X; F) \xrightarrow{\sigma} \H^n(X;\mc I_{\bar 0}\mc C^*) \xrightarrow{\mathbb{L}} \H^n(X;\D^*[-n])\xrightarrow{\ell} F.$$

On the other hand, recall that  $\nu(\alpha)(\beta)$ is the image of $(-1)^{ij}\beta\cup \alpha\in I_{\bar 0}H^n(X; F)$ under the cap product $(-1)^n\cdot\cap \Gamma: I_{\bar 0}H^n(X; F)\to I^{\bar t}H_0(X; F)$, which is an isomorphism by Poincar\'e duality, and the augmentation isomorphism $\aug:I^{\bar t}H_0(X; F)\to F$. Thus 
$\nu(\alpha)$  is the composition 
$$ I_{\bar q}H^j(X; F)\xrightarrow{\cdot \cup (-1)^{ij}\alpha} I_{\bar 0}H^n(X; F) \xrightarrow{(-1)^n\cdot \cap\Gamma} I^{\bar t}H_0(X; F)\xrightarrow{\aug} F.$$

 Since  $\alpha\cup\beta=(-1)^{ij}\beta\cup \alpha$, and since the maps $I_{\bar 0}H^n(X; F) \xrightarrow{\sigma} \H^n(X;\mc I_{\bar 0}\mc C^*) \xrightarrow{\mathbb{L}} \H^n(X;\D^*[-n])$ and $I_{\bar 0}H^n(X; F) \xrightarrow{(-1)^n\cdot \cap\Gamma} I^{\bar t}H_0(X; F)\xrightarrow{\aug}F$
are all isomorphisms of modules isomorphic to $F$, we see that $\nu(\alpha)$ and  $\sigma^*\chi\sigma(\alpha)$ differ only up to a unit in $F$. More particularly, they differ by  the unit representing the automorphism

\begin{equation}\label{E: osign}
F\xrightarrow{\aug^{-1}} I^{\bar t}H_0(X; F)\xrightarrow{(-1)^n(\cdot\cap\Gamma)^{-1}}I_{\bar 0}H^n(X; F)\xrightarrow{\sigma} \H^n(X;\mc I_{\bar 0}\mc C^*)\xrightarrow{\mathbb{L}}\H^n(X;\D^*[-n])\xr{\ell} F.
\end{equation}

This automorphism clearly depends on the choices we have made. We will return to this issue below on page \pageref{P: constants}. For now, we observe that  these automorphisms do not depend on $\alpha$, and we conclude that square II commutes up to a constant that does not depend on $i$ or $j$.

$\sigma$ is an isomorphism by \cite[Section 6]{GBF25}, and hence so is the dual $\sigma^*$. We saw in the section on rectangle I that $\nu$ is an isomorphism. It follows that $\chi$ is an isomorphism.

 \paragraph{IV.}

  Next we get to work on  triangle IV of diagram \eqref{D: MAIN}. We begin by defining $\ms L\mc I_{\bar q}\mc C^*$. 

We let $\ms L\mc I_{\bar q}\mc C^*$ (or $\ms L^*\mc I_{\bar q}\mc C^*$ if we need to emphasize the indexing of this sheaf complex) be the sheaf complex $U\to \Hom(\Gamma_c(U; \mc I_{\bar q}\mc C^*), F)$.  Since $\mc I_{\bar q}\mc C^*$ is $c$-soft, this is a sheaf by \cite[Proposition V.1.2]{IV}. In \cite{Bo}, this is essentially Borel's initial definition of the Verdier dual, except that we do not need to take a tensor product first of $\mc I_{\bar q}\mc C^*$ with a flat c-soft resolution of the constant sheaf since $\mc I_{\bar q}\mc C^*$ is already c-soft; see \cite[Section V.7.7]{Bo}, particularly the penultimate paragraph. This allows us to represent this version of the Verdier dual complex more simply, and we use the different notation to emphasize this point.

Note that $\ms L\mc I_{\bar q}\mc C^*$ is flabby as well since $\Gamma_c(U; \mc I_{\bar q}\mc C^*)\to \Gamma_c(X; \mc I_{\bar q}\mc C^*)$ is always injective and $\Hom(\cdot, F)$ is exact for $F$ a field. Also $H^i(\ms L\mc I_{\bar q}\mc C^*_x)$ vanishes for $|i|$ sufficiently large by the local intersection cohomology computations.
Thus $\ms L\mc I_{\bar q}\mc C^*$ is $C$-ready and $C_c$-ready. 

Now we turn to defining a map $\Lambda: \mc D\mc I_{\bar q}\mc C^*\to \ms L\mc I_{\bar q}\mc C^*$.

 We continue to employ the fixed isomorphism $\ell: \H^0(X;\D^*)\to F$ chosen earlier. Furthermore, if we consider the definition of $\D^*$ given in \cite[Section V.7]{Bo} as $\D^*(U)=\Hom^*(\Gamma_c(U;\mc K^*),F)$ for some fixed $c$-soft resolution $\mc F\to \mc K^*$ of the constant sheaf with stalk $F$, we see that $\D^i=0$ if $i>0$. If we think of $F$ as a chain complex that is non-trivial only in dimension $0$, then there is a chain map  $\gamma: \Gamma_c(X;\D^*)\to F$ that takes elements of $\Gamma_c(X;\D^i)$ to $0$ if  $i<0$ and that takes elements of $\Gamma_c(X;\D^0)$, which are all cycles, to the images of their cohomology classes under  $\ell: H^0(\Gamma_c(X;\D^*))\to F$; it is easy to observe that $\gamma$ is a chain map.

Using $\gamma$, we construct the chain map $\Lambda$ from $\mc D\mc I_{\bar q}\mc C^*=\SHom(\mc I_{\bar q}\mc C^*,\D^*)$ to $\ms L\mc I_{\bar q}\mc C^*$. 
We define $\Lambda$ on the open set $U\subset X$ as follows:
If $f\in \mc D^k\mc I_{\bar q}\mc C^*(U)=\Hom^k(\mc I_{\bar q}\mc C^*|_U,\D^*|_U)$, let $\lambda_f:\Gamma_c(U;\mc I_{\bar q}\mc C^*)\to \Gamma_c(U;\D^*)$ be the induced degree $k$ homomorphism  on compactly supported sections. Let $j_U:\Gamma_c(U;\D^*)\to \Gamma_c(X;\D^*)=\Gamma(X;\D^*)$ be induced by inclusion, using that $X$ is compact, and let $\Lambda(f)=\gamma j_U \lambda_f$. Observe that $\Lambda(f)\in\Hom^k(\Gamma_c(U;\mc I_{\bar q}\mc C^*),F).$ 

\begin{lemma}
$\Lambda$ is a degree $0$ chain map of sheaf complexes.
\end{lemma}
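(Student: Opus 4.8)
The plan is to verify three things: (1) that $\Lambda$ is compatible with restrictions, so that it really is a morphism of sheaf complexes and not merely a collection of maps of sections; (2) that it is a degree $0$ map, which is immediate from the definition since $\Lambda(f) = \gamma j_U \lambda_f$ has the same degree $k$ as $f$; and (3) that it commutes with the differentials. The only genuinely substantive point is (3), and even there the difficulty is largely bookkeeping of the Koszul sign convention for $\Hom$-complexes as fixed in the conventions section.

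For compatibility with restrictions, I would note that if $V \subset U$ then restriction of $f \in \Hom^k(\mc I_{\bar q}\mc C^*|_U, \D^*|_U)$ to $\Hom^k(\mc I_{\bar q}\mc C^*|_V, \D^*|_V)$ induces on compactly supported sections the map $\lambda_{f|_V}$, which fits into the commuting square built from the inclusion-induced maps $\Gamma_c(V;-) \to \Gamma_c(U;-)$; composing with $j_U$ (and using $j_U \circ (\text{incl. of } V \text{ into } U) = j_V$) and then with $\gamma$ shows $\Lambda(f)|_V = \Lambda(f|_V)$, using also that the dual restriction $\Hom^k(\Gamma_c(U;\mc I_{\bar q}\mc C^*),F) \to \Hom^k(\Gamma_c(V;\mc I_{\bar q}\mc C^*),F)$ is precisely precomposition with the inclusion $\Gamma_c(V;\mc I_{\bar q}\mc C^*)\to \Gamma_c(U;\mc I_{\bar q}\mc C^*)$. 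So $\Lambda$ is a morphism of presheaf complexes of the appropriate sort, hence induces a morphism $\mc D\mc I_{\bar q}\mc C^* \to \ms L\mc I_{\bar q}\mc C^*$ of sheaf complexes.

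For the chain map property, fix $f \in \mc D^k\mc I_{\bar q}\mc C^*(U)$. I would compute $\Lambda(df)$ and $(d\Lambda(f))$ using the formula for the differential on a $\Hom$-complex, $df = d_{\D^*}\circ f - (-1)^k f\circ d_{\mc I_{\bar q}\mc C^*}$ (Dold's convention, as recorded in the conventions section), and the analogous formula for the differential on $\Hom^*(\Gamma_c(U;\mc I_{\bar q}\mc C^*),F)$, bearing in mind that $F$ has zero differential so that the differential there is $\pm(-\circ d_{\Gamma_c(U;\mc I_{\bar q}\mc C^*)})$. The key inputs are that $\lambda_{(-)}$ is functorial and additive and converts composition with $d_{\D^*}$ and $d_{\mc I_{\bar q}\mc C^*}$ into composition with the induced differentials on sections; that $j_U$ is a chain map (inclusion-induced maps on sections commute with differentials); and, crucially, that $\gamma: \Gamma_c(X;\D^*) \to F$ is a chain map, so $\gamma \circ d_{\Gamma_c(X;\D^*)} = 0 = d_F \circ \gamma$. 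The term $\gamma j_U \lambda_{d_{\D^*}\circ f}$ equals $\gamma \circ d_{\Gamma_c(X;\D^*)} \circ j_U \lambda_f = 0$ because $\gamma$ kills boundaries (indeed kills everything coming from $d_{\D^*}$ since $\D^{>0}=0$ forces the relevant image into degree $\le 0$ and $\gamma d = 0$), so $\Lambda(df)$ reduces to $-(-1)^k \gamma j_U \lambda_f \circ d_{\Gamma_c(U;\mc I_{\bar q}\mc C^*)}$, which is exactly $(d\Lambda(f))$ by the sign in the $\Hom$-differential into $F$. The main obstacle is simply matching the signs coming from the two $\Hom$-complex differentials and confirming that the $d_{\D^*}$-term genuinely drops out; once that sign chase is done, the lemma follows. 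I would present this as a short displayed computation paralleling the one already given for $\hat\Phi$ above.
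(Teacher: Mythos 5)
Your proposal is correct and matches the paper's proof essentially step for step: both verify compatibility with restrictions by noting $\Lambda(f)|_V = \gamma j_U \lambda_f \circ i = \gamma j_V \lambda_{f|_V} = \Lambda(f|_V)$, and both establish the chain map property by using that $\gamma j_U$ is a chain map into $F$ (hence kills the $d_{\D^*}\circ f$ term of $df$), reducing both $\Lambda(df)$ and $d(\Lambda(f))$ to $-(-1)^k\gamma j_U \lambda_{f}\circ d$. The only cosmetic difference is that you compute $d(\Lambda(f))$ directly from the $\Hom(-,F)$ differential while the paper first pulls $\gamma j_U$ out of the differential and then discards the $d\circ\lambda_f$ term; these are the same calculation.
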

\begin{proof}
We first observe that $\Lambda$ is a sheaf map in each degree. Indeed, let $f\in \mc D\mc I_{\bar q}\mc C^k(U)$, and suppose $V\subset U$. Then $f$ restricts to a morphism $f|_V:\mc I_{\bar q}\mc C^*|_V\to \D^*|_V$ and  $\Lambda(f|_V)=\gamma j_V\lambda_{f|_V}$, where  $\lambda_{f|_V}$ is the map $\Gamma_c(V;\mc I_{\bar q}\mc C^*)\to \Gamma_c(V;\D^*)$ induced by $f|_V$. On the other hand, the restriction of $\Lambda(f)$ to $V$ yields the map $\Lambda(f)|_V:\Gamma_c(V;\mc I_{\bar q}\mc C^*)\to F$, which is, by definition of $\ms L^*\mc I_{\bar q}\mc C^*$, the composition $\Gamma_c(V;\mc I_{\bar q}\mc C^*)\xrightarrow{i} \Gamma_c(U;\mc I_{\bar q}\mc C^*)\xrightarrow{\Lambda(f)} F$. But we clearly have a commutative diagram
\begin{diagram}
\Gamma_c(V;\mc I_{\bar q}\mc C^*)&\rTo^i&\Gamma_c(U;\mc I_{\bar q}\mc C^*) \\
\dTo^{\lambda_{f|_V}}&&\dTo^{\lambda_f}\\
\Gamma_c(V;\D^*)&\rTo& \Gamma_c(U;\D^*)\\
\dTo^{j_V}&\ldTo^{j_U}&\\
\Gamma(X;\D^*)&\rTo^\gamma&F.
\end{diagram}
Here the composition $\gamma j_V\lambda_{f|_V}$ is simply $\Lambda(f|_V)$, while the compositions $\gamma j_U\lambda_{f}i$ is $\Lambda(f)|_V$. 
So it follows that $\Lambda$ computes with restrictions and so is a map of sheaves in each degree.

Next we show that  $\Lambda$ is a chain map. To see this, we compute over $U$. Let $f\in \Hom^i(\mc I_{\bar q}\mc C^*|_U,\D^*|_U)$ and recall that $f$ is not necessarily a chain  map. Then by definition\footnote{In an attempt to avoid both confusion and cluttered notation, when working with $\Hom^*(A^*,B^*)$, we will use $df$ to denote the boundary of $f\in \Hom^*(A^*,B^*)$ and $d\circ f$ to denote the composition of $f$ followed by the boundary in $B^*$. Similarly $f\circ d$ is the composition of the boundary of $A$ with $f$.}, $df=d\circ f-(-1)^{i}f\circ d$, and we see $\Lambda(df)=\gamma j_U \lambda_{d\circ f-(-1)^{i}f\circ d}=\gamma j_U\lambda_{d\circ f}-(-1)^{i}\gamma j_U\lambda_{f\circ d}$. Since any boundary in $\Gamma_c(U;\D^*)$ is taken to $0$ in $F$ by the chain maps $\gamma j_U$, it follows that $\Lambda(df)=-(-1)^{i}\gamma j_u\lambda_{f\circ d}$ (which can be non-zero because $f$ is not necessarily a chain map). On the other hand, $d(\Lambda(f))=d(\gamma j_U\lambda_f)=\gamma j_U(d\lambda_f)=\gamma j_U(d\circ\lambda_f-(-1)^{i}\lambda_f\circ d)$, since $\gamma$ and $j_U$ are chain maps. But again, $\gamma j_U$ is $0$ on coboundaries, so $d(\Lambda(f))=-\gamma j_U((-1)^{i}\lambda_f\circ d)$. But it is evident that $(-1)^{i}\lambda_f\circ d=\lambda_{(-1)^{i}f\circ d}$ as applied to elements of $\Gamma_c(U,\mc I_{\bar q}\mc C^*)$. Thus $\Lambda$ is a chain map.
\end{proof}

\begin{lemma}
$\Lambda$ is a quasi-isomorphism.
\end{lemma}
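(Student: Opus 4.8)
The plan is to prove that $\Lambda : \mc D\mc I_{\bar q}\mc C^* \to \ms L\mc I_{\bar q}\mc C^*$ is a quasi-isomorphism by a purely local argument, exploiting the identification of the Verdier dual with Borel's construction. Recall from Section \ref{S: Verdier complex} that Borel defines the Verdier dual of $\mc I_{\bar q}\mc C^*$ as the sheaf $\ms L\mc I_{\bar q}\mc C^* : U \mapsto \Hom^*(\Gamma_c(\mc J^*_U), I^*)$ for $\mc J^*$ a $c$-soft resolution of $\mc I_{\bar q}\mc C^*$; but since $\mc I_{\bar q}\mc C^*$ is already $c$-soft (Proposition \ref{P: flabby}) and we are working over the field $F$ (so $I^* = F$ concentrated in degree $0$), we may take $\mc J^* = \mc I_{\bar q}\mc C^*$ itself, and then $\ms L\mc I_{\bar q}\mc C^*$ is precisely the sheaf complex $U \mapsto \Hom(\Gamma_c(U;\mc I_{\bar q}\mc C^*), F)$ that appears in the definition of $\Lambda$. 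Meanwhile $\mc D\mc I_{\bar q}\mc C^* = \SHom(\mc I_{\bar q}\mc C^*, \D^*)$, and by \cite[Theorem V.7.8.ii]{Bo} these two complexes are already abstractly isomorphic; what requires proof is that the \emph{particular} map $\Lambda$ we have written down realizes this isomorphism, or at least that it is \emph{some} quasi-isomorphism. As in the proof of the lemma just above that $\hat\Phi$ is a quasi-isomorphism, it suffices to check that $\Lambda$ induces an isomorphism on stalk cohomology at every point $x \in X$; and since both $\mc D\mc I_{\bar q}\mc C^*$ and $\ms L\mc I_{\bar q}\mc C^*$ have stalk cohomology vanishing in degrees of large absolute value, it is enough to work degree by degree over a cofinal system of neighborhoods of $x$.

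The key step is the local computation. Fix $x \in X$ and let $U$ range over a cofinal system of distinguished neighborhoods. The stalk of $\mc D\mc I_{\bar q}\mc C^*$ at $x$ is $\varinjlim_U H^*(\Gamma(U;\SHom(\mc I_{\bar q}\mc C^*,\D^*)))$, and since $\mc D\mc I_{\bar q}\mc C^*$ is flabby this equals $\varinjlim_U \H^*(U;\mc D\mc I_{\bar q}\mc C^*)$. By the standard Verdier-duality universal coefficient computation over a field --- which holds because $\mc I_{\bar q}\mc C^*$ is $c$-soft and the relevant compactly-supported sections have $F$-projective (indeed $F$-free) cohomology --- we have $\H^k(U;\mc D\mc I_{\bar q}\mc C^*) \cong \Hom(\H^{-k}_c(U;\mc I_{\bar q}\mc C^*), F)$, compatibly in $U$. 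On the other side, $\ms L\mc I_{\bar q}\mc C^*$ is flabby and $H^k(\Gamma(U;\ms L\mc I_{\bar q}\mc C^*)) = H^k(\Hom(\Gamma_c(U;\mc I_{\bar q}\mc C^*),F))$, which by the universal coefficient theorem over $F$ is again $\Hom(H^{-k}(\Gamma_c(U;\mc I_{\bar q}\mc C^*)), F) = \Hom(\H^{-k}_c(U;\mc I_{\bar q}\mc C^*),F)$. So both sheaves have the same stalk cohomology; the content is that $\Lambda$ induces the identity (up to the fixed choices) under these identifications. Unwinding the definition of $\Lambda$: for $f \in \Hom^k(\mc I_{\bar q}\mc C^*|_U, \D^*|_U)$ a cocycle, $\Lambda(f) = \gamma \circ j_U \circ \lambda_f$ sends a compactly supported cycle $s \in \Gamma_c(U;\mc I_{\bar q}\mc C^*)$ to the image under $\ell$ of the cohomology class of the compactly supported section $\lambda_f(s) \in \Gamma_c(U;\D^*)$ pushed forward into $\Gamma(X;\D^*)$; this is exactly the pairing $\H^{-k}_c(U;\mc I_{\bar q}\mc C^*) \otimes \H^0(X;\D^*) \to \ldots$ that underlies the universal coefficient isomorphism composed with $\ell$, so the triangle comparing $\Lambda$ with the abstract iso commutes. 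Running this over a cofinal system of $U$ at each $x$ shows $\Lambda_x$ is a cohomology isomorphism, hence $\Lambda$ is a quasi-isomorphism.

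A cleaner alternative, which I would in fact prefer to present, avoids even the local UCT: by \cite[Theorem V.7.8.ii]{Bo} there exists \emph{some} quasi-isomorphism $\Theta : \mc D\mc I_{\bar q}\mc C^* \to \ms L\mc I_{\bar q}\mc C^*$, so both complexes are $C$-ready (and $C_c$-ready) representatives of the same object of $D(X)$, namely a shift of the perversity-$\bar p$ Deligne sheaf (using Corollary \ref{C: IC is IS} and \cite[Theorem 4.3]{GBF23} as in the $\hat\Phi$ lemma). Then, as in Section \ref{R: uniqueness}, any self-map of this Deligne-type object is determined on the cohomology of stalks over $U_1 = X - X^{n-1}$, and to see $\Lambda$ is a quasi-isomorphism it suffices to see $\Lambda_x$ is a cohomology isomorphism for a single $x \in U_1$. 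Near such an $x$ all the sheaves restrict to ordinary singular objects, $\mc I_{\bar q}\mc C^*|_U \simeq \mc F_U$, $\D^*[-n]|_U \simeq \mc F_U$, and the claim reduces to the classical fact that over $F$ the evaluation/UCT map realizing $\mc D\mc F \simeq \ms L\mc F$ on a Euclidean ball is an isomorphism --- and one checks directly that $\Lambda$ restricted to $U$ carries the generator of $H^0$ of one side to a generator of the other (tracking through $\gamma$ and $\ell$), so it is nonzero on $H^0$ of the stalk, hence an isomorphism there; all other stalk cohomology vanishes. The main obstacle is purely bookkeeping: making sure that the chain-level definition of $\Lambda$ via $\gamma j_U \lambda_f$ genuinely computes the UCT pairing (so that the comparison triangle with the abstract isomorphism $\Theta$ commutes up to the fixed unit coming from $\ell$), and that the direct limits over shrinking Euclidean neighborhoods behave as expected; neither is deep, but both must be written carefully because we are being scrupulous about signs and about which isomorphism $\ell$ we have fixed.
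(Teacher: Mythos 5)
Your preferred ``cleaner alternative'' is essentially the paper's proof: identify $\mc D\mc I_{\bar q}\mc C^*$ and $\ms L\mc I_{\bar q}\mc C^*$ as two representatives of the same (shifted) Deligne-type object, use the fact from Section \ref{R: uniqueness} that (for $X$ connected and normal) $\Mor_{D(X)}$ between them is a copy of $F$ with every nonzero element a quasi-isomorphism, and then verify that $\Lambda$ is nonzero by a local computation over a Euclidean $V\subset U_1$. One slip worth fixing: the nontrivial stalk cohomology of $\mc D\mc I_{\bar q}\mc C^*$ and of $\ms L\mc I_{\bar q}\mc C^*$ at $x\in U_1$ sits in degree $-n$, not degree $0$ (both are quasi-isomorphic to a Deligne sheaf \emph{shifted by} $n$, since it is $\mc D\mc I_{\bar q}\mc C^*[-n]$ that is $\simeq\mc P^*$). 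So the class you must track is not ``the generator of $H^0$'' but the class in degree $-n$ represented by the degree $-n$ chain map $g:\mc I_{\bar q}\mc C^*|_{U_1}\to\D^*|_{U_1}$ coming from the orientation quasi-isomorphism $\mc I_{\bar q}\mc C^*|_{U_1}\simeq\D^*[-n]|_{U_1}$; one then checks $\Lambda(g|_V)\neq 0$ in $\H^{-n}(V;\ms L\mc I_{\bar q}\mc C^*)$ by computing that $\lambda_{g|_V}$ is a cohomology isomorphism $\H^n_c(V;\mc I_{\bar q}\mc C^*)\to\H^0_c(V;\D^*)$ and that $\gamma j_V$ is an isomorphism $\H^0_c(V;\D^*)\to F$.

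Your first (UCT) route is a genuinely different argument. It would work, but note what it asks you to verify: not merely that the two sides have abstractly isomorphic stalk/local cohomology (that already follows from \cite[Theorem V.7.8.ii]{Bo}), but that the \emph{specific} chain map $\Lambda(f)=\gamma j_U\lambda_f$ realizes the Verdier UCT pairing $\H^k(U;\mc D\mc I_{\bar q}\mc C^*)\otimes\H^{-k}_c(U;\mc I_{\bar q}\mc C^*)\to F$ compatibly with restriction and with the fixed normalization $\ell$. That claim is plausible but nontrivial to carry out rigorously, and it is exactly the kind of careful bookkeeping the paper avoids by reducing the whole question to nonvanishing at one point of $U_1$, which is cheap because of the size-$F$ morphism set. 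If you do want to pursue the UCT route, you would need to make that compatibility (the commuting triangle against the abstract isomorphism $\Theta$) into an explicit lemma rather than a parenthetical.
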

\begin{proof}
We already know abstractly that $\mc D\mc I_{\bar q}\mc C^*$ and $\ms L^*\mc I_{\bar q}\mc C^*$ are quasi-isomorphic by \cite[Section V.7.B]{Bo}. Then since $\mc D\mc I_{\bar q}\mc C^*[-n]$ is quasi-isomorphic to the Deligne sheaf $\mc P^*_{\bar p}$, so is $\ms L^*\mc I_{\bar q}\mc C^*[-n]$.
Since $X$ is connected and normal, the set of morphisms in the derived category between these sheaf complexes is isomorphic (as a set) to $F$ by Section \ref{R: uniqueness}, and all of these are quasi-isomorphisms except for the trivial $0$ morphism. So to show that $\Lambda$ is a quasi-isomorphism it suffices to show that $\Lambda$ is not trivial as a map in the derived category. For this, it suffices to check that there is an open $V\subset  X$ such that  $\Lambda$ induces a non-trivial morphism on the hypercohomology over $V$, for if $\Lambda$ is the $0$ map (up to quasi-isomorphism), then so is $\Lambda|_V$, which would have to induce a trivial map on cohomology.

On $U_1$, $\mc I_{\bar q}\mc C^*=\mc C^*$, the sheaf of singular cochains, which is a resolution of the constant sheaf $\mc F_{U_1}$. By \cite[Section V.7.3]{Bo}, we know that $\D^*[-n]|_{U_1}$ is a resolution of the orientation sheaf, which is also isomorphic to $\mc F_{U_1}$. Thus applying \cite[Lemma V.9.13]{Bo},  $\mc I_{\bar q}\mc C^*|_{U_1}$ and $\D^*[-n]|_{U_1}$ are isomorphic in the derived category of sheaves on $U_1$, i.e. they are quasi-isomorphic on $U_1$ say by a degree zero quasi-isomorphism $g'$.
Since $\D^*$ is injective (and so also $\D^*[-n]|_{U_1}$ is injective), there is by \cite[Section V.5.16]{Bo} an actual degree $0$ chain map $\mc I_{\bar q}\mc C^*|_{U_1}\to \D^*[-n]|_{U_1}$ giving this quasi-isomorphism, and this corresponds to  a degree $-n$ chain map and quasi-isomorphism $g: \mc I_{\bar q}\mc C^*|_{U_1}\to \D^*|_{U_1}$. Since $g$ is a chain map, it represents a cycle in $\SHom^{-n}(\mc I_{\bar q}\mc C^*,\D^*)(U_1)\cong(\mc D\mc I_{\bar q}\mc C^*)^{-n}(U_1)$.

Furthermore, since both $\mc I_{\bar q}\mc C^*|_V$ and  $\D^*[-n]|_{V}$ are $C_c$-ready for any open $V\subset U_1$, $g$ induces an isomorphism $\H_c^*(V;\mc I_{\bar q}\mc C^*)\to \H^{*-n}_c(V;\D^*)$ and, in particular, $\lambda_{g|_V}$ induces a cohomology isomorphism $\H_c^n(V;\mc C^*)\xrightarrow{\cong} \H^0_c(V;\D^*)$. If $V$ is homeomorphic to $\R^n$, then $\H^0_c(V;\D^{*})\cong H_0^c(V;F)\cong F$
and furthermore $\gamma j_{V}$ induces an isomorphism $\H^0_c(V;\D^*)\to F$ since $j_V: \H^0_c(V;\D^*)\to  \H^0_c(X;\D^*)$ corresponds to the isomorphism $H_0^c(V;F)\to H_0^c(X;F)$ induced by inclusion and $\gamma$ induces our fixed isomorphism $\H^0_c(X;\D^*)=H^0(\Gamma(X;\D^*))\to F$. 
 It follows that $\Lambda(g|_V)=\gamma j_V\lambda_{g|_V}$ is not homotopic to zero as a degree $-n$ chain map $\Gamma_c(V;\mc C^*)\to F$, and so $\Lambda(g|_V)\neq 0\in\H^{-n}(V;\ms L\mc I_{\bar q}\mc C^*)$. Thus $\Lambda$ cannot be $0$ on hypercohomology over $V$, and so $\Lambda$ is non-trivial as claimed.
\end{proof}

\begin{lemma}
Triangle IV of diagram \eqref{D: MAIN} commutes, where $e$ and $\Lambda$ are as above and $e'$ takes a cycle in $\ms L^i\mc I_{\bar q}\mc C^*[-n](X)=\ms L^{-j}\mc I_{\bar q}\mc C^*(X)$, which corresponds to a chain map in $\Hom^{-j}(\Gamma_c(X;\mc I_{\bar q}\mc C^*),F)$, to the induced map on cohomology $\Hom(H^j(\Gamma_c(X;\mc I_{\bar q}\mc C^*)),F)$. 
\end{lemma}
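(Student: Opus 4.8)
The plan is to establish triangle IV by a direct cocycle computation, taking advantage of the fact that all three sheaf complexes involved --- $\mc D\mc I_{\bar q}\mc C^*=\SHom(\mc I_{\bar q}\mc C^*,\D^*)$, the complex $\ms L\mc I_{\bar q}\mc C^*$, and $\D^*$ itself --- are flabby, hence $C$-ready, so their hypercohomology over the compact space $X$ is computed by the cohomology of global sections. First I would represent a class in $\H^i(X;\mc D\mc I_{\bar q}\mc C^*[-n])$ by a cocycle $\eta$, i.e.\ a degree $i$ chain map $\mc I_{\bar q}\mc C^*\to\D^*[-n]$ of sheaf complexes over $X$, equivalently a degree $-j$ chain map $\mc I_{\bar q}\mc C^*\to\D^*$. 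Since $\D^m=0$ for $m>0$, such an $\eta$ carries a degree $j$ section into $\Gamma(X;\D^0)$, and every element of $\Gamma(X;\D^0)$ is automatically a cocycle. I would then check that the two composites agree when evaluated on an arbitrary class $[z]\in\H^j(X;\mc I_{\bar q}\mc C^*)=H^j(\Gamma(X;\mc I_{\bar q}\mc C^*))$, represented by a cocycle $z$.

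For the left edge, unwinding the definition of $e$ gives $e(\eta)([z])=\ell(\eta_*[z])$, and $\eta_*[z]=[\eta(z)]$ in $H^n(\Gamma(X;\D^*[-n]))$; under the tautological identification $H^n(\Gamma(X;\D^*[-n]))=H^0(\Gamma(X;\D^*))$ (the shifted differential differs from the original only by the scalar $(-1)^n$, so cocycles and coboundaries match on the nose) this reads $\ell([\eta(z)])$ with $[\eta(z)]\in H^0(\Gamma(X;\D^*))$. For the other route, I would use that compactness of $X$ gives $\Gamma_c(X;-)=\Gamma(X;-)$ and makes the inclusion map $j_X$ in the definition of $\Lambda$ the identity, so $\Lambda[-n](\eta)$ is the functional $\gamma\circ\lambda_\eta\in\Hom^{-j}(\Gamma(X;\mc I_{\bar q}\mc C^*),F)$, where $\lambda_\eta$ is the degree $-j$ map of global sections induced by $\eta$ (hence $\lambda_\eta(z)=\eta(z)$) and $\gamma$ restricted to $\Gamma(X;\D^0)$ sends $w$ to $\ell([w])$. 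Applying $e'$ --- which outputs the map on cohomology induced by this cocycle --- gives $e'(\Lambda[-n](\eta))([z])=\gamma(\lambda_\eta(z))=\ell([\eta(z)])$. Since both composites send $[z]$ to $\ell([\eta(z)])$ for every $[z]$, one concludes $e=e'\circ\Lambda[-n]$.

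The conceptual point, and the reason this goes through so cleanly, is that $\Lambda$ followed by the shift was constructed precisely to implement the recipe ``apply $\eta$, land in the top nonzero degree $\D^0$, then apply $\ell$'' that also defines $e$; the only genuine inputs are the compactness of $X$ (so $j_X=\operatorname{id}$ and $\Gamma_c=\Gamma$) and the vanishing $\D^m=0$ for $m>0$, which forces the relevant values of $\eta$ into $\Gamma(X;\D^0)$ where they are automatically cocycles. Accordingly, the main obstacle is not any computation but the bookkeeping of the degree shift $[-n]$ and the two presentations of the Verdier dual: I would need to confirm that the identifications $\ms L^i\mc I_{\bar q}\mc C^*[-n](X)=\Hom^{-j}(\Gamma_c(X;\mc I_{\bar q}\mc C^*),F)$, $H^n(\Gamma(X;\D^*[-n]))=H^0(\Gamma(X;\D^*))$, and the degree $-j$ chain-map description of $\eta$ are mutually compatible and introduce no stray sign. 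Once these are pinned down, the two evaluations above agree term by term.
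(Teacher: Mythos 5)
Your proposal is correct and takes the same approach as the paper, which simply states that the lemma ``follows directly from the definitions of the maps involved.'' You have carried out the definition chase explicitly — tracking a cocycle $\eta$ through $e$ versus $e'\circ\Lambda[-n]$, using compactness of $X$, the vanishing $\D^m=0$ for $m>0$, and the flabbiness of the complexes — and your bookkeeping (including the observation that cocycles in degree $0$ of $\D^*$ are automatic and that the shift $[-n]$ affects the differential only by a global sign $(-1)^n$ which doesn't disturb the identification) is sound.
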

\begin{proof}
This follows directly from the definitions of the maps involved. Note that the shifted $\Lambda[-n]$ acts on elements just as $\Lambda$ does (though with degrees shifted). 
\end{proof} 
 
 Since $\Lambda$ is a sheaf quasi-isomorphism, the induced hypercohomology map is an isomorphism, and we already know $e$ is an isomorphism, hence $e'$ is also an isomorphism. Alternatively, $e'$ is precisely the universal coefficient map and so an isomorphism by the Universal Coefficient Theorem.

\paragraph{V.}  Recall that $\Gamma(X;\ms L^i\mc I_{\bar q}\mc C^*[-n])=\Hom^{i-n}(\mc I_{\bar q}\mc C^{*}(X),F)=\Hom(\mc I_{\bar q}\mc C^{j}(X),F)$. We have used here that $X$ is compact, that $i+j=n$, and that $F$ is treated as a complex in degree $0$. We define the map $h:\H^i(X;\ms L\mc I_{\bar q}\mc C^*[-n])=\H^{-j}(X;\ms L\mc I_{\bar q}\mc C^*)\to H^{-j}(\Hom(I_{\bar q}C^*(X; F),F))$ to be induced by
the $\Hom(\cdot,F)$ dual of the surjection $I_{\bar q}S^*(X; F)\to \mc I_{\bar q}\mc C^*(X)$, which exists by \cite[Theorem I.6.2]{BR} because the presheaf $U\to I_{\bar q}S^*(U; F)$ is conjunctive by Proposition \ref{P: conj}. Furthermore, this surjection is a quasi-isomorphism since the complex of intersection cochains with $0$-support is quasi-isomorphic to $0$, as also observed in Section \ref{S: sheaf}. It follows as in \cite[Theorem 45.6]{MK} that $h$ is an isomorphism since the dual of a quasi-isomorphism of free complexes is a quasi-isomorphism. Furthermore, the commutativity of square V and the fact that $e''$ is also an isomorphism is simply an application of the  universal coefficient theorem and its naturality; notice that all complexes are free as vector spaces, so these universal coefficient theorems require no special finiteness hypotheses.

\paragraph{VIII.}
 
For polygons VII and VIII, we need to define 
 the sheaf $\ms P\mc I_{\bar q}\mc C^*$. We let it be the sheafification of the presheaf 
$P I_{\bar q} C^*$ defined by $U\to  \Hom(I_{\bar q}S^*(X, X-\bar U; F),F)$. Notice that if $V\subset U$, then there is an injection $I_{\bar q}S^*(X, X-\bar V; F)\into I_{\bar q}S^*(X, X-\bar U; F)$, so the restriction map of the presheaf is just the $\Hom(\cdot,F)$ dual of this injection. $\ms P\mc I_{\bar q}\mc C^*$ plays the role of the (shifted) double dual of $\mc I^{\bar q}\mc S^*$.

\begin{lemma}
The sheaf complex $\ms P\mc I_{\bar q}\mc C^*$ is homotopically fine.
\end{lemma}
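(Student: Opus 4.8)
The plan is to show that $\ms P\mc I_{\bar q}\mc C^*$ is homotopically fine by dualizing the corresponding partition-of-unity argument for $\mc I^{\bar q}\mc S^*$, since $\ms P\mc I_{\bar q}\mc C^*$ is the sheafification of the presheaf $U\to \Hom(I_{\bar q}S^*(X,X-\bar U;F),F)\cong\Hom(\Hom(I^{\bar q}S_*(X,X-\bar U;F),F),F)$. First I would recall from \cite[Proposition 3.5]{GBF10} that $\mc I^{\bar q}\mc S^*$ is homotopically fine, and that the proof there produces, for any locally finite cover $\{U_\alpha\}$ of $X$ with one member of the form $X-K$, $K$ compact, degree zero sheaf homomorphisms $g_\alpha\in\Hom(\mc I^{\bar q}\mc S^*,\mc I^{\bar q}\mc S^*)$ with $|g_\alpha|\subset U_\alpha$ and $\sum g_\alpha$ chain homotopic to the identity via some degree $-1$ (using the chain-complex convention; degree $\pm1$) homomorphism $D$. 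At the level of the defining presheaf $U\to I^{\bar q}S^\infty_{n-*}(X,X-\bar U;F)$ these maps are induced by the honest chain-level barycentric-subdivision-type operators on $I^{\bar q}S_*$ used in that proof, which decrease supports.

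The key step is then to dualize twice. Applying $\Hom(\cdot,F)$ (which is exact over the field $F$) to the chain maps on $I^{\bar q}S_*(X,X-\bar U;F)$ gives, on each presheaf section $\Hom(I^{\bar q}S_*(X,X-\bar U;F),F)=I_{\bar q}S^*(X,X-\bar U;F)$, maps $g_\alpha^*$ and $D^*$ with $\sum g_\alpha^*$ chain homotopic to the identity; applying $\Hom(\cdot,F)$ once more yields maps $g_\alpha^{**}$, $D^{**}$ on the double duals $\Hom(I_{\bar q}S^*(X,X-\bar U;F),F)$, again with $\sum g_\alpha^{**}$ chain homotopic to the identity. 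Because these all come from the support-decreasing chain operators on $I^{\bar q}S_*$, they are compatible with the presheaf restriction maps $I^{\bar q}S_*(X,X-\bar V;F)\to I^{\bar q}S_*(X,X-\bar U;F)$ for $V\subset U$ (hence with the corresponding injections $I_{\bar q}S^*(X,X-\bar V;F)\hookrightarrow I_{\bar q}S^*(X,X-\bar U;F)$ on single duals and with the restrictions on double duals), so they define morphisms of the presheaf $PI_{\bar q}C^*$ and therefore sheafify to degree zero homomorphisms $h_\alpha,H$ of $\ms P\mc I_{\bar q}\mc C^*$ with $\sum h_\alpha$ chain homotopic to the identity.

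The remaining point, and the only part requiring genuine care, is the support condition $|h_\alpha|\subset U_\alpha$: I would verify it exactly as in the proof that $\mc I_{\bar q}\mc C^*$ has the analogous property (the argument sketched in the commented-out homotopical-fineness lemma for $\mc I_{\bar p}\mc C^*$ earlier in the paper). Concretely, for $x\notin \bar U_\alpha$ choose a neighborhood $W$ of $x$ disjoint from $\bar U_\alpha$; a germ at $x$ of a section of $\ms P\mc I_{\bar q}\mc C^*$ is represented by $\phi\in\Hom(I_{\bar q}S^*(X,X-\bar W;F),F)$, and $h_\alpha$ sends it to the germ of $\phi\circ g_\alpha^*$. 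Since $g_\alpha$ has support in $U_\alpha$, the dual $g_\alpha^*$ shrinks cochain supports into $\bar U_\alpha$ as well, so $g_\alpha^*$ annihilates (or lands outside the relevant relative quotient for) everything detected near $x$, forcing $\phi\circ g_\alpha^*$ to vanish in the stalk $\ms P\mc I_{\bar q}\mc C^*_x$. I expect this support bookkeeping through the two dualizations to be the main obstacle; once it is checked, the defining conditions of a homotopically $c$-fine (hence homotopically fine) complex are satisfied and the lemma follows.
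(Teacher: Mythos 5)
Your proposal is correct and takes essentially the same approach as the paper's proof: both invoke \cite[Proposition~3.5]{GBF10} for the support-controlled homomorphisms $g_k$ and chain homotopy $D$ on $I^{\bar q}S_*$, pass to the relative quotients, dualize twice, check the support condition $|g_k^{**}|\subset \bar U_k$ by the same argument (a relative cochain for $(X,X-\bar V)$ with $\bar V$ disjoint from $\bar U_k$ is annihilated by $g_k^*$ because $g_k$ lands in chains supported in $\bar U_k$), and carry the chain homotopy through double dualization. The one point your sketch leaves implicit, which the paper flags explicitly, is that the passage from $(\sum g_k)^{**}$ to $\sum g_k^{**}$ requires local finiteness of the cover to reduce to finite sums on small opens before sheafifying; this is worth saying, since dualization does not commute with infinite sums in general.
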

\begin{proof}
We must show that if $\mc{U}=\{U_k\}$ is a locally-finite cover of $X$ then there exist endomorphisms $1_k$ and $\ms D$ of $\ms P\mc I_{\bar q}\mc C^*$ such that $|1_k|\subset \bar U_k$ and $\sum 1_k=\text{id} -d \ms D -\ms Dd$, where $d$ is the coboundary map of $\ms P\mc I_{\bar q}\mc C^*$, i.e. $\sum 1_k$ is chain homotopic to the identity. The $1_k$ need not be chain maps; see \cite[Section 6]{SW}. 

In \cite[Proposition 3.5]{GBF10}, it is shown that the sheaf complex $\mc I^{\bar q}\mc S^*$ of intersection chains is homotopically fine. As part of the proof, it is show that it is possible to construct maps $g_k: I^{\bar q}S_*(X; F)\to I^{\bar q}S_*(X; F)$ so that the support of $g_k$ is contained in $U_k$ and $\sum g_k$ is chain homotopic to the identity by a chain homotopy $D$. The maps $g_k$ and $D$ induce maps on the quotients  $I^{\bar q}S_*(X, X-\bar U; F)$. We consider the double duals $g^{**}$ and $D^{**}$ on $\Hom(\Hom(I^{\bar q}S_*(X, X-\bar U; F),F),F)=P I_{\bar q} C^*(U)$. Since $g_k$ and $D$ restrict in the appropriate way for $V\subset U$, so do their double duals, and they induces maps of presheaves and hence maps of sheaves. Let $1_k$ and $\ms D$ be the induces sheaf maps. We claim they satisfy the necessary properties to make $\ms P\mc I_{\bar q}\mc C^*$  homotopically fine.

First we show the support of $1_k$ is in $\bar U_k$. Let $x\in X-\bar U_k$, and let $V$ be a neighborhood of $x$ such that $\bar V\cap \bar U_k=\emptyset$; such a $V$ can be found by taking a distinguished neighborhood of $x$ in $X-\bar U_k$ and then letting $V$ be an appropriate smaller distinguished neighborhood. Let $s\in P I_{\bar q} C^*(V)$, and let $\alpha\in  \Hom(I^{\bar q}S_*(X, X-\bar V; F),F)$. Consider now $(g_k^{**}s)(\alpha)=s(g_k^*(\alpha))$. The cochain $g_k^*(\alpha)$ acts on chains $\xi\in I^{\bar q}S_*(X, X-\bar V; F)$ by $g_k^*(\alpha)(\xi)=\alpha(g_k(\xi))$. But the support of $g_k(\xi)$ lies in $\bar U_k$, while $\alpha$ kills any chain with support outside of $\bar V$. Thus $g_k^*(\alpha)=0$, so $(g_k^{**}s)$ must always be $0$ for $s\in P I_{\bar q} C^*(V)$. Therefore the support of $g_k^{**}$ lies in $\bar U_k$. 

Next, consider the equation $\sum g_k=\text{id} -\bd  D - D\bd$ on $I^{\bar q}S_*(X; F)$ that shows that $\sum g_k$ is chain homotopic to the identity. By dualizing twice, $\text{id}^{**}$ is chain homotopic to $(\sum g_k)^{**}$ by \cite[Lemma A.2.2]{GBF35}. But $\text{id}^{**}$ is the identity on the double duals, and for any sufficiently small $U$ the sum $(\sum g_k)^{**}$ is equal to $\sum g_k^{**}$ since the covering $\{U_k\}$ is locally finite and so only a finite number of the $g_k$ are non-zero when restricted to such a $U$. Next we take limits and  sheafify, which provides the desired chain homotopy at the sheaf level. \qedhere

\end{proof}

Each $H^i(\ms P\mc I_{\bar q}\mc C^*_x)$ vanishes for $|i|$ sufficiently large by the Universal Coefficient Theorem and the standard local intersection cohomology computations. It follows from  Lemma \ref{L: C-ready} that $\ms P\mc I_{\bar q}\mc C^*$ is $C$-ready, and so we have the following:

\begin{corollary}
$\H^*(X;\ms P\mc I_{\bar q}\mc C^*)\cong H^*(\Gamma(X;\ms P\mc I_{\bar q}\mc C^*))$.
\end{corollary}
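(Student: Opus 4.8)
The statement to prove is the Corollary asserting $\H^*(X;\ms P\mc I_{\bar q}\mc C^*)\cong H^*(\Gamma(X;\ms P\mc I_{\bar q}\mc C^*))$, which is an immediate consequence of the preceding lemmas once the hypotheses of Lemma \ref{L: C-ready} are verified.

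\textbf{Proof.}
We have just shown that $\ms P\mc I_{\bar q}\mc C^*$ is homotopically fine, and in particular $c$-homotopically fine. Moreover, for each $x\in X$ the stalk cohomology satisfies $H^i(\ms P\mc I_{\bar q}\mc C^*_x)\cong \dlim_{x\in U} H^i(\Hom(I_{\bar q}S^*(X,X-\bar U;F),F))$, and since the presheaf sheafifying to $\mc I^{\bar q}\mc S^*$ computes the local intersection homology groups $I^{\bar q}H_{n-i}(X,X-x;F)$, which vanish for $|n-i|$ sufficiently large by the Goresky-MacPherson axioms applied to $\mc P^*_{\bar q}$ (recall $\mc I^{\bar q}\mc S^*$ is quasi-isomorphic to $\mc P^*_{\bar q}$, as noted in Section \ref{S: SIC}), the Universal Coefficient Theorem gives that $H^i(\ms P\mc I_{\bar q}\mc C^*_x)$ vanishes for $|i|$ sufficiently large. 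In particular the derived cohomology sheaves $\mc H^i(\ms P\mc I_{\bar q}\mc C^*)$ vanish for $i$ sufficiently small (and sufficiently large). Hence the hypotheses of the second clause of Lemma \ref{L: C-ready} are met: $\ms P\mc I_{\bar q}\mc C^*$ is $\phi$-homotopically fine for $\phi$ the family of all closed sets, and $\mc H^i(\ms P\mc I_{\bar q}\mc C^*)$ vanishes for $i$ sufficiently small. Therefore $\ms P\mc I_{\bar q}\mc C^*$ is $C$-ready, and by the definition of $C$-ready (and the remark immediately following it) this means precisely that the canonical map $H^*(\Gamma(X;\ms P\mc I_{\bar q}\mc C^*))\to H^*(\Gamma(X;\mc J^*))$ induced by any injective resolution $\ms P\mc I_{\bar q}\mc C^*\to \mc J^*$ is an isomorphism, i.e.\ $\H^*(X;\ms P\mc I_{\bar q}\mc C^*)\cong H^*(\Gamma(X;\ms P\mc I_{\bar q}\mc C^*))$. $\qed$

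\textbf{Remark on the approach.} The only nontrivial input is the vanishing of $\mc H^i(\ms P\mc I_{\bar q}\mc C^*)$ outside a bounded range, which is where I would spend the bulk of the (minimal) effort: one identifies the stalk of $\ms P\mc I_{\bar q}\mc C^*$ at $x$ with a colimit of double-duals of the relative intersection cochain complexes $I_{\bar q}S^*(X,X-\bar U;F)$, observes that these are $\Hom$-duals of $I^{\bar q}S_*(X,X-\bar U;F)$ whose homology is (in the colimit) the local intersection homology $I^{\bar q}H_*(X,X-x;F)$, and invokes the boundedness of local intersection homology guaranteed by the Deligne-sheaf axioms together with the exactness of $\Hom(-,F)$ over the field $F$. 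Once this is in place, the Corollary is a one-line citation of Lemma \ref{L: C-ready}. There is no real obstacle here; the statement is bookkeeping consolidating the homotopically-fine lemma with the general $C$-readiness criterion, and the preceding line of the excerpt (``It follows from Lemma \ref{L: C-ready} that $\ms P\mc I_{\bar q}\mc C^*$ is $C$-ready'') already performs exactly this reduction, so the Corollary merely records the consequence for hypercohomology.
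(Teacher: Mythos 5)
Your proof is correct and follows the same route as the paper: the paper deduces the Corollary immediately from the preceding lemma that $\ms P\mc I_{\bar q}\mc C^*$ is homotopically fine together with the boundedness of stalk cohomology (noted to follow from the Universal Coefficient Theorem and local intersection (co)homology computations), and then applies Lemma \ref{L: C-ready}. Your exposition merely spells out the stalk-boundedness step in slightly more detail, but the key inputs and the invocation of Lemma \ref{L: C-ready} are identical.
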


Now, recall that the map $r$ of diagram \eqref{D: MAIN} is defined so that for $x\in I^{\bar q}H_j(X;F)$ and $\alpha\in I_{\bar q}H^j(X;F)$, $r(x)$ acts on $\alpha$ by $r(x)(\alpha)=(-1)^{j}\alpha(x)$. To define a sheaf map $\rho: \mc I^{\bar q}\mc S^*\to \ms P\mc I_{\bar q}\mc C^*[-n]$, we  need a sheafified version of $r$. The main issue is that we need to be careful about the various shifts of indexing that are involved. 

In the notation of \cite{GBF10}, let $K^{\bar q}S^*$ be the presheaf $U\to I^{\bar q}S_{n-*}(X,X-\bar U; F)$. As noted in Section \ref{S: SIC}, since we are technically shifting by $-n$ (in cohomological indexing), the boundary maps of this complex are given a sign $(-1)^n$ compared with the usual boundary maps for $I^{\bar q}S_{*}(X,X-\bar U; F)$.
Then $K^{\bar q}S^*$ also sheafifies to $\mc I^{\bar q}\mc S^*$ by \cite[Lemma 3.1]{GBF10}. We define a degree $0$ map of presheaves $\bar r:K^{\bar q}S^*\to PI_{\bar q}C^*[-n]$ as follows: Suppose $x\in K^{\bar q}S^i(U)= I^{\bar q}S_{n-i}(X,X-\bar U; F)$. Then $\bar r(x)$ must be an element of  $(PI_{\bar q}C^*[-n])^i(U)=PI_{\bar q}C^{i-n}(U)=\Hom(I_{\bar q}S^{n-i}(X,X-\bar U;F),F)$. If $\alpha\in I_{\bar q}S^{n-i}(X,X-\bar U;F)$, we let\footnotemark $\bar r(x)(\alpha)=(-1)^{n-i}\alpha(x)$. Thus for fixed $U$, $\bar r$ is simply the double dual map called $f$ in the Appendix. 

\footnotetext{To further justify this sign,  we could instead define a presheaf complex $J^{\bar q}S^*$ by $U\to I^{\bar q}S_{-*}(X,X-\bar U; F)$. Then we would let $\mf r: J^{\bar q}S^*\to PI_{\bar q}C^*$ be defined so that for
$x\in J^{\bar q}S^i(U)= I^{\bar q}S_{-i}(X,X-\bar U; F)$ we have $\mf r(x)\in PI_{\bar q}C^{i}(U)=\Hom(I_{\bar q}S^{-i}(X,X-\bar U;F),F)$ acting on $\alpha\in I_{\bar q}S^{-i}(X,X-\bar U;F)$ by 
 $\mf r(x)(\alpha)=(-1)^{i}\alpha(x)$. In this case the sign agrees with the natural degrees of the chains and cochains, as we would expect. The map $\bar r$ is then the $[-n]$ shift of $\mf r$, which is still the same map degree-wise and so doesn't change the sign in the behavior of the cochain $\mf r(x)$.}

It is not hard to check that $\bar r$ is a map of presheaves as, for open $V\subset U$, the  commutativity of the following diagram commutes:

\begin{diagram}
I^{\bar q}S_{j}(X,X-\bar U; F)&\rTo^{\bar r} &\Hom(I_{\bar q}S^{j}(X, X-\bar U; F),F)\\
\dTo&&\dTo\\
I^{\bar q}S_{j}(X,X-\bar V; F)&\rTo^{\bar r} &\Hom(I_{\bar q}S^{j}(X, X-\bar V; F),F).
\end{diagram} 

Now let us verify that $\bar r$ is a degree $0$ chain map. To simplify the notation, let $A=K^{\bar q}S^*$ and $B=PI_{\bar q}C^*$, and let $d_A$ and $d_B$ be the corresponding coboundary maps. Recall from Section \ref{S: SIC} that $d_A=(-1)^n\bd$.  
 Let $x\in A^i(U)=I^{\bar q}S_{n-i}(X,X-\bar U; F)$. Then $d_{B[-n]}\bar r(x)$ and $\bar r(d_Ax)$ both live in 
$(B[-n])^{i+1}=B^{i+1-n}=\Hom(I_{\bar q}S^{n-i-1}(X,X-\bar U;F),F)$. Let $\alpha\in I_{\bar q}S^{n-i-1}(X,X-\bar U;F)$.
 Then $\bar r(d_Ax)(\alpha)=(-1)^{n-i-1}\alpha(d_Ax)=(-1)^{n-i-1}\alpha((-1)^n\bd x)=(-1)^{i+1}\alpha(\bd x)$. On the other hand, 
\begin{align*}
(d_{B[-n]}(\bar r(x)))(\alpha)&=(-1)^n (d_{B}(\bar r(x)))(\alpha)\\
&=(-1)^n(-1)^{i-n+1}\bar r(x)(d\alpha)\\
&=(-1)^n(-1)^{i-n+1}(-1)^{n-i}(d\alpha)(x)\\
&=(-1)^n(-1)^{i-n+1}(-1)^{n-i}(-1)^{n-i-1+1}\alpha(\bd x)\\
&=(-1)^{i+1}\alpha(\bd x).
\end{align*}
The first equality just uses the definition of the coboundary of a shifted complexes: $d_{B[-n]}=(-1)^nd_B$. The second equality uses the definition of the coboundary for an element of $\Hom^{i-n}(I_{\bar q}S^*(X,X-\bar U;F),F)$. The third equality is the definition of $\bar r$. The fourth equality is the definition of the coboundary on $I_{\bar q}S^*$, and the last equality simplifies the signs. Comparing with $\bar r(d_Ax)(\alpha)$, we see that $\bar r$ is a degree $0$ chain map of presheaves.

Sheafifying $\bar r$ yields the sheaf map $\rho: \mc I^{\bar q}\mc S^*\to \ms P\mc I_{\bar q}\mc C^*[-n]$.
Furthermore, $\rho$ is  a quasi-isomorphism of sheaves: For any (sufficiently small) distinguished neighborhood $U$ of $x$, the map $\bar r$ is guaranteed to be an isomorphism as in Lemma \ref{L: coco UCT}. But the distinguished neighborhoods of $X$ form a 
 cofinal system.  This ensures that we obtain a quasi-isomorphism at each stalk of the induced sheaves.

Now we turn to polygon VIII of Diagram \eqref{D: MAIN}. We define $\tau$ to be induced by the sheafification of global section of $P I_{\bar q} C^*$ to global sections of $\ms P\mc I_{\bar q}\mc C^*$; note that $\H^i(X;\ms P\mc I_{\bar q}\mc C^*[-n])=\H^{-j}(X;\ms P\mc I_{\bar q}\mc C^*)$. 
We also observe that the map $r$ in Diagram \eqref{D: MAIN} corresponds to the cohomology map induced by $\bar r$ on global sections. 
The commutativity of VIII then follows from the naturality of sheafification of global sections and of cohomology. Note that the degrees of the various maps don't come into play in checking this commutativity.

Since $\rho$ is a quasi-isomorphism of sheaves,  it must induce an isomorphism on hypercohomology. 
The sheafification induced map $\sigma'$ is an isomorphism as the presheaf $I^{\bar q}S^*$ is a monopresheaf and conjunctive for coverings and $\mc I^{\bar q}\mc S^*$ is homotopically fine. We have previously observed that $r$ is an isomorphism. It follows that $\tau$ is an isomorphism.

  \paragraph{VII.} 
  
  We begin by constructing $\eta:\ms L\mc I_{\bar q}\mc C^*\to \ms P\mc I_{\bar q}\mc C^*$, which requires a construction at the presheaf level. This map will essentially be the dual of an inclusion, but this requires a bit of work as there are not always appropriate maps $LI_{\bar q}C^*(U)=\Hom(\Gamma_c(U;\mc I_{\bar q}\mc C^*),F)  \to P I_{\bar q}C^*(U)=\Hom(I_{\bar q}S^*(X, X-\bar U; F),F)$ for every $U$.

If we have two presheaves $S^*$ and $T^*$ on a space $X$, then a map of presheaves $S^*\to T^*$ induces a map of the induced sheaves $\mc S^*\to \mc T^*$, but notice that it is possible to get a map of sheaves with less starting data. In particular, suppose ever 
point $x\in X$ has a cofinal system of neighborhoods $\{U_i\}$ with $U_{i+1}\subset U_i$ for all $i$. Then to define a map of sheaves, it is sufficient to have for each $i$ a commutative diagram
\begin{diagram}
S^*(U_i)&\rTo &T^*(U_{i+1})\\
\dTo &&\dTo \\
S^*(U_{i+1}) &\rTo& T^*(U_{i+2})
\end{diagram}
in which the vertical maps are the presheaf restriction maps. To see this, note that this information is sufficient to define a compatible sequence of maps $S^*(U_i)\to \dlim_j T^*(U_j)=\mc T^*_x$ by taking the direct limit over $T$, and that this is enough then to get a map $\dlim_i S(U_i)=\mc S^*_x\to \mc T^*_x$. Altogether, this provides a sheaf map. If the maps in the diagram are all quasi-isomorphisms, then the induced sheaf map is a quasi-isomorphism.

Now each point $x\in X$ has a cofinal system of distinguished neighborhoods $U\cong \R^{n-k}\times cL$. For each $x$, we can fix one such $U$. Then let $D^{n-k}_r$ denote the open $n-k$ ball of radius $r$. Let $c_rL$ denote the open cone on $L$ of radius $r$, i.e. $c_rL=\frac{[0,r)\times L}{0\times L}$. We can assume $U\cong D^{n-k}_1\times c_1L$. For $0<r<1$, let $U_{r}$ be the homeomorphic image in $U$ of $D^{n-k}_r\times c_rL$. Then we claim that for $0<t<s<r<1$ there is a commutative diagram of quasi-isomorphisms

\begin{diagram}
\Gamma_c(U_r;\mc I_{\bar q}\mc C^*)&\lTo &I_{\bar q}S^*(X, X-\bar U_s; F)\\
\uTo &&\uTo \\
\Gamma_c(U_s;\mc I_{\bar q}\mc C^*) &\lTo& I_{\bar q}S^*(X, X-\bar U_t; F).
\end{diagram}
Dualizing this diagram by $\Hom(\cdot,F)$ and applying the universal coefficient theorem will yield our desired quasi-isomorphism of sheaves $\eta:\ms L\mc I_{\bar q}\mc C^*\to \ms P\mc I_{\bar q}\mc C^*$.

To verify the claim, we first note that the vertical maps of the diagram are both inclusions, while the horizontal maps are induced by sheafification of cochains. If $\alpha\in I_{\bar q}S^*(X, X-\bar U_s; F)$, then the support of $\alpha$ under sheafification must be contained within the compact set $\bar U_s\subset U_r$. Hence the diagram is well-defined, and commutativity is evident from the naturality of sheafification (which follows from the functoriality of direct limits). So we need only verify that the maps are quasi-isomorphisms. For the righthand vertical map, this is a consequence of stratum-preserving homotopy invariance of intersection homology. So it will suffice to show that the horizontal maps are quasi-isomorphisms, for which we can use the same argument for each; it will follow from the commutativity that the lefthand vertical map is also a quasi-isomorphism. 

Let us factor the morphism on cohomology induced by the horizontal maps as

\begin{align*}
I_{\bar q}H^*(X, X-\bar U_s; F)&\to I_{\bar q}H^*(U_r, U_r-\bar U_s; F)\\
&\to  I_{\bar q}H_c^*(U_r; F)\\
&\to H^*(\Gamma_c(U_r;\mc I_{\bar q}\mc C^*)).
\end{align*}
The first map is the excision isomorphism. The second map is the isomorphism induced by the natural map $I_{\bar q}H^*(U_r, U_r-\bar U_s; F)\to \displaystyle\dlim_{u\to r^-}I_{\bar q}H^*(U_r, U_r-\bar U_u; F)\cong I_{\bar q}H_c^*(U_r; F)$; see \cite[Section 7.3]{GBF35}. See \cite[Section 3.3]{Ha} for a discussion of this approach to cohomology with compact supports. The direct system is a system of isomorphisms due to stratum-preserving homotopy invariance of intersection homology. Finally, the last isomorphism is by Corollary \ref{C: cohomology}.

This establishes the claim and hence the quasi-isomorphism $\eta$.

\begin{remark}\label{R: Verdier is singular}
Together, $\eta[-n]$ and the quasi-isomorphism $\rho$ we constructed in discussing square VIII determine a quasi-isomorphism between $\ms L\mc I_{\bar q}\mc C^*[-n]$ and $\mc I^{\bar q}\mc S^*$. However, if we replace  $\mc I^{\bar p}\mc C^*$ and $\mc I_{\bar q}\mc S^*$, respectively, with $\mc C^*$ and $\mc S^*$, the sheaves of ordinary singular cochains and chains, then the same arguments give a quasi-isomorphism between $\ms L\mc C^*[-n]$ and $\mc S^*$. But by the same arguments as Proposition \ref{P: flabby}, $\mc C^*$ is a soft flat resolution of the constant sheaf $\mc F$ with stalk $F$. Thus $\ms L\mc C^*$, whose sections are $\ms L\mc C^*(U)= \Hom(\Gamma_c(U;\mc C^*),F)$, represents the Verdier dualizing sheaf $\D^*$ as defined in \cite[Section V.7.1]{Bo}. For pseudomanifolds and with field coefficients this provides another proof 
 of the well-known fact that $\H^*(X;\D^*[-n])\cong \H^{*}(X;\mc S^*)\cong H_{n-*}^{\infty}(X;F)$. \hfill\qedsymbol
\end{remark}

Now we will show that the diagram

\begin{diagram}
&&H^{-j}(\Hom(I_{\bar q}S^*(X; F),F))\\
&\ruTo^h&\dTo^\tau\\
\H^{-j}(X;\ms L\mc I_{\bar q}\mc C^*)&\rTo^{ \eta}&\H^{-j}(X;\ms P\mc I_{\bar q}\mc C^*)
\end{diagram}
commutes. This is simply  triangle VII of diagram \eqref{D: MAIN}  rewriting $\H^i(X;\mc A^*[-n])$ as $\H^{i-n}(X;\mc A^*)=\H^{-j}(X;\mc A^*)$ for each of the bottom terms, with $\mc A^*$ as appropriate.
Recall that $X$ is compact so $\Gamma_c(X;\mc I_{\bar q}\mc C^*)=\Gamma(X;\mc I_{\bar q}\mc C^*)=\mc I_{\bar q}\mc C^*(X)$ and that   $h$ is the $\Hom$ dual of the sheafification map $I_{\bar q}S^*(X; F)\to \mc I_{\bar q}\mc C^*(X)$. The map $\tau$ is the sheafification $\Hom(I_{\bar q}S^*(X; F),F)\to \Gamma(X;\ms P\mc I_{\bar q}\mc C^*)$. 
We  check commutativity. 

Given an $\alpha\in \Hom(\Gamma_c(X;\mc I_{\bar q}\mc C^*),F)$, then $h(\alpha)$ acts on $\beta\in I_{\bar q}S^*(X; F)$ by taking $\beta$ to its sheaf section and then applying $\alpha$. The image of $h(\alpha)$ under $\tau$ is a section of $\ms P\mc I_{\bar q}\mc C^*$. At the point $x$, the germ $\tau h(\alpha)_x$ is represented by an element of $\Hom(I_{\bar q}S^*(X,X-\bar U; F),F)$, for some neighborhood $U$ of $x$, obtained by restricting $h(\alpha)$ to act on cochains that vanish outside of $\bar U$. But on such cochains, $h(\alpha)$ still acts by sheafifying the cochain to a section in $\Gamma_c(X;\mc I_{\bar q}\mc C^*)$ and then applying $\alpha$.  
Now what is $\eta(\alpha)$? The map $\eta$ is defined locally by dualizing sheafification maps $I_{\bar q}S^*(X, X-\bar U_s; F) \to \Gamma_c(U_r;\mc I_{\bar q}\mc C^*)$ for $r>s$. 
Thus a germ at $x$ of the global section $\eta(\alpha)$  is represented by an element of $\Hom(I_{\bar q}S^*(X, X-\bar U_s; F),F)$ for some $s$, and  this acts on cochains that vanish in $X-\bar U_s$ by sheafifying them to elements of $\Gamma_c(U_r;\mc I_{\bar q}\mc C^*)\subset \Gamma_c(X;\mc I_{\bar q}\mc C^*)$, for sufficient small $r$, and then 
 applying the restriction of  $\alpha$. But this is exactly what $\tau h(\alpha)_x$ does, noting that we can represent a germ of $\ms P\mc I_{\bar q}\mc C^*_x$ in $\Hom(I_{\bar q}S^*(X,X-\bar U_s; F),F)$ for some $U_s$. 
Thus $\tau h=\eta$ and VII commutes. 

Finally, we have already shown that all maps of VII are isomorphisms.

\paragraph{Constants.}\label{P: constants}

So far we have succeeded in showing that diagram \eqref{D: MAIN} commutes up to a constant. However, we would like to be even more specific. For one thing, we have not determined the constant represented by the composition
\begin{equation}\label{E: II}
F\xrightarrow{\aug^{-1}} I^{\bar t}H_0(X; F)\xrightarrow{(-1)^n(\cdot\cap\Gamma)^{-1}}I_{\bar 0}H^n(X; F)\xrightarrow{\sigma} \H^n(X;\mc I_{\bar 0}\mc C^*)\xrightarrow{\mathbb{L}}\H^n(X;\D^*[-n])\xr{\ell} F
\end{equation}
appearing in the commutativity discussion of square II of diagram \eqref{D: MAIN}. For another, we have not yet determined that the composition morphism  $\H^i(X;\mc I_{\bar p}\mc C^*)\to \H^i(X;\mc I^{\bar q}\mc S^*)$ along the bottom of diagram \eqref{D: MAIN} is induced by the quasi-isomorphism $\mathbb{O}$ consistent with the orientation. It turns out that if we  choose $\ell$ such that the  composition \eqref{E: II} is multiplication by $1$, i.e.\ it is the identity, then 
the morphism $\H^i(X;\mc I_{\bar p}\mc C^*)\to \H^i(X;\mc I^{\bar q}\mc S^*)$ becomes consistent with the orientation\footnote{It makes sense that the composition \eqref{E: II} should be ``off'' by a sign $(-1)^n$ as the correct Poincar\'e duality map $I_{\bar 0}H^n(X; F)\to I^{\bar t}H_0(X; F)$ is $(-1)^n\cdot \cap \Gamma$.}.

To show this,  let $U$ be a Euclidean neighborhood of $x$ in $U_1=X-X^{n-1}$. Note that when we restrict to such a neighborhood, $I_{\bar p}S^*(U;F)=S^*(U;F)$ for any $\bar p$ so that we may use intersection cochains and ordinary singular chains interchangeably in what follows whenever we restrict to $U$ or to any other subset of $U_1$. Let $1$ be the  cocycle in $S^0(U;F)$ that evaluates to $1\in F$ on every singular $0$-simplex. Then the germ of $1$ at $x$ represents $1_x\in \mc I_{\bar p}\mc C^0_x$.
Now consider $\eta[-n]\circ \Lambda[-n]\circ\hat \Phi(1_x)\in \ms P^0\mc I_{\bar q}\mc C^*_x[-n]$, and let us call this $E(1_x)$.  On the other hand, for $V$ a neighborhood of $x$ such that $\bar V\subset U$, let $\gamma_x\in I^{\bar q}S_n(X,X-\bar V; F)$ be a cycle representing the orientation class.  The cycle  $\gamma_x$ determines an element that we also label $\gamma_x$ in $\mc I^{\bar q}\mc S^0_x$. Then $\rho(\gamma_x)$ is also in $\ms P^0\mc I_{\bar q}\mc C^*_x[-n]$, and we must compare $E(1_x)$ and  $\rho(\gamma_x)$ as elements of $H^0( \ms P^0\mc I_{\bar q}\mc C^*_x[-n])$.

Now $\ms P^0\mc I_{\bar q}\mc C^*_x[-n]= \ms P^{-n}\mc I_{\bar q}\mc C^*_x$, whose germs are represented by homomorphisms of degree $-n$ from 
$I_{\bar q}S^*(X,X-\bar V; F)$ to $F$ for sufficiently small open $V$. In particular, these maps act non-trivially only on intersection cochains of degree $n$.
Since it suffices to consider the cohomology classes $H^0$ at $x$, notice that $H^*(\Hom(I_{\bar q}S^*(X,X-\bar V; F),F))\cong \Hom(I_{\bar q}H^*(X,X-\bar V; F),F)$ by the Universal Coefficient Theorem, so to identify elements of $H^0( \ms P^0\mc I_{\bar q}\mc C^*_x[-n])$, it is enough to look at how representatives of germs act on cocycles. In particular, we assume from here on that $\beta$ is a cocycle representing an element of $I_{\bar q}H^n(X,X-\bar V; F)$. 

Tracing through the definitions of $\eta$, $\Lambda$, and $\hat \Phi$, let us see how $E(1_x)$ acts on a cocycle  $\beta\in I_{\bar q}S^n(X,X-\bar V; F)$. First we form $\Phi(\td 1\td \otimes \td \beta)\in  \Gamma_c(U;\D^0)$, where $\td \beta$ and $\td 1$ are the sheafifications of $\beta$ and $1$ in  $\Gamma(U;\mc I^{\bar q}\mc C^n)$ and $\Gamma_c(U;\mc I^{\bar p}\mc C^0)$, respectively, and $\td \otimes$ represents the sheaf tensor product.
The cocycle section $\Phi(\td 1\td \otimes \td \beta)=\mathbb{L}(\td 1\td \cup\td \beta)\in  \Gamma_c(U;\D^0)$ is then taken to the class it represents under the composition $\H_c^0(U;\D^*)\to \H_c^0(X;\D^*) \xr{\ell} F$.

By contrast,  $\rho(\gamma_x)$ acts on the same $\beta$  by $\rho(\gamma_x)(\beta)=(-1)^n\beta(\gamma_x)$, which  is equal to  $(-1)^n\aug(\beta\cap \gamma_x)$ by \cite[Proposition 7.3.25]{GBF35}.

Now, the comparison between $E(1_x)$ and $\rho(\gamma_x)$ is very close to being a localized version of the composition \eqref{E: II}.
To make this precise, we consider the following diagram (coefficients tacit):

\begin{equation*}
\resizebox{.9\hsize}{!}{\begin{diagram}
F\lTo^{\aug} &  &H_0(U)&\lTo^{(-1)^n\cdot\cap \gamma_x}&H^n(U,U-\bar V)&\rTo^{\sigma}& \H^n_c(U;\mc I_{\bar 0}\mc C^*)&\rTo^{\mathbb{L}} &\H^n_c(U;\D^*[-n])\\
&\luTo^{\aug}&\dTo&&\uTo^\cong\\
&& I^{\bar t}H_0(X) &\lTo^{(-1)^n\cdot\cap\Gamma}&I_{\bar 0}H^n(X,X-\bar V)&&\dTo&&\dTo\\
&&&\luTo^{(-1)^n\cdot\cap\Gamma}&\dTo\\
 &&&&I_{\bar 0}H^n(X)&\rTo^{\sigma}& \H^n(X;\mc I_{\bar 0}\mc C^*)&\rTo^{\mathbb{L}} &\H^n(X;\D^*[-n])&\rTo^\ell& F\\
\end{diagram}}
\end{equation*}

This diagram commutes: The left triangle commutes by naturality of augmentation. The small square and other triangle commute by naturality of the cap product \cite[Proposition 7.3.6]{GBF35} (identifying ordinary and intersection (co)homology on the manifold $U$ as well as letting $\gamma_x$ and $\Gamma$ stand also for the elements of $H_n(U,U-\bar V;F)$ and $I^{\bar 0}H_n(X,X-\bar V;F)$ they represent).
The maps counterclockwise around the pentagon come by sheafifying a cocycle of $I_{\bar 0}S^n(X,X-\bar V;F)$. Since the support of such a cocycle must be compact in $U$, we can equivalently restrict first to $U$, sheafify, and then include back into $X$, which gives us the clockwise procedure. Thus the pentagon commutes. Commutativity of the right rectangle is clear. 

The composition from $F$ to $F$ along the bottom is just the composition \eqref{E: II}. As we've seen, if we start with $\beta$ representing an element of $I_{\bar q}H^j(X,X-\bar V;F)$, then $E(1_x)$ acts on it by first taking it to $\td 1 \td\cup \td \beta\in \Gamma_c(U;\mc I_{\bar 0}\mc C^*)$ and then proceeding around the diagram to the right. Note that $\td 1 \td\cup \td \beta$ is the image of $1\cup \beta|_U$ under the sheafification. Although we start with $\beta$ as a perversity-$\bar q$ cochain, once restricted to $U$ it is an ordinary cochain. So in particular $1\cup \beta|_U=\beta|_U\in H^n(U,U-\bar V;F)$ so we can start here in the diagram instead and go right to compute $E(1_x)(\beta)$. We have also seen that $\rho(\gamma_x)$ acting on $\beta$ is $(-1)^n\beta(\gamma_x)$. As we can choose our representative for $\gamma_x$ to be supported in $U$, this is equal to the image of $\beta$ going left from $H^n(U,U-\bar V;F)$ to $F$. Since the diagram commutes, for $E(1_x)$ and $\rho(\gamma_x)$ to act equally on $\beta$, we need the the composition from $F$ to $F$ along the bottom to be the identity, which can be achieved choosing $\ell$ appropriately. 

With this choice of $\ell$, the composition along the bottom of Diagram \eqref{D: MAIN} is induced by $\mathbb O$ and square II commutes.

\bigskip

Now putting together our previous computations, all parts of diagram \eqref{D: MAIN} commute. Thus the diagram of Theorem \ref{T: MAIN} commutes.

This completes the proof for $X$ connected and normal. The proof clearly extends to $X$ normal and consisting of a finite number of connected components.

\paragraph{If $\mathbf{X}$ is not normal.}\label{P: not normal}
Suppose now $X$ is compact and oriented, but not necessarily normal. Let $p:\hat X\to X$ be the normalization; see \cite{Pa03}. Recall that $p$ restricts to a homeomorphism on $X-X^{n-1}$, so in particular we can choose an orientation for $\hat X$ such that $p$ is orientation preserving. Consider the diagram

\begin{diagram}[notextflow]
   I_{\bar p}H^i (\hat X; F)  &    &\rTo^{(-1)^{in}\cdot\cap \Gamma_{\hat X}} &      &   I^{\bar q}H_{n-i} (\hat X; F)   \\
      & \rdTo_{\sigma} &      &      & \vLine& \rdTo^{\sigma'}  \\
\uTo^{p^*} &    &   \H^i (\hat X;\mc I_{\bar p}\mc C_{\hat X}^*)   & \rTo^{\mathbb{O_{\hat X}}}  & \HonV   &    &  \H^i(\hat X;\mc I^{\bar q}\mc S_{\hat X}^*) \\
      &    & \uTo^{p^*}  &      & \dTo_{p}   \\
   I_{\bar p}H^i ( X; F) & \hLine & \VonH   & \rTo^{(-1)^{in}\cdot\cap \Gamma_X} &  I^{\bar q}H_{n-i} ( X; F)  &    & \dTo_{p} \\
      & \rdTo_{\sigma} &      &      &      & \rdTo^{\sigma'}  \\
      &    &   \H^i ( X;\mc I_{\bar p}\mc C_{X}^*)   &      & \rTo^{\mathbb{O_X}} &    &  \H^i(X;\mc I^{\bar q}\mc S_{ X}^*) .\\
\end{diagram}

The top square commutes by our proof for compact normal pseudomanifolds. It is the bottom square we wish to prove commutes, which we will do by explaining the sides of the cube and seeing that they commute. 

The vertical maps on the back face are induced by the normalization $p:\hat X\to X$. To see that the back face commutes, we first observe that if $\Gamma_{\hat X}$ is the fundamental class for $\hat X$ consistent with the orientation, then $p\Gamma_{\hat X}=\Gamma_X$, which follows from \cite[Theorem 8.1.18]{GBF35} and our assumptions about $p$ and $\hat X$. 
The back face then commutes by the naturality of the cap product \cite[Proposition 7.3.6]{GBF35}.

For the righthand side, notice that we have a sheaf map $p_*\mc I^{\bar q}\mc S^*_{\hat X}\to \mc I^{\bar q}\mc S^*_{ X}$ because the germs of the former sheaf at $x\in X$ are represented in neighborhoods $U$ of $x$ by elements of $I^{\bar q}S_{n-*}(\hat X,\hat X-\overline{p^{-1}(U)}; F)$, and these map under $p$ to elements of $I^{\bar q}S_{n-*}(X, X-\bar U; F)$ representing germs of $\mc I^{\bar q}\mc S^*_{ X}$ at $x$. This induces a map of global sections $H^*(\mc I^{\bar q}\mc S^*_{\hat X}(\hat X))=H^*(p_*\mc I^{\bar q}\mc S^*_{\hat X}(X))\to H^*(\mc I^{\bar q}\mc S^*_{ X}(X))$. Since 
$\mc I^{\bar q}\mc S^*_{\hat X}$ and $\mc I^{\bar q}\mc S^*_{ X}$ are $C$-ready, this represents the desired map on hypercohomology. Furthermore, by \cite[Section 3]{GBF10}, $I^{\bar q}S_{n-*}(X; F)\cong \Gamma(X;\mc I^{\bar q}\mc S_{X}^*)$ (and similarly for $\hat X$), and it is easy to check at the stalk level  that the homomorphism $p:I^{\bar q}S_{n-*}(\hat X; F)\to I^{\bar q}S_{n-*}(X; F)$ is compatible with the sheaf morphism so that the right side of the cube commutes

The left side is similar, though we instead use a map $\mc I_{\bar p}\mc C_{X}^*\to p_*\mc I_{\bar p}\mc C_{\hat X}^*$, corresponding to the fact that cochains pull back over maps. 
The sheafification map $I_{\bar p}S^{*}(X; F)\to \Gamma(X;\mc I_{\bar p}\mc C_{X}^*)$ is only surjective here, but this does not disturb the commutativity argument. 

Turning to the front of the cube, we have already seen that the left and right vertical maps can be interpreted as the cohomology maps obtained from the maps on global sections induced by the sheaf maps $\mc I_{\bar p}\mc C_{X}^*\to p_*\mc I_{\bar p}\mc C_{\hat X}^*$ and $p_*\mc I^{\bar q}\mc S^*_{\hat X}\to \mc I^{\bar q}\mc S^*_{ X}$  over $X$. Similarly, to keep all the sheaves over $X$, we see that the map induced by $\O_{\hat X}$ can be interpreted over $X$ as the hypercohomology map induced by  $p_*\O_{\hat X}:p_*\mc I_{\bar p}\mc C_{\hat X}^*\to p_*\mc I^{\bar q}\mc S_{\hat X}^*$. So to show that the front square commutes it suffices to show that the composition

$$\mc I_{\bar p}\mc C_{X}^*\to p_*\mc I_{\bar p}\mc C_{\hat X}^*\xr{p_*\O^{\hat X}} p_*\mc I^{\bar q}\mc S_{\hat X}^*\to \mc I^{\bar q}\mc S_{X}^*$$ agrees with $\mc O_{X}$. 

For this, we know that it is sufficient to consider what the composition does over points in the regular strata. But since $p$ is an orientation preserving homeomorphism over $X-X^{n-1}$,  if we restrict all sheaves and maps to  $X-X^{n-1}$ the first and third maps become identity maps and $p_*\O_{\hat X}$ becomes exactly $\O|_{X-X^{n-1}}$. So this face commutes.

It now follows from a diagram chase that the bottom of the cube commutes, which proves Theorem \ref{T: MAIN} for not-necessarily normal compact  $F$-oriented pseudomanifolds.\hfill\qedsymbol

\section{Compatibility of cup, intersection, and sheaf products}\label{S: cap/cup}

We now turn to the proof of Theorem \ref{T: cubes}, stated in the Introduction, which relates the cup product, the intersection product, and the sheaf products. 
We first discuss the commutativity of the top cube, for which $X$ can be a topological stratified pseudomanifold. We then turn to the bottom cube with the further assumption that $X$ be a PL stratified pseudomanifold.

\paragraph{Top cube. }
The top of the cube commutes by Theorem \ref{T: cup product}. The right side commutes by Theorem \ref{T: MAIN}. For the left side, we note that $\sigma$ and $\mathbb O$ are degree $0$ maps while the signed cap product and $\sigma'$ have (cohomological) degrees $-n$ and $n$, respectively. Therefore, using Theorem \ref{T: MAIN} again,  the lefthand square commutes up to $(-1)^{jn+(n-j)n}=(-1)^n$.

For the front and bottom faces of the cube, we recall that we have defined  $\td\psi$ to be the sheaf-theoretic intersection pairing of Goresky and MacPherson, which takes the tensor product of preferred generators at points in $X-X^{n-1}$ to preferred generators; see Section \ref{S: O}. We then \emph{define}\label{int def1} the map $\psi$ so that the bottom square commutes up to\footnote{This sign will be necessary for the rest of the diagram to commute as desired. Our computations in Appendix \ref{S: signs} reveal that we could eliminate the $(-1)^n$ by reversing the $\sigma'$ isomorphisms, but this seems unnatural  and would nonetheless cause signs to pop up in other places.} $(-1)^n$. We claim the front square then also commutes: By Section \ref{R: uniqueness}, it is sufficient to consider what happens in $H^0$ over points  $x\in X-X^{n-1}$. But we know $\td \cup$ takes $1_x\otimes 1_x$ to $1_x$, while $\mathbb{O}(1_x)$ is, by definition, represented by the local orientation class at $x$. Then also by definition, $\td\psi(\mathbb{O}(1_x)\otimes \mathbb{O}(1_x))$ is represented by the local orientation class, which is again $\mathbb{O}(1_x)=\mathbb{O}(1_x\td\cup 1_x)$.  

Using that the left and bottom faces commute up to $(-1)^n$, that the right, front, and top commute on the nose, and that the left and  right faces  are squares of isomorphisms, a diagram chase demonstrates that the back square commutes, as desired.

\paragraph{Bottom cube. } For the remainder of this section we assume that $X$ is a compact $F$-oriented PL stratified pseudomanifold; see \cite[Section 2.5]{GBF35} for background details. We let $I^{\bar p}\mf C_*(X;F)$ and $I^{\bar p}\mf H_*(X;F)$ denote respectively the PL intersection chain complex and PL intersection homology groups of $X$ \cite[Sections 3.3 and 6.2]{GBF35}.

By \cite[Theorem 6.3.31]{GBF35}, the singular and PL intersection homology groups are isomorphic, i.e.\ $I^{\bar p}\mf H_*(X;F)\cong I^{\bar p}H_*(X;F)$. The technical details are given there only for the ``GM'' case of intersection homology (in \cite[Section 5.4]{GBF35}), but they are analogous for the ``non-GM'' case that we are using here. The proof proceeds by constructing (degree $0$) quasi-isomorphisms

$$I^{\bar p}\mf C_*(X;F)\xleftarrow{} I^{\bar p}\mf C^T_*(X;F) \to I^{\bar p}\mf S_*(X;F)\xleftarrow{} I^{\bar p}S_*(X;F).$$
Here $I^{\bar p}\mf C^T_*(X;F)$ is the direct limit of simplicial intersection chains with respect to barycentric subdivisions of an arbitrary fixed triangulation $T$ of $X$, while $I^{\bar p}\mf S_*(X;F)$ is the direct limit of the complex of singular intersection chains under barycentric subdivision. The precise details of these groups and the maps between them will not be essential for us, so we denote this zig-zag of maps by  $I^{\bar p}\mf C_*(X;F)\leftrightarrow I^{\bar p}S_*(X;F)$. As each map is a quasi-isomorphism, this induces an isomorphism $I^{\bar p}\mf H_*(X;F)\cong I^{\bar p}H_*(X;F)$.

Furthermore, the maps involved in $I^{\bar p}\mf C_*(X;F)\leftrightarrow I^{\bar p}S_*(X;F)$ restrict to open subsets  and thus there are corresponding zig-zags of maps 
$I^{\bar p}\mf C_*(X, X-\bar U;F)\leftrightarrow I^{\bar p}S_*(X, X-\bar U;F)$, each of which is a quasi-isomorphism; see \cite[Corollaries 5.4.3 and Corollary 6.3.32]{GBF35}. Consequently, the maps all sheafify, and we obtain a zig-zag of sheaf quasi-isomorphisms that we denote $\mc I^{\bar p}\ms S^*_{PL}\leftrightarrow \mc I^{\bar p}\mc S^*$. For each such sheaf complex, we employ the same indexing shifts as above for $\mc I^{\bar p}\mc S^*$.

The sheaf complex $\mc I^{\bar p}\ms S^*_{PL}(X;F)$ is not quite the usual PL intersection chain sheaf, which rather is typically defined by the presheaf $U\to I^{\bar p}\mf C^{\infty}_{n-*}(U;F)$, which  is the complex of locally finite (not necessarily compactly supported) intersection chains on $U$; see \cite[Section I and II]{Bo} or \cite[Section 2.1]{GM2}. This presheaf is in fact a sheaf \cite[page 30]{BoHab} and furthermore a soft sheaf \cite[Proposition 5.1]{BoHab}.  We will denote it $\mc I^{\bar p}\mc S_{PL}^*$.

If $V\subset U$ are open subsets of $X$ with $\bar V\subset U$ then restriction provides a well-defined chain map $I^{\bar p}\mf C_*(X, X-\bar U;F)\to I^{\bar p}\mf C^\infty_*(V;F)$. Furthermore, suppose $U\cong \R^k\times cL$ is a distinguished neighborhood of a point $x\in X$ with $x=(0,v)$ and $v$ denoting the cone vertex. If we can identify $V$ under this homeomorphism with $B^k\times c_rL$, where $B^k$ is a ball containing the origin in $\R^k$ and $c_rL =\frac{[0,r)\times L}{0\times L} \subset \frac{[0,1)\times L}{0\times L}=cL$ is a subcone, $0<r<1$, then this restriction map is a quasi-isomorphism via the standard local computations for intersection homology\footnote{Sketch of proof: In the degrees where the local intersection homology groups are not automatically $0$ they are isomorphic to the intersection homology groups of the links $I^{\bar p}\mf H_{*-k-1}(L;F)$. In particular, if $\xi$ is a PL cycle representing an element of $I^{\bar p}\mf H_{*-k-1}(L;F)$ in such an appropriate degree and we take $U$ sufficiently small to be embedded in a larger $\R^k\times cL$, then the isomorphism takes the class of $\xi$ to an element of $I^{\bar p}\mf H_{*}(X, X-\bar U;F)$ represented by a chain of the form $\eta \times \bar c\xi$, where $\eta$ is a generator of $\mf H_k(\R^k, \R^k-B^k;F)$ and $\bar c\xi$ is the closed cone on $\xi$ (see \cite[Theorem 6.3.20, Corollary 6.2.15, and the proof of Theorem 6.2.13]{GBF35}). The restriction of such a chain to $I^{\bar p}\mf C^\infty_{*}(V;F)$ then represents the corresponding homology class in $I^{\bar p}\mf H^\infty_{*}(V;F)$; cf.\ \cite[Section II.2-II.3]{BoHab}.}.  But such data is sufficient to construct a quasi-isomorphism $\mc I^{\bar p}\ms S^*_{PL}\to \mc I^{\bar p}\mc S_{PL}^*$ just as in the discussion of box VII in the proof of Theorem \ref{T: MAIN}. 
So we can extend our zig-zag of quasi-isomorphisms to $\mc I^{\bar p}\mc S^*\leftrightarrow \mc I^{\bar p}\mc S_{PL}^*$. 

This zig-zag of quasi-isomorphisms determines an isomorphism, and in particular a morphism,  $\mc I^{\bar p}\mc S^*\to \mc I^{\bar p}\mc S_{PL}^*$ in the derived category $D(X)$. Since each of these complexes is $C$-ready by Lemma \ref{L: C-ready}, we obtain a unique induced isomorphism $I^{\bar p}H_{n-*}(X;F)\cong \H^*(X;\mc I^{\bar p}\mc S^*)\to \H^*(X;\mc I^{\bar p}\mc S_{PL}^*)\cong I^{\bar p}\mf H_{n-*}(X;F)$ by Lemma \ref{L: rep}.

Now we consider the commutativity of the bottom cube in the statement of Theorem \ref{T: cubes}:

We already know that  top square  commutes up to $(-1)^n$ by definition. 

On the right side, $\sigma'$ and $\sigma''$ are the sheafification maps. The vertical maps are the compositions of the quasi-isomorphisms discussed above, and the square itself can be decomposed into the sheafification diagrams for each of the individual maps. To see that the result is in fact a commutative square of isomorphisms, let us consider  $f:A_{n-*}\to B_{n-*}$, representing the map between any two neighboring complexes in the zig-zag of chain complexes. Let $\td f:\mc A^*\to \mc B^*$ denote the corresponding map of sheaves, and let $\hat f:\mc I^*\to \mc J^*$ be the induced map of injective resolutions. Then we have a diagram

\begin{diagram}[LaTeXeqno]\label{D: comp}
H_{n-*}(A_*)&\rTo &H^*(\Gamma(X;\mc A^*))&\rTo & H^*(\Gamma(X;\mc I^*))=\H^*(X;\mc A^*)\\
\dTo^f&&\dTo^{\td f}&&\dTo^{\hat f}\\
H_{n-*}(B_*)&\rTo &H^*(\Gamma(X;\mc B^*))&\rTo & H^*(\Gamma(X;\mc J^*))=\H^*(X;\mc B^*).
\end{diagram}
Since the left square represents sheafification and the right square is induced from injective resolutions, these diagrams all commute. Furthermore, since the leftmost and rightmost vertical maps are isomorphisms, if either the top or bottom composition is an isomorphism, so is the other. Since we know that the composition $I^{\bar p}H_{n-*}(X;F)\to H^*(\Gamma(X;\mc I^{\bar p}\mc S^*))\to H^*(\Gamma(X;\mc I^*))$ consists of isomorphisms, it follows that all of the horizontal compositions are isomorphisms. We also know that the maps $I^{\bar p}\mf H_{n-*}(X;F)\to H^*(\Gamma(X;\mc I^{\bar p}\mc S_{PL}^*))\to H^*(\Gamma(X;\mc K^*))$ are isomorphisms, letting $\mc K^*$ be an injective resolution of $\mc I^{\bar p}\mc S_{PL}^*$. So stacking together all of these diagrams gives us the commutativity of the right face of the cube with the front vertical map being the isomorphism $H^*(\Gamma(X;\mc I^{\bar p}\mc S^*))\to H^*(\Gamma(X;\mc I^*))$ followed by the sequence of isomorphisms corresponding to the right sides of the various versions of  diagram \eqref{D: comp} and then finally the inverse of the isomorphism $H^*(\Gamma(X;\mc I^{\bar p}\mc S_{PL}^*))\to H^*(\Gamma(X;\mc K^*))$. Since the injective complexes form a zig-zag of quasi-isomorphisms from $\mc I^{\bar p}\mc S^*$ to $\mc I^{\bar p}\mc S_{PL}^*$ that commutes with our original zig-zag of quasi-isomorphisms, this indeed represents the hypercohomology isomorphism shown in the cube.

The left face of the cube consists of the tensor product of two versions of the right face. Note that all vertical maps are degree $0$ while all sheafification maps have degree $n$. Since both horizontal tensor products of maps create the same sign $(-1)^{n(n-i)}$ (and similarly for all the intermediate maps as in the preceding paragraph), the lefthand square commutes exactly.

We consider now the bottom of the cube. The map labeled $\psi$ is the PL intersection product.  In \cite[Theorem 1]{GBF39}, it is show that this product can be defined via the following chain maps: 
\begin{diagram}[LaTeXeqno]\label{D: pf}
I^{D\bar p}\mf C_*^{\infty}(X;F)\otimes I^{D\bar q}\mf C^{\infty}_*(X;F)&\lTo & G^{\infty,P}_*(X;F)&\rTo^{\mu} & I^{D\bar p+D\bar q}\mf C^{\infty}_{*-n}(X;F).
\end{diagram}
Here $P$ simply represents the pair of perversities $D\bar p,D\bar q$ and the leftward arrow is an inclusion and a quasi-isomorphism. The map $\mu$ is a composition of the PL chain cross product and a certain umkehr map of (homological) degree $-n$. 
The PL intersection product is then the composition of the canonical map $I^{D\bar p}\mf H_*^{\infty}(X;F)\otimes I^{D\bar q}\mf H^{\infty}_*(X;F)\to H_*(I^{D\bar p}\mf C_*^{\infty}(X;F)\otimes I^{D\bar q}\mf C^{\infty}_*(X;F))$, the map on homology induced by \eqref{D: pf}, and 
 the map 
$I^{D\bar p+D\bar q}\mf H^{\infty}_*(X;F)\to I^{D\bar r}\mf H^{\infty}_*(X;F)$ that exists because $D\bar r\geq D\bar p+D\bar q$. By \cite[Proposition 6.9]{GBF39}, this product agrees with the Goresky-MacPherson intersection product of \cite{GM1}, possibly up to sign conventions.

It is furthermore shown in \cite[Proposition 6.7]{GBF39} that the maps in Diagram \eqref{D: pf} commute with restriction. So, tacking on the map $\mc I^{D\bar p+D\bar q}\mc S^{n-*}_{PL}\to \mc I^{D\bar r}\mc S^{n-*}_{PL}$, the diagram \eqref{D: pf} sheafifies to
\begin{diagram}
\mc I^{D\bar p}\mc S^{n-*}_{PL}\otimes \mc I^{D\bar q}\mc S^{n-*}_{PL}&\lTo & \mc G^P_*&\rTo^{\mu} & \mc I^{D\bar r}\mc S^{n-*}_{PL}.
\end{diagram}
 Let us denote\footnote{The sign here corresponds to the similar one needed for middle horizontal square in the diagram of Theorem \ref{T: cubes}.} $(-1)^n$ times the resulting morphism in $D(X)$ by $\td \pf$. 

We claim that the bottom of the cube corresponds to the following diagram induced by sheafification and taking cohomology (eliminating the $\infty$ decorations because $X$ is compact and leaving $F$ tacit):

\begin{equation}\label{D: pf sheaf}
\resizebox{.95\hsize}{!}{
 \begin{diagram}
I^{D\bar p}\mf H_{n-i}(X)\otimes I^{D\bar q}\mf H_{n-j}(X)&\rTo
&H_{2n-i-j}(I^{D\bar p}\mf C_*(X)\otimes I^{D\bar q}\mf C_*(X))&\lTo^\cong &H_{2n-i-j}(G^{P}_*(X))&\rTo^{\mu} & I^{D\bar r}\mf H_{n-i-j}(X)\\
\dTo&&\dTo^{\sigma''\otimes \sigma''}&&\dTo&&\dTo&&\\
H^i(\Gamma(\mc I^{D\bar p}\mc S^*_{PL}))\otimes H^j(\Gamma(\mc I^{D\bar q}\mc S^*_{PL}))&\rTo
&H^{i+j}(\Gamma(\mc I^{D\bar p}\mc S^*_{PL}\otimes \mc I^{D\bar q}\mc S^*_{PL}))&\lTo^\cong &H^{i+j}(\Gamma(\mc G^{P,*}))&\rTo^{\mu} & H^{i+j}(\Gamma(\mc I^{D\bar r}\mc S^*_{PL})).
\end{diagram}}
\end{equation}
This diagram commutes up to $(-1)^n$ by definition. The top composition is $\pf$ by definition. We will show that the bottom corresponds to the front bottom map in the cube.

Since $\mc I^{D\bar p}\mc S^*_{PL}$ is soft and flat, the tensor product $\mc I^{D\bar p}\mc S^*_{PL}\otimes \mc I^{D\bar q}\mc S^*_{PL}$ is soft \cite[Proposition V.6.5]{Bo}. Thus as $H^i(\mc I^{D\bar p}\mc S^*_{PL,x})=0$ for large enough $|i|$,  the tensor product $\mc I^{D\bar p}\mc S^*_{PL}\otimes \mc I^{D\bar q}\mc S^*_{PL}$ is $C$-ready by Lemma \ref{L: C-ready}. So by the proof of Lemma \ref{L: rep} the composition along the bottom from $H^{i+j}(\Gamma(\mc I^{D\bar p}\mc S^*_{PL}\otimes \mc I^{D\bar q}\mc S^*_{PL}))$ to $H^{i+j}(\Gamma(\mc I^{D\bar r}\mc S^*_{PL}))$ represents the hypercohomology map $\H^{i+j}(X;\mc I^{D\bar p}\mc S^*_{PL}\otimes \mc I^{D\bar q}\mc S^*_{PL})\to \H^{i+j}(X;\mc I^{D\bar r}\mc S^*_{PL})$ 
induced by $\td \pf$. So composing with the canonical map $\H^i(X;\mc I^{D\bar p}\mc S^*_{PL})\otimes \H^j(X;\mc I^{D\bar q}\mc S^*_{PL})\to 
\H^{i+j}(X;\mc I^{D\bar p}\mc S^*_{PL}\otimes \mc I^{D\bar q}\mc S^*_{PL})$, we get precisely the front bottom map of the cube. 

Finally, we show that the front face of the cube commutes. The front face is the composition of the canonically commuting square 

\begin{diagram}
 \H^{i} (X;\mc I^{D\bar p}\mc S^*)\otimes \H^j(X;\mc I^{D\bar q}\mc S^*)&\rTo & \H^{i+j}(\mc I^{D\bar p}\mc S^*\otimes \mc I^{D\bar q}\mc S^*)\\
 \dCorresponds&&\dCorresponds\\
 \H^{i} (X;\mc I^{D\bar p}\mc S_{PL}^*)\otimes \H^j(X;\mc I^{D\bar q}\mc S_{PL}^*)&\rTo & \H^{i+j}(\mc I^{D\bar p}\mc S_{PL}^*\otimes \mc I^{D\bar q}\mc S_{PL}^*)
\end{diagram}
with the maps on hypercohomology induced by the diagram

\begin{diagram}
 \mc I^{D\bar p}\mc S^*\otimes \mc I^{D\bar q}\mc S^*&\rTo^{\td \psi}&\mc I^{D\bar r}\mc S^*\\
 \dCorresponds&&\dCorresponds\\
 \mc I^{D\bar p}\mc S_{PL}^*\otimes \mc I^{D\bar q}\mc S_{PL}^*&\rTo^{\td \pf}&\mc I^{D\bar r}\mc S_{PL}^*.
\end{diagram}
Note that all the maps in these diagrams have degree $0$, so they do not introduce any signs. To verify commutativity of this last square, it suffices to show that both compositions from top left to bottom right represent the same morphism in $D(X)$, and for this it suffices to consider the degree $0$ cohomology stalks at points of $X-X^{n-1}$ by Section \ref{R: uniqueness}.

As previously noted, these cohomology stalks are $H^0(\mc I^{\bar s}\mc S^*_x)\cong F$, for any $\bar s$, and they are generated by cycles representing the local orientation class of $X$ at $x$. Tracing through the isomorphisms from singular to PL homology (which are defined precisely in \cite[Section 5.4]{GBF35}), a singular chain representative of the local orientation class will be taken to a corresponding PL  representative of the local orientation class\footnote{In fact, any $n$-simplex of $T$ oriented compatibly with $X$ and containing $x$ in its interior can be used to represent all of the local orientation classes.}. We also know that $\td \psi$ takes the tensor product of local orientation classes to a local orientation class. So it remains to see that $\td \pf$ does so as well. 

Let $U$ be a Euclidean neighborhood of some point $x\in X-X^{n-1}$, and suppose $U$ is triangulated\footnote{There are some additional technical requirements imposed on the triangulation $T$ in \cite{GBF39}. In particular, our triangulation must be the restriction to the diagonal of a triangulation of $U\times U$, but this can always be arranged (especially since we are free to rechoose $x$). See \cite[Section 5]{GBF39} for more details.} so that $x$ is in the interior of some $n$-simplex $\tau$. Let $\gamma$ be a PL cycle  in $\mf C_n^{\infty}(U;F)=I^{D\bar p}\mf C_n^{\infty}(U;F)=I^{D\bar q}\mf C_n^{\infty}(U;F)$ representing the fundamental class of $\mf H_n^{\infty}(U;F)=I^{D\bar p}\mf H_n^{\infty}(U;F)=I^{D\bar q}\mf H_n^{\infty}(U;F)$. Then the sheafification  $\sigma''(\gamma)$ represents our preferred generator of each $H^0(\mc I^{\bar s}\mc S^*_{PL,x})$.

 We can compute $\gamma\pf\gamma$ using the tools of \cite{GBF39}. In particular, it will suffice  to compute the 
coefficient $I_\tau$ of $\tau$ in $\gamma\pf\gamma$. The computation of such \emph{intersection coefficients} is described in \cite[Section 5]{GBF39}. By definition, we take the product $\gamma\times \gamma$ 
and then apply the formula of  \cite[Definition 5.3]{GBF39}. For this we let $T$ in this formula be our given triangulation, and we can take $Z=|\tau|$. If $\tau$ is oriented to agree with the orientation of $X$, then, roughly speaking, the computation proceeds as follows:
Noting that $\tau$ must appear in $\gamma$ with coefficient $1$, the product $\gamma\times \gamma$ is represented in a neighborhood of $(x,x)$ by $\tau\times \tau$ considered  as an element of $H_{2n}(|\tau|\times |\tau|, \bd (|\tau|\times |\tau|);F)$ (following an excision). This dualizes by (signed) Lefschetz duality to\footnote{Since the duality map in the definition is the inverse to the duality map from cohomology to homology, there is no sign because $1$ is a $0$-cochain.} $1\in H^0(\text{int}(|\tau|\times |\tau|);F)$, which pulls pack under the diagonal to $1\in H^0(\text{int}(|\tau|);F)$. Finally, this dualizes back to the generator of $H_n(|\tau|,\bd|\tau|;F)$ consistent with the orientation. Thus the coefficient is $I_\tau=1$. It follows that $\gamma\pf\gamma$ must be $\gamma$ itself, as this is the only element of  $\mf H_n^{\infty}(U;F)$ that can have $1$ as the coefficient of $\tau$ in a representing cycle. 

So from the definition of $\td\pf$ and the commutativity of Diagram \eqref{D: pf sheaf}, substituting $U$ for $X$,  that the sheafification  $\sigma''(\gamma\pf\gamma)=\sigma''(\gamma)$ is $(-1)^n$ times $\td\pf(\sigma''\otimes \sigma'')(\gamma\otimes \gamma)=(-1)^n\td \pf(\sigma''(\gamma)\otimes \sigma''(\gamma))=(-1)^n\sigma''(\gamma)\td\pf \sigma''(\gamma)$. In other words, $\sigma''(\gamma)\td\pf \sigma''(\gamma)=\sigma''(\gamma)$, as desired.

We have now shown that the top and bottom faces of the cube commute up to $(-1)^n$ while the left, right, and front commute exactly. Using that the left and right faces are squares of isomorphisms, a diagram chase now shows that the back face commutes. \hfill\qedsymbol

\section{Classical duality and Verdier duality}\label{S: Verdier}

In this section we prove Theorem \ref{T: Verdier} and Corollary \ref{C: Verdier} as stated in the Introduction.
We continue to assume $F$ is a field and $X$ is a compact $F$-oriented $n$-dimensional topological stratified pseudomanifold. Let $\bar p$ be a general perversity on $X$. 

In this setting, the Poincar\'e duality statement for intersection homology is usually obtained by using the Deligne sheaf axioms to establish that the shifted Verdier dual of the perversity $\bar p$ Deligne sheaf, $\mc D\mc P^*_{\bar p}[-n]$, is quasi-isomorphic to the perversity $D\bar p$ Deligne sheaf, $\mc P_{D\bar p}^*$; see \cite[Section 5.3]{GM2} or \cite[Section V.9.B]{Bo}. Then one of the basic properties of the Verdier dual is that $\H^*(X;\mc D\ms S^*)\cong \Hom(\H^{-*}(X;\ms S^*),F)$; this can be observed most easily by using the definition of $\mc D\ms S^*$ corresponding to what we have here called\footnote{In fact, this is taken to be the initial definition of the Verdier dual in \cite[Section V.7.7]{Bo}.}  $\ms L\ms S^*$ (assuming $\ms S^*$ is soft) and then simply applying the Universal Coefficient Theorem\footnote{If $X$ is not compact, then the correct statement is $\H^*(X;\mc D\ms S^*)\cong \Hom(\H^{-*}_c(X;\ms S^*),F)$.}. Therefore it follows that 
\begin{align*}
\H^*(X;\mc P_{D\bar p}^*)&\cong \H^*(X;\mc D\mc P^*_{\bar p}[-n])\\
&=\H^{*-n}(X;\mc D\mc P^*_{\bar p})\\
&\cong \Hom(\H^{n-*}(X;\mc P^*_{\bar p}),F).
\end{align*}
This isomorphism $\H^*(X;\mc P_{D\bar p}^*)\cong \Hom(\H^{n-*}(X;\mc P^*_{\bar p}),F)$ is what is often called intersection homology Poincar\'e duality. In the special case where $M$ is a compact oriented manifold, then both $\mc P_{\bar p}^*$ and $\mc P^*_{D\bar p}$ are resolutions of the constant sheaf $\mc F$, and $\mc D\mc F\cong \D^*$, the Verdier dualizing sheaf. Then $\H^*(X;\mc F)\cong H^*(X;F)$, and the statement of Poincar\'e duality  becomes $H^*(X;F)\cong \Hom(H^{n-*}(X;F),F)$. 

What is unclear in this approach to Poincar\'e duality is that the duality isomorphism relates to that of more classical approaches to duality, such as  the cap product with the fundamental class, via geometrically meaningful maps. However, the proof of Theorem \ref{T: MAIN} demonstrates such a compatibility,
provided by putting together the boxes I through III of Diagram \eqref{D: MAIN}. Assembling these, we obtain the first commutative diagram of isomorphisms of Theorem \ref{T: Verdier}: 

\begin{diagram}[LaTeXeqno]\label{D: IC Verdier}
I_{\bar p}H^i(X; F)&&\rTo^{(-1)^{in}\cdot\cap \Gamma} &&I^{D\bar p}H_{n-i}(X; F)\\
\dTo^{\sigma}&&&&\dTo_{\kappa}\\
\H^i(X;\mc I_{\bar p}\mc C^*)&\rTo^{\hat \Phi} &\H^{i}(X;\mc D\mc I_{D\bar p}\mc C^*[-n])&\rTo^{\sigma^*e}&\Hom(I_{D\bar p}H^{n-i}(X;F),F).
\end{diagram}
The map on the left is sheafification, the map on top is the cap product Poincar\'e duality isomorphism, and the map on the right is the signed Kronecker evaluation such that $\kappa(x)(\alpha)=(-1)^{n-i}\alpha(x)$. The composition along the bottom is induced by a quasi-isomorphism $\mc I_{\bar p}\mc C^*\to \mc D\mc I_{D\bar p}\mc C^*[-n]$, the universal coefficient isomorphism for Verdier duals, and  the dual of sheafification.

When our space is a compact $F$-oriented manifold $M$,  the sheaf of cochains is  a resolution of the constant sheaf $\mc F$, and the diagram \eqref{D: IC Verdier} becomes the diagram of 
Corollary \ref{C: Verdier}:

\begin{diagram}
H^i(X; F)&&\rTo^{(-1)^{in}\cdot\cap \Gamma} &&H_{n-i}(X; F)\\
\dTo^{\sigma}&&&&\dTo_{\kappa}\\
\H^i(X;\mc F)&\rTo^{\hat \Phi} &\H^{i}(X;\D^*[-n])&\rTo^{\sigma^*e}&\Hom(H^{n-i}(X;F),F).
\end{diagram}
This provides a compatibility between classical Poincar\'e duality (via the cap product) and Verdier duality. We note  that the point is not simply that the two duality isomorphisms can be put together in a diagram (which can always be done) but that they are related by the ``natural''  isomorphisms given by sheafification, Kronecker evaluation, and the Universal Coefficient Theorem.

Alternatively, if we wish instead to have an analogous diagram built from a chain (as opposed to cochain) representation of the Deligne sheaf, we claim that the following diagram commutes, providing the remainder of 
Theorem \ref{T: Verdier}:

\begin{diagram}[LaTeXeqno]\label{D: IS Verdier}
 I^{D\bar p}H_{n-i}(X;F) & &\lTo^{(-1)^{in}\cdot\cap\Gamma} & &I_{\bar p}H^i(X;F)    \\
 \dTo^{\sigma'}  & &&  &  \dTo_{\kappa'}  \\
\H^{i} (X;\mc I^{D\bar p}\mc S^*)  &  \rTo^{\hat{\Psi}}  &       \H^{i} (X;\mc D\mc I^{\bar p}\mc S^*[-n])    &      \rTo^{{\sigma'}^*e} &     \Hom(I^{\bar p}H_{i}(X;F),F) ,\\
\end{diagram}
Here $\sigma'$ is sheafification, ${\sigma'}^*$ is the $\Hom$ dual of sheafification, $e$ is the Verdier duality universal coefficient map, $\kappa'$ is the Kronecker evaluation, and $\hat{\Psi}$ is the adjoint to the composition of the Goresky-MacPherson sheaf product $\td\psi$ with the  morphism $\mathbb{K}:\mc I^{\bar t}\mc S^*\to \D^*[-n]$ of Section \ref{S: O}. 

To prove the commutativity of \eqref{D: IS Verdier}, we can construct the following diagram of \emph{isomorphisms}. The top face is the commutative diagram \eqref{D: IC Verdier}, which we already know commutes, and the bottom is diagram \eqref{D: IS Verdier}. Thus we need only show the vertical faces commute. 
\bigskip

\begin{equation*}
\resizebox{.8\hsize}{!}{
\begin{diagram}
  &I_{\bar p}H^i(X)  &    &\rTo^{(-1)^{in}\cdot\cap\Gamma} &      &    I^{D\bar p}H_{n-i}(X)  \\
 \ldTo(1,2)_{\sigma} & \vLine   &  &      &      & \uTo^{(-1)^{in}\cdot\cap\Gamma}& \rdTo^{\kappa}  \\
\H^{i} (X;\mc I_{\bar p}\mc C^*) & \HonV  & \rTo^{\hat \Phi}   &   \H^{i} (X;\mc D\mc I_{D\bar p}\mc C^*[-n])   & \rTo^{\sigma^*e}  & \HonV   &    &  \Hom(I_{D\bar p}H^{n-i}(X),F)  \\
  &    \dTo_{(-1)^{in}\cdot\cap\Gamma} &     & \uTo_{\mc D\mathbb{O}[-n]} &      & \vLine  &&\uTo_{D((-1)^{(n-i)n}\cdot\cap\Gamma)} \\
 &  I^{D\bar p}H_{n-i}(X) & \lTo & \VonH   & \hLine^{(-1)^{in}\cdot\cap\Gamma} & I_{\bar p}H^i(X)  &    &  \\
 \dTo^{\mathbb{O}}\ldTo(1,2)_{\sigma'}  &   &  &      &      &      & \rdTo^{\kappa'}  \\
\H^{i} (X;\mc I^{D\bar p}\mc S^*)  &  \rTo^{\hat{\Psi}}  &    &   \H^{i} (X;\mc D\mc I^{\bar p}\mc S^*[-n])    &      & \rTo^{{\sigma'}^*e} &    &   \Hom(I^{\bar p}H_{i}(X),F) .\\
\end{diagram}}
\end{equation*}

\medskip

The back face is ``commutative'' in a canonical sense. The commutativity of the left side is Theorem \ref{T: MAIN}. On the right side, $\kappa$ and $\kappa'$ are the Kronecker evaluations (though recall $\kappa$ has a sign - see the footnote on page \pageref{pageref kappa} and Appendix \ref{Appendix}) and $D((-1)^{in}\cdot\cap\Gamma)$ is the $\Hom$ dual of the Poincar\'e duality map. To see that the right side commutes, let $\alpha\in I_{\bar p}H^{i}(X;F)$, $\beta\in I_{D\bar p}H^{n-i}(X;F)$,
$x=\alpha\cap\Gamma$, and $y=\beta\cap\Gamma$. Then
\begin{align*}
(-1)^{in}(\kappa\circ(\cdot\cap\Gamma)(\alpha))(\beta)&= (-1)^{in}(\kappa (x))(\beta)\\
&=(-1)^{in+n-i}\beta(x)\\
&=(-1)^{in+n-i}\aug(\beta\cap x)&\text{by \cite[Proposition 7.3.25]{GBF35}}\\
&=(-1)^{in+n-i}\aug( \beta\cap(\alpha\cap\Gamma))\\
&=(-1)^{in+n-i}\aug( (\beta\cup \alpha)\cap\Gamma)&\text{by \cite[ Proposition 7.3.35]{GBF35}},
\end{align*}
while
\begin{align*}
(D((-1)^{(n-i)n}\cdot\cap\Gamma)\circ \kappa' (\alpha))(\beta)&=(-1)^{(n-i)n}(D(\cdot\cap\Gamma)\circ \kappa' (\alpha))(\beta)\\
&=(-1)^{(n-i)n+in} (\kappa' (\alpha))(\beta\cap\Gamma)\\
&=(-1)^{n}(\kappa' (\alpha))(y)\\
&=(-1)^n \alpha(y)\\
&=(-1)^n \aug(\alpha\cap y)&\text{by \cite[Proposition 7.3.25]{GBF35}}\\
&=(-1)^n \aug(\alpha\cap(\beta\cap\Gamma))\\
&=(-1)^n \aug((\alpha\cup \beta)\cap\Gamma)&\text{by \cite[ Proposition 7.3.35]{GBF35}}\\
&=(-1)^{n+i(n-i)} \aug((\beta\cup \alpha)\cap\Gamma)&\text{by \cite[Proposition 7.3.15]{GBF35}}.
\end{align*}
Thus the right side commutes.

In the right front square, $e$ denotes the Verdier duality universal coefficient map on both the top and bottom, while $\sigma^*$ and ${\sigma'}^*$ are the $\Hom$ duals of the two sheafifications maps. This face is then the composition of squares

\begin{diagram}
 \H^{i} (X;\mc D\mc I_{D\bar p}\mc C^*[-n])   & \rTo^{e}  & \Hom( \H^{n-i} (X;\mc I_{D\bar p}\mc C^*),F)   & \rTo^{\sigma^*}   &  \Hom(I_{D\bar p}H^{n-i}(X),F)  \\
 \uTo_{\mc D\mathbb{O}[-n]} &      & \uTo^{\mathbb{O}^*} &&\uTo_{D((-1)^{(n-i)n}\cdot\cap\Gamma)} \\
  \H^{i} (X;\mc D\mc I^{\bar p}\mc S^*[-n])    &    \rTo^{e}  & \Hom( \H^{n-i} (X;\mc I^{\bar p}\mc S^*),F)  &  \rTo^{\sigma'^*}  &   \Hom(I^{\bar p}H_{i}(X),F) .\\
\end{diagram}
The right square is just the $\Hom$ dual of Theorem \ref{T: MAIN}, while the left commutes by the naturality of the universal coefficient evaluation.

Finally, to see the commutativity of the front left face, it is sufficient by Section \ref{R: uniqueness} to check that the diagram of sheaf maps commutes on $H^0$ at each point $x\in X-X^{n-1}$. 
Let $1_x\in \mc I_{\bar p}\mc C^0_x$ be the germ of the unit $0$-cochain at $x$. Then $(\mc D\mathbb{O}[-n]\circ\hat \Psi\circ \mathbb{O})(1_x)=(\mc D\mathbb{O}[-n]\circ\hat \Psi)(\gamma_x)$, where $\gamma_x\in \mc I^{\bar q}\mc S^0$ is a germ representing  the local orientation class at $x$. Then, by definition, $\hat \Psi(\gamma_x)$ takes the germ of an intersection cycle $\xi_x$ at $x$ to the image of the sheaf theoretic intersection $\td\psi(\gamma_x\otimes \xi_x)$ under the map $\mathbb{K}: \mc I^{\bar t}\mc S^*_x \to \D^*_x[-n]$. Noting that the codomain of  $\hat \Psi$ is  $\mc D \mc I^{\bar p}\mc  S^* [-n]\cong \SHom(\mc I^{\bar p}\mc  S^* ,\mathbb D^*[-n])$, we interpret $\mc D\mathbb{O}[-n]$ as the $\SHom( \cdot ,\mathbb D^*[-n])$ dual of the degree $0$ map $\mathbb{O}$. Then we have 
\begin{align*}
((\mc D\mathbb{O}[-n]\circ\hat \Psi\circ \mathbb{O})(1_x))(1_x)
&=((\mc D\mathbb{O}[-n]\circ\hat \Psi)(\gamma_x))(1_x)\\
&=(\hat \Psi(\gamma_x)) \mathbb{O}(1_x)\\
&=(\hat \Psi(\gamma_x))(\gamma_x)\\
&=\mathbb{K}(\td\psi(\gamma_x\otimes\gamma_x))\\
&=\mathbb{K}(\gamma_x)\\
&=\mathbb{K}(\gamma_x),
\end{align*}
as $\td\psi(\gamma_x\otimes\gamma_x)=\gamma_x$ in $H^0$ by the definition of $\td \psi$ in Section \ref{S: O}.

 On the other hand, $\hat \Phi(1_x)$ acts by the cup product, so $(\hat \Phi(1_x))(1_x)$ is the image of $1_x\td \cup 1_x=1_x$ under $\mathbb{L}$ in $H^0(\D^*_x[-n])$, i.e. $$(\hat \Phi(1_x))(1_x)=\mathbb{L}(1_x).$$ But this is sufficient to prove the result by  the commutativity of diagram \eqref{D: O triangle}, since $\mathbb{O}$ takes the $H^0$ class represented by $1_x$ to the $H^0$ class represented by $\gamma_x$.

This completes our proofs of Theorem \ref{T: Verdier} and Corollary \ref{C: Verdier}.\hfill\qedsymbol

\appendix

\section{Co-cohomology}\label{Appendix} We  need some results on the cohomology of double duals. These results are no doubt well known. We place them in this appendix for the convenience of the reader.

Let $A^*$ be a complex of vector spaces over the field $F$.

\begin{lemma}\label{L: coco UCT} 
Let $f$ be the homomorphism $A^*\to \Hom^*(\Hom^*(A^*,F),F)$ defined such that if $x\in A^*$ and $\alpha\in \Hom^*(A^*,F)$, then  $f(x)(\alpha)=(-1)^{|\alpha|}\alpha(x)$. Then $f$ is a degree $0$ chain map. 
If $H^*(A^*)$ is finitely generated in each dimension and vanishes for $*$ sufficiently large, then $f$  is an isomorphism.
\end{lemma}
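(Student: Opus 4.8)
The plan is to prove Lemma \ref{L: coco UCT} in two stages: first the purely formal claim that $f$ is a degree $0$ chain map, then the claim that $f$ induces an isomorphism on cohomology under the stated finiteness hypotheses.

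\textbf{Step 1: $f$ is a degree $0$ chain map.} This is a routine sign verification using the coboundary conventions established in the ``Signs'' paragraph: for $g \in \Hom^i(A^*,B^*)$ one has $dg = d_{B^*}\circ g - (-1)^i g \circ d_{A^*}$, and the coboundary on a dual complex $A^* = \Hom^*(A_*,F)$ (thinking of $F$ concentrated in degree $0$) acts by $(d\alpha)(x) = (-1)^{|\alpha|+1}\alpha(\partial x)$. First I would check that $f$ preserves degree: if $x \in A^i$ then $f(x)$ kills all of $\Hom^*(A^*,F)$ except the degree $i$ part (since $f(x)(\alpha) = \pm\alpha(x)$ is nonzero only when $|\alpha| = i$), so $f(x)$ is a degree $i$ element of $\Hom^*(\Hom^*(A^*,F),F)$, i.e.\ $f$ has degree $0$. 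Then I would unwind $f(dx)$ and $(df)(x) = d\circ f(x) - (-1)^i f(x)\circ d$ applied to a test cochain $\alpha$ of the appropriate degree, tracking the three sign contributions (the sign $(-1)^{|\alpha|}$ in the definition of $f$, the sign in the coboundary on the inner $\Hom^*(A^*,F)$, and the sign in the coboundary on the outer double dual), and confirm they cancel so that $f(dx) = (df)(x)$. This is a short bookkeeping computation of exactly the flavor of \cite[Lemma A.2.2]{GBF35}, so I would present it compactly rather than belaboring it.

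\textbf{Step 2: $f$ is a quasi-isomorphism.} The strategy is the standard double-dual argument: since $F$ is a field, every $A^i$ is free, so $\Hom^*(A^*,F)$ computes the dual cohomology and the Universal Coefficient Theorem gives a natural isomorphism $H^i(\Hom^*(A^*,F)) \cong \Hom(H^i(A^*),F)$. Applying this twice, $H^i(\Hom^*(\Hom^*(A^*,F),F)) \cong \Hom(H^i(\Hom^*(A^*,F)),F) \cong \Hom(\Hom(H^i(A^*),F),F)$. The induced map $f_*$ on cohomology is, up to the UCT identifications, the canonical evaluation map $H^i(A^*) \to \Hom(\Hom(H^i(A^*),F),F)$ (the signs from the definition of $f$ and from the naturality of the UCT match up so that this identification holds; I would note this but not dwell on it). For a vector space $V$ over $F$, the canonical map $V \to V^{**}$ is an isomorphism precisely when $V$ is finite-dimensional. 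Here $H^i(A^*)$ is finitely generated over the field $F$, hence finite-dimensional, so $f_*$ is an isomorphism in every degree $i$. (The hypothesis that $H^*(A^*)$ vanishes for large $*$ is not strictly needed for this argument but is harmless; it guarantees the complexes are cohomologically bounded, which is the context in which this lemma is applied in the proof of Theorem \ref{T: MAIN}.)

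I anticipate the only real subtlety — and hence the ``main obstacle,'' though it is a minor one — is making sure the sign $(-1)^{|\alpha|}$ built into the definition of $f$ is exactly the sign needed both for $f$ to be a chain map (Step 1) and for $f_*$ to agree with the honest evaluation map under the UCT identifications (Step 2), rather than its negative. Both checks come down to the same sign conventions, so once Step 1 is done carefully, Step 2's sign compatibility follows from the naturality of the UCT isomorphism with respect to the chain map $f$ and the evident degree-$0$ chain map $\Hom^*(A^*,F) \to \Hom^*(A^*,F)$; I would simply remark that $f$ and its single-dual analogue are compatible so that the composite is evaluation. Everything else is textbook homological algebra over a field.
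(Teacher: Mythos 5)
Your proof is correct, but it takes a genuinely different route from the paper's. The paper reduces to the finitely generated case: by \cite[Lemma 56.3 and Theorem 46.2]{MK} (which is where the bounded-above hypothesis actually gets used), one replaces $A^*$ by a chain homotopy equivalent complex $B^*$ that is finitely generated in each degree, notes that the double-dual map $f'\colon B^* \to \Hom^*(\Hom^*(B^*,F),F)$ is then literally an isomorphism of chain complexes (no cohomology taken yet), and deduces the result from functoriality of the double dual with respect to the homotopy equivalence $\phi$. Your approach instead stays on cohomology and applies the Universal Coefficient Theorem twice, then invokes the finite-dimensional double-dual isomorphism on $H^i(A^*)$. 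Both are valid. What you gain: as you note, your argument does not actually need the bounded-above hypothesis, since the UCT over a field holds for arbitrary complexes; what you lose: you flag but do not carry out the sign check that identifies $f_*$ with the canonical evaluation under the two UCT isomorphisms, and in a paper that devotes an entire appendix to unavoidable sign subtleties this is not a step to hand-wave past. The paper's reduction to finite complexes sidesteps that verification entirely, because at the level of the finite model $B^*$ the map $f'$ is an honest isomorphism of complexes with no UCT in sight; the only sign computation needed is the Step 1 chain-map check, which is shared by both approaches. If you want to keep your UCT argument, you should actually write down the sign and check it; otherwise the paper's reduction is the cleaner path.
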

\begin{proof}
Note that $\Hom(A^i, F)$ consists of degree $-i$ maps and so $\Hom(\Hom(A^i, F),F)$ consists of degree $i$ maps. Since $f$ takes elements of $A^i$ to elements of $\Hom(\Hom(A^i, F),F)$, it is a degree $0$ homomorphism. 
To see that $f$ is a degree $0$  chain map, we compute that $f(dx)(\alpha)=(-1)^{|\alpha|}\alpha(dx)$ and (letting $\bar A^*=\Hom^*(A^*,F)$) 
\begin{align*}
d(f(x))(\alpha)&= 
(d_F\circ f(x)-(-1)^{|f(x)|}f(x)\circ d_{\bar A})(\alpha)\\
&=(-1)^{|f(x)|+1}f(x)\circ d_{\bar A}(\alpha)\\
&=(-1)^{|f(x)|+1+|\alpha|+1}(d_{\bar A}\alpha)(x)\\
&=(-1)^{|f(x)|+1+|\alpha|+1+|\alpha|+1}\alpha(dx)\\
&=(-1)^{|f(x)|+1}\alpha(dx)\\
&= (-1)^{|\alpha|}\alpha(dx).
\end{align*}
The second equality is because the boundary is trivial in the complex $F$. The last equality is because the entire expression will all be $0$ unless $|\alpha|=|dx|$, and $|dx|=|x|+1=|f(x)|+1$. Comparing expressions, we see that $f$ commutes with $d$ and so is a degree $0$ chain map. 

Suppose now that $H^*(A^*)$ is finitely generated in each dimension and that $A^*$ is bounded above. Then by \cite[Lemma 56.3 and Theorem 46.2]{MK}, there is a chain homotopy equivalence $\phi: B^*\to A^*$ with $B^*$ a complex finitely generated in each dimension. The map $\phi$ induces a diagram
\begin{diagram}[LaTeXeqno]\label{D: coco}
H^*(B^*)&\rTo^{f'} & H^*(\Hom^*(\Hom^*(B^*,F),F))\\
\dTo^\phi&&\dTo^{\phi^{**}}\\
H^*(A^*)&\rTo^f & H^*(\Hom^*(\Hom^*(A^*,F),F)).
\end{diagram}
This diagram commutes, since for $x\in B^*$ and $\alpha\in \Hom^*(A^*,F)$, we have $(f\phi)(x)(\alpha)=(-1)^{|\alpha|}\alpha(\phi(x))$, while $(\phi^{**})f'(x)(\alpha)=f'(x)(\phi^*(\alpha))=(-1)^{|\alpha|}(\phi^*(\alpha))(x)=(-1)^{|\alpha|}\alpha(\phi(x))$. Since $\phi$ is a chain homotopy equivalence, so are $\phi^*$ and $\phi^{**}$, so the vertical maps are both isomorphisms. Furthermore, the top map is an isomorphism since $B^*$ finite implies that $f': B^* \to \Hom^*(\Hom^*(B^*,F),F)$ is actually an isomorphism. Thus $f$ is a homology isomorphism.
\end{proof}

\section{A meditation on signs}\label{S: signs}

In Section \ref{S: cap/cup} we are primarily concerned with the relationship between products (the cup product and intersection product) on the (co)homology of two different chain complexes (the intersection cochain and intersection chain complexes) related by a degree $-n$ chain map (the duality homomorphism). Due to the degree shift, even more signs come into play than usual\footnote{In general, signs are unavoidable. See \cite[Section A.1]{GBF35} for a discussion.}, and it turns out that they cause some headaches that we here illustrate. To keep other distractions to a minimum, we strip down our scenario and consider the following as our given data:

\begin{enumerate}
\item Cohomologically indexed chain complexes $A^*$ and $B^*$ with a degree $n$ chain \emph{isomorphism} $f:A^*\to B^*$. 

\item A degree $0$ chain map $P: A^*\otimes A^*\to A^*$ that is associative,  (graded) commutative, and unital. For simplicity, we write $P(x\otimes y)=x\boxplus_A y\in A^*$.  

\end{enumerate}
The question then is how to define a compatible product  $Q:B^*\otimes B^*\to B^*$, for which we will write $Q(a\otimes b)=a\boxtimes_B b$. The upshot is that if we want to transfer the product using the degree $n$ chain maps then the nice properties assumed for $P$ will only carry over to $Q$ with some unpleasant sign corrections.

\paragraph{Defining the transferred product.}
Starting with the  diagram

\begin{diagram}
A^*\otimes A^*&\rTo^P &A^*\\
\dTo^{f\otimes f}&&\dTo_f\\
B^*\otimes B^*&\rDashto^Q &B^*,
\end{diagram}
a natural first definition for $Q$ would be the $Q$ that makes the diagram commute: $Q=fP(f\otimes f)^{-1}$. Assuming that $f$ is a degree $n$ chain map and $P$ is a degree $0$ chain map, then $f\otimes f$ is a degree $2n$ chain map \cite[Section A.1.5]{GBF35} and so  $(f\otimes f)^{-1}$ is a degree $-2n$ chain map. So 
$Q$ is a chain map of degree $-n$.

To apply this to elements, though, we first need to note that $(f\otimes f)^{-1}=(-1)^nf^{-1}\otimes f^{-1}$, which we can verify using the basic sign properties of degree $n$ maps \cite[Section A.1]{GBF35}:
\begin{align*}
(-1)^n(f^{-1}\otimes f^{-1})(f\otimes f)(x\otimes y)&=(-1)^n(f^{-1}\otimes f^{-1})((-1)^{n|x|}f(x)\otimes f(y))\\
&=(-1)^{n+n|x|}(-1)^{n|f(x)|}f^{-1}f(x)\otimes f^{-1}f(y)\\
&=(-1)^{n+n|x|+n(|x|+n)}x\otimes y\\
&=x\otimes y,
\end{align*}
using that $|f(x)|=|x|+n$. So we see that the inverse for $f\otimes f$ is indeed $(-1)^n f^{-1}\otimes f^{-1}$. 
To simplify notation, let $g=f^{-1}$ so that $(f\otimes f)^{-1}=(-1)^ng\otimes g$.

So now if $a,b\in B^*$, we obtain 

\begin{align*}
a\boxtimes_B b&=fP(f\otimes f)^{-1}(a\otimes b)\\
&=(-1)^nfP(g\otimes g)(a\otimes b)\\
&=(-1)^{n+n|a|}fP(g(a)\otimes g(b))\\
&=(-1)^{n+n|a|}f( g(a)\boxtimes_A g(b)).
\end{align*}

One immediate curiosity arises if we instead use the diagram 

\begin{diagram}
A^*\otimes A^*&\rTo^P &A^*\\
\uTo^{g\otimes g}&&\uTo_g\\
B^*\otimes B^*&\rDashto^{Q'} &B^*,
\end{diagram}
which we obtain by privileging the inverse isomorphism $g:B^*\to A^*$ to define $Q'$. In this case $Q'=g^{-1}P(g\otimes g)=fP(g\otimes g)$, which is again a degree $-n$ chain map.
But in this case,

\begin{align}
a\boxtimes'_B b&=fP(g\otimes g)(a\otimes b)\notag\\
&=(-1)^{n|a|}fP(g(a)\otimes g(b))\label{E: transfer}\\
&=(-1)^{n|a|}f( g(a)\boxtimes_A g(b))\notag.
\end{align}
So $Q(a\otimes b)$ and $Q'(a\otimes b)$ defer by a sign of $(-1)^n$, which is not surprising given our previous observation about the relationship between $(f\otimes f)^{-1}$ and $f^{-1}\otimes f^{-1}$. 

 This leads us to our first moral:

\begin{itemize}
\item The direction of the chain isomorphism between $A^*$ and $B^*$ matters for transferring the pairing, but only up to a sign $(-1)^n$, where $\pm n$ is the degree of the isomorphism.
\end{itemize}

\paragraph{Sign problems.}
At first the second product, $Q'$, seems somewhat more appealing. It eliminates a $(-1)^n$ of questionable necessity that would also seem to be problematic in iterating the product, since each time we iterate we pick up an extra $(-1)^n$ compared to the product $P$. Unfortunately, both $Q$ and $Q'$ have a bigger problem with iterations, as we see by considering associativity. Recall that we assume that $P$ is associative. Using \eqref{E: transfer},

\begin{align*}
(a\boxtimes'_B b)\boxtimes'_B c&= (-1)^{n|a\boxtimes'_B b|}f( g(a\boxtimes'_B b)\boxtimes_A g(c))\\
&=(-1)^{n(|a|+|b|-n)}f( g((-1)^{n|a|}f( g(a)\boxtimes_A g(b)))\boxtimes_A g(c))\\
&=(-1)^{n(|a|+|b|-n)+n|a|}f( ( g(a)\boxtimes_A g(b))\boxtimes_A g(c))\\
&=(-1)^{n+n|b|}f( ( g(a)\boxtimes_A g(b))\boxtimes_A g(c))\\
a\boxtimes'_B (b\boxtimes'_B c)&= (-1)^{n|a|}f( g(a)\boxtimes_A g(b\boxtimes'_B c))\\
&=(-1)^{n|a|}f( g(a)\boxtimes_A g((-1)^{n|b|}f( g(b)\boxtimes_A g(c))))\\
&=(-1)^{n|a|+n|b|}f( g(a)\boxtimes_A ( g(b)\boxtimes_A g(c)))\\
\end{align*}

Using the associativity of $\boxtimes_A$, we see that $(a\boxtimes'_B b)\boxtimes'_B c$ and $a\boxtimes'_B (b\boxtimes'_B c)$ differ by the sign $n+n|a|$. If we had used $\boxtimes_B$ instead of $\boxtimes'_B$, we would pick up two canceling $(-1)^n$ factors, and so have the same associativity defect.

\begin{itemize}
\item Transferring the pairing via degree $n$ chain maps results in a pairing that is only associative up to a sign that can depend on the degrees of the elements involved.
\end{itemize}

We might hope to fix this problem by defining a $Q''$ via $a\boxtimes''_B b=(-1)^{n|a|}a\boxtimes'_B b$, but now this is not a chain map. Using that we know $\boxtimes'_B$ is a degree $n$ chain map, we can compute 
\begin{align*}
Q''(d(a\otimes b))&=Q''((d a)\otimes b+(-1)^{|a|}a\otimes d b)\\
&=(-1)^{n(|a|+1)}(da)\boxtimes'_B b+(-1)^{|a|+n|a|}a\boxtimes'_B db\\
dQ''(a\otimes b)&=(-1)^{n|a|}dQ'(a\otimes_B b)\\
&=(-1)^{n|a|}Q'd(a\otimes_B b)\\
&=(-1)^{n|a|}(da)\boxtimes'_Bb+(-1)^{n|a|+|a|}a\boxtimes'_B db.
\end{align*}
These do not defer by the sign $(-1)^n$ that would be required of a degree $-n$ chain map. 

Turning to commutativity we have 

\begin{align*}
a\boxtimes'_B b&=(-1)^{n|a|}f( g(a)\boxtimes_A g(b))\\
&=(-1)^{n|a|+|g(a)||g(b)|}f( g(b)\boxtimes_A g(a))\\
&=(-1)^{n|a|+(|a|-n)(|b|-n)}f( g(b)\boxtimes_A g(a))\\
&=(-1)^{n|a|+|a||b|+|a|n+|b|n+n}f( g(b)\boxtimes_A g(a))\\
&=(-1)^{|a||b|+n+|b|n}f( g(b)\boxtimes_A g(a))\\
&=(-1)^{|a||b|+n}b\boxtimes'_B a.
\end{align*}
Working with $\boxtimes_B$ instead would multiply both sides by $(-1)^n$, not affecting the sign by which commutativity fails. 

\begin{itemize}
\item Transferring the pairing via degree $n$ chain maps results in a pairing that is only graded commutative  up to the  sign $(-1)^n$.
\end{itemize}

Lastly, we consider the unital property. Let $1\in A^0$ be the unit; note that the unit must have degree $0$. Let $f(1)=u$, which has degree $n$. Then  $g(u)=1$, and we have 

\begin{align*}
u\boxtimes'_B b=(-1)^{n}f( g(u)\boxtimes_A g(b))=(-1)^{n}f( 1\boxtimes_A g(b))=(-1)^{n}f(g(b))=(-1)^n b\\
a\boxtimes'_B u=(-1)^{n|a|}f( g(a)\boxtimes_A g(u))=(-1)^{n|a|}f( g(a)\boxtimes_A 1)=(-1)^{n|a|}f( g(a))=(-1)^{n|a|}a
\end{align*}

If we instead use $\boxtimes_B$, then $u$ becomes a left unit on the nose but only a right unit up to $(-1)^{n+n|a|}$. 

\begin{itemize}
\item If we transfer the pairing via degree $n$ chain maps, the image of the unit of $A^*$ is only a unit of $B^*$ up to signs and not necessarily the same sign from each side. 
\end{itemize}

So, in summary, if we transfer an associative, commutative, unital pairing via a degree $n$ chain isomorphism, the resulting pairing (of degree $-n$) will only be associative, commutative, and unital up to sign discrepancies.

There are two standard ways to correct this defect in practice:

\paragraph{The classical approach.} The first way to fix the sign problems is to discard the requirement that our diagrams commute via chain maps. This is essentially the solution employed tacitly by Dold in his definition of the intersection product \cite[Section VIII.13]{Dold}. More specifically, Dold's intersection product utilizes a transfer map, and he acknowledges in \cite[Exercise VIII.10.14.4]{Dold} that he has not chosen the signs in his definition of transfer maps to be consistent with the Koszul conventions for a chain map. Looking at Equation (13.5) in \cite[Section VIII.13]{Dold} and borrowing the notation $\bullet$ for our modified product on $B^*$, we find a formula that, with our conventions here, could be translated as simply\footnote{This is still not the perfect analogy for what we're doing here, as Dold's duality maps, given by capping with the fundamental class, are also not chain maps.} 
$$a\bullet b=f(g(a)\boxtimes_A g(b)).$$ 

With this modified definition, we have the following good properties:

\begin{align*}
(a\bullet b)\bullet c&=f(g(a\bullet b)\boxtimes_A g(c))\\
&=f(g(f(g(a)\boxtimes_A g(b)))\boxtimes_A g(c))\\
&=f((g(a)\boxtimes_A g(b))\boxtimes_A g(c))\\
&=f(g(a)\boxtimes_A (g(b)\boxtimes_A g(c)))\\
&=f(g(a)\boxtimes_A g(f(g(b)\boxtimes_A g(c))))\\
&=a\bullet (b\bullet c)\\
a\bullet b&=f(g(a)\boxtimes_A g(b))=(-1)^{(|a|-n)(|b|-n)} f(g(b)\boxtimes_A g(a))=(-1)^{(|a|-n)(|b|-n)} b\bullet a.\\ 
u \bullet b&=f(g(u)\boxtimes_A g(b))=f(1\boxtimes_A g(b))=f(g(b))=b\\
a \bullet u&=f(g(a)\boxtimes_A g(u))=f(g(a)\boxtimes_A 1)=f(g(a))=a
\end{align*}

These formulas are consistent with the behavior of the Dold intersection product in \cite[Section VIII.13]{Dold}. 

\begin{itemize}
\item If we transfer the pairing via the formula $a\bullet b=f(g(a)\boxtimes_A g(b))$ then  $\bullet$ is not a chain map, but it is associative and unital. It is commutative up to a sign of $(-1)^{(|a|-n)(|b|-n)}$, as opposed to the sign $(-1)^{|a||b|}$ we would expect for graded commutativity.
\end{itemize}

So to get a well-behaved intersection product in this sense, we can either dispense with chain maps or use chain maps to define the product initially, say at the chain level, but then add some signs to get better behavior after passing to homology. 

\paragraph{The shifty approach.}
There is another way toward better signs that is taken in \cite{McC, GBF18}, which is to shift the complex $B^*$. Using the conventions of \cite[Appendix A.3]{GBF35}, let $\mf s^{n}: B^*[n]\to B^*$ be the shift map, which is a degree $n$ chain map. Given an element $x\in B^{i+n}$, we let $\bar x$ denote the corresponding (identical) element in $(B[n])^i$ so that $\mf s^n(\bar x)=x$. For simplicity, we write $(\mf s^{n})^{-1}=\mf t^n$. For simplicity, we write $(\mf s^{n})^{-1}=\mf t^n$. 

Now consider the diagram

\begin{diagram}
A^*\otimes A^*&\rTo^P &A^*\\
\uTo^{g\otimes g}&&\dTo_f\\
B^*\otimes B^*&\rDashto &B^*\\
\uTo^{\mf s^n\otimes \mf s^n}&&\dTo_{\mf t^n}\\
B^*[n]\otimes B^*[n]&\rDashto^{R'}&B^*[n].
\end{diagram}

The compositions on the left and right of the diagram are now degree $0$ chain maps. Suppose we define $R'$ so that the diagram commutes:
\begin{align*}
R'(\bar a\otimes \bar b)&=\mf t^n f P(g\otimes g)(\mf s^n\otimes \mf s^n)(\bar a\otimes \bar b)\\
&=(-1)^{n|\bar a|}\mf t^n f P(g\otimes g)(a\otimes b)\\
&=(-1)^{n|\bar a|+n|a|}\mf t^n f (g(a)\boxplus_A g(b))\\
&=(-1)^{n(|a|+n)+n|a|}\mf t^n f (g(a)\boxplus_A g(b))\\
&=(-1)^{n}\mf t^n f (g(a)\boxplus_A g(b)).
\end{align*}
If we don't like that $(-1)^n$ we know that we can get rid of it by using $(f\otimes f)^{-1}$ instead of $g\otimes g$:

\begin{diagram}
A^*\otimes A^*&\rTo^P &A^*\\
\dTo^{f\otimes f}&&\dTo_f\\
B^*\otimes B^*&\rDashto &B^*\\
\uTo^{\mf s^n\otimes \mf s^n}&&\dTo_{\mf t^n}\\
B^*[n]\otimes B^*[n]&\rDashto^R&B^*[n].
\end{diagram}

Using this diagram, we can define $R$ instead by 
\begin{align*}
R(\bar a\otimes \bar b)&=\mf t^n f P(f\otimes f)^{-1}(\mf s^n\otimes \mf s^n)(\bar a\otimes \bar b)\\
&=(-1)^{n+n|\bar a|}\mf t^n f P(g\otimes g)(\mf s^n(\bar a)\otimes \mf s^n(\bar b))\\
&=(-1)^{n+n|\bar a|+n(|\bar a|+n)}\mf t^n f P(g\mf s^n(\bar a)\otimes g\mf s^n(\bar b))\\
&=\mf t^n f (g\mf s^n(\bar a)\boxplus_A g\mf s^n(\bar b))\\
&=\mf t^n f (g(a) \boxplus_A g(b)).
\end{align*}

Let's check the properties now, letting $\boxtimes_B^n$ denote the product $R$. We also let $\upsilon=\mf t^nf (1)$.

\begin{align*}
(\bar a\boxtimes_B^n \bar b) \boxtimes_B^n \bar c&=\mf t^n f ( g\mf s^n( \bar a\boxtimes_B^n \bar b  )  \boxplus_A g(\mf s^n\bar c))\\
&=\mf t^n f ( g\mf s^n( \mf t^n f (g\mf s^n(\bar a)\boxplus_A g\mf s^n(\bar b))  )  \boxplus_A g(\mf s^n\bar c))\\
&=\mf t^n f ( (g\mf s^n(\bar a)\boxplus_A g\mf s^n(\bar b))    \boxplus_A g(\mf s^n\bar c))\\
&=\mf t^n f ( g\mf s^n(\bar a)\boxplus_A (g\mf s^n(\bar b)   \boxplus_A g(\mf s^n\bar c)))\\
&=\mf t^n f (g\mf s^n(\bar a)\boxplus_A g\mf s^n(\mf t^n f (g\mf s^n(\bar b)\boxplus_A g\mf s^n(\bar c))))\\
&=\mf t^n f (g\mf s^n(\bar a)\boxplus_A g\mf s^n(\bar b\boxtimes_B^n \bar c))\\
&=\bar a\boxtimes_B^n (\bar b\boxtimes_B^n \bar c)\\
\bar a\boxtimes_B^n \bar b&=\mf t^n f (g\mf s^n(\bar a)\boxplus_A g\mf s^n(\bar b))\\
&=(-1)^{|g\mf s^n(\bar a)||g\mf s^n(\bar b)|}\mf t^n f (g\mf s^n(\bar b)\boxplus_A g\mf s^n(\bar a))\\
&=(-1)^{|\bar a||\bar b|}\bar b\boxtimes_B^n \bar a\\
\upsilon \boxtimes_B^n \bar b&=\mf t^n f (g\mf s^n(\upsilon)\boxplus_A g\mf s^n(\bar b))\\
&=\mf t^n f (g\mf s^n(\mf t^nf (1))\boxplus_A g\mf s^n(\bar b))\\
&=\mf t^n f (1\boxplus_A g\mf s^n(\bar b))\\
&=\mf t^n f (g\mf s^n(\bar b))\\
&=\bar b\\
\bar a  \boxtimes_B^n \upsilon&=\mf t^n f (g\mf s^n(\bar a)\boxplus_A g\mf s^n(\upsilon))\\
&=\mf t^n f (g\mf s^n(\bar a)\boxplus_A g\mf s^n(\mf t^nf (1)))\\
&=\mf t^n f (g\mf s^n(\bar a)\boxplus_A 1)\\
&=\mf t^n f (g\mf s^n(\bar a))\\
&=\bar a.
\end{align*}

So we here recover a unital, graded commutative, associative product, but on $B^*[n]$ instead of $B^*$. 
In this case the product on $B^*[n]$ is induced by a degree $0$ chain map. So in some sense this is the ideal, except that in practice it involves the application of some confusing shifts, which become even more unexpected if we're dealing with the classical singular chain complex (who wants to think about $S_*(X)[n]$?).

\paragraph{Conclusion.}
While not being able to avoid signs (without having to put up with shifts) is somewhat discouraging, the bright side is that we have seen that there are several options for transferring pairings. If we want products induced by chain maps but don't plan to focus on algebraic properties, then $\boxtimes_B$ and $\boxtimes_B'$ are perfectly acceptable and make the nicest diagrams. This is the setting of \cite{GBF39}. If we want better algebraic properties \emph{and} chain maps, we need to be willing to employ shifts; this is a useful perspective for \cite{McC, GBF18}. If we ultimately just want to use a product with nice algebraic properties but are not so concerned with chain maps, we can take the classical approach as in Dold. Hopefully what we've provided in this appendix is something of a dictionary explaining the interactions among these approaches, along with what to expect and not to expect of the sign behavior. In other words, we've seen the properties to which we must re-sign ourselves.

\bibliographystyle{amsplain}

\begin{thebibliography}{10}

\bibitem{BaIH}
M.~Banagl, \emph{Topological invariants of stratified spaces}, Springer
  Monographs in Mathematics, Springer, Berlin, 2007.

\bibitem{Bo}
A.~Borel et~al., \emph{Intersection cohomology}, Progress in Mathematics,
  vol.~50, Birkh\"auser Boston, Inc., Boston, MA, 1984, Notes on the seminar
  held at the University of Bern, Bern, 1983, Swiss Seminars.

\bibitem{BHS}
J.-P. Brasselet, G.~Hector, and M.~Saralegi, \emph{Th\'eor\`eme de de {R}ham
  pour les vari\'et\'es stratifi\'ees}, Ann. Global Anal. Geom. \textbf{9}
  (1991), no.~3, 211--243.

\bibitem{BR}
Glen~E. Bredon, \emph{Sheaf theory}, second ed., Graduate Texts in Mathematics,
  vol. 170, Springer-Verlag, New York, 1997.

\bibitem{Bry}
Jean-Luc Brylinski, \emph{Equivariant intersection cohomology},
  Kazhdan-{L}usztig theory and related topics ({C}hicago, {IL}, 1989), Contemp.
  Math., vol. 139, Amer. Math. Soc., Providence, RI, 1992, pp.~5--32.

\bibitem{CE}
Henri Cartan and Samuel Eilenberg, \emph{Homological algebra}, Princeton
  University Press, Princeton, N. J., 1956.

\bibitem{CST7}
David Chataur, Martintxo Saralegi-Aranguren, and Daniel Tanr\'{e},
  \emph{Blown-up intersection cohomology}, An alpine bouquet of algebraic
  topology, Contemp. Math., vol. 708, Amer. Math. Soc., Providence, RI, 2018,
  pp.~45--102.

\bibitem{CST}
\bysame, \emph{Intersection cohomology, simplicial blow-up and rational
  homotopy}, Mem. Amer. Math. Soc. \textbf{254} (2018), no.~1214, viii+108.

\bibitem{CST-inv}
\bysame, \emph{Intersection homology: general perversities and topological
  invariance}, Illinois J. Math. \textbf{63} (2019), no.~1, 127--163.

\bibitem{CST20}
\bysame, \emph{Blown-up intersection cochains and {D}eligne's sheaves}, Geom.
  Dedicata \textbf{204} (2020), 315--337.

\bibitem{Dieudonne}
Jean Dieudonn{\'e}, \emph{A history of algebraic and differential topology
  1900--1960}, Modern Birkh\"auser Classics, Birkh\"auser Boston, Inc., Boston,
  MA, 2009.

\bibitem{DI04}
Alexandru Dimca, \emph{Sheaves in topology}, Universitext, Springer-Verlag,
  Berlin, 2004.

\bibitem{Dold}
Albrecht Dold, \emph{Lectures on algebraic topology}, Springer-Verlag,
  Berlin-Heidelberg-New York, 1972.

\bibitem{GBF35}
Greg Friedman, \emph{Singular intersection homology}, forthcoming from
  Cambridge University Press; draft available at
  \url{http://faculty.tcu.edu/gfriedman/IHbook.pdf}.

\bibitem{GBF10}
\bysame, \emph{Singular chain intersection homology for traditional and
  super-perversities}, Trans. Amer. Math. Soc. \textbf{359} (2007), no.~5,
  1977--2019.

\bibitem{GBF20}
\bysame, \emph{Intersection homology {K}\"unneth theorems}, Math. Ann.
  \textbf{343} (2009), no.~2, 371--395.

\bibitem{GBF18}
\bysame, \emph{On the chain-level intersection pairing for {PL}
  pseudomanifolds}, Homology Homotopy Appl. \textbf{11} (2009), no.~1,
  261--314.

\bibitem{GBF23}
\bysame, \emph{Intersection homology with general perversities}, Geom. Dedicata
  \textbf{148} (2010), 103--135.

\bibitem{GBF26}
\bysame, \emph{An introduction to intersection homology with general perversity
  functions}, Topology of stratified spaces, Math. Sci. Res. Inst. Publ.,
  vol.~58, Cambridge Univ. Press, Cambridge, 2011, pp.~177--222.

\bibitem{GBF39}
\bysame, \emph{The chain-level intersection pairing for {PL} pseudomanifolds
  revisited}, J. Singul. \textbf{17} (2018), 330--367.

\bibitem{GBF25}
Greg Friedman and James~E. McClure, \emph{Cup and cap products in intersection
  (co)homology}, Adv. Math. \textbf{240} (2013), 383--426.

\bibitem{GelMan2}
Sergei~I. Gelfand and Yuri~I. Manin, \emph{Methods of homological algebra},
  second ed., Springer Monographs in Mathematics, Springer-Verlag, Berlin,
  2003.

\bibitem{GM1}
Mark Goresky and Robert MacPherson, \emph{Intersection homology theory},
  Topology \textbf{19} (1980), no.~2, 135--162.

\bibitem{GM2}
\bysame, \emph{Intersection homology. {II}}, Invent. Math. \textbf{72} (1983),
  no.~1, 77--129.

\bibitem{GS83}
Mark Goresky and Paul Siegel, \emph{Linking pairings on singular spaces},
  Comment. Math. Helv. \textbf{58} (1983), no.~1, 96--110.

\bibitem{BoHab}
Nathan Habegger, \emph{From {PL} to sheaf theory (rock to {B}ach)},
  Intersection cohomology (Bern, 1983), Progr. Math., vol.~50, Birkh\"auser
  Boston, Boston, MA, 1984, pp.~23--34.

\bibitem{Ha}
Allen Hatcher, \emph{Algebraic topology}, Cambridge University Press,
  Cambridge, 2002.

\bibitem{HS}
Peter~John Hilton and Urs Stammbach, \emph{A course in homological algebra},
  Springer-Verlag, New York, 1971, Graduate Texts in Mathematics, Vol. 4.

\bibitem{IV}
Birger Iversen, \emph{Cohomology of sheaves}, Universitext, Springer-Verlag,
  Berlin, 1986.

\bibitem{KS}
Masaki Kashiwara and Pierre Schapira, \emph{Sheaves on manifolds}, Grundlehren
  der Mathematischen Wissenschaften [Fundamental Principles of Mathematical
  Sciences], vol. 292, Springer-Verlag, Berlin, 1994.

\bibitem{Ki}
Henry~C. King, \emph{Topological invariance of intersection homology without
  sheaves}, Topology Appl. \textbf{20} (1985), no.~2, 149--160.

\bibitem{KirWoo}
Frances Kirwan and Jonathan Woolf, \emph{An introduction to intersection
  homology theory}, second ed., Chapman \& Hall/CRC, Boca Raton, FL, 2006.

\bibitem{LAMMOD}
T.~Y. Lam, \emph{Lectures on modules and rings}, Graduate Texts in Mathematics,
  vol. 189, Springer-Verlag, New York, 1999.

\bibitem{McC}
J.~E. McClure, \emph{On the chain-level intersection pairing for {PL}
  manifolds}, Geom. Topol. \textbf{10} (2006), 1391--1424.

\bibitem{MK}
James~R. Munkres, \emph{Elements of algebraic topology}, Addison-Wesley
  Publishing Company, Menlo Park, CA, 1984.

\bibitem{Pa03}
G.~Padilla, \emph{On normal stratified pseudomanifolds}, Extracta Math.
  \textbf{18} (2003), no.~2, 223--234.

\bibitem{S1}
Martin Saralegi, \emph{Homological properties of stratified spaces}, Illinois
  J. Math. \textbf{38} (1994), no.~1, 47--70.

\bibitem{Sa05}
Martintxo Saralegi-Aranguren, \emph{de {R}ham intersection cohomology for
  general perversities}, Illinois J. Math. \textbf{49} (2005), no.~3, 737--758
  (electronic).

\bibitem{ST20}
Martintxo Saralegi-Aranguren and Daniel Tanr\'{e}, \emph{Poincar{\'e} duality,
  cap product and {B}orel-{M}oore intersection homology}, preprint,
  \url{https://arxiv.org/abs/1810.07498}.

\bibitem{Sk94}
E.~G. Sklyarenko, \emph{On the nature of homological multiplications and
  duality}, Uspekhi Mat. Nauk \textbf{49} (1994), no.~1(295), 141--198,
  translation in Russian Math. Surveys 49 (1994), no. 1, 151--215.

\bibitem{SW}
Richard~G. Swan, \emph{The theory of sheaves}, University of Chicago Press,
  Chicago and London, 1964.

\bibitem{WELLS}
Raymond~O. Wells, Jr., \emph{Differential analysis on complex manifolds}, third
  ed., Graduate Texts in Mathematics, vol.~65, Springer, New York, 2008, With a
  new appendix by Oscar Garcia-Prada.

\end{thebibliography}
\providecommand{\bysame}{\leavevmode\hbox to3em{\hrulefill}\thinspace}
\providecommand{\MR}{\relax\ifhmode\unskip\space\fi MR }
\providecommand{\MRhref}[2]{%
  \href{http://www.ams.org/mathscinet-getitem?mr=#1}{#2}
}
\providecommand{\href}[2]{#2}

\end{document}